\documentclass[reqno,11pt]{amsart}

\usepackage{amsfonts,amsthm,amsmath,amssymb,amscd,mathrsfs}
\usepackage{graphics}
\usepackage{indentfirst}
\usepackage{cite}
\usepackage{latexsym}
\usepackage{color}
\usepackage{bookmark}
\usepackage{soul}
\usepackage{nccmath}

\usepackage{tikz}
\usetikzlibrary{arrows.meta,positioning,calc,decorations.pathreplacing}

\newtheorem{theorem}{Theorem}[section]
\newtheorem{corollary}[theorem]{Corollary}
\newtheorem{remark}{Remark}[section]
\newtheorem{definition}{Definition}[section]
\newtheorem{lemma}[theorem]{Lemma}
\newtheorem{proposition}[theorem]{Proposition}

\newcommand{\supp}{\operatorname{supp}}
\newcommand{\R}{\mathbb{R}}
\renewcommand{\div}{\mathrm{div}}

\begin{document}

\title{Sharp Ill-Posedness of the Euler Equations in Lorentz Spaces}

\begin{abstract}
We study vortex stretching for the three-dimensional axisymmetric Euler equations without swirl in vorticity formulation.
     Danchin \cite{Danchin07} established global existence and uniqueness for bounded vorticity $\omega_0$ provided $\omega_0/r$ lies in the endpoint Lorentz space $L^{3,1}(\mathbb{R}^3)$ (together with a decay assumption on $\omega_0$).
    We prove that this $L^{3,1}$ endpoint is sharp: for every Lorentz  exponent $q>1$, we construct multi-ring data $\omega_0 \in L^\infty (\mathbb{R}^3)$ with $\omega_0/r\in L^{3,q}(\mathbb{R}^3)$ that produce $L^\infty$-norm inflation of the vorticity; moreover, within the same class, we obtain instantaneous blow-up from data with infinitely many rings. 

    Our initial data are inspired by the Kim--Jeong dyadic ring superposition \cite{KimJeong22}, but we crucially generalize it by allowing flexible conical support geometry for the ring profile. In the regime where outer rings are dominant---a multiscale viewpoint appearing in recent works including \cite{KimJeong22,CordobaMartinezZheng25}---we obtain a forward-in-time ODE cascade for ring amplitudes and aspect ratios in which vortex stretching weakens its own future forcing: as a ring amplifies, incompressibility flattens it, the aspect ratio collapses, and the induced stretching coefficient is geometrically depleted. A key new ingredient is a profile-localization argument that freezes the relevant Biot–Savart kernel and makes this depletion explicit, enabling us to exploit a monotone ``productive window" (controlled by the cone slope) together with an exact cascade identity. This propagates stretching across scales and gives a robust lower bound on cumulative stretching, yielding ill-posedness in the full range $q>1$.    
\end{abstract}

\author{Jeaheang Bang}
\address{Institute for Theoretical Sciences, Westlake Institute for Advanced Study, Westlake University, 600 Dunyu Road, Hangzhou, Zhejiang 310024, People's Republic of China}
\email{jhbang@westlake.edu.cn}

\author{Alexey Cheskidov}
\address{Institute for Theoretical Sciences, Westlake University, 600 Dunyu Road, Hangzhou, Zhejiang 310024, People's Republic of China}
\email{cheskidov@westlake.edu.cn}

\maketitle

\setcounter{tocdepth}{2}
\tableofcontents

\section{Introduction}
Consider the initial value problem of the incompressible Euler equations in $\mathbb{R}^3$:
    \begin{align}
    \label{eq:Euler}
    \left\{
        \begin{aligned}
           \partial_t u + u\cdot \nabla u + \nabla p
        &=
            0, 
           \qquad \mathrm{div}\, u=0
           \\
           u(x,0)
        &=
            u_0(x).
        \end{aligned}
    \right.
    \end{align}
Equivalently, one can also consider its vorticity formulation:
    \begin{align}
    \label{main_eq}
    \left\{
    \begin{aligned}
        \partial_t \omega + u\cdot \nabla \omega = \omega \cdot \nabla u,
    \\
        \omega(x,0)=\omega_0(x)
    \end{aligned}
    \right.
    \end{align}
Here,  $u$ is determined by the Biot--Savart law: $u=(-\Delta)^{-1} \mathrm{curl} \, \omega$, that is,
    \begin{align}
    \label{eq:gen_Biot-Savart}
        u(x)= \frac{1}{4\pi}
        \int_{\mathbb{R}^3} 
        \frac{\omega (y)\times (x-y) }{|x-y|^3} \, dy.
    \end{align}
See our sign convention related to the Biot--Savart law in Appendix~\ref{sec:notation}.

Well-posedness and ill-posedness of the Euler equations have long been central topics in mathematical fluid mechanics. 
Advection tends to mix the flow, while the vortex stretching mechanism can amplify vorticity. 
The interplay between these two effects plays a crucial role in the analysis of well-posedness and ill-posedness. 

In two dimensions, vortex stretching is absent, and the vorticity is purely transported along  particle trajectories. As a consequence, the $L^\infty$ norm of the vorticity is conserved in time. Although the condition $\omega\in L^\infty$ only yields log-Lipschitz continuity of the velocity field $u$, this regularity is sufficient to define the Lagrangian flow map $Y(x,t)$ through
    \begin{align*}
        \frac{\partial }{\partial t} Y(x,t) = u(Y(x,t),t ), \quad Y(x,0)=x.
    \end{align*}
This observation underlies Yudovich theory \cite{Yudovich63}, which establishes global existence and uniqueness of solutions  for initial vorticity $\omega_0 \in L^\infty (\mathbb{R}^2)$ under a mild decay condition at infinity. 

This global result shows that, although the formation of small scales is not excluded---since no assumption is imposed on the vorticity gradient--- the sole boundedness of the initial vorticity is sufficient to ensure global well-defined Lagrangian dynamics.

On the other hand, in three dimensions, the presence of vortex stretching fundamentally alters the dynamics and renders the extension of Yudovich theory highly nontrivial. In general, the amplification of vorticity induced by stretching may lead to loss of regularity, even for initially smooth flows. However, in the special class of axisymmetric flows without swirl, the structure of the vorticity equation simplifies significantly:
    \begin{align}
    \label{vor_eq}
        \partial_t \omega
        +u^r \partial_r \omega
        + u^z \partial _z \omega
        &= \omega \, \frac{u^r}{r}.
    \end{align}
By abuse of notation, here $\omega$ still denotes a scalar quantity defined by
    \begin{align*}
        \operatorname{curl} u= \omega e_\theta, 
    \end{align*}
where $e_\theta= e_z\times e_r$, and
$\theta$ denotes the angular variable in cylindrical coordinates. 

In addition, we will identify 
meridional $(r,z)-$functions, such as $\omega(r,z)$, with their axisymmetric lifts to $\mathbb{R}^3$. 
The axisymmetric Biot--Savart law reads as follows.
    \begin{align}
    \begin{aligned}
    \label{eq:axis_BS_law}
        u^r (x)
    &=
        \frac{1}{4\pi} \int_{\mathbb{R}^3}
            \frac{ (z_x-z_y)\, \cos (\theta_x-\theta_y ) \,  }{|x-y|^3}
            \omega(y) \, dy,
     \\
     u^z(x)
     &=-\frac{1}{4\pi}
     \int_{\mathbb{R}^3} 
     \frac{r_x \cos (\theta_x-\theta_y) -r_y}{|x-y|^3} \, \omega (y) \, dy,
     \end{aligned}
    \end{align}
where $(r_x,\theta_x,z_x)$ denotes cylindrical coordinates of $x$.
We  call $u^r/r$, that appears on the right-hand side of the equation, \emph{vortex stretching rate}. See our sign convention related to the Biot--Savart law in Appendix~\ref{sec:notation}.

\subsection{A Global Estimate in Lorentz Spaces}
For the axisymmetric flows without swirl, the vorticity equation \eqref{vor_eq} can be re-written as
    \begin{align}
    \label{eq:rel_vor_eq}
        \partial_t \left( \frac{\omega}{r} \right) 
    +
        u^r \partial_r \left( \frac{\omega}{r} \right)
    +
        u^z \partial_z \left( \frac{\omega}{r} \right)=0.
    \end{align}
Hence, the so-called relative vorticity $\omega/r$ is transported along particle trajectories. This partial suppression of vortex stretching makes the axisymmetric, no-swirl case the closest three-dimensional analogue of the two-dimensional case.

Motivated by Yudovich theory, one may attempt to establish a \emph{global} well-posedness result in this setting under a minimal additional assumption controlling vortex stretching. Since $\omega/r$ is conserved along the flow, it is natural to impose an assumption on the initial quantity $\omega_0/r$. From a scaling perspective, the condition $\omega_0/r \in L^{3,q}(\mathbb{R}^3)$ for $q \ge 1$ has the same homogeneity as the main assumption $\omega_0 \in L^\infty(\mathbb{R}^3)$. 

Moreover, like the $L^\infty$ condition in Yudovich theory, such a Lorentz-space assumption does not impose any explicit Sobolev or H\"older regularity on the vorticity. While the condition $\omega_0/r\in L^{3,q}$ in fact does enforce a mild constraint near the symmetry axis, it remains a borderline assumption that does not directly control vorticity gradients. (A detailed discussion about possible singular behaviors in this class can be found in the next section \ref{General_L_space}.)
Thus, the family of Lorentz spaces $L^{3,q}$ provides a natural class in which to seek a Yudovich-type global result. Now, the central question is whether such a condition is sufficient to control the vortex stretching rate $u^r/r$.

To answer this question, one observes that $\omega/r$ and $u^r/r$ behave as if they are related by the singular kernel $|x|^{-2}$. Since this kernel belongs to the Lorentz space $L^{\frac{3}{2},\infty}(\mathbb{R}^3)$, one obtains the estimate
\begin{align}
\label{Danchin_est}
\left\| \frac{u^r}{r} \right\|_{L^\infty(\mathbb{R}^3)}
\le C
\left\| \frac{\omega}{r} \right\|_{L^{3,1}(\mathbb{R}^3)}
\left\| \frac{1}{|x|^2} \right\|_{L^{\frac{3}{2},\infty}(\mathbb{R}^3)}
\le C
\left\| \frac{\omega}{r} \right\|_{L^{3,1}(\mathbb{R}^3)}.
\end{align}
Here, among the Lorentz spaces $L^{3,q}, q\geq 1$, the space $L^{3,1}$ arises naturally from duality, since $(L^{3,1})^* = L^{\frac{3}{2},\infty}$.

If $\omega_0/r \in L^{3,1}(\mathbb{R}^3)$, then the Lorentz norm of $\omega/r$ is conserved in time, and the estimate \eqref{Danchin_est} yields a uniform-in-time bound on the vortex stretching rate $u^r/r$. This observation explains the main heuristic behind Danchin’s global existence result. More precisely, Danchin proved global existence and uniqueness of solutions under the assumptions
\begin{align}
\label{Danchin_cond}
\omega_0 \in L^\infty(\mathbb{R}^3) \cap L^{3,1}(\mathbb{R}^3),
\qquad
\frac{\omega_0}{r} \in L^{3,1}(\mathbb{R}^3).
\end{align}
The additional condition $\omega_0 \in L^{3,1}(\mathbb{R}^3)$ ensures sufficient decay at infinity; see
\cite{Serfati94, Saint-Raymond94, ShirotaYanagisawa94, AbidiHmidiKeraani10} for related results in various settings.

In contrast to the two-dimensional case, the additional Lorentz condition $\omega_0/r\in L^{3,1}(\mathbb{R}^3)$ in \eqref{Danchin_cond} is needed to control the stretching mechanism.
However, the assumption $\omega_0/r \in L^{3,1}(\mathbb{R}^3)$ is weak in the sense that it has the same scaling as the primary boundedness condition $\omega_0 \in L^\infty(\mathbb{R}^3)$. Moreover, the main estimate \eqref{Danchin_est} fails if $L^{3,1}$ is replaced by $L^{3,q}$ for any $q>1$. For this reason, Danchin’s result is  regarded as a sharp extension of Yudovich theory to the three-dimensional axisymmetric no-swirl setting.

\subsection{Worst-Case Singular Geometry in \texorpdfstring{$L^{3,q}$}{L3q}}\label{General_L_space}

More precisely, Danchin's result is expected to be sharp in the sense that if one
instead considers Lorentz spaces $L^{3,q}(\mathbb{R}^3)$ for any $q>1$ in
\eqref{Danchin_cond}, then ill-posedness might occur; see
\cite[p.~32, footnote~3]{LimJeong25}.

To investigate this ill-posedness, let us consider a worst-case scenario in the
class
\[
\omega_0 \in L^\infty(\mathbb{R}^3),
\qquad
\frac{\omega_0}{r} \in L^{3,q}(\mathbb{R}^3),
\]
for a fixed $q>1$ (together with a decay condition at infinity).

First, any potentially singular behavior is strongest when it is concentrated on the $z$-axis. 
Otherwise,
it does not contribute to amplifying the vortex stretching in view of Danchin's
result. In addition, if the singularity is uniformly distributed along the $z$-axis, then the Lorentz
condition in \eqref{Danchin_cond} forces $\omega$ to be H\"older continuous along
the axis; in that case one has a classical local well-posedness theory (explained
below). Hence, it is natural  to consider a \emph{singularity concentrated at a point}
on the axis, say the origin $(r,z)=(0,0)$.

In addition, since $\omega/r$ is conserved along particle trajectories, outward radial motion of a fluid particle amplifies the vorticity. This suggests
considering flows that are \emph{odd in $z$} such that 
\[
\omega(r,z)\le 0 \qquad \text{when} \quad  z>0.
\]

In this case, $\omega_0/r$ should behave like a typical example of a singular
function that barely belongs to $L^{3,q}_{\mathrm{loc}}(\mathbb{R}^3)$:
\begin{align*}
        \frac{\omega_0}{r}
    \sim 
        -\frac{1}{|x|} 
        \frac{1}{\log ^\alpha (1/|x|)}\mathrm{sgn}(z),
    \qquad \text{that is,}\qquad  
        \omega_0 \sim -\frac{r}{|x|} 
        \frac{1}{\log^\alpha (1/|x|)}\mathrm{sgn}(z),
\end{align*}
where $\alpha q>1$.

Near the origin, one expects $u^r_0(r,z)\approx r\,\partial_r u^r_0(0,0)$ (using
that $u^r_0(0,0)=0$ and $\partial_z u^r_0 (0,0)=0$ due to the axisymmetry). Therefore, one can approximate
the initial vortex stretching rate $u^r_0/r$ via the Biot--Savart law as follows:
\begin{align}
\label{ini_vs}
\begin{aligned}
        \frac{u^r_0(r,z)}{r}
    \approx
        \partial_r u^r_0(0,0)
    &=
        C\iint \frac{r^2\, z}{(r^2+z^2)^{5/2}} \,\big(- \omega_0 (r,z) \big) \, dr\,dz
        \\
    &\sim
        C\iint_{r\sim |z|} \frac{r^2\, z}{(r^2+z^2)^{5/2}} \,\big(- \omega_0 (r,z) \big) \, dr\,dz \ge 0.
\end{aligned}
\end{align}
Here, the kernel carries factors $r^2$ and $z$, so the dominant contribution near
the origin comes from the region where $r$ and $|z|$ are comparable. Restricting
to $r\sim |z|$ therefore still captures the worst singular behavior.

Consequently, we focus on the region $r\sim |z|$, in which
\begin{align}
\label{worst_m}
        \omega_0 \sim -\frac{1}{\log^\alpha (1/|x|)} \mathrm{sgn}(z).
\end{align}
This provides a simple worst-case scenario in the class
$\omega_0/r\in L^{3,q}(\mathbb{R}^3)$.
Note that if $q>1$ is close to $1$, then one may take $\alpha<1$ arbitrarily close
to $1$ while still having $\alpha q>1$.

\subsection{Ill-Posedness Beyond Danchin's Regime}
\label{sharp_Danchin}
To study ill-posedness beyond Danchin's regime, we will focus on norm inflation or instantaneous blow-up and use the worst-case scenario
\eqref{worst_m} as a model for that.

One of the main motivations behind ill-posedness is that for any $q>1$ (equivalently,
for any $\alpha<1$ in \eqref{worst_m}), the initial vortex stretching rate $u^r_0/r$ near the singular point can become very large.

A first (too naive) heuristic is to insert \eqref{worst_m} into \eqref{ini_vs} and obtain that for small $x$,
\begin{align*}
    \frac{u_0^r(x)}{r}
    \sim C \int_{\substack{|y|\le C,\\r_y \sim |z_y|,\\z_y\geq 0}}\frac{-\omega_0(y)}{|y|^3}\,dy
    \sim C\int_{\substack{|y|\le C,\\ r_y\sim |z_y|}}\frac{1}{|y|^3\log^\alpha(1/|y|)}\,dy=\infty
\end{align*}
 for any $\alpha<1$ (indeed, this integral diverges even for $\alpha=1$). Hence the stretching rate becomes very large near the singular point.

However, this heuristic does not take into account the competition between the blow-up of vortex stretching rate $u^r/r$ and decay of the initial data $\omega_0$, which occurs as one approaches  the singular point $(r,z)=(0,0)$.
More precisely, the vorticity equation \eqref{vor_eq} can be re-written via the Lagrangian flow map $Y$ as
    \begin{align}
    \label{Cauchy_for}
          \omega(Y(r,z;t),t)
    &=
        \omega_0 (r,z) 
        \exp\left(\int_0^t \frac{u^r(Y(r,z;s),s)}{Y^r(r,z;s)} \, ds \right).
    \end{align}
As $|x|\to 0$, the initial data $\omega_0$ on the right-hand side decays to $0$ while the vortex stretching factor blows up.

To account for this competition, 
    \begin{align}
    \label{str_ini_vs2}
        \left| \frac{u^r_0(x)}{r} \right|
        \leq
        C\int_{\substack{r_y\sim |z_y|, \\ |y|\leq 1/2}} \frac{1}{|x-y|^2 \log ^{\alpha}(1/|y|)} dy\sim \log ^{1-\alpha} (1/|x|) 
    \end{align}
where $x$ is also in a region where $r\sim |z|$.
Indeed, we decompose the domain of integration into the outer region: $|y|\geq 2 |x|$ and the remaining region $|y|\leq 2|x|$ and  denote each integral by $I_{\mathrm{out}}, I_{\mathrm{rem}}$. Then
    \begin{align*}
        I_{\mathrm{out}}\sim 
        \int_{2|x|\leq |y|\leq1/2} \frac{1}{|y|^2} \frac{1}{\log^\alpha (1/|y|)}dy\sim \log ^{1-\alpha} \left( \frac{1}{|x|} \right), 
    \qquad
        I_{\mathrm{rem}} 
        \sim \frac{1}{\log^{\alpha} (1/|x|)}.
    \end{align*}
Hence the claim \eqref{str_ini_vs2} follows, and note that the main contribution arises from the outer region $|y|\geq 2 |x|.$

Moreover, if $\omega_0$ is odd in $z$ and $\omega_0\leq0$ for $z>0$, then 
the dominant contribution to the Biot--Savart integral has a definite sign. In particular, the outer-region contribution yields  a genuine lower bound for $u_0^r (x)/r$ of order $\log^{1-\alpha}(1/|x|)$.

By contrast, in the borderline case $\alpha=1$ one instead has $\frac{u^r_0(x)}{r}\sim \log\log\!\Big(\frac1{|x|}\Big).$

With this in mind, for very short times, the trajectory has not moved far, and we may heuristically
approximate the stretching factor by its initial value, obtaining from \eqref{Cauchy_for} that for $z>0$,
\begin{align*}
    \omega(Y(r,z;t),t)
    &\sim
    \omega_0(r,z)\exp\!\left(Ct\,\frac{u^r_0(x)}{r}\right)
    \sim
    -\frac{1}{\log^\alpha\!\frac1{|x|}}
    \exp\!\left(Ct\,\log^{1-\alpha}\!\frac1{|x|}\right)
    \to -\infty
    \quad \text{as }|x|\to 0,
\end{align*}
for any fixed $t>0$ provided $\alpha<1$.

Thus the initial vortex stretching rate is strong enough to overwhelm the
initial decay of $\omega_0$ near the singular point for every $\alpha<1$ (i.e.\ for every
$q>1$). In contrast, when $\alpha=1$ the growth of $\frac{u^r_0(x)}{r}$ is only of size
$\log\log \frac1{|x|}$, and the above short-time heuristic does not force instantaneous
blow-up  of $\omega$.

\subsection{Geometric Self-Slowdown Mechanism}
\label{geo_ss_mc}

Although the approximation by the initial stretching rate $u_0^r/r$ in
Subsection~\ref{sharp_Danchin} suggests ill-posedness, there is another
important mechanism that competes with strong vortex stretching and can
significantly weaken it over time. We refer to this effect as the
\emph{geometric self-slowdown mechanism}. 
This is also referred to as squeezing of vorticity; see \cite[p.~8]{KimJeong22}.

We begin with a qualitative explanation that does not rely on any detailed
kernel computation. Consider a portion of vorticity concentrated in a region
where
$
r \sim R, \,\, z \sim \pm H,$ more precisely $|r-R|\leq cR, |z\pm H|\leq cH $. (In a general distribution of vorticity, we decompose the vorticity into small regions like this.)
As the flow evolves,
vortex stretching tends to increase the radial scale $R$ of this region.
Since the flow is incompressible, this radial expansion must be accompanied
by a collapse of the vorticity support toward the mid-plane $z=0$, so that
the vertical scale $H$ decreases in time. Consequently, the aspect ratio
$H/R$ becomes smaller as vortex stretching proceeds.

The key observation is that the Biot--Savart law governing the vortex
stretching rate $u^r/r$ is essentially homogeneous and introduces no preferred
length scale. As a result, the contribution of a vortex packet to the
stretching rate $u^r/r$ depends primarily on the \emph{shape} of the vorticity
distribution rather than its absolute size. In particular, for a vortex
packet that becomes increasingly flattened in the $z$-direction, the
contribution to the stretching rate is suppressed as $H/R$ decreases.

This suppression has a simple geometric origin. Since the relevant kernel is
odd in $z$, the leading contribution is controlled by a vertical moment, $\int z\, \omega \, dz$, of
the vorticity distribution. As the support collapses toward $z=0$, this
moment decreases, and so does the induced vortex stretching. Thus, vortex
stretching deforms the vorticity support in a way that weakens further
stretching.

To illustrate this slowdown mechanism more concretely, we now present a
representative heuristic computation based on the structure of the
axisymmetric Biot--Savart law. Suppose that $\omega$ is essentially supported
in a region where $r\sim R$ and $z\sim \pm H$, and consider a point $x$ located
much closer to the origin than to the support of $\omega$. Using \eqref{ini_vs}, we estimate
\begin{align*}
    \frac{u^r(x)}{r}
    &\sim
    \|\omega\|_{L^\infty}
    \iint_{ \substack{r\sim R,\\ z\sim H}}
    \frac{r^2 z}{(r^2+z^2)^{5/2}} \, dr\,dz.
\end{align*}
Since $r\sim R$ on the support of $\omega$, the kernel contributes a factor of
order $R^{-2}$ as $H/R $ becomes small. 
Then we obtain
\begin{align*}
    \frac{u^r(x)}{r}
    &\sim
    \frac{\|\omega\|_{L^\infty}}{R^2}
    \int_{z\sim H} z \, dz
    \sim
    \|\omega\|_{L^\infty}\,\frac{H^2}{R^2}.
\end{align*}
This computation shows explicitly that the stretching rate decreases as the
aspect ratio $H/R$ becomes small. 

In summary, strong initial vortex stretching does not by itself guarantee
ill-posedness. One must instead analyze the competition between the initial
amplification of the stretching rate $u^r/r$ and its subsequent suppression due to
the self-slowdown mechanism.

\subsection{Kim--Jeong's Ill-Posedness for Large \texorpdfstring{$q$}{q}}

The heuristic analysis in Subsection~\ref{sharp_Danchin} shows that, for the
worst-case scenario \eqref{worst_m}, the dominant contribution to the initial vortex
stretching rate arises from an \emph{outer region} of the Biot--Savart
integral. Kim and Jeong \cite{KimJeong22} exploit this same mechanism in a
multiscale setting and show that, for sufficiently large $q\gg1$, the
outer-region stretching can be made effective on suitable time scales before
the slowdown mechanism plays a role, leading to norm inflation.

Motivated by \eqref{worst_m}, Kim--Jeong replace the continuous logarithmic
profile by a dyadic superposition of localized vortex rings. More precisely,
they consider initial data of the form
\begin{align}
\label{KJdata}
    \omega_0^{(m)}(r,z)
    =
    \sum_{k=n_0}^{m} \omega_{0,k}(r,z),
    \qquad
    \omega_{0,k}(r,z)
    =
    \frac{1}{k^\alpha}
    \phi\!\left( \frac{r}{(1/8)^{k-1}}, \frac{z}{(1/8)^k} \right),
    \quad \frac{1}{q}<\alpha<1.
\end{align}
where $\phi$ is a fixed profile that is odd in $z$, nonpositive for $z>0$, and
supported near $(r,z)=(1,\pm1)$. Each $\omega_{0,k}$ represents a vortex ring
localized at spatial scale $1/8^k$, and the amplitude $k^{-\alpha}$ reflects the
logarithmic decay in \eqref{worst_m}. In particular, $\omega_0^{(m)}/r \in
L^{3,q}(\mathbb{R}^3)$ since $1/q<\alpha<1$.  See our sign convention related to the Biot--Savart law in Appendix~\ref{sec:notation}.

Kim--Jeong's key lemma turns the outer-region mechanism in the Biot--Savart law into an estimate usable for short times after $t=0$: it writes the stretching rate $u^r(x,t)/r$ as a sign-definite outer-region
integral up to controllable errors. When $x$ is associated with
the $k$-th ring and the rings remain well-separated, the main term depends essentially
only on vorticity at larger scales $n<k$.

A further issue in using the strong initial stretching is that the outer-region contribution could, in principle, weaken
due to geometric flattening (self-slowdown). Kim--Jeong  overcame this difficulty by combining
the key lemma with a scale-dependent \emph{stability} statement for the outer rings:
on a scale-dependent time interval, each outer ring remains localized in a
comparable annulus, so that the contributions by the outer rings stay
comparable to its initial value. Summing these stable outer-ring contributions yields a
robust lower bound on the time-integrated stretching rate felt by the $k$-th ring.

The
assumption that $q$ is large (equivalently, that $\alpha$ is small) is crucial here,
since it ensures that the amplitudes of the outer rings decay slowly with scale, allowing
the outer-ring contributions to accumulate before the self-slowdown mechanism becomes
effective.

Although the construction replaces the continuous worst-case profile \eqref{worst_m} by a
dyadic superposition of localized rings, this still captures the strong initial vortex stretching rate $u^r_0/r$.
 In addition, the dyadic superposition provides a \emph{scale-by-scale
decomposition} of the outer-region contribution, and the contribution of each outer ring can be tracked (for a short, scale-dependent
time) before geometric self-slowdown degrades its effect.

Kim–Jeong show that outer-region forcing can produce norm inflation when 
$q$ is sufficiently large because the dyadic amplitudes decay slowly enough for cumulative stretching to build up before geometric flattening suppresses the Biot--Savart coefficient. The main difficulty in the present paper is to recover this mechanism for every 
$q>1$, where the amplitudes decay much faster. Our key new idea is to allow a flexible conical geometry for the rings and to localize the relevant Biot--Savart kernel, which isolates a monotone productive window and yields an effective ODE cascade even near the endpoint 
$q=1$.

\subsection{Initial Data and Main Results}

Motivated by \eqref{KJdata}, we use generalized data in the following form:
    \begin{align}
\label{KJdata2}
    \omega_0^{(m)}(r,z)
    =
    \sum_{k=1}^{m} \omega_{0,k}(r,z),
    \qquad
    \omega_{0,k}(r,z)
    =
    \frac{\varepsilon}{k^\alpha}
    \phi\!\left( \frac{r}{d^k}, \frac{z}{d^k} \right),
    \quad d:=10^{-2}
\end{align}
where $\phi$ is a fixed smooth profile satisfying
\begin{align}
\label{cond_phi}
    \begin{cases}
    \text{(i)}\ \phi \text{ is odd in } z,\\
    \text{(ii)}\ \phi \le 0 \text{ for } z>0,\\
    \text{(iii)}\ \|\phi\|_{L^\infty(\mathbb{R}^3)}\geq 1,\\
    \text{(iv)}\ 
        \mathrm{supp} \phi\cap\{z>0\}\ \subset\
\big[(1-\eta)r_0,(1+\eta)r_0\big]\times \big[(1-\eta)z_0,(1+\eta )z_0\big],
    \end{cases}
\end{align}
where
    \begin{align*}
        r_0:=1, \quad \eta:=1/4, \quad L:=z_0/r_0>0.
    \end{align*}
 We call $L$ the \emph{cone-slope} parameter, $\alpha$ the \emph{logarithmic decay exponent}, and $\eta$ the \emph{localization} parameter. The specific choice of $d, r_0, \eta$ is not necessary. 

A schematic evolution of two consecutive vortex-ring supports is drawn in Figure~\ref{fig:sch_evol}.

\begin{figure}[t]
\centering
\begin{tikzpicture}[x=1cm,y=1cm,>=Latex, line cap=round, line join=round]

% ---------------------------
% Parameters
% ---------------------------
\pgfmathsetmacro{\xmax}{12.6}
\pgfmathsetmacro{\ymax}{6.6}
\pgfmathsetmacro{\Lcone}{2.30}

% Initial supports (same cone z = L r)
\pgfmathsetmacro{\rk}{2.20}
\pgfmathsetmacro{\zk}{\Lcone*\rk}
\pgfmathsetmacro{\radk}{0.92}

\pgfmathsetmacro{\rkp}{0.85}
\pgfmathsetmacro{\zkp}{\Lcone*\rkp}
\pgfmathsetmacro{\radkp}{0.24}

% Later-time supports (common time t>0)
\pgfmathsetmacro{\Rk}{9.05}
\pgfmathsetmacro{\Hk}{2.60}
\pgfmathsetmacro{\ak}{2.70}
\pgfmathsetmacro{\bk}{0.28}

\pgfmathsetmacro{\Rkp}{4.80}
\pgfmathsetmacro{\Hkp}{1.05}
\pgfmathsetmacro{\akp}{1.00}
\pgfmathsetmacro{\bkp}{0.24}

% Arrow attachment angles
\pgfmathsetmacro{\thetaTopStart}{-28}
\pgfmathsetmacro{\thetaTopEnd}{168}
\pgfmathsetmacro{\thetaBotStart}{-25}
\pgfmathsetmacro{\thetaBotEnd}{165}

% ---------------------------
% Styles
% ---------------------------
\tikzset{
  axis/.style={very thick,-{Latex[length=3mm,width=2.2mm]}},
  ring/.style={thick},
  transport/.style={very thick,-{Latex[length=3mm,width=2.2mm]}},
  guideDotted/.style={densely dotted, line width=0.8pt},
  group/.style={densely dashed, rounded corners=4pt, line width=0.7pt},
  annot/.style={font=\small}
}

% ---------------------------
% Axes
% ---------------------------
\draw[axis] (0,0) -- (\xmax,0) node[annot,below right] {$r$};
\draw[axis] (0,0) -- (0,\ymax) node[annot,above left] {$z$};

% ---------------------------
% Cone guide for initial centers
% ---------------------------
\draw[guideDotted] (0,0) -- ({2.55},{\Lcone*2.55});

% ---------------------------
% Initial supports
% ---------------------------
\draw[ring] (\rk,\zk) circle (\radk);
\fill (\rk,\zk) circle (1.15pt);

\draw[ring] (\rkp,\zkp) circle (\radkp);
\fill (\rkp,\zkp) circle (1.05pt);

\node[annot,anchor=south west] at (0.88,6)
  {$\operatorname{supp}\,\omega_k(\cdot,0)$};
\node[annot,anchor=west] at (1.22,2.28)
  {$\operatorname{supp}\,\omega_{k+1}(\cdot,0)$};

\node[annot,anchor=west] at (3.1,5.12)
  {center$= (r_0 d^k,\,z_0 d^k)$};

% ---------------------------
% Later-time supports
% ---------------------------
\draw[ring] (\Rk,\Hk) ellipse ({\ak} and {\bk});
\fill (\Rk,\Hk) circle (1.15pt);
\node[annot,anchor=west] at (\Rk+\ak+0.35,\Hk-0.02)
  {$\operatorname{supp}\,\omega_k(\cdot,t)$};

\draw[ring] (\Rkp,\Hkp) ellipse ({\akp} and {\bkp});
\fill (\Rkp,\Hkp) circle (1.05pt);
\node[annot,anchor=west] at (\Rkp+\akp+0.35,\Hkp-0.02)
  {$\operatorname{supp}\,\omega_{k+1}(\cdot,t)$};

% Local size annotations for the k-th later support
\draw[<->,line width=0.9pt] (\Rk-\ak,\Hk-0.72)
  -- node[annot,below=4pt] {$\sim R_k$} (\Rk+\ak,\Hk-0.72);

\draw[<->,line width=0.9pt] (\Rk-\ak-0.55,\Hk-\bk)
  -- node[annot,left=5pt] {$\sim H_k$} (\Rk-\ak-0.55,\Hk+\bk);

\node[annot,anchor=east] at (\Rk+2,\Hk+0.52)
  {center $\sim (r_0\, R_k,z_0 \, H_k)$};

% ---------------------------
% Transport arrows
% ---------------------------
% Top arrow
\pgfmathsetmacro{\xtS}{\rk + \radk*cos(\thetaTopStart)}
\pgfmathsetmacro{\ytS}{\zk + \radk*sin(\thetaTopStart)}
\pgfmathsetmacro{\xtE}{\Rk + \ak*cos(\thetaTopEnd)}
\pgfmathsetmacro{\ytE}{\Hk + \bk*sin(\thetaTopEnd)}
\pgfmathsetmacro{\aTop}{(\ytS*\xtS - \ytE*\xtE)/(\ytS-\ytE)}
\pgfmathsetmacro{\CTop}{\ytS*(\xtS-\aTop)}

\draw[transport]
  plot[smooth,domain=\xtS:\xtE,samples=120]
    (\x,{\CTop/(\x-\aTop)});

% Bottom arrow
\pgfmathsetmacro{\xbS}{\rkp + \radkp*cos(\thetaBotStart)}
\pgfmathsetmacro{\ybS}{\zkp + \radkp*sin(\thetaBotStart)}
\pgfmathsetmacro{\xbE}{\Rkp + \akp*cos(\thetaBotEnd)}
\pgfmathsetmacro{\ybE}{\Hkp + \bkp*sin(\thetaBotEnd)}
\pgfmathsetmacro{\aBot}{(\ybS*\xbS - \ybE*\xbE)/(\ybS-\ybE)}
\pgfmathsetmacro{\CBot}{\ybS*(\xbS-\aBot)}

\draw[transport]
  plot[smooth,domain=\xbS:\xbE,samples=120]
    (\x,{\CBot/(\x-\aBot)});

\end{tikzpicture}
\caption{Schematic evolution of two consecutive vortex-ring supports in the upper half-plane. The left pair represents the initial supports $\operatorname{supp}\,\omega_k(\cdot,0)$ and $\operatorname{supp}\,\omega_{k+1}(\cdot,0)$, whose centers lie on the same cone $z=(z_0/r_0)r$. The right pair represents the corresponding supports at a common later time $t>0$: the $k$-th ring has moved outward and flattened into an ellipse with center $\sim (r_0 \, R_k,z_0 \, H_k)$, while the $(k+1)$-th ring remains in a smaller inner annulus. The picture is schematic and not to scale; its purpose is only to indicate the common initial cone, the qualitative outward transport and flattening, and the outer/inner annular ordering at later time. In particular, the arrows represent qualitative transport and deformation rather than exact trajectories, and the relative placement of supports belonging to different indices and different times is not intended to encode any further quantitative relation. Only the upper half-plane is shown; the reflected lower half-plane is omitted.}
\label{fig:sch_evol}
\end{figure}

For more details on the choice of parameters $r_0, \eta, d, \ldots,$ see the list of notation at the end of the paper.

First of all, similar to Kim--Jeong's data \eqref{KJdata}, one can show our data \eqref{KJdata2} with \eqref{cond_phi}  also satisfies $\omega_0^{(m)}/r\in L^{3,q}(\mathbb{R}^3), q>1,$ for $\alpha\in (1/q,1)$.

Compared with \eqref{KJdata}, this class of data \eqref{KJdata2} is  more general because the geometry of $\mathrm{supp} \, \phi$ is more flexible. Especially, the cone slope parameter $L$ is not fixed for \eqref{KJdata2}. This difference plays a critical role in our result. The other differences do not seem to be essential. In particular, the condition on the $L^\infty$ norm of $\phi$ and the specific choice of $r_0, \eta,d $ are made just for convenience. See \S~\ref{sec:out_paper_not} for more details about this choice.

Using this data \eqref{KJdata2} and tuning the cone-slope parameter $L$, we prove norm inflation as follows:

\begin{theorem}[Norm inflation]
\label{main_thm1}
    Fix any $q>1$. For any $\tilde \varepsilon, \delta, A>0$, there exists an axisymmetric initial datum $\omega_0 \in C_c^\infty (\mathbb{R}^3)$ such that 
        \begin{align*}
           \| \omega_0 \|_{L^\infty \cap L^{1}(\mathbb{R}^3)} 
        +
            \left\|
                \frac{\omega_0}{r}
            \right\|_{L^{3,q}(\mathbb{R}^3) } 
            \leq  \tilde \varepsilon
        \end{align*}
    and the unique global-in-time solution $\omega (x,t)$ to the vorticity equation \eqref{vor_eq} with the initial condition $\omega|_{t=0}=\omega_0$ satisfies
        \begin{align*}
            \sup _{0\leq t \leq \delta} 
            \|\omega (\cdot,t) \|_{L^\infty (\mathbb{R}^3)} \geq A.
        \end{align*}
\end{theorem}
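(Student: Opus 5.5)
We take the datum $\omega_0=\omega_0^{(m)}$ from \eqref{KJdata2}, with $\alpha\in(1/q,1)$ fixed, the cone slope $L$ to be tuned, and the amplitude $\varepsilon$ and number of rings $m$ chosen at the end. The size condition is routine. Since $\phi$ is supported in the conical box \eqref{cond_phi}(iv) and the rings sit at separated dyadic scales $d^k$, we get
\[
\|\omega_0\|_{L^\infty}\lesssim\varepsilon,\qquad \|\omega_0\|_{L^1}\lesssim\varepsilon\sum_k d^{3k}.
\]
Moreover $\omega_0/r$ has distribution function comparable to that of $\varepsilon|x|^{-1}\log^{-\alpha}(1/|x|)$. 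Hence
\[
\|\omega_0/r\|_{L^{3,q}}^q\lesssim\varepsilon^q\sum_k k^{-\alpha q}<\infty
\]
uniformly in $m$, because $\alpha q>1$. Choosing $\varepsilon$ small gives the bound $\tilde\varepsilon$ for every $m$. Global existence and uniqueness for this smooth compactly supported no-swirl datum is classical (Danchin/Ukhovskii--Yudovich). Everything therefore reduces to showing that $\sup_{t\le\delta}\|\omega(t)\|_{L^\infty}\ge A$ once $m$ is large.

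We track each ring through its Lagrangian image, $\omega_k(t)=\omega_{0,k}\circ Y^{-1}$. Its effective radius $R_k(t)$ and height $H_k(t)$ are defined via the center of the support, normalized to $R_k(0)=H_k(0)=d^k$. Its amplitude is $a_k(t)\approx (\varepsilon k^{-\alpha})R_k(t)/d^k$, by transport of $\omega/r$ in \eqref{eq:rel_vor_eq}. Incompressibility, with the ring approximately a product of annuli, gives $R_k^2H_k\approx$ const. The core analytic input is a Kim--Jeong type key lemma adapted to the conical geometry. As long as the rings stay ordered and separated, which is a bootstrap hypothesis, for $x$ in the $k$-th ring we want
\[
\frac{u^r}{r}(x,t)=\sum_{n<k}c(L\,H_n/R_n)\,a_n(t)+O\Big(\sum_{n<k}a_n\,\tfrac{d^{k}}{R_n}\Big)+O(a_k).
\]
We establish this by localizing the profile so that the kernel $r^2z/(r^2+z^2)^{5/2}$ is frozen at the ring center; this makes the geometric coefficient $c(\lambda)$ explicit, with $c(\lambda)\sim\lambda^2/(1+\lambda^2)^{5/2}$. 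This is the source of the depletion: $c$ is increasing on a window $\lambda\le\lambda_*$, and we choose $L$ so that the initial ratio $L$ lies in this productive window. As $H_n/R_n$ collapses, the coefficient then decreases in a controlled monotone way instead of jumping out of range.

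This produces an ODE cascade
\[
\frac{d}{dt}\log R_k\simeq\sum_{n<k}c\!\left(L\,\frac{H_n}{R_n}\right)a_n,\qquad H_n\simeq d^{3n}R_n^{-2},\qquad a_n\simeq\varepsilon n^{-\alpha}\frac{R_n}{d^n}.
\]
The next step is to find an exact cascade identity. Writing $\rho_n=\log(R_n/d^n)$, the forcing of ring $k$ minus that of ring $k-1$ equals the contribution of ring $k-1$ alone. Thus $\dot\rho_k-\dot\rho_{k-1}\simeq g(\rho_{k-1})\,\varepsilon (k-1)^{-\alpha}$, where $g(\rho)=e^{\rho}c(Le^{-3\rho})$ is bounded below for $\rho\in[0,\rho_*]$. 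Integrating and summing over $n<k$ gives
\[
\int_0^t\frac{u^r}{r}\,ds\gtrsim\varepsilon\, t\sum_{n<k}n^{-\alpha}\sim\varepsilon t\,k^{1-\alpha},
\]
as long as the outer rings have not exited the window. Inner rings are still in the window, whereas those that have exited have already amplified by $e^{\rho_*}$. Either way we get a lower bound $\rho_k(t)\gtrsim\min(\rho_*,\varepsilon t k^{1-\alpha})$ on cumulative stretching. Choosing $k$ with $\varepsilon\delta k^{1-\alpha}\gg1$ and iterating across scales gives $a_k\ge A$ for some $k\le m$ and some $t\le\delta$. A single window amplification $e^{\rho_*}$ only gives a factor, so we need the window to be reusable. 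Since $\rho_*$ grows as $L\to\infty$ (the window is $Le^{-3\rho}\lesssim1$, i.e.\ $\rho\lesssim\frac13\log L$), we pick $L$ large depending on $A$ and $\varepsilon$, and then $m$ large.

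The main obstacle is the key lemma with rigorous error control under the bootstrap. The leading term $c(LH_n/R_n)a_n$ must dominate the errors, which are the self-interaction $O(a_k)$, the inner-ring contributions, and profile deformation that spoils the frozen-kernel approximation. This must hold while the rings deform substantially, with aspect ratio changing by the factor $L$. We would first try to control the deformation using conservation of $\omega/r$ and a Lipschitz bound on the flow within each dyadic shell, with an estimate of the form $\|\nabla u\|\lesssim\varepsilon\log(1/|x|)^{1-\alpha}$, and to close the ordering bootstrap using the $d=10^{-2}$ scale separation.
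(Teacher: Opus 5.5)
There is a genuine gap in the amplification step. Your bound $\rho_k(t)\gtrsim\min(\rho_*,\varepsilon t k^{1-\alpha})$ saturates at $\rho_*\approx\frac13\log L$. It follows ring $k$ only until that ring leaves the productive window, so it certifies a gain of at most $e^{\rho_*}\approx(L/\zeta_*)^{1/3}$, independent of $k$. Turning $a_k(0)=\varepsilon k^{-\alpha}$ into $A$, however, needs the $k$-dependent gain $Ak^{\alpha}/\varepsilon$.

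Taking $L$ large depending on $A$ does not repair this, for two reasons.
\begin{enumerate}
\item The size of the datum depends on $L$. For a profile of fixed shape in the box $[(1-\eta),(1+\eta)]\times[(1-\eta)L,(1+\eta)L]$, one has $\|\phi/r\|_{L^{3,q}}\sim L^{1/3}$ and $\|\phi\|_{L^1}\sim L$, so $\varepsilon$ cannot be fixed before $L$. Worse, a ring whose cone variable $\zeta_k=LH_k/R_k$ has only come down to $O(1)$ has amplitude $x_k^0(L/\zeta_k)^{1/3}\approx x_k^0L^{1/3}$. That is comparable to its conserved $L^{3,q}$ norm, so inflation requires driving $\zeta_k$ far \emph{below} the window, which is exactly the regime your argument does not control.
\item The initial forcing per ring decays like $L^{-3}$ as $L\to\infty$. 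This forcing is the kernel $r^2z/(r^2+z^2)^{5/2}$ integrated against $-\phi$ near $z\approx Lr$. Crossing the window by time $\delta$ therefore forces $k^{1-\alpha}\gtrsim L^{3}\log L/(\varepsilon\delta)$. The required gain $Ak^{\alpha}/\varepsilon$ then outruns $L^{1/3}$ unless $\alpha$ is small, which is essentially the Kim--Jeong large-$q$ regime.
\end{enumerate}

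Your first description of the window is also reversed. Since $a_n\propto\lambda^{-1/3}$, the per-ring contribution $c(\lambda)a_n$ is proportional to $\Psi(\lambda)=\lambda^{5/3}(1+\lambda^2)^{-5/2}$, which grows in time only for $\lambda\ge 1/\sqrt2$. The range where $c$ is increasing is the depletion regime. Your later formula $g(\rho)=L^{1/3}\Psi(Le^{-3\rho})$ is right, but the window is $Le^{-3\rho}\gtrsim 1$, not $\lesssim 1$.

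The missing idea is the front-migration bound. You use monotonicity of the productive contributions only as $b_n(t)\ge b_n(0)$. That gives the first stage: the last ring reaches the threshold $\zeta_\eta$ within time $\lesssim m^{\alpha-1}$. The decisive use is the other direction, $B_n(t)=\int_0^t b_n\le\Theta_\mu t\,b_n(t)$ for rings still in the window. Combine it with the exact cascade identity $\zeta_{J+1}(t)=L\exp(-3\sum_{n\le J}B_n(t))$. Once $\zeta_m(t)<\zeta_\eta$, the front $J(t)$ satisfies $\sum_{n\le J}B_n(t)\ge\frac13\log(L/\zeta_\eta)$, hence $S_{m-1}(t)\ge\frac{1}{3\Theta_\mu t}\log(L/\zeta_\eta)$.

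This $1/t$ forcing comes from the outer rings that are still productive, and it keeps stretching ring $m$ after ring $m$ has left the window. It gives $x_m(t)\ge x_m(t_{\zeta_\eta})(t/t_{\zeta_\eta})^{1/\gamma_q}$. Consequently $T_B\lesssim\bar A^{\gamma_q}m^{\alpha-1+\gamma_q\alpha}\to 0$ for $\alpha<1/(1+\gamma_q)$, where $\gamma_q=3\Theta_\mu/\log(L_q/\zeta_\eta)$. This is why $L=L_q$ depends only on $q$, not on $A$.

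The same algebraic decay is what closes the bootstrap. The residual rescaled velocity $V_k$, including the $\log(1/\Gamma_k)\lesssim\log m$ self-interaction loss in $u^z/z$, has size $\bar A\log m$. It is harmless because $\bar A\log m\,T_B\to0$, not because the leading term dominates it pointwise. On a fixed interval $[0,\delta]$, with a Lipschitz bound of the type $\varepsilon\log^{1-\alpha}(1/|x|)$, no such smallness is available.
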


\begin{remark}
To elaborate on how the cone-slope parameter $L$ plays a role in our proof for norm inflation (Theorem~\ref{main_thm1}) and thus for instantaneous blow-up (Theorem~\ref{thm:instant_blow-up} below), 
given any $q>1$, we choose a large cone-slope parameter $L=L_q>0$ such that
    \begin{align}
    \label{eq:cond_L5}
        \frac{1}{q} 
        <
        \frac{\log (L_q/\zeta_\eta)}
        {3\Theta_\mu + \log (L_q/\zeta_\eta)} (<1),
    \quad
        \Theta_\mu:= \left( \frac{1+\mu}{1-\mu} \right)^8,
    \quad \mu:= \frac{1}{20},
    \quad
        \zeta_\eta  := \frac{1+\eta}{\sqrt{2}(1-\eta)}
    \end{align}
Then we can establish norm inflation for all $\alpha$ such that 
    \begin{align}
    \label{eq:norm_inflation_range_rem}
        \frac{1}{q} 
        <\alpha<
        \frac{\log (L_q/\zeta_\eta)}
        {3\Theta_\mu + \log (L_q/\zeta_\eta )} (<1).
    \end{align}
See Proposition~\ref{prop:norm_inflation_varcoef} and Remark~\ref{rem:role_L} for more details.

In fact, for a fixed cone-slope parameter $L$, if one can extend our norm inflation range \eqref{eq:norm_inflation_range_rem} to the full range $1/q<\alpha<1$, then one does not need to adjust $L$ to prove norm inflation for any $q>1$. However, such extension of our norm inflation range is impossible. Our proof also shows that for a fixed $L$, one can rule out norm inflation when $\alpha$ is close to $1$. See Appendix~\ref{frozen_pro_ODE_appdx} for a proof. Therefore, tuning the geometry of $\supp \phi$, or more precisely the cone-slope parameter $L$, is necessary in order to prove norm inflation for any $q>1$.

Regarding the cone slope, it is worth comparing the initial cone slope with the slope of the $m$-th ring center at the norm inflation time. From Figure~\ref{fig:sch_evol}, one can find the cone slope of the $m$-th ring center:
    \begin{align*}
        L_q\frac{H_m(t)}{R_m(t)}=L_q \frac{1}{\tilde x_m(t)^3 }.
    \end{align*}
At the norm-inflation time, $\tilde x_m(t)\sim Am^\alpha /\varepsilon,$ and hence the cone slope at the norm-inflation time is of the following order
    \begin{align*}
        L_q \left( \frac{\varepsilon}{Am^\alpha} \right)^3 \ll 1
    \end{align*}
for large $m$.
Thus, although the initial profile is placed on a large cone \(z_0/r_0\sim L_q\),
the \(m\)-th ring is strongly flattened by the time it produces norm inflation.
\end{remark}

Now to state our instantaneous blow-up result, let us first define a distributional solution and a Yudovich-type weak solution of the initial value problem:
    \begin{align}
    \label{eq:IVP_rel_vor}
    \left\{
    \begin{aligned}
        \partial_t \left(\frac{\omega}{r}\right)
        + u^r \, \partial_r  \left(\frac{\omega}{r}\right)
        +u^z \, \partial_z  \left(\frac{\omega}{r}\right)&=0, 
        \\
        \omega(x,0)&=\omega_0(x).
    \end{aligned}
    \right.
    \end{align}
\begin{definition}
\label{def:dist_sol_rel_vor}
Let \(T\in (0,\infty]\) and let \(\omega_0=\omega_0(r,z)\) be axisymmetric such that $\omega_0/r\in L^1_{\mathrm{loc}} (\mathbb{R}^3)$. 
\begin{itemize}
    \item 
We say that a pair
\((u,\omega)\) is a \emph{distributional solution} of the initial value problem 
\eqref{eq:IVP_rel_vor} on \((0,T)\)  if
\[
u\in L^1_{\mathrm{loc}}\bigl((0,T)\times\mathbb{R}^3\bigr),
\qquad
\frac{\omega}{r}\in L^1_{\mathrm{loc}}\bigl([0,T)\times\mathbb{R}^3\bigr),
\qquad 
u\,\frac{\omega}{r}\in L^1_{\mathrm{loc}}\bigl([0,T)\times\mathbb{R}^3\bigr),
\]
\(u=u^r(r,z,t)e_r+u^z(r,z,t)e_z\) is divergence-free and axisymmetric without swirl,
\[
\operatorname{curl}u=\omega e_\theta
\qquad\text{in }\mathcal{D}'\bigl((0,T)\times\mathbb{R}^3\bigr),
\]
and
\begin{equation}
\int_0^T\int_{\mathbb{R}^3}\frac{\omega(x,t)}{r}
\bigl(\partial_t\psi(x,t)+u(x,t)\cdot\nabla_x\psi(x,t)\bigr)\,dx\,dt
+\int_{\mathbb{R}^3}\frac{\omega_0(x)}{r}\psi(x,0)\,dx
=0
\label{eq:transport_distributional}
\end{equation}
for every \(\psi\in C_c^\infty(\mathbb{R}^3\times[0,T))\).
\item And we say that a distributional solution $(u,\omega)$ is a \emph{Yudovich-type weak solution} if $\omega$ additionally satisfies $\omega\in L^\infty (0,T;L^\infty \cap L^1(\mathbb{R}^3))$, and the velocity field $u$ is associated with $\omega$ via the Biot--Savart law~\eqref{eq:axis_BS_law}.
\end{itemize}

\end{definition}

For the definition of a distributional solution, we only impose a minimal regularity requirement to make sense of the weak formulation~\eqref{eq:transport_distributional}. 
On the other hand, we include boundedness and decay in the definition of a Yudovich-type weak solution, which aligns with a convention as in \cite[Definition 8.1]{MajdaBertozzi01}. 

In what follows, we refer to these as distributional and Yudovich-type weak solutions of \eqref{eq:IVP_rel_vor}. We do not explicitly include axisymmetry or the no-swirl condition in the terminology, since these properties are already encoded in the formulation of \eqref{eq:IVP_rel_vor}. In particular, any solution in the above sense is necessarily axisymmetric without swirl.

Using our data~\eqref{KJdata2} with $m=\infty$ (infinitely many rings), we prove instantaneous blow-up, which consists of two types of results: non-existence of a Yudovich-type weak solution, and existence of a distributional solution that blows up instantaneously.

\begin{theorem}[Instantaneous blow-up]
\label{thm:instant_blow-up}
For any \(\widetilde{\varepsilon}>0\) and \(q>1\), there exists an axisymmetric initial datum
\(\omega_0\in C^\infty(\mathbb{R}^3\setminus \{0\})\) with compact support such that
\[
\|\omega_0\|_{L^\infty\cap L^1(\mathbb{R}^3)}
+\left\|\frac{\omega_0}{r}\right\|_{L^{3,q}(\mathbb{R}^3)}
\le \widetilde{\varepsilon},
\]
and the following  properties hold: 
\begin{enumerate}
\item 
For any $T>0$, there does not exist a Yudovich-type weak solution $(u,\omega)$ on $(0,T)$ of \eqref{eq:IVP_rel_vor} with the initial data $\omega_0$
in the sense of Definition~\ref{def:dist_sol_rel_vor};
\item 
There exists a distributional solution $(u,\omega)$ in $(0,\infty)$ of \eqref{eq:IVP_rel_vor} with the same initial data $\omega_0$ in the sense of Definition~\ref{def:dist_sol_rel_vor} such that
    \begin{align}
    \label{eq:instant_blow_up8}
        \operatorname*{ess \,sup}_{0<t<T}
        \|\omega(t)\|_{L^{\infty}(\mathbb{R}^3)}
        =+\infty
        \qquad 
        \text{for any }T\in (0,\infty).
    \end{align}
\end{enumerate}
\end{theorem}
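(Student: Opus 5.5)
The plan is to build the blow-up datum from the infinite-ring version of \eqref{KJdata2}, that is, $\omega_0=\sum_{k\ge1}\omega_{0,k}$, with the cone slope $L=L_q$ from \eqref{eq:cond_L5} and $\alpha$ in the range \eqref{eq:norm_inflation_range_rem}. The amplitude $\varepsilon$ is chosen small so that $\|\omega_0\|_{L^\infty\cap L^1}+\|\omega_0/r\|_{L^{3,q}}\le\widetilde\varepsilon$. The rings have disjoint supports accumulating only at the origin, so $\omega_0\in C^\infty(\mathbb{R}^3\setminus\{0\})$ with compact support. The Lorentz bound follows as for the finite sums, since $\alpha q>1$ makes $\sum_k k^{-\alpha q}$ converge.

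The key quantitative input is the norm-inflation mechanism behind Theorem~\ref{main_thm1} (Proposition~\ref{prop:norm_inflation_varcoef}), used in a scale-invariant form. For the truncations $\omega_0^{(m)}$, the ODE cascade for the ring amplitudes shows that the $m$-th ring reaches $|\omega|\ge A$ within a time $\delta_m$. Because of the dyadic self-similarity, $\delta_m\to0$ as $m\to\infty$ for each fixed $A$. I would also need this estimate to be \emph{stable under adding further inner rings}: the lower bound on the cumulative stretching felt by ring $m$ comes from the outer rings $n<m$, through the sign-definite outer-region term, while inner rings only add nonnegative or controllable contributions. So the same lower bound should hold for any solution whose initial vorticity agrees with $\omega_0$, as long as the rings stay localized.

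Part (1) I would prove by contradiction. Suppose $(u,\omega)$ is a Yudovich-type weak solution on $(0,T)$. Then $\omega\in L^\infty_tL^\infty\cap L^1$ gives a log-Lipschitz velocity and hence a unique Lagrangian flow. Transport of $\omega/r$ then gives the Cauchy formula \eqref{Cauchy_for} and preserves the ring structure. With $M:=\|\omega\|_{L^\infty_{t,x}}<\infty$, this bound keeps the stretching rates and particle displacements controlled on short time intervals. That control is exactly what the localization and stability hypotheses of the cascade analysis require. Applying the cascade estimate to ring $m$ with $A=2M$ and $\delta_m<T$ gives $\|\omega(t)\|_{L^\infty}>M$ for some $t<T$, a contradiction. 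The main obstacle is this step. The a priori estimates of the paper were derived for smooth solutions of finite-ring data, and here they must be rederived for a merely Yudovich-type solution with infinitely many rings. I would handle this by approximating with the truncations $\omega_0^{(m')}$: using Yudovich-type stability in $L^1$ (available since $M<\infty$), the solutions for the truncations should converge to $\omega$, and the cascade bounds for a fixed ring $m$ pass to the limit.

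For part (2), I would take the smooth global solutions $\omega^{(m)}$ with data $\omega_0^{(m)}$ given by Danchin's theory. The quantities $\omega^{(m)}/r$ are uniformly bounded in $L^\infty_t L^{3,q}$, and $\omega^{(m)}$ is uniformly bounded in $L^\infty_tL^1$ through conservation along the flow. This yields local uniform bounds on $u^{(m)}$ in $L^p_{\mathrm{loc}}$ and time equicontinuity. Compactness (Aubin--Lions, or a weak-$*$ limit combined with strong compactness of $u$) then lets us pass to the limit in the weak formulation \eqref{eq:transport_distributional} and obtain a distributional solution with $\operatorname{curl}u=\omega e_\theta$. To get \eqref{eq:instant_blow_up8}, I would use the lower bounds on the rings, which are uniform in $m'\ge m$: ring $m$ of $\omega^{(m')}$ has vorticity below $-A$ on a set of positive, $m'$-independent measure at times in $[\delta_m/2,\delta_m]$. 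Since $\omega\le0$ there, weak limits preserve this, so $\operatorname*{ess\,sup}_{0<t<T}\|\omega(t)\|_{L^\infty}\ge A$ for every $A$ and every $T$.
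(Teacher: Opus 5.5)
Your architecture matches the paper's: the infinite-ring datum, contradiction for (1), and compactness of the smooth truncations for (2). However, the two quantitative steps that carry the argument do not go through as you propose them.

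\textbf{Part (1).} You resolve the obstacle you flagged by approximating with truncations, and that approximation fails. The bound $M=\|\omega\|_{L^\infty_{t,x}}$ belongs only to the hypothetical solution. The truncated solutions $\omega^{(m')}$ have no uniform $L^\infty$ bound on any fixed time interval; this is exactly Theorem~\ref{main_thm1}, since $T_N(A',m')\lesssim m'^{-\beta_q}\to0$ for every level $A'$. Two things therefore break.

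(a) There is no Yudovich-type stability estimate linking $\omega^{(m')}$ to $\omega$. The energy argument of Lemma~\ref{lem:uniq_bd} needs $\|u_1-u_2\|_{L^\infty}$ bounded. As the paper remarks, the uniform bounds available for the truncations (energy, and $\|\omega^{(m')}/r\|_{L^{3,q}}$) give no uniform $L^\infty_x$ bound on $u^{(m')}$ when $q>1$.

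(b) There is no $m'$-uniform cascade bound for a fixed ring $m$. The finite-ring bootstrap of Section~\ref{sec:norm_inflation} runs only up to $T_N(A,m')$. Its confinement and scale separation (Lemma~\ref{lem:Euler_scale_separation}) use $x_k\le\bar A$ for \emph{all} rings, and that fails at times tending to $0$ as $m'\to\infty$, i.e.\ inside the fixed window $[0,\delta_m]$ where you need ring $m$.

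The missing idea is that no approximation is needed: the paper runs a finite-head bootstrap directly on the hypothetical solution. It builds the Lagrangian flow from $\nabla u\in BMO$ (log-Lipschitz velocity, Chemin's theorem). It proves the Cauchy formula for $\omega/r$ by a duality argument (Proposition~\ref{prop:transport_by_flow}) and propagates oddness and sign by Yudovich-type uniqueness. It defines the rings by pushing $\omega_{0,k}/r$ forward with $Y$, and then bootstraps rings $1,\dots,M$. The infinitely many tail rings are controlled by $A_\infty$ through the cone condition, the head--tail separation (via the Danskin-type Lemma~\ref{lem:envelope_radial_extrema}), and volume conservation. Since no bound on $DY$ is available, the self-interaction is estimated with $L^\infty$ only, and the resulting $\log M$ loss is absorbed by $T_B(A,M)\lesssim M^{-\beta_q}$.

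\textbf{Part (2).} Your blow-up argument rests on the same unavailable $m'$-uniform lower bound for ring $m$. Even granting it, the conclusion does not survive the limit. Only the \emph{measure} of the set where ring $m$ of $\omega^{(m')}$ lies below $-A$ is $m'$-independent; the set itself moves with $m'$. Under weak-$*$ convergence, $L^\infty$ lower bounds on moving sets can be averaged away, because the $L^\infty$ norm is only lower semicontinuous.

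Separately, a uniform $L^\infty_tL^1$ bound on $\omega^{(m')}=r\xi^{(m')}$ would require uniform control of the support radius, hence of the velocity. The paper instead uses energy conservation and local $L^2$ bounds.

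The paper sidesteps the limit problem entirely. Suppose the limit $\omega$ were essentially bounded on some $(0,T)$. Then Lemma~\ref{lem:flow_cauchy_bd} gives a bounded velocity, so $\xi$ stays supported in a fixed ball and $\omega\in L^\infty(0,T;L^1\cap L^\infty)$ (Proposition~\ref{prop:bounded_limit_upgrade}). The limit would then be a Yudovich-type solution, contradicting part (1). In this way part (2) needs no quantitative information about the approximants.
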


We call \eqref{eq:instant_blow_up8} \emph{instantaneous blow-up}.

\begin{remark}
Although Theorem~\ref{thm:instant_blow-up}-(2) is stated only in terms of the
existence of a distributional solution and instantaneous blow-up of
\(\|\omega(t)\|_{L^\infty(\mathbb{R}^3)}\), the solution constructed in the proof
has additional local energy-type bounds. We record these bounds
only as properties obtained by our construction; no optimality of the regularity
is asserted.

More precisely, the proof constructs a single global-in-time axisymmetric
no-swirl distributional solution \((u,\omega)\) as a diagonal subsequential limit
of the smooth finite-ring solutions; see
Proposition~\ref{prop:finite_ring_limit_distributional} and
Corollary~\ref{cor:global_distributional_limit}. Writing
\[
\xi:=\frac{\omega}{r},
\]
the constructed solution satisfies
\[
u\in L^\infty_{\mathrm{loc}}\bigl([0,\infty);L^2(\mathbb{R}^3)\bigr)
\cap L^\infty_{\mathrm{loc}}\bigl([0,\infty);H^1_{\mathrm{loc}}(\mathbb{R}^3)\bigr),
\qquad
\xi\in L^\infty_{\mathrm{loc}}\bigl([0,\infty);L^2_{\mathrm{loc}}(\mathbb{R}^3)\bigr).
\]
Moreover, for every \(T>0\) and \(R>0\), along a suitable subsequence of the finite-ring
approximations \((u^{(m)},\omega^{(m)})\), one has
\[
u^{(m)} \overset{*}{\rightharpoonup} u
\quad\text{in }L^\infty\bigl(0,T;L^2(\mathbb{R}^3)\bigr),
\qquad
u^{(m)} \to u
\quad\text{strongly in }L^2\bigl((0,T)\times B_R\bigr),
\]
and
\[
\frac{\omega^{(m)}}{r}
\overset{*}{\rightharpoonup}
\frac{\omega}{r}
\quad\text{in }L^\infty\bigl(0,T;L^2(B_R)\bigr).
\]
Thus, the singular behavior in Theorem~\ref{thm:instant_blow-up}-(2) is specific to the
\(L^\infty_x\)-norm of the vorticity: despite instantaneous blow-up of
\(\|\omega(t)\|_{L^\infty}\), the constructed solution still retains the above local
energy-type regularity and local \(L^2\)-control of the relative vorticity. 
\end{remark}

\begin{remark}
For each finite-ring approximation, the vorticity is compactly supported at
every time. Moreover, on the bootstrap time intervals used in the norm-inflation
argument, the individual rings remain quantitatively localized and scale-separated.
It is therefore natural to expect that the limiting distributional solution also
has compactly supported vorticity, at least for a suitable representative and for
a.e. time. However, this compact-support property does not follow from the
present compactness argument.

The obstruction is that the uniform estimates available for the finite-ring approximations are the
transported bound
\[
    \left\|\xi^{(m)}\right\|_{L^\infty(0,\infty; L^{3,q}(\mathbb{R}^3))}\le C,
    \qquad \xi^{(m)}=\omega^{(m)}/r,
\]
and the conserved energy
\[
    \left\|u^{(m)}\right\|_{L^\infty (0,\infty;L^2(\mathbb{R}^3))} \leq C,
\]
both of which are independent of \(m\). Together with local elliptic estimates, these bounds give the local compactness
used to construct the distributional limit, but they do not give a uniform-in-\(m\)
finite-propagation estimate for the supports. In particular, for \(q>1\), they do
not yield a uniform \(L^\infty_x\)-bound on the velocities \(u^{(m)}\), nor on the
limiting velocity. Consequently, we do not claim compact support of the limiting
distributional solution.
\end{remark}

\begin{remark}
    We prove Theorem~\ref{thm:instant_blow-up}-(2) by using the finite-ring approximations from Theorem~\ref{main_thm1} and taking its subsequential limit. This part in fact shows this limiting solution blows up instantaneously. In contrast, Theorem~\ref{thm:instant_blow-up}-(1) shows there is no Yudovich-type weak solution, that is, there is no distributional solution $\omega$ in $L^\infty(0,T; L^\infty \cap L^1(\mathbb{R}^3))$ with the Biot--Savart law~\eqref{eq:axis_BS_law}. 
    Admittedly, Theorem~\ref{thm:instant_blow-up}-(1)
    does not rule out the existence of other distributional solutions $\omega$ that remain bounded in $L_x^\infty$ but fail the $L^1_x$ condition, that is, the non-existence of a distributional solution $\omega\in L^\infty(0,T; L^\infty(\mathbb{R}^3))$ but $\omega\notin L^\infty(0,T;L^1(\mathbb{R}^3))$.
    We leave it open. See Remark~\ref{rem:diff_bd_dis_sol} for detailed discussion on difficulties of this problem.    
\end{remark}

\begin{remark}
\label{rem_DeG}
In a forthcoming paper, we prove an analogous result (norm inflation) for the De Gregorio model, a one-dimensional model of the three-dimensional  Euler equation. More precisely, the corresponding Lorentz quantity we consider is  $\|\omega_0/|x|\|_{L^{1,q}(\mathbb{R})}$. The one-dimensional geometry eliminates the geometric self-slowdown mechanism, which leads to what we call \emph{strong ODEs}. See \S~\ref{der_weak_ODE} for more discussions. The absence of the slowdown mechanism makes the analysis significantly simpler. 
\end{remark}

\subsection{Framework: Dominant Outer-Rings and ODE Reduction}
\label{sec:framework}

Inspired by \cite{KimJeong22} and \cite{CordobaMartinezZheng25}, we will perform our analysis in a regime where outer-ring contributions to vortex stretching dominate over inner-ring and self-induced contribution, which motivates an ansatz and ODE reduction.
The authors of \cite{CordobaMartinezZheng25} introduced this ansatz to construct finite-time blow-up solutions starting from H\"older continuous initial data. See \S~\ref{subsec:CMZ_comparison} for further discussion of their results.

Using the  data \eqref{KJdata2}, we consider the solution prior to collision, and decompose the vorticity into individual vortex rings:
\begin{align*}
    \omega^{(m)}(r,z,t) = \sum_{k=1}^m \omega_k(r,z,t).
\end{align*}
For a fixed index $k$, we define the \emph{outer} and \emph{inner} rings by
\[
    \omega_- := \sum_{j=1}^{k-1} \omega_j,
    \qquad
    \omega_+ := \sum_{j=k+1}^{m} \omega_j,
\]
so that $\omega^{(m)} = \omega_- + \omega_k + \omega_+$. Correspondingly, we decompose the velocity field as
\[
    u^{(m)} = u_- + u_k + u_+,
\]
where each component is generated by the corresponding vorticity through the Biot--Savart law.

The evolution of a fixed vortex ring $\omega_k$ is governed by
\begin{align}
\label{vor_k}
    \partial_t \omega_k
    + u^{(m),r} \partial_r \omega_k
    + u^{(m),z} \partial_z \omega_k
    =
    \omega_k \frac{u^{(m),r}}{r}.
\end{align}

The contribution of the inner rings $u_+$ does not enhance vortex stretching. For very short times $t>0$, the odd symmetry in $z$ ensures that this contribution remains negligible. Moreover, the self-induced velocity $u_k$ turns out to be subleading. Therefore, outer rings dominate, and thus
one can approximate the contribution of the outer rings $u_-$ near the origin $(r,z)=(0,0)$:
\begin{align*}
    u_-^r(r,z,t) \approx r\,\partial_r u_-^r(0,0,t)=r\, a(t), 
    \qquad \text{where }a(t):= \partial_r u_-^r(0,0,t). 
\end{align*}

In fact, we justified the outer-region dominance in a more general setting without relying on the multi-ring structure of a solution. See Section~\ref{sec:cone_free_key_lem} and Proposition~\ref{prop:cone_free_key_lem} for its statement, history, and our motivation behind it.

By axisymmetry and odd symmetry in $z$, we have
\[
    u_-^r(0,0,t)=0,
    \qquad
    \partial_z u_-^r(0,0,t)=0.
\]
Furthermore, using the divergence-free condition,
\begin{align*}
    u_-^z(r,z,t)
    \approx
    z\,\partial_z u_-^z(0,0,t)
    =
    -2z\,a(t).
\end{align*}

Under these approximations, \eqref{vor_k} reduces to
\begin{align*}
    \partial_t \omega_k
    + r\,a(t)\,\partial_r \omega_k
    - 2z\,a(t)\,\partial_z \omega_k
    =
    \omega_k\,a(t).
\end{align*}
Solving this equation by the method of characteristics yields
\begin{align*}
    \omega_k(r,z,t)
    &=
    \exp\left( \int_0^t a(s) \,ds \right) 
    \,
    \omega_{0,k}
    \!\left(
        r \, \exp\left( -\int_0^t a(s) \,ds \right) ,
        z \, \exp\left( 2\int_0^t a(s) \,ds \right) 
    \right)
    \\
    &=
    x_k^0
   \exp\left( \int_0^t a(s) \,ds \right) 
   \,
    \phi\!\left(
        \frac{r}{d^k \exp\left( \int_0^t a(s) \,ds \right)  },
        \frac{z \exp\left( 2\int_0^t a(s) \,ds \right) }{d^k}
    \right).
\end{align*}
Here, 
    \begin{align*}
        x_k^0=\frac{\varepsilon}{k^\alpha}, \qquad \frac{1}{q}<\alpha<1.
    \end{align*}
Defining
\begin{align}
\label{tilde_xk_rk_hk}
    x_k(t) := x_k^0 \exp \left( {\int_0^t a(s)\,ds} \right) , 
    \quad
    \tilde x_k(t) := \frac{x_k(t)}{x_k^0},
    \quad 
    R_k(t) := d^k \tilde x_k(t),
    \quad
    H_k(t) := \frac{d^k}{(\tilde x_k(t))^2},
\end{align}
we obtain the representation
\[
    \omega_k(r,z,t)
    =
    x_k(t)\,
    \phi\!\left(
        \frac{r}{R_k(t)},
        \frac{z}{H_k(t)}
    \right).
\]

Motivated by this formal computation, we adopt the ansatz
\begin{align}
\label{eq:ansatz}
    \omega_k(r,z,t)
    =
    x_k(t)\,
    W_k\!\left(
        \frac{r}{R_k(t)},
        \frac{z}{H_k(t)},
        t
    \right),
\end{align}
where the weak dependence of $W_k$ on $k$ and $t$ is introduced to account for errors between the approximate dynamics and the true solution, although $W_k$ is expected to remain close to the reference profile $\phi$.

In addition, regarding the data \eqref{KJdata2}, a direct computation then shows that
\begin{align*}
    \left\| \frac{\omega_0^{(m)}}{r} \right\|_{L^{3,q}(\mathbb{R}^3)}^q
    \sim
    \sum_{k=1}^{m} \frac{\varepsilon^q}{k^{\alpha q}}
    \leq
    \sum_{k=1}^{\infty} \frac{\varepsilon^q}{k^{\alpha q}}
    < \infty
\end{align*}
as expected.
In particular, by choosing $\varepsilon$ sufficiently small, this norm can be made arbitrarily small uniformly in $m$.

\subsection{Main Idea: Evolution of the Stretching Rate \texorpdfstring{$S_k$}{Sk} and Front-Migration}
\label{ODE_intro}
A central point of this paper is that the stretching--slowdown competition can be
captured, to leading order, by a coupled ODE cascade for the amplitudes and
aspect ratios of the vortex rings.  The advantage of this viewpoint is that it lets us
track the evolution of the stretching rate itself and quantify how stretching combined with geometric 
flattening can either deplete or enhance the influence of outer rings. 

We use superscript/subscript, such as orig, loc, froz, to distinguish various types of simplifications. In addition, we will denote $x_k^{\mathrm{orig}}(t):=x_k(t)$ throughout this Section~\ref{ODE_intro}, but after this section, we will come back to the original notation $x_k(t)$.

\subsubsection{The Weak ODEs, and Overview.}
\label{der_weak_ODE}
Under the ansatz and the outer-ring linearization described above, the
$L^\infty$-amplitude $x_k^{\mathrm{orig}}(t)$ of the $k$-th ring satisfies
\[
    \frac{d}{dt}\log x_k^{\mathrm{orig}}(t) =\partial_r u_-^r(0,0,t),
\]
where $u_-=\sum_{j<k}u_j$ is generated by the outer rings
$\omega_-=\sum_{j<k}\omega_j$.

From the axisymmetric Biot--Savart law, one can obtain
    \begin{align*}
        \partial_r u^{r}_- (0,0,t)
    =
        C\int_{-\infty}^\infty \int _0^\infty \frac{r^2z}{(r^2+z^2)^{5/2}} \big(-\omega_-(r,z,t)\big) \, dr dz.
    \end{align*}

Rescaling the Biot--Savart contribution of the $j$-th ring by its length scales
$(R_j(t),H_j(t))$ yields a factor $\Gamma_j(t)^2$ together with a
profile-dependent correction:
\begin{align}
\label{original_ODEs}
\begin{aligned}
    \frac{d}{dt}\log x_k^{\mathrm{orig}}(t)
    &=
    \sum_{j=1}^{k-1} x_j^{\mathrm{orig}}(t)\,\Gamma_j^{\mathrm{orig}}(t)^2
    \iint \frac{r^2 z}{(r^2+\Gamma_j^{\mathrm{orig}}(t)^2 z^2)^{5/2}} \big( -W_j(r,z,t) \big)\, drdz, 
    \\
    &=\sum_{j=1}^{k-1}
    x_j^{\mathrm{orig}}(t) \Gamma_j^{\mathrm{orig}}(t)^2 \Lambda_j (t,\Gamma_j^{\mathrm{orig}}(t)),
    \qquad
    \Gamma_j^{\mathrm{orig}}(t):=
    \frac{H_j(t)}{R_j(t)}
    =
    \left(
    \frac{x_j^{\mathrm{orig}}(0)}{x_j^{\mathrm{orig}}(t)}
    \right)^3
\end{aligned}
\end{align}
where
\begin{equation*}
\Lambda_j(t,\Gamma)
:=\int_{-\infty}^\infty \int_{0}^\infty
K_\Gamma(r,z)\,\big( - W_j(r,z,t) \big)\,dr\,dz,
\qquad
K_\Gamma(r,z):=\frac{r^2 z}{(r^2+\Gamma^2 z^2)^{5/2}}.
\end{equation*}
As the profiles $W_j(r,z,t)\approx \phi(r,z)$ are supposed to vary little in $j,t$, let us consider the following model with a ``frozen" profile $\phi$:
\begin{equation}
\label{ode_full}
    \frac{d}{dt}\log x_k^{\mathrm{froz}}(t)
    =
    \sum_{j<k} x_j^{\mathrm{froz}}(t)\,\Gamma_j^{\mathrm{froz}}(t)^2\,\Lambda_{\mathrm{froz}}(\Gamma_j^{\mathrm{froz}}(t)),
    \qquad \Gamma_j^{\mathrm{froz}}(t)
    :=\left(
        \frac{x_j^{\mathrm{froz}}(0)}{x_j^{\mathrm{froz}}(t)}
    \right)^3,
\end{equation}
where
\begin{equation*}
    \Lambda_{\mathrm{froz}}(\Gamma)
    :=
    \int_{-\infty}^\infty \int _{0}^\infty
    \frac{r^2 z}{(r^2+\Gamma^2 z^2)^{5/2}}
    \big( -\phi(r,z) \big)\,drdz.
\end{equation*}
Let us call $\Lambda_j(t,\Gamma),\Lambda_{\mathrm{froz}}(\Gamma)$ an \emph{unfrozen and frozen Biot--Savart coefficient} respectively.
(The dependence on the reference profile $\phi$ is suppressed in the notation.)

The geometric self-slowdown mechanism is already encoded in the ODE structure:
incompressibility forces $\Gamma_j^{\mathrm{froz}}(t)$ to decrease as $x_j^{\mathrm{froz}}(t)$ grows, which can
weaken the subsequent contribution of the $j$-th ring to vortex stretching.

\medskip
\noindent \emph{Strong ODEs without Geometric Self-Slowdown.}
If one ignores the geometric self-slowdown mechanism by treating the factor $\Gamma ^2 \Lambda (\Gamma)\sim 1$, then vortex stretching enhances itself without any depletion because
the ODEs become
    \begin{align*}
        \frac{d}{dt} \log x_k^{\mathrm{str}}(t)
        = \sum_{j<k} x_j^{\mathrm{str}}(t).
    \end{align*}
Let us call this equation \emph{strong} ODEs whereas we call the original ODE \eqref{original_ODEs} \emph{weak} ODEs.
    
For this model, as it is expected from our discussion in \S~\ref{sharp_Danchin}, one can simply use the strong initial vortex stretching by estimating $\sum_{j<k}x_j^{\mathrm{str}}(t) \geq \sum_{j<k}x_j^0 \sim k^{1-\alpha}$ on the right-hand side in order to establish norm inflation for any $\alpha<1.$ This is how vortex stretching is governed for the De Gregorio model, which is a one-dimensional model of the three-dimensional  Euler equation \eqref{eq:Euler}. See Remark~\ref{rem_DeG}. And regarding the Euler equation \eqref{eq:Euler}, if one considers a backward problem instead of our forward problem, then it yields strong ODEs. However, we cannot use a backward problem in our case. See \S~\ref{subsub:comparison} for discussion on this issue.

As discussed in \S~\ref{geo_ss_mc}, in our case, the geometric self-slowdown depletes the vortex stretching rate $u^{r}_-/r$, which corresponds to $\frac{d}{dt}\log x_k^{\mathrm{orig}}(t)$ at the ODE level, as vorticity stretches. This entire section \ref{ODE_intro} is devoted to investigate how to quantify the stretching-slowdown competition and overcome the difficulty caused by the slowdown.

Proving norm inflation for \eqref{vor_eq} reduces to proving norm inflation at the ODE
level, i.e.\ that for a fixed short time $t>0$ one has $x_k^{\mathrm{orig}}(t)\to\infty$ as
$k\to\infty$. Let us call this type of result a \emph{norm inflation result} (at the ODE level). In contrast, when one proves the other type of result, that is, for a fixed short time $t>0$, $x_k^{\mathrm{orig}}(t) \to 0$ as $k\to\infty$, let us call it a \emph{decay result} (at the ODE level).

\medskip
\noindent \emph{Overview of Section~\ref{ODE_intro}.} 
We now outline the structure of this section. Our primary goal is to motivate the proof of norm inflation at the ODE level for the original, unsimplified \emph{weak} ODEs \eqref{original_ODEs} for any $\alpha<1$ after tuning $L$ appropriately. To achieve this, we introduce several simplifications of the ODE system and analyze them using various techniques, demonstrating how the insights gained from these simplified models enable us to prove norm inflation for the full system. A diagram that shows the hierarchy of these ODE simplifications is drawn in Figure~\ref{fig:hier_ODE}.
    \begin{enumerate}
        \item We first freeze the profile $W_k(r,z,t)\approx \phi(r,z)$ and obtain the \emph{frozen-profile} model \eqref{ode_full} (or \emph{a frozen model} in short):
        \begin{align*}
            \frac{d}{dt}(\log x_k^{\mathrm{froz}}(t))
    =
    \sum_{j=1}^{k-1} x_j^{\mathrm{froz}}(t)\,\Gamma_j^{\mathrm{froz}}(t)^2\,\Lambda_{\mathrm{froz}}(\Gamma_j^{\mathrm{froz}}(t)),
        \end{align*}
    \item (Section~\ref{flattened_kernel_ODE_intro})
        As vortex stretching makes $\Gamma_j(t)$ small in general, we simplify it further by approximating the Biot--Savart coefficient $\Lambda_{\mathrm{froz}} (\Gamma)\approx \Lambda_{\mathrm{froz}}(0)$ and obtain \emph{a model with a flattened Biot--Savart coefficient} (or \emph{a flattened model} in short):
            \begin{align*}
                \frac{d}{dt}(\log x_k^{\mathrm{flat}}(t))
                =
            \sum_{j=1}^{k-1}
            x_j^{\mathrm{flat}}(t) \Gamma_j^{\mathrm{flat}}(t)^2.
            \end{align*}
        We prove sharp norm inflation for $\alpha<2/7$ for this model by investigating evolution of vortex stretching rate $S_k^{\mathrm{flat}}(t)=\sum_{j\leq k} x_j^{\mathrm{flat}}(t) \Gamma_j^{\mathrm{flat}}(t)^2$ and obtaining a Riccati-type inequality. (A rigorous proof for this model can be found in Appendix \ref{HAodes}.)
    \item (Section~\ref{variable_coeff_intro})
        Going back to the \emph{frozen-profile} model \eqref{ode_full},
        we apply similar ideas and obtain a similar Riccati-type inequality but with a profile-dependent correction.
        We can prove the same $2/7$-norm inflation, which is now sharp only for the profile-independent approach. It suggests to utilize both the geometry of $\supp \phi$ and the profile-dependent correction to prove norm inflation beyond $2/7$. (A rigorous proof for this part can be found in Appendix \ref{frozen_pro_ODE_appdx}.)
    \item (Section~\ref{intro:front_mig})
        Therefore, we localize the profile $\phi$, which in turn ``localizes" the frozen Biot--Savart coefficient $\Lambda _{\mathrm{froz}}(\Gamma)\approx \Lambda_{\mathrm{loc}} (\Gamma)$. Here, $\Lambda_{\mathrm{loc}} (\Gamma)$ has an explicit expression. Through change of variables, this localization leads to a \emph{model with a localized Biot--Savart coefficient} (or a \emph{localized model} in short):
        \begin{align}
        \label{eq:loc_coe_model}
        \begin{aligned}
            \frac{d}{dt} (\log x_k^{\mathrm{loc}}(t))
            &\approx 
            \sum_{j=1}^{k-1}
             x_j^{\mathrm{loc}}(t) \Gamma_j^{\mathrm{loc}}(t)^2 \Lambda_{\mathrm{loc}}(\Gamma_j^{\mathrm{loc}}(t))
            \\
            &=
            \sum_{j=1}^{k-1}
            x_j^{\mathrm{loc}}(0) L^{-2/3} \Psi (\zeta_j^{\mathrm{loc}}(t)),
            \quad \Psi(\zeta)=\zeta^{5/3}(1+\zeta^2)^{-5/2}, 
            \quad \zeta_j^{\mathrm{loc}}(t) =L  \Gamma_j^{\mathrm{loc}}(t)
        \end{aligned}
        \end{align}
        where $L$ is a cone slope parameter related to the geometry of $\supp \phi$. 
        Through the monotonicity of $\Psi$, which is related to what we call a front-migration mechanism, and an exact cascade identity for $\Gamma_j$ , we can prove norm inflation in a certain range of $\alpha$ that depends on the geometry of $\supp \phi$, and this range reaches the full range $\alpha<1$ as $L\to\infty$.
    \item (Section~\ref{local_phi_final}) We generalize this idea to the original weak ODEs (un-frozen profile) as the monotonicity of $\Psi$ and the exact cascade identity are stable under multiplicative comparability bounds.
    \end{enumerate}
\begin{figure}[t]
\centering
\begin{tikzpicture}[
  >=Latex,
  font=\footnotesize,
  arr/.style={->, thick},
  box/.style={
    draw, rounded corners, thick, align=left,
    outer sep=0pt
  },
  weakbox/.style={
    box, inner sep=5pt,
    text width=.56\linewidth
  },
  sidebox/.style={
    box, dashed, inner sep=5pt,
    text width=.24\linewidth
  },
  widebox/.style={
    box, inner sep=5pt,
    text width=.82\linewidth,
    minimum width=.90\linewidth
  },
  halfbox/.style={
    box,
    inner xsep=3pt,
    inner ysep=5pt,
    text width=.43\linewidth,
    minimum height=30mm
  },
  conclbox/.style={
    box,
    inner xsep=3pt,
    inner ysep=4pt,
    align=center,
    text width=.43\linewidth,
    minimum height=18mm
  }
]

% common row width = 0.90\linewidth
\coordinate (L) at (-0.45\linewidth,0);
\coordinate (R) at ( 0.45\linewidth,0);

%--- first row ---------------------------------------------------------------
\node[weakbox, anchor=north west] (weak) at (L) {%
\textbf{Weak ODEs (original)}\\[-1mm]
\[
\frac{d}{dt} \log x_k^{\mathrm{orig}}(t)
=
\sum_{j<k} x_j^{\mathrm{orig}}(t)\,\Gamma_j^{\mathrm{orig}}(t)^2\,
\Lambda_j\!\bigl(t,\Gamma_j^{\mathrm{orig}}(t)\bigr).
\]
Self-slowdown is encoded.
};

\node[sidebox, anchor=north east] (strong) at (R) {%
\textbf{Strong ODE}\\[-1mm]
\[
\frac{d}{dt}\log x_k^{\mathrm{str}}(t)
=
\sum_{j<k} x_j^{\mathrm{str}}(t)
\]
Ignore self-slowdown.
};

\draw[arr] (weak.east) -- (strong.west)
node[pos=.5, above, fill=white, inner sep=1pt]
{\scriptsize ignore \(\Gamma_j\)};

%--- second row --------------------------------------------------------------
\node[widebox, anchor=north west] (frozen)
at ([yshift=-6mm]L |- weak.south) {%
\textbf{Frozen model}\\[-1mm]
\[
\frac{d}{dt}\log x_k^{\mathrm{froz}}(t)
=
\sum_{j<k} x_j^{\mathrm{froz}}(t)\,\Gamma_j^{\mathrm{froz}}(t)^2\,
\Lambda_{\mathrm{froz}}\!\bigl(\Gamma_j^{\mathrm{froz}}(t)\bigr).
\]
};

\draw[arr] (weak.south) -- (frozen.north)
node[midway, right, fill=white, inner sep=1pt]
{\scriptsize freeze \(W_j\approx \phi\)};

%--- third row ---------------------------------------------------------------
\node[halfbox, anchor=north west] (flat)
at ([yshift=-6mm]L |- frozen.south) {%
\textbf{Flattened model}\\[-1mm]
\[
\frac{d}{dt} \log x_k^{\mathrm{flat}}(t)
=
\sum_{j<k} x_j^{\mathrm{flat}}(t)\,\Gamma_j^{\mathrm{flat}}(t)^2.
\]
};

\node[halfbox, anchor=north east] (local)
at ([yshift=-6mm]R |- frozen.south) {%
\textbf{Localized model}\\[-1mm]
\[
\begin{aligned}
\frac{d}{dt} \log x_k^{\mathrm{loc}}(t)
&=
\sum_{j<k} x_j^{\mathrm{loc}}(t)\,\Gamma_j^{\mathrm{loc}}(t)^2\,
\Lambda_{\mathrm{loc}}\!\bigl(\Gamma_j^{\mathrm{loc}}(t)\bigr)
\\
&=
\sum_{j<k} x_j^{\mathrm{loc}}(0)\,L^{-2/3}\,
\Psi\!\bigl(\zeta_j^{\mathrm{loc}}(t)\bigr).
\end{aligned}
\]
};

\coordinate (fleft)  at ($(frozen.south west)!0.30!(frozen.south east)$);
\coordinate (fright) at ($(frozen.south west)!0.70!(frozen.south east)$);

\draw[arr] (fleft) -- (flat.north)
node[pos=.60, above, fill=white, inner sep=1pt]
{\scriptsize flattening \(\Lambda_{\mathrm{froz}}(\Gamma)\approx\Lambda_{\mathrm{froz}}(0)\)};

\draw[arr] (fright) -- (local.north)
node[pos=.40, above, fill=white, inner sep=1pt]
{\scriptsize localize \(\phi\)};

%--- fourth row --------------------------------------------------------------
\node[conclbox, anchor=north west] (flatc)
at ([yshift=-6mm]L |- flat.south) {%
\textbf{Conclusion}\\[-1mm]
norm inflation for \(\alpha<2/7\)\\
(profile-independent).
};

\node[conclbox, anchor=north east] (localc)
at ([yshift=-6mm]R |- local.south) {%
\textbf{Conclusion}\\[-1mm]
Geometry-dependent range using \(\operatorname{supp}\phi\);\\
reaches the full range \(\alpha<1\) as \(L\to\infty\).
};

\draw[arr] (flat.south) -- (flatc.north);
\draw[arr] (local.south) -- (localc.north);

\end{tikzpicture}
\caption{Hierarchy of the ODE simplifications used in Section~\ref{ODE_intro}}
\label{fig:hier_ODE}
\end{figure}

\subsubsection{A Model with a Flattened Biot--Savart Coefficient and Evolution of the Stretching Rate $S_k$}
\label{flattened_kernel_ODE_intro}
To isolate the main ODE mechanism, it is convenient to simplify further the frozen-profile model \eqref{ode_full} 
by ``flattening'' the frozen Biot--Savart coefficients, that is, setting $\Lambda_{\mathrm{froz}}(\Gamma)\approx \Lambda_{\mathrm{froz}}(0)$ in the flattened regime
$\Gamma\ll 1$. This simplification is based on the fact that norm inflation makes $\Gamma$ small in general. Hence in this simplification, we suppress the $\Gamma$-dependence of $\Lambda_{\mathrm{froz}}$ and keep only the $\Gamma^2$ slowdown, isolating the core depletion mechanism.

Absorbing harmless constants by rescaling the time variable, the system \eqref{ode_full} becomes
\begin{equation}\label{ode_simplified}
    \frac{d}{dt} (\log x_k^{\mathrm{flat}}(t))
    =
    \sum_{j<k} x_j^{\mathrm{flat}}(t)\,\Gamma_j^{\mathrm{flat}}(t)^2, 
    \quad
    \Gamma_j^{\mathrm{flat}}(t)
    := 
    \left(
    \frac{x_j^{\mathrm{flat}}(0)}{x_j^{\mathrm{flat}}(t)}
    \right)^3.
\end{equation}
Define
\[
    b_j^{\mathrm{flat}}(t):=x_j^{\mathrm{flat}}(t)\,\Gamma_j^{\mathrm{flat}}(t)^2,
    \qquad
    S_k^{\mathrm{flat}}(t):=\sum_{j\le k} b_j^{\mathrm{flat}}(t).
\]
Then \eqref{ode_simplified} reads $\frac{d}{dt}\log x_k^{\mathrm{flat}}(t)=S_{k-1}^{\mathrm{flat}}(t)$.
Differentiating $b_j^{\mathrm{flat}}$ using 
\begin{align*}
\frac{d}{dt}\Gamma_j^{\mathrm{flat}}(t)=-3\Gamma_j^{\mathrm{flat}}(t)S_{j-1}^{\mathrm{flat}}(t)
\end{align*}
yields the 
depletion law
\begin{equation}\label{ode_bj}
    \frac{d}{dt}b_j^{\mathrm{flat}}(t)\;=\;-5\,b_j^{\mathrm{flat}}(t)\,S_{j-1}^{\mathrm{flat}}(t),
\quad \text{and hence}\quad
    \frac{d}{dt}S_k^{\mathrm{flat}}(t)
    \;=\;
    -5\sum_{j\le k} b_j^{\mathrm{flat}}(t)\,S_{j-1}^{\mathrm{flat}}(t).
\end{equation}
Equivalently, using $(S_j^{\mathrm{flat}})^2-(S_{j-1}^{\mathrm{flat}})^2 = 2b_j^{\mathrm{flat}}\, S_{j-1}^{\mathrm{flat}}+(b_j^{\mathrm{flat}})^2$, one obtains
\begin{equation}\label{ode_Sk_quad}
    \frac{d}{dt}S_k^{\mathrm{flat}}(t)
    \;=\;
    -\frac{5}{2}\Big(S_k^{\mathrm{flat}}(t)^2-\sum_{j\le k} b_j^{\mathrm{flat}}(t)^2\Big).
\end{equation}
As $\frac{d}{dt}b_j^{\mathrm{flat}}(t)\leq0$, it holds that $b_j^{\mathrm{flat}}(t)\leq b_j^{\mathrm{flat}}(0)=\frac{\varepsilon}{j^\alpha}\leq \varepsilon$. Hence  $\sum_{j\le k} b_j^{\mathrm{flat}}(t)^2$ is lower order. Ignoring this lower order term, $\sum b_j^{\mathrm{flat}}(t)^2\geq 0$, one is led to a Riccati-type differential inequality  of $S_k^{\mathrm{flat}}(t)$ with a $1/t$ bound of $S_k^{\mathrm{flat}}(t)$ with the Riccati constant $5/2$:
    \begin{align}
    \label{low_bd_skt}
        S_k^{\mathrm{flat}}(t) \geq \frac{1}{\frac{5}{2}t+ \frac{1}{S_k^{\mathrm{flat}}(0)}}, 
    \quad \text{and thus},
    \quad 
        \int_0^t S_k^{\mathrm{flat}}(s) ds \geq
        \log (k^{\frac{2}{5}(1-\alpha)})+C(\varepsilon,t,\alpha).
    \end{align}
Therefore, the ODEs $\frac{d}{dt} \log x_k^{\mathrm{flat}}(t)= S_{k-1}^{\mathrm{flat}}(t)$ yields
    \begin{align*}
        x_k^{\mathrm{flat}}(t) \geq
        \frac{C(\varepsilon,t,\alpha)}{k^\alpha} k^{\frac{2}{5}(1-\alpha)}.
    \end{align*}
This in turn yields the norm inflation range for the ODEs:
\begin{equation}
\label{27_ni_range}
    \frac{2}{5}(1-\alpha)>\alpha, 
    \quad \text{that is}, 
    \quad
    \alpha< \frac{2/5}{1+2/5}=\frac{2}{7}.
\end{equation}

This threshold $2/7$ is in fact sharp for the flattened model \eqref{ode_simplified}. More precisely, for $\alpha>2/7$, one can show a decay result, that is, for any fixed small time $t>0$, $x_k^{\mathrm{flat}}(t) \to 0$ as $k\to\infty$. See Appendix~\ref{HAodes} for more details.
This computation highlights the basic obstruction coming from self-slowdown when the
Biot--Savart coefficient $\Lambda$ is treated as essentially constant.

\subsubsection{A Model with a Non-Flattened Coefficient and Geometry of the Profile $\phi$}
\label{variable_coeff_intro}
For the original frozen-profile ODEs \eqref{ode_full}, one must retain the factor $\Lambda_{\mathrm{froz}}\big(\Gamma_j(t)\big)$ in
\eqref{ode_full}. Define
\begin{align}
\label{eq:orig_def_bj_froz}
    b_j^{\mathrm{froz}}(t):=x_j^{\mathrm{froz}}(t)\,\Gamma_j^{\mathrm{froz}}(t)^2\,\Lambda_{\mathrm{froz}}\big(\Gamma_j^{\mathrm{froz}}(t)\big),
    \qquad
    S_k^{\mathrm{froz}}(t):=\sum_{j\le k} b_j^{\mathrm{froz}}(t),
\end{align}
so that \eqref{ode_full} again reads $\frac{d}{dt}\log x_k^{\mathrm{froz}}(t)=S_{k-1}^{\mathrm{froz}}(t)$.
The difference is that the evolution of $b_j^{\mathrm{froz}}$ now includes a $\Gamma$-dependent
correction. Using $\frac{d}{dt}\Gamma_j^{\mathrm{froz}}(t)= -3\Gamma_j^{\mathrm{froz}}(t)S_{j-1}^{\mathrm{froz}}(t)$, one finds
\begin{align}
\label{eq:log_der_cor}
    \frac{d}{dt}b_j^{\mathrm{froz}}(t)
    =
    b_j^{\mathrm{froz}}(t)\,S_{j-1}^{\mathrm{froz}}(t)
    \Big(
        -5
        + Q_{\mathrm{froz}}\big(\Gamma_j^{\mathrm{froz}}(t))
    \Big),
    \qquad
    Q_{\mathrm{froz}}(\Gamma)= 3\Gamma\frac{|\Lambda'_{\mathrm{froz}}(\Gamma)|}{\Lambda_{\mathrm{froz}}(\Gamma)},
\end{align}
where we used $\Lambda_{\mathrm{froz}}'(\Gamma)\le 0$.

Hence the coefficient $-5$ responsible for the $2/7$-threshold in the flattened model heuristic (\S~\ref{flattened_kernel_ODE_intro}) is shifted upward by the  correction $Q_{\mathrm{froz}}(\Gamma)$. 

First of all, simply treating
$Q_{\mathrm{froz}}(\Gamma)\geq 0$ as an ``error'' recovers the same $2/7$-norm inflation result as before. 
On the other hand, using the simple upper bound $\mathrm{max}_{0\leq \Gamma \leq 1} Q_{\mathrm{froz}}(\Gamma)$ of $Q_{\mathrm{froz}}(\Gamma)$, one can establish a decay result for a certain range of $\alpha$. In addition, one can make this upper bound $\mathrm{max}_{0\leq \Gamma \leq 1} Q_{\mathrm{froz}}(\Gamma)$ arbitrary small by modifying the geometry of $\supp \phi$, which leads to the full decay range $\alpha>2/7$. See Appendix~\ref{frozen_pro_ODE_appdx} for a proof. 
Therefore, the $2/7$-norm inflation threshold is in fact sharp for the profile-independent approach.

This observation has several consequences. First, given the fact that the $2/7$-norm inflation threshold is in fact independent of a choice of a profile $\phi$ whereas our decay range depends on it, it is reasonable to speculate that the upward shifting of the coefficient $-5$ by $Q_{\mathrm{froz}}(\Gamma)$ might in fact lead to norm inflation beyond $2/7$, and the real norm inflation range might depend on the profile $\phi$. In addition, as one can make the upper bound $\mathrm{max}_{0\leq \Gamma \leq 1} Q_{\mathrm{froz}}(\Gamma)$ small by adjusting the profile $\phi$, one can also make it large, which will make our decay range a small interval near $\alpha=1$. Hence, one might be able to prove norm inflation in the full range, any $\alpha<1$, by changing the profile $\phi$.

To this end, heuristically, if one can somehow increase the coefficient $-5$ in \eqref{ode_bj} to an arbitrary small negative number $-\delta'$, then it will lead to the full inflation range:
    \begin{align}
    \label{zeta_full_range}
        \alpha< \frac{2/\delta'}{1+2/\delta'} \sim 1,
    \end{align}
It can be seen by replacing the coefficient $-5$ by $-\delta'$ in \eqref{ode_bj}--\eqref{27_ni_range}. 

However, it is delicate to extract a net gain from $Q_{\mathrm{froz}}(\Gamma)$ because early-time, large-scale enhancement of stretching can feed back into a
stronger later-time, small-scale slowdown. More precisely, when time $t>0$ is small and the scale index $k$ is small, the aspect ratio $\Gamma_k(t)$ does not become small, which leads to strong stretching. However, at the same time, it leads to stronger slowdown for larger time $t>0$ and large scale index $k$.

Therefore, it is natural to try to effectively utilize  the geometry of $\supp \phi$ in order to prove a norm inflation result beyond $\alpha=2/7$. To this end, we localize $\phi$ around the center $(r_0,\pm z_0)$ of its support and make explicit the dependence of the Biot--Savart coefficient $\Lambda_{\mathrm{froz}}(\Gamma)$ on the geometry of $\supp \phi$.

The front-migration mechanism that we present in the next section~\ref{intro:front_mig}, produces an effective small Riccati constant (depending on geometry of $\supp \phi$).

\subsubsection{A Model with a Localized Biot--Savart Coefficient, and Front-Migration}
\label{intro:front_mig}
The idea of localization suggests to introduce 
ODEs with a ``localized" Biot--Savart coefficient in which the Biot--Savart kernel $K_\Gamma$ is frozen at a representative point
$(r_0,z_0)$  of the support:
\begin{equation}\label{eq:Lambda_fr_intro}
    \Lambda_{\mathrm{froz}}(\Gamma)\ \leadsto\ \Lambda_{\mathrm{loc}}(\Gamma)
    :=
    \frac{r_0^2 z_0}{(r_0^2+\Gamma^2 z_0^2)^{5/2}}
    =
    \frac{z_0}{r_0^3}\Big(1+L^2\Gamma^2\Big)^{-5/2},
    \qquad
    L:=\frac{z_0}{r_0}.
\end{equation}
In this subsection \S~\ref{intro:front_mig}, we do not fix $L,\alpha$ by \eqref{eq:cond_L5}, \eqref{eq:norm_inflation_range_rem}. At the moment, let us regard them as unfixed parameters $L>0, \alpha\in (0,1)$. We will identify conditions of $L,\alpha$ later on.
(As usual, harmless positive constants depending on the localized profile are suppressed in
the discussion below.) Then the original frozen model \eqref{ode_full} becomes the localized one \eqref{eq:loc_coe_model}. 

Recall the frozen model
\[
    b_j^{\mathrm{froz}}(t)=x_j^{\mathrm{froz}}(t)\Gamma_j^{\mathrm{froz}}(t)^2\,\Lambda_{\mathrm{froz}}(\Gamma_j^{\mathrm{froz}}(t)),
    \quad
    S_k^{\mathrm{froz}}(t)=\sum_{i\le k} b_i^{\mathrm{froz}}(t),
    \quad
    \Gamma_j^{\mathrm{froz}}(t)=\left(\frac{x_j^{\mathrm{froz}}(0)}{x_j^{\mathrm{froz}}(t)}\right)^3.
\]
In the localized model, using $x_j^{\mathrm{loc}}(t)=x_j^{\mathrm{loc}}(0)\Gamma_j^{\mathrm{loc}}(t)^{-1/3}$, we can rewrite the outer-ring
contribution purely in terms of the single parameter $\Gamma_j^{\mathrm{loc}}(t)$:
\[
    b_j^{\mathrm{loc}}(t)
    =
    x_j^{\mathrm{loc}}(t)\Gamma_j^{\mathrm{loc}}(t)^2\,\Lambda_{\mathrm{loc}}(\Gamma_j^{\mathrm{loc}}(t))
    =
    x_j^{\mathrm{loc}}(0)\Gamma_j^{\mathrm{loc}}(t)^{5/3}\,\Lambda_{\mathrm{loc}}(\Gamma_j^{\mathrm{loc}}(t)), 
    \quad
    \Gamma_j^{\mathrm{loc}} (t):=
    \left(\frac{x_j^{\mathrm{loc}}(0)}{x_j^{\mathrm{loc}}(t)}
    \right)^3.
\]
Introducing the cone variable
\[
    \zeta_j^{\mathrm{loc}}(t):=L\Gamma_j^{\mathrm{loc}}(t),
\]
and substituting \eqref{eq:Lambda_fr_intro}, we obtain the structural form
\begin{equation}\label{eq:bj_Psi_intro}
    b_j^{\mathrm{loc}}(t) = \ x_j^{\mathrm{loc}}(0)\,L^{-2/3}\,\Psi(\zeta_j^{\mathrm{loc}}(t)),
    \qquad
    \Psi(\zeta):=\zeta^{5/3}(1+\zeta^2)^{-5/2}.
\end{equation}
We call $\Psi$ the \emph{Biot--Savart profile}.
The key point is that \(\Psi\) is monotone on a fixed ``productive'' range:
indeed, $\Psi$ is strictly decreasing on $[\zeta_\ast,\infty)$, where $\zeta_\ast:=1/\sqrt2$.
Since $x_j^{\mathrm{loc}}(t)$ is increasing, $\Gamma_j^{\mathrm{loc}}(t)$ (and hence $\zeta_j^{\mathrm{loc}}(t)$) is decreasing in time. Hence the composition $t\mapsto\Psi(\zeta_j^{\mathrm{loc}}(t))$ is  increasing in time as long as $\zeta_j^{\mathrm{loc}}(t) \geq \zeta_*$, which captures the exact window where stretching has not yet been overtaken by geometric self-slowdown. See Figure~\ref{fig:mon_psi} for an illustration of this monotonicity.

\begin{figure}[t]
\centering
\begin{tikzpicture}[
    scale=1.0,
    x=9cm,
    y=3cm,
    line width=1pt,
    line cap=round,
    line join=round,
    >=stealth
]

% parameters
\def\zstar{0.45}
\def\zinit{0.95}

% axes
\draw[->] (0,0) -- (1.05,0) node[right] {$\zeta$};
\draw[->] (0,0) -- (0,1.15) node[above] {$\Psi$};

% Psi curve (smooth schematic)
\draw[thick]
(0,0)
.. controls (0.10,0.02) and (0.15,0.35) ..
(0.25,0.70)
.. controls (0.32,0.95) and (0.40,1.00) ..
(\zstar,1.00)
.. controls (0.60,0.95) and (0.70,0.60) ..
(0.80,0.35)
.. controls (0.90,0.15) and (1.00,0.10) ..
(1.05,0.10);

% vertical dashed line at zeta*
\draw[dashed] (\zstar,0) -- (\zstar,1.00);
\node[below] at (\zstar,0) {$\zeta_*=1/\sqrt{2}$};

% initial position zeta_j(0)
\draw (\zinit,0.04) -- (\zinit,-0.04);
\node[below] at (\zinit,0) {$\zeta_j(0)=L$};

% arrow showing motion of zeta_j(t)
\draw[->] (\zinit-0.02,-0.35) -- (0.55,-0.35);
\node at (0.75,-0.50) {$\zeta_j(t)\ \text{decreases in time}$};

% arrow and label showing increase of Psi(zeta_j(t))
\draw[->]
(0.98,0.32+0.02)
.. controls (0.86,0.34+0.02) and (0.78,0.55+0.02) ..
(0.70,0.78+0.02)
.. controls (0.63,0.98+0.02) and (0.56,1.05+0.02) ..
(0.50,1.08+0.02);

\node at (0.86,1.0) {$\Psi(\zeta_j(t))$};
\node at (0.92,0.83) {$\text{increases}$};
\node at (0.92,0.66) {$\text{in time}$};
\end{tikzpicture}
\caption{
An illustration of the monotonicity of the Biot--Savart profile $\Psi$.
}
\label{fig:mon_psi}
\end{figure}

Since $\frac{d}{dt}x_1^{\mathrm{loc}}(t)=0$, one has $\Gamma_1^{\mathrm{loc}}\equiv1$ and therefore $\zeta_1^{\mathrm{loc}}\equiv L$, which is chosen to be large. In addition, $\Gamma_j^{\mathrm{loc}}(0)=1$ for all $j$, hence $\zeta_j^{\mathrm{loc}}(0)=L$ for all $j.$ Hence, starting in the productive regime, $\zeta_j^{\mathrm{loc}}(t)'s$  decreases from $L$ and then eventually passes the threshold $\zeta_*$.

\medskip

\noindent\emph{Front index and the \(t^{-1}\) lower bound for \(S_{m-1}^{\mathrm{loc}}(t)\).}
Define the time-integrated contributions
\[
    B_j^{\mathrm{loc}}(t):=\int_0^t b_j^{\mathrm{loc}}(s)\,ds,
\]
and recall the exact identity
\begin{equation*}
    \frac{d}{dt}\Gamma_{j+1}^{\mathrm{loc}}(t)=-3\Gamma_{j+1}^{\mathrm{loc}}(t)S_j^{\mathrm{loc}}(t),
    \qquad
    \frac{d}{dt}\Gamma_j^{\mathrm{loc}}(t)=-3\Gamma_j^{\mathrm{loc}}(t)S_{j-1}^{\mathrm{loc}}(t).
\end{equation*}
Subtracting the logarithmic derivatives gives
\[
    \frac{d}{dt}\Big(\log \Gamma_{j+1}^{\mathrm{loc}}(t)-\log \Gamma_j^{\mathrm{loc}}(t)\Big)
    =
    -3\big(S_j^{\mathrm{loc}}(t)-S_{j-1}^{\mathrm{loc}}(t)\big)
    =
    -3b_j^{\mathrm{loc}}(t),
\]
and integrating yields the \emph{exact $\Gamma$--cascade identity}
\begin{equation}
    \Gamma_{j+1}^{\mathrm{loc}}(t)=\Gamma_j^{\mathrm{loc}}(t)\,e^{-3B_j^{\mathrm{loc}}(t)},
    \qquad\text{and thus, }
    \qquad
    \label{eq:xi_cascade_intro}
    \zeta_{j+1}^{\mathrm{loc}}(t)=\zeta_j^{\mathrm{loc}}(t)\,e^{-3B_j^{\mathrm{loc}}(t)}.
\end{equation}
As $\zeta_j^{\mathrm{loc}}(t)$ decreases in $t$, the front index $J_{\zeta_0}^{\mathrm{loc}}(t)$ is non-increasing in time, too. We call this behavior of the  front index \emph{front-migration}.

Fix a threshold $\zeta_0\in[\zeta_\ast,L]$ and define the \emph{front index}
\[
    J_{\zeta_0}^{\mathrm{loc}}(t):=\max\Big\{1\le j\le m-1:\ \zeta_j^{\mathrm{loc}}(t)\ge \zeta_0\Big\}.
\]
A schematic profile of the cone variable $\zeta_j^{\mathrm{loc}}(t)$, which shows the behavior of the front index $J_{\zeta_0}^{\mathrm{loc}}(t)$ is drawn in Figure~\ref{fig:front_index}.

\begin{figure}[t]
\centering
\begin{tikzpicture}[
    x=1.55cm,
    y=1.10cm,
    >=Latex,
    thick,
    font=\small,
    declare function={zeta(\x)=0.58 + 3.15*exp(-0.43*(\x-0.55));}
  ]

  % parameters
  \def\yth{1.46}   % threshold height
  \def\J{3}
  \def\Jp{4}

  % axes
  \draw[->] (0,0) -- (6.75,0) node[right] {$j$};
  \draw[->] (0,-1.10) -- (0,4.05) node[above] {$\zeta_j^{\mathrm{loc}}(t)$};

  % threshold line
  \draw[gray!70] (0,\yth) -- (6.25,\yth);
  \node[right] at (6.32,\yth) {$\zeta_0$};

  % smooth monotone guide curve
  \draw[very thick,domain=0.65:6.20,samples=200]
    plot (\x,{zeta(\x)});

  % discrete points at integer indices
  \foreach \x in {1,2,3,4,5,6}
    \fill (\x,{zeta(\x)}) circle (1.15pt);

  % dashed lines ending exactly on the curve
  \draw[densely dashed] (\J,0) -- (\J,{zeta(\J)});
  \draw[densely dashed] (\Jp,0) -- (\Jp,{zeta(\Jp)});

  % ticks
  \foreach \x in {1,2,5,6}
    \draw (\x,0.07) -- (\x,-0.07);
  \draw (\J,0.07) -- (\J,-0.07);
  \draw (\Jp,0.07) -- (\Jp,-0.07);

  % ordinary index labels (smaller, slightly spread)
  \node[font=\footnotesize] at (1,-0.23) {$1$};
  \node[font=\footnotesize] at (2,-0.23) {$2$};
  \node[font=\footnotesize] at (5.10,-0.23) {$m-1$};
  \node[font=\footnotesize] at (6.03,-0.23) {$m$};

  % J and J+1 labels (separated so they do not clash)
  \node[font=\footnotesize, anchor=north east] at (\J+0.30,-0.08) {$J(t)$};
  \node[font=\footnotesize, anchor=north west] at (\Jp-0.40,-0.08) {$J(t)+1$};

  % annotation: each zeta_j decreases in time
  \draw[->] (1.05,2.80) -- (1.05,2.05);
  \draw[->] (1.23,2.68) -- (1.23,1.93);
  \node at (3.15,2.72) {$\zeta_j^{\mathrm{loc}}(t)$ decrease in time};

  % annotation: front index moves left as time increases
  \draw[->] (3.60,-0.62) -- (2.05,-0.62);
  \node[font=\footnotesize] at (3.33,-0.93) {front moves left as time increases};

\end{tikzpicture}
\caption{Schematic profile of the cone variable at a fixed time. Here
\(J(t):=J_{\zeta_0}^{\mathrm{loc}}(t)\) is the largest index such that
\(\zeta_j^{\mathrm{loc}}(t)\ge \zeta_0\), so that
\(\zeta_{J(t)}^{\mathrm{loc}}(t)\ge \zeta_0>\zeta_{J(t)+1}^{\mathrm{loc}}(t)\).
Since each \(t\mapsto \zeta_j^{\mathrm{loc}}(t)\) is nonincreasing, the values
\(\zeta_j^{\mathrm{loc}}(t)\) move downward in time and the front index moves to
smaller values of \(j\). Strictly speaking, the profile consists of the discrete values
\(\{\zeta_j^{\mathrm{loc}}(t)\}_{j=1}^m\) at integer indices \(j\); the continuous curve is
drawn only as a visual guide connecting these discrete points.}
\label{fig:front_index}
\end{figure}

 Assume that the front has passed the smallest scale by time \(t\), i.e.
\(\zeta_m^{\mathrm{loc}}(t)<\zeta_0\).
Then, defining \(J:=J_{\zeta_0}^{\mathrm{loc}}(t)\) temporarily, we have \(\zeta_{J+1}^{\mathrm{loc}}(t)<\zeta_0\).
Iterating \eqref{eq:xi_cascade_intro} from \(1\) to \(J\) and using \(\zeta_1^{\mathrm{loc}}=L\) gives
\[
    \zeta_{J+1}^{\mathrm{loc}}(t)
    =
    L\exp\!\left(-3\sum_{k=1}^{J} B_k^{\mathrm{loc}}(t)\right),
\]
hence
\begin{equation}\label{eq:sumBk_lower_intro}
    \sum_{k=1}^{J} B_k^{\mathrm{loc}}(t)
    =
    \frac13\log\!\left(\frac{L}{\zeta_{J+1}^{\mathrm{loc}}(t)}\right)
    \ge
    \frac13\log\!\left(\frac{L}{\zeta_0}\right).
\end{equation}
This is the first input: \emph{
to drive the cone variables from their initial value $L$ across the threshold $\zeta_0$,
the system must
accumulate at least \(\frac13\log(L/\zeta_0)\) units of integrated outer influence up to the front.}

The second input is monotonicity on the productive regime.
For every \(k\le J\) and every \(s\in[0,t]\), we have
\(
\zeta_k^{\mathrm{loc}}(s)\ge \zeta_k^{\mathrm{loc}}(t)\ge \zeta_0\ge \zeta_\ast
\)
because each \(\zeta_k^{\mathrm{loc}}\) is nonincreasing in time.
Since \(\Psi\) is decreasing on \([\zeta_\ast,\infty)\), the representation \eqref{eq:bj_Psi_intro}
implies that \(s\mapsto b_k(s)\) is nondecreasing on \([0,t]\).
Consequently,
\[
    B_k^{\mathrm{loc}}(t)=\int_0^t b_k^{\mathrm{loc}}(s)\,ds \le t\,b_k^{\mathrm{loc}}(t)
    \qquad (k\le J),
\]
and summing over \(k\le J\) gives
\begin{equation}\label{eq:sumBk_upper_intro}
    \sum_{k=1}^{J} B_k^{\mathrm{loc}}(t)
    \le
    t\sum_{k=1}^{J} b_k^{\mathrm{loc}}(t)
    \le
    t\,S_{m-1}^{\mathrm{loc}}(t).
\end{equation}
Combining \eqref{eq:sumBk_lower_intro} and \eqref{eq:sumBk_upper_intro} yields the desired
\emph{front-induced \(t^{-1}\) lower bound}:
\begin{equation*}
    S_{m-1}^{\mathrm{loc}}(t)\ \ge\ \frac{1}{3t}\,\log\!\left(\frac{L}{\zeta_0}\right)
    \qquad\text{whenever}\qquad \zeta_m^{\mathrm{loc}}(t)<\zeta_0.
\end{equation*}
This estimate is the crux of the front-migration mechanism: given our discussion around \eqref{zeta_full_range}, heuristically, the large coefficient $\log (L/\zeta_0)$ here effectively replaces the Riccati constant $5/2$ in \eqref{low_bd_skt} by an arbitrary small number and yields the full norm inflation range for large $L$:
    \begin{align*}
        \alpha < \frac{\log (L/\zeta_0)}{3+\log(L/\zeta_0)}\to 1 \quad \text{as }L\to\infty.
    \end{align*}

\subsubsection{The Original Weak ODEs and Localization of $\phi$}
\label{local_phi_final}

The localized model is only an intermediate simplification. The actual weak ODEs involve the unfrozen coefficients $\Lambda_j(t,\Gamma)$ coming from the transported profiles $W_j(\cdot,\cdot,t)$. Accordingly, define
\[
    b_j^{\mathrm{orig}}(t)
    :=
    x_j^{\mathrm{orig}}(t)\bigl(\Gamma_j^{\mathrm{orig}}(t)\bigr)^2
    \Lambda_j\!\bigl(t,\Gamma_j^{\mathrm{orig}}(t)\bigr),
    \qquad
    S_k^{\mathrm{orig}}(t):=\sum_{j=1}^k b_j^{\mathrm{orig}}(t),
\]
and
\[
    B_j^{\mathrm{orig}}(t):=\int_0^t b_j^{\mathrm{orig}}(s)\,ds.
\]
Then the original weak system still has the exact cascade form
\[
    \frac{d}{dt}\log x_k^{\mathrm{orig}}(t)=S_{k-1}^{\mathrm{orig}}(t),
    \qquad
    \Gamma_{j+1}^{\mathrm{orig}}(t)
    =
    \Gamma_j^{\mathrm{orig}}(t)e^{-3B_j^{\mathrm{orig}}(t)}.
\]

To connect this system to the localized model, we use two comparisons. First, on the bootstrap interval, the flow-map bounds imply that the transported profiles stay close to the fixed profile $\phi$ after rescaling, so
\[
    \Lambda_j(t,\Gamma)\approx \Lambda_{\mathrm{froz}}(\Gamma).
\]
Equivalently, if
\[
    b_{\mathrm{froz},j}^{\mathrm{orig}}(t)
    :=
    x_j^{\mathrm{orig}}(t)\bigl(\Gamma_j^{\mathrm{orig}}(t)\bigr)^2
    \Lambda_{\mathrm{froz}}\!\bigl(\Gamma_j^{\mathrm{orig}}(t)\bigr),
    \qquad
    S_{\mathrm{froz},k}^{\mathrm{orig}}(t):=\sum_{j=1}^k b_{\mathrm{froz},j}^{\mathrm{orig}}(t),
\]
then
\[
    b_j^{\mathrm{orig}}(t)\approx b_{\mathrm{froz},j}^{\mathrm{orig}}(t),
    \qquad
    S_k^{\mathrm{orig}}(t)\approx S_{\mathrm{froz},k}^{\mathrm{orig}}(t).
\]

Second, localization acts on the frozen coefficient, not directly on $\Lambda_j(t,\Gamma)$. Since $\phi$ is supported near $(r_0,\pm z_{0})$, one has 
\[
    \Lambda_{\mathrm{froz}}(\Gamma)\approx \Lambda_{\mathrm{loc}}(\Gamma).
\]
Hence the correct chain is
\[
    \Lambda_j(t,\Gamma)\approx \Lambda_{\mathrm{froz}}(\Gamma)\approx \Lambda_{\mathrm{loc}}(\Gamma).
\]

Therefore the monotonicity and front-migration estimates proved through the localized Biot--Savart profile $\Psi$ apply first to the frozen quantities and then transfer, up to harmless constants, to the original weak system. Since the exact $\Gamma$-cascade already holds for $\Gamma_j^{\mathrm{orig}}$, the same ODE norm-inflation mechanism remains valid for the original weak ODEs.

\subsection{Scale Separation and Stability of the Profiles}
\label{subsec:scale_sep}

In our construction, it is essential that the vortex rings remain well separated
and the profiles $W_k(r,z,t)$ do not depend much on the scale index $k$ and time $t$
throughout the entire time interval relevant for the analysis, even as norm
inflation actually deforms the configuration. 

More precisely, for scale separation, we must verify that the
separation of spatial scales is preserved for all time across scales, namely that
\begin{align}
\label{scale_sep}
    R_k(t) \gg R_{k+1}(t),
    \qquad\text{equivalently}\qquad
    d\,\frac{\tilde x_{k+1}(t)}{\tilde x_k(t)} \ll 1
\end{align}
uniformly for all $k$ and for all $t$ in the time interval of interest.

In addition, for the stability of the profile $W_k(r,z,t)$, we justify it through a bootstrap argument: Using the fact that $W_k(r,z,t)$ does not change much in $k,t$ for a short time interval, that is, $W_k(r,z,t)$ satisfy ``bootstrap bounds"  for a short time interval, we improve the bootstrap bounds with a margin by using the norm inflation at the ODE level and choosing large $m$. 
It will justify the bootstrap bounds until the ODE norm inflation time.

Indeed, for scale separation, using the ODEs \eqref{ode_full}, one finds
\[
    \frac{\tilde x_{k+1}(t)}{\tilde x_k(t)}
    \sim
    \exp\!\left(
        \int_0^t
        b_k(s)
        \,ds
    \right), \quad b_k(s)= x_k(s) \Gamma_k(s)^2 \Lambda (\Gamma_k(s)).
\]
Thus, controlling the ratio $\tilde x_{k+1}(t)/\tilde x_k(t)$ reduces to estimating
the time integral of $b_k$.

Fix an arbitrary target amplitude $A>0$ and define the \emph{norm inflation time}
$T_N(A,m)$ by
\[
    \sup_{1\le k\le m} x_k(T_N(A,m)) = A.
\]
Norm inflation at the ODE level implies that $T_N(A,m)\to 0$ as $m\to\infty$ for any
fixed $A$. One can in fact obtain an algebraic decay rate.

Using the bound $x_k(t) \leq A$ on the time interval $[0,T_N(A,m)]$, one can at once estimate the time-integral of $b_k$:
    \begin{align}
    \label{small_time_int_b}
        \int_0^t b_k(s) ds \leq C A T_N(A,m)\to 0\quad \text{as }m\to\infty.
    \end{align}
Therefore, by choosing a sufficiently small distance factor $d$ and large $m$,
we ensure that \eqref{scale_sep} holds
for all $k$ and all $t\in[0,T_N(A,m)]$. (In our actual estimate, we will use a bootstrap time interval $[0,T_B]$ instead of the norm inflation time interval $[0,T_N]$. In addition, we will use a rescaled amplitude $\bar A$ instead of $A$. See \S~\ref{roadmap_boot_Euler} for an explanation.)

Regarding stability of profiles $W_k$,  bootstrap improvement is also controlled by the same quantity $AT_N(A,m)$. 
Indeed, the profile stability is governed by the time integral of discrepancy between the real solution and our approximate ansatz solution. Following the heuristic argument behind the ansatz (\S~\ref{sec:framework}), one can estimate the discrepancy by $AT_N(A,m)$.
Hence choosing large $m$ as like \eqref{small_time_int_b} will improve the bootstrap bounds.

Therefore, using scale-separation and stable profiles, we can transfer ODE-level norm inflation to the PDEs, using the smallness of $AT_N(A,m)$ for large $m$.
It finishes the main ideas of our proof for Theorem \ref{main_thm1}.

Before turning to the infinite-ring argument, we record why the bootstrap estimate used below contains an extra logarithmic factor. See \eqref{eq:desired_bd_res_vel_updated}. For smooth finite-ring solutions one could, in principle, exploit Calderón--Zygmund cancellations to obtain sharper self-interaction estimates. Since the same type of estimate must later be used in the contradiction framework for merely bounded Yudovich-type solutions, we instead use a robust \(L^\infty\)-based estimate already in the finite-ring bootstrap. This produces the factor \(\log m\), which remains harmless because the ODE analysis gives an algebraically small time scale \(T_B(A,m)\lesssim m^{-\beta_q}\).

\subsection{Instantaneous Blow-Up}
\label{subsec:instant_blow-up}

To pass from norm inflation to instantaneous blow-up, we take $m=\infty$ in \eqref{KJdata2}, that is, we superpose infinitely many dyadic rings with $\alpha > 1/q$. Since this construction is tailored to model the worst-case profile \eqref{worst_m}, the resulting data satisfy
\[
\omega_0^{(\infty)} \in L^\infty \cap L^1(\mathbb R^3),
\qquad 
\omega_0^{(\infty)}/r \in L^{3,q}(\mathbb R^3),
\]
but $\omega_0^{(\infty)}$ necessarily loses Hölder regularity. This loss of regularity is exactly what makes the instantaneous blow-up problem qualitatively different from the finite-time framework of \cite{CordobaMartinezZheng25}.

\medskip

\noindent
\textbf{Lack of Calderón--Zygmund cancellation.}
In the Hölder-based setting of \cite{CordobaMartinezZheng25}, one exploits Calderón--Zygmund cancellations in the Biot--Savart law to estimate $\nabla u^{(\infty)}$. Making use of that mechanism requires more than bounded vorticity: one must propagate a Hölder norm of $\omega^{(\infty)}$ and, in turn, control the deformation gradient of the flow map through
\[
\frac{d}{dt} DY(t) = \nabla u^{(\infty)}(Y(t),t)\, DY(t).
\]
By contrast, in the contradiction argument for instantaneous blow-up (more precisely, non-existence of Yudovich-type weak solutions), we may assume only that a solution
\[
\omega^{(\infty)} \in L^\infty\bigl(0,T; L^\infty \cap L^1(\mathbb R^3)\bigr)
\]
exists on some short interval. At this level, the associated velocity field is merely log-Lipschitz (see \cite[Lemma~1]{AzzamBedrossian15}), so the Lagrangian flow map $Y$ is still well-defined, but there is no available bootstrap bound on $DY$ or on any Hölder norm of $\omega^{(\infty)}(t)$. In particular, the cancellation mechanism available in the Hölder regime is no longer accessible.

\medskip

\noindent
\textbf{Logarithmic penalty.}
Accordingly, in the instantaneous blow-up argument we estimate the self-induced velocity using only the $L^\infty$ bound on the vorticity together with the geometry of its support, without exploiting any cancellation.

More precisely, using the crude bound
\[
|u(x)| \;\lesssim\; \int_{\mathbb R^3} \frac{|\omega(y)|}{|x-y|^2}\,dy,
\]
and the fact that $\omega_k(\cdot,t)$ is supported in a region of radial scale $R_k$ and vertical thickness $H_k$, one obtains
\[
|u_k(x)| 
\;\lesssim\;
\|\omega_k\|_{L^\infty}
\int_{\supp \omega_k(\cdot,t)} \frac{1}{|x-y|^2}\,dy
\;\lesssim\;
\|\omega_k\|_{L^\infty}\, H_k \log\!\left(\frac{R_k}{H_k}\right).
\]
Therefore, for $(r,z)\in \supp \omega_k(\cdot,t)$, where $|z| \sim H_k$, we obtain
\[
\frac{|u^{z}_{k}(r,z,t)|}{|z|}
\;\lesssim\;
\|\omega_k \|_{L^\infty}
\log\!\left(\frac{R_k}{H_k}\right)
=
\|\omega_k\|_{L^\infty}
\log\!\left(\frac{1}{\Gamma_k}\right),
\]
where $\Gamma_k = H_k/R_k \ll 1$ is the aspect ratio.

Thus, in the absence of Calderón--Zygmund cancellation, a geometric logarithmic loss of order $\log(1/\Gamma_k)$ naturally appears in the self-interaction estimate.

\medskip

\noindent
\textbf{How to overcome the logarithmic loss.}
The key point is that our construction is designed so that this logarithmic loss is dominated by the shrinking time scale.

Along characteristics, one needs to control
\[
\int_0^{T_N(A,k)} 
\left\| \frac{u^{z}_{k}}{z} \right\|_{L^\infty(\supp \omega_k(\cdot,t))}
\,dt
\;\lesssim\;
A\,
\log\!\left(\frac{1}{\Gamma_k(T_N(A,k))}\right)
\, T_N(A,k).
\]
To reach amplitude $A$, the $k$-th ring must satisfy
\[
\Gamma_k(T_N(A,k)) \sim k^{-3\alpha},
\qquad
\log\!\left(\frac{1}{\Gamma_k}\right) \sim \log k.
\]
On the other hand, the ODE analysis shows that the norm-inflation time decays \emph{algebraically}.
Hence, $\log k \cdot T_N(A,k) \to 0,$ so the logarithmic geometric loss remains perturbative.

\medskip

\noindent
\textbf{Further remarks.}
For finite-ring solutions, one can bootstrap the deformation gradient and recover Calderón--Zygmund cancellation, eliminating the logarithmic loss. The appearance of the logarithmic factor is therefore a genuinely infinite-ring phenomenon tied to the lack of Hölder regularity.

It is  worth noting that the extra log factor also appears in a heuristic estimate of $u_k^{z}/z$ based on the local induction approximation.
To see this, we approximate one vortex ring $\supp \omega_k(\cdot,\cdot,t)\cap \{z>0\}$ by a vortex filament given by the centerline circle and compute the self-induced vertical velocity  through local induction approximation. For the notion of local induction approximation, see \cite[Chapter 7]{MajdaBertozzi01}.

In addition, the infinite-ring configuration leads to a vanishing time scale. To handle this, we decompose the configuration into head and tail rings, applying the bootstrap argument only to the head and controlling the tail via outer-ring dominance (see  Proposition~\ref{prop:cone_free_key_lem}).

\subsection{Comparison with the C\'ordoba--Martinez-Zoroa--Zheng Construction}
\label{subsec:CMZ_comparison}
The recent work of C\'ordoba--Martinez-Zoroa--Zheng \,\cite{CordobaMartinezZheng25} constructs finite-time
singularities for the $3$D axisymmetric Euler equations without swirl from initial data in a
H\"older-based local well-posedness regime.
At a very high level, both \,\cite{CordobaMartinezZheng25} and the present paper (and \cite{KimJeong22}) exploit the same multiscale
``vortex-ring cascade'' principle: one superposes rings indexed by $k$ and uses that, at suitable
points, the dominant contribution to the stretching of an inner ring comes from outer rings. 
(Multi-scale constructions also appear in \cite{CordobaMartinezZoroa23ForcedEuler}, \cite{CordobaLainSanclementeMartinezZoroa25}.)

However, the comparison is subtle because there are \emph{two different criticalities} at play:
\begin{itemize}
\item \textbf{H\"older criticality} (pointwise regularity of $\omega$ or $u$ near the axis), which is the
natural scale for classical local well-posedness;
\item \textbf{Lorentz criticality} (integrability of the transported quantity $\omega/r$ in \eqref{eq:rel_vor_eq}), which is
the natural scale for Danchin-type global estimates such as \eqref{Danchin_est}.
\end{itemize}
In particular, even though $\omega/r$ is transported by \eqref{eq:rel_vor_eq}, its Lorentz criticality is 
sensitive to the \emph{geometry of the support near $r=0$}, since the weight $r^{-1}$ amplifies
concentration near the axis. This is precisely where the CMZ configuration differs from ours.

In this section~\ref{subsub:comparison}, we will see that: a) CMZ's data are supercritical in the Lorentz scale because their rings concentrate near the axis, yet this same anisotropy weakens the stretching kernel; b) Our construction instead targets the worst-case conical geometry relevant to Danchin's threshold. c) Additionally, we will also study why our problem must be organized forward in time instead of backward as opposed to CMZ's problem.

\subsubsection{The CMZ Construction: Data Given at the Singular Time, and Backward Evolution}

CMZ prove that there exist axisymmetric no-swirl solutions on a time interval $[0,T)$
whose velocity remains in a H\"older class for every fixed time $0\leq t<T$ but whose vorticity
blows up as $t\uparrow T$. More precisely, they construct solutions $u(x,t)$ to the Euler equation \eqref{eq:Euler}
on $t\in[0,T)$ such that for every $t<T$,
\[
u(\cdot, t)\in C^\infty(\mathbb{R}^3\setminus\{0\})\cap C^{1,\alpha_{\mathrm{CMZ}}}(\mathbb{R}^3)\cap L^2(\mathbb{R}^3),
\qquad
\omega(\cdot,t)=\nabla\times u(\cdot,t)\in C^{\alpha_{\mathrm{CMZ}}}(\mathbb{R}^3),
\]
while
$\max_{x\in\mathbb{R}^3}|\omega(x,t)|\approx 1/|T-t|$ as $t\uparrow T.$
In fact, they worked on a time interval $[-T,0)$ where $t=0$ corresponds to the final time and $t=-T$ the initial time, but for the sake of comparison, let us shift their time interval to $[0,T)$.

A key structural feature is that CMZ organize the cascade \emph{backward in time}.
They start by prescribing a singular ``final state'' at the blow-up time $t=T$ via a dyadic
superposition of vortex rings, and then solve the Euler dynamics backward to obtain
regular initial data at $t=0$.

To avoid confusion, no matter whether we consider a backward problem or a forward problem,
we will always consider the time interval $(0,T)$, and the initial time always means $t=0$, at which the evolution starts, whereas the final time means $t=T$ where the evolution ends.

Concretely, for each truncation level $m$ they prescribe at $t=T$ the (truncated) vorticity
\begin{equation}\label{eq:CMZ_terminal_data}
\omega^{\mathrm{CMZ}}_{(m)}(r,z,T)
=\sum_{k=0}^{m}\omega^{\mathrm{CMZ}}_{k}(r,z,T),
\qquad
\omega^{\mathrm{CMZ}}_{k}(r,z,T)
= A_{\mathrm{CMZ}}^{k}\,
\varphi
\left(
\frac{r}{d_{\mathrm{CMZ}}^{k}},
\frac{z}{d_{\mathrm{CMZ}}^{k}}
\right),
\end{equation}
where $0<A_{\mathrm{CMZ}}-1\ll1, \, 0<d_{\mathrm{CMZ}}\ll 1$, and $\varphi$ is a smooth profile supported near $r=1$ horizontally and in $[-2Z,2Z]$ for large parameter $Z>0$ vertically.
The profile $\varphi$ is odd in $z$. (see \cite{CordobaMartinezZheng25} for details). We have simplified some structure of the profile $\varphi$ that is not essential for our discussion.

This terminal configuration makes the infinite sum (as $m\to\infty$) singular at $t=T$, but
for each finite $m$, the data are smooth and hence generate a global-in-time classical solution in the axisymmetric no-swirl class.
CMZ then pass to a limit $m\to\infty$ to obtain the singular solution on $[0,T)$.

We will call $\omega_{(m)}^{\mathrm{CMZ}}$ given in \eqref{eq:CMZ_terminal_data} \emph{CMZ's terminal data} whereas we call $\omega_{(m)}^{\mathrm{CMZ}}(r,z,0)$ \emph{CMZ's backward-evolved initial data}.

Their multiscale ansatz for each $\omega_k^{\mathrm{CMZ}}$ may be written in the same form as ours but with $x_k(0)$ replaced by $x_k(T)$. Indeed, \cite[Eq. (15)]{CordobaMartinezZheng25} has the form
\[
\omega^{\mathrm{CMZ}}_{k}(r,z,t)
=
x^{\mathrm{CMZ}}_{k}(t)\,
W^{\mathrm{CMZ}}_{k}\!\left(
\frac{r}{R^{\mathrm{CMZ}}_{k}(t)},
\frac{z}{H^{\mathrm{CMZ}}_{k}(t)},
t\right),
\]
where
\begin{equation*}
R^{\mathrm{CMZ}}_{k}(t)
= d_{\mathrm{CMZ}}^{k}\frac{x^{\mathrm{CMZ}}_{k}(t)}{x_k^{\mathrm{CMZ}}(T)},
\qquad
H^{\mathrm{CMZ}}_{k}(t)
= d_{\mathrm{CMZ}}^{k}\frac{(x_k^{\mathrm{CMZ}}(T))^2}{\bigl(x^{\mathrm{CMZ}}_{k}(t)\bigr)^2}, 
\qquad
x_k^{\mathrm{CMZ}}(T)=A^{k}_{\mathrm{CMZ}}.
\end{equation*}
Here $d_{\mathrm{CMZ}}$ is chosen to be small, and $A_{\mathrm{CMZ}}>1$ is chosen to be close to $1$.
At the singular time $t=T$, one has $x^{\mathrm{CMZ}}_{k}(T)=A_{\mathrm{CMZ}}^{k}$, so
$R^{\mathrm{CMZ}}_{k}(T)= H^{\mathrm{CMZ}}_{k}(T)=d_{\mathrm{CMZ}}^{k}$ and the rings
are essentially ``conical'' at $t=T$.  In the backward direction $t<T$, $x^{\mathrm{CMZ}}_{k}(t)$
decreases and therefore the aspect ratio grows:
\begin{equation}\label{eq:CMZ_aspect_ratio}
\Gamma^{\mathrm{CMZ}}_{k}(t)
:=
\frac{H^{\mathrm{CMZ}}_{k}(t)}{R^{\mathrm{CMZ}}_{k}(t)}
=
\left(\frac{A_{\mathrm{CMZ}}^{k}}{x^{\mathrm{CMZ}}_{k}(t)}\right)^{3},
\qquad t<T.
\end{equation}
Thus the backward evolution produces initial data at $t=0$ supported in increasingly
\emph{thin} regions (large $\Gamma^{\mathrm{CMZ}}_{k}$.)

As CMZ utilized a backward problem, their backward-evolved initial data at $t=0$ is not given explicitly. Hence
for later comparison, it is convenient to summarize the CMZ \emph{initial} geometry and pointwise
size by a \emph{continuous envelope} model.  To this end, the following schematic asymptotic behavior is useful: with a fixed $t<T$,
    \begin{align}
    \label{CMZ_x_k}
        x_k^{\mathrm{CMZ}}(t) \sim A^{-k}_{\mathrm{CMZ}} , \quad 
    \end{align}
    for large $k$ when $A_{\mathrm{CMZ}}>1$ is close to $1$.
See \cite[\S~1.3.2]{CordobaMartinezZheng25} for more detailed heuristic computations on this asymptotic behavior. We have simplified the asymptote, $A^{-k}_{\mathrm{CMZ}}$, from the exact asymptote in \cite[\S~1.3.2]{CordobaMartinezZheng25}, using the fact that $A_{\mathrm{CMZ}}>1$ is close to $1$.

A minimal envelope that captures both
\begin{enumerate}
\item the pointwise H\"older-type decay of $\omega$ near the origin, and
\item the concentration of the support toward the axis,
\end{enumerate}
is
\begin{equation}\label{eq:CMZ_envelope}
\omega_{\mathrm{env}}(r,z)
:=
-|(r,z)|^{\beta}\,
\mathbf{1}_{\{\,c|z|^{\gamma}\le r\le C|z|^{\gamma}\,\}}\,
\mathbf{1}_{\{|z|\le 1\}}\,
\mathrm{sgn}(z),
\qquad
\beta>0,\ \gamma>1,
\end{equation}
where 
    \begin{align*}
        \gamma \sim \frac{\log \frac{1}{d_{\mathrm{CMZ}}}+2\log A_{\mathrm{CMZ}}}{\log \frac{1}{d_{\mathrm{CMZ}}}-4\log A_{\mathrm{CMZ}}}>1, 
    \qquad
        \beta\sim  \frac{\log A_{\mathrm{CMZ}}}{\log \frac{1}{d_{\mathrm{CMZ}}} +2\log A_{\mathrm{CMZ}}} \ll1.
    \end{align*}
Here $\gamma$ is close to $1.$ The expressions of $\gamma, \beta$ are valid for an amplitude $A_{\mathrm{CMZ}}>1$ close to $1.$
One can find the expression of $\gamma,\beta$ by using \eqref{CMZ_x_k} together with $r\sim R_k^{\mathrm{CMZ}}(t)$  and $|z|\sim H_k^{\mathrm{CMZ}}(t)$. For the expression of $\beta$, it can be also found in  \cite[\S~1.3.2]{CordobaMartinezZheng25}. 
 We will use \eqref{eq:CMZ_envelope} below to make the Lorentz/H\"older comparison
transparent.

\subsubsection{Comparison with Our data and Solution: Lorentz Spaces, Geometry, and the Impossibility of a Backward Problem} \,
\label{subsub:comparison} 

\medskip
\noindent\textbf{(1) CMZ backward-evolved initial data are more singular than ours in the Lorentz scale.}
The key quantity for Danchin-type criticality is the transported relative vorticity $\omega/r$,
cf.\ \eqref{eq:rel_vor_eq}, and Lorentz norms of $\omega/r$ are propagated along the flow.

It is therefore enough to compute $\omega/r$ from the explicit terminal configuration
\eqref{eq:CMZ_terminal_data}.  On the support of the $k$-th CMZ ring at $t=T$ one has
$r\sim d_{\mathrm{CMZ}}^{k}$, and hence
\begin{equation*}
\left|\frac{\omega^{\mathrm{CMZ}}_{k}(r,z,T)}{r}\right|
\sim
\frac{A_{\mathrm{CMZ}}^{k}}{d_{\mathrm{CMZ}}^{k}}
=\left(\frac{A_{\mathrm{CMZ}}}{d_{\mathrm{CMZ}}}\right)^{k}.
\end{equation*}
Moreover, since each ring is localized at scale $d_{\mathrm{CMZ}}^{k}$ in all directions at $t=T$,
its support has volume $|\supp \omega^{\mathrm{CMZ}}_{k}(T)|\sim d_{\mathrm{CMZ}}^{3k}$.
Consequently, for any $1\le p<\infty$,
\begin{equation}\label{eq:CMZ_Lp_layer}
\left\|\frac{\omega^{\mathrm{CMZ}}_{k}(T)}{r}\right\|_{L^{p}(\mathbb{R}^3)}^{p}
\sim
\left(\frac{A_{\mathrm{CMZ}}^{k}}{d_{\mathrm{CMZ}}^{k}}\right)^{p}
\,d_{\mathrm{CMZ}}^{3k}
=
A_{\mathrm{CMZ}}^{kp}\,d_{\mathrm{CMZ}}^{k(3-p)}.
\end{equation}
Summing over $k$ gives a sharp integrability threshold:
\begin{equation*}
\frac{\omega^{\mathrm{CMZ}}_{(\infty)}(t)}{r}\in L^{p}_{\mathrm{loc}}
\quad\Longleftrightarrow\quad
p<p_\ast,
\qquad
p_\ast:=\frac{3\log(1/d_{\mathrm{CMZ}})}{\log A_{\mathrm{CMZ}}+\log(1/d_{\mathrm{CMZ}})}
\;<\;3.
\end{equation*}
In particular, $\omega_{(\infty)}^{\mathrm{CMZ}}(t)/r\notin L^{3}_{\mathrm{loc}}$ and hence it lies  outside
Danchin's endpoint class $L^{3,1}$ (and outside $L^{3,q}$ for any finite $q$).  More precisely,
the ring counting implicit in \eqref{eq:CMZ_Lp_layer} yields the distribution estimate
$|\{|\omega^{\mathrm{CMZ}}_{(\infty)}/r|>\lambda\}|\sim \lambda^{-p_\ast}$, so
\begin{equation}\label{eq:CMZ_Lorentz_membership}
\frac{\omega^{\mathrm{CMZ}}_{(\infty)}(t)}{r}\in L^{p_\ast,\infty}_{\mathrm{loc}}
\quad\text{but}\quad
\frac{\omega^{\mathrm{CMZ}}_{(\infty)}(t)}{r}\notin L^{p_\ast,q}_{\mathrm{loc}}\ \text{for any }q<\infty.
\end{equation}

By contrast, our construction starts from  data $\omega_0^{(\infty)}$ with
$\omega_0^{(\infty)}/r\in L^{3,q}(\mathbb{R}^3)$ for some $q>1,$
and we can make this $L^{3,q}$ norm arbitrarily small.
Thus, in terms of Lorentz criticality for $\omega^{(\infty)}/r$, the CMZ backward-evolved initial data are \emph{strictly more
singular} than ours: they sit only at a weaker exponent $p_\ast<3$.

\medskip
\noindent\textbf{(2) CMZ's backward-evolved data are more regular than ours in the H\"older scale.}
Despite being more singular than our data in the Lorentz scale, 
CMZ's backward-evolved data (with infinite rings) is more regular than ours in the H\"older scale. Indeed, CMZ's backward-evolved data is H\"older continuous at the origin whereas ours $\omega_0^{(\infty)}$ is not. Hence the evolution of their data is locally classical and well-posed in that topology (local well-posedness in H\"older spaces is explained in the next section \ref{subsec:holder_spaces}) whereas our data are in an ill-posedness regime.

\medskip
\noindent\textbf{(3) Why there is no contradiction: pointwise vs.\ integral scales, and the role of geometry.}
The comparison between CMZ's backward-evolved data and ours is subtle because the two scales measure
different aspects of singularity.

On the one hand, H\"older regularity is a \emph{pointwise}, $L^\infty$-based notion.
Near the origin it asks how fast $\omega(r,z)$ itself vanishes at each point.
On the other hand, the Lorentz scale relevant here is not attached to $\omega$ itself,
but to the transported quantity $\omega/r$ in \eqref{eq:rel_vor_eq}.  This is an
\emph{integral} $L^p$-type measurement, so it depends not only on the pointwise size of
$\omega$, but also on how much of the support lies in the region where $r$ is very small.
Thus the factor $r^{-1}$ and the geometry of the support near the axis are largely
invisible in the H\"older scale, but they are decisive in the Lorentz scale.

For CMZ, the backward-evolved initial data at $t=0$ are concentrated in increasingly thin regions near
the axis; equivalently, their aspect ratios $\Gamma_k^{\mathrm{CMZ}}$ are large, see
\eqref{eq:CMZ_aspect_ratio}.  In the envelope model \eqref{eq:CMZ_envelope}, this is
encoded by
\[
r\sim |z|^\gamma,\qquad \gamma>1,
\qquad
\omega_{\mathrm{env}}(r,z)\sim -|(r,z)|^\beta\mathrm{sgn}(z) .
\]
Hence CMZ have a genuine \emph{pointwise power decay} of $\omega$ near the origin, which
is compatible with a H\"older-based local well-posedness regime.  However, because
$r\ll |z|$ on the support, dividing by $r$ creates a much larger quantity:
\[
\frac{\omega_{\mathrm{env}}(r,z)}{r}
\sim
-\frac{|z|^\beta}{|z|^\gamma}\mathrm{sgn}(z)
=
-|z|^{\beta-\gamma}\mathrm{sgn}(z).
\]
Therefore $\omega/r$ can be highly singular even though $\omega$ itself is H\"older.
Since Lorentz norms measure the distribution of large values of $\omega/r$, this thin
concentration near the axis makes CMZ's backward-evolved data more singular in the Lorentz scale.

Our data have the opposite feature.  Recall the continuous model of our data \eqref{worst_m}, that is,
\[
\omega_0(r,z)\sim -\frac{1}{(\log(1/|(r,z)|))^\alpha}\mathrm{sgn}(z)
\qquad\text{on } \{r\sim |z|\}
\]
This is worse pointwise than any power $|(r,z)|^\beta$, so our data are not H\"older at
the origin.  But now there is no extra concentration toward the axis: on the support,
the factor $r^{-1}$ is  comparable to $|(r,z)|^{-1}$.  Hence, 
\[
\frac{\omega_0(r,z)}{r}
\sim
-\frac{1}{|(r,z)|\,(\log(1/|(r,z)|))^\alpha}\mathrm{sgn}(z) \qquad \text{on } \{r\sim |z|\}
\]
which is the borderline behavior tailored to belong to $L^{3,q}$ for $q>1$ as we already observed in Section~\ref{General_L_space}.

So the difference is not merely that CMZ's backward-evolved data and our data have different norms.
Rather, they optimize different features.  CMZ are \emph{pointwise milder} but
\emph{geometrically more concentrated} near the axis; we are \emph{pointwise rougher}
but \emph{geometrically less concentrated} toward the axis.  This is exactly why CMZ's
backward-evolved data are more regular in the H\"older scale while being more singular in the Lorentz
scale.

\medskip
\noindent\textbf{(4) CMZ are not worst-case in their Lorentz class; a worst-case model.}
Even though CMZ's backward-evolved initial data is more singular than ours in terms of Lorentz criticality, their solution does not exhibit more ``singular" behavior than ours. To explain this discrepancy,
the stretching rate $u^r/r$ is controlled by an $|x|^{-2}$-type singular integral of $\omega/r$,
as reflected in the heuristic behind \eqref{Danchin_est}.  For a fixed Lorentz size of $\omega/r$, the strongest
amplification is obtained by placing the vorticity where the kernel in the Biot--Savart law is
largest.  In particular, the kernel in \eqref{ini_vs} carries a factor $r^2 z/(r^2+z^2)^{5/2}$, so the
dominant contribution comes from the conical region $r\sim |z|$ (this is precisely why the
worst-case scenario \eqref{worst_m} is conical).

CMZ's configuration at the initial time $t=0$ is instead highly anisotropic
(large $\Gamma^{\mathrm{CMZ}}_{k}$), meaning it is concentrated closer to the axis, where the
kernel in \eqref{ini_vs} is suppressed by extra powers of $r/|z|$.
This can be seen transparently on the envelope \eqref{eq:CMZ_envelope}.  Using \eqref{ini_vs} and
$r\sim |z|^\gamma$ (so $r\ll|z|$), we have $(r^2+z^2)^{5/2}\sim |z|^5$.
Therefore, the stretching functional has the scaling
\begin{align*}
\int\!\!\int \frac{r^2 z}{(r^2+z^2)^{5/2}}\,\omega_{\mathrm{env}}(r,z)\,dr\,dz
\sim
\int_{0}^{1}|z|^{\beta}\int_{c|z|^\gamma}^{C|z|^\gamma}\frac{r^2}{|z|^4}\,dr\,dz \nonumber
\sim
\int_{0}^{1} z^{3\gamma+\beta-4}\,dz,
\end{align*}
which is finite whenever $3\gamma+\beta>3$, in particular for any $\gamma>1$ and any
$\beta>0$.
Thus CMZ can be Lorentz-supercritical while still having finite initial stretching in the
sense of \eqref{ini_vs}, because their support geometry avoids the worst-case region for the kernel.

By contrast, a natural ``worst-case'' model in the Lorentz class $L^{p_\ast,\infty}$ suggested by
\eqref{eq:CMZ_Lorentz_membership} is to place $\omega/r$ conically with the borderline power law
\begin{equation*}
\frac{\omega_{\mathrm{worst}}(r,z)}{r}
\sim
-\frac{1}{|(r,z)|^{3/p_\ast}}\,
\mathbf{1}_{\{\,r\sim |z|,\ |(r,z)|\le 1\,\}}\,
\mathrm{sgn}(z),
\qquad p_\ast<3.
\end{equation*}
This saturates the weak-$L^{p_\ast}$ scaling and, when inserted into the heuristic
$u^r/r\sim |x|^{-2}*(\omega/r)$, yields a divergent stretching rate at the origin.
In this sense, the CMZ configuration is not the worst-case configuration inside its Lorentz class.

\medskip
\noindent\textbf{(5) Why CMZ can run a backward problem, while we cannot (and the link to self-slowdown).} 
CMZ's backward organization is compatible with their goal: they prescribe a singular final
state at $t=T$ and evolve backward to obtain regular data at $t=0$ in a H\"older class. The required H\"older regularity does not have the same scaling as the blow-up quantity, the $L^\infty$ norm of vorticity.

For our norm inflation, however, the required Lorentz regularity, $\omega_0/r \in L^{3,q}(\mathbb{R}^3)$, has the \emph{same scaling} as the inflation quantity, the $L^\infty$ norm of vorticity, and 
we must start from \emph{small} data in the critical Lorentz
class $L^{3,q}$ for $\omega/r$.  

Since $\omega/r$ is transported by~\eqref{eq:rel_vor_eq}, its Lorentz norm is
conserved in time.  Moreover, for our dyadic vortex-ring architecture (finite superposition
as in \eqref{KJdata2}), the $L^{3,q}$ norm is essentially an $\ell^q$ norm of the ring amplitudes.
Indeed, 
assume
at the final time $t=T$ (more precisely, norm inflation time) the $k$-th ring has the canonical form
\[
\omega_k(r,z,T)=x_k^T\,W\!\left(\frac{r}{d^k},\frac{z}{d^k}\right),
\]
a scaling computation gives
\begin{equation}\label{eq:ellq_identity_intro}
\left\|\frac{\omega^{(m)}|_{t=T}}{r}\right\|_{L^{3,q}(\mathbb{R}^3)}^{q}
\sim \sum_{k=1}^{m}|x_k^T|^{q}.
\end{equation}
Since \eqref{eq:ellq_identity_intro} is invariant under the transport \eqref{eq:rel_vor_eq},
any attempt to prescribe a \emph{final} state at $t=T$ and solve backward forces
\[
\left\|\frac{\omega^{(m)}|_{t=0}}{r}\right\|_{L^{3,q}}^{q}
=
\left\|\frac{\omega^{(m)}|_{t=T}}{r}\right\|_{L^{3,q}}^{q}
\sim \sum_{k=1}^{m}|x_k^T|^{q}
\ge \Bigl(\max_{1\le k\le m}|x_k^T|\Bigr)^q
\sim \|\omega^{(m)}|_{t=T}\|_{L^\infty(\mathbb{R}^3)}^{q}.
\]
Hence one cannot make $\|\omega^{(m)}|_{t=T}\|_{L^\infty}$ large while keeping
$\|\omega^{(m)}|_{t=0}/r\|_{L^{3,q}}$ small: the backward problem is incompatible with the
small-data norm inflation goal.

This obstruction is precisely why our analysis must be organized \emph{forward in time}.
But in forward time, as explained in Subsection~\ref{geo_ss_mc} and encoded in the weak cascade
model \eqref{original_ODEs}, vortex stretching drives the aspect ratios $\Gamma_k(t)=H_k(t)/R_k(t)$ to become
small, which suppresses the Biot--Savart coefficient and produces the \emph{geometric
self-slowdown} mechanism.  Overcoming this self-slowdown in a Lorentz-critical regime is
the main additional difficulty in our problem compared to the backward cascade strategy of
CMZ.

\subsection{H\"older Spaces}
\label{subsec:holder_spaces}
Lastly, as the study of well-posedness/ill-posedness in Lorentz spaces
$L^{3,q}(\mathbb{R}^3)$ is closely related to borderline H\"older regularity, we briefly
review some relevant background.

First of all, without imposing axisymmetry (and allowing swirl), classical local well-posedness for
$u_0\in C^{k,\alpha}(\mathbb{R}^3)$ ($k\ge 1$, $\alpha\in(0,1)$) goes back to
\cite{Lichtenstein30,Gunther27}. It was later extended to Sobolev data
$u_0\in H^s(\mathbb{R}^3)$ with $s>\frac{5}{2}$ (more generally, $s>\frac{n}{2}+1$ in
$\mathbb{R}^n$) in \cite{EbinMarsden70,Kato72}.

On the other hand,
ill-posedness in the critical Sobolev space $H^{\frac{5}{2}}(\mathbb{R}^3)$ was proved
in \cite{BourgainLi15,BourgainLi21}. A simplified proof in a closely related spirit was
later found by Kim--Jeong \cite{KimJeong22}.

\medskip
\noindent\textbf{Axisymmetric no-swirl blow-up vs.\ global regularity.}
Finite-time blow-up solutions in the axisymmetric no-swirl class have been constructed
from low H\"older regularity in \cite{Elgindi21,CordobaMartinezZheng25}.
Equivalently, these results may be viewed as singularity formation for
$C^{1,\alpha}$ velocities, or $C^\alpha$ vorticities, with small $\alpha>0$.

On the other hand, Danchin's global well-posedness theory implies global regularity
in the axisymmetric no-swirl setting for $\alpha>\frac{1}{3}$, assuming suitable decay
at infinity. Indeed, if $\omega_0\in C^\alpha(\mathbb{R}^3)$ is axisymmetric without
swirl, then $\omega_0^\theta$ vanishes on the axis and satisfies
$|\omega_0(r,z)|\lesssim r^\alpha$ near $r=0$, hence
\begin{equation*}
    \frac{|\omega_0(r,z)|}{r}
    \;\lesssim\;
    r^{\alpha-1}
    \in L^{3,1}_{\mathrm{loc}}(\mathbb{R}^3)
    \qquad \text{if and only if}\qquad
    \alpha>\frac{1}{3}.
\end{equation*}
Combining this with local well-posedness in H\"older spaces and the
Beale--Kato--Majda criterion yields global regularity for
$\omega_0\in C^\alpha$ with $\alpha>\frac{1}{3}$.

During the final preparation of this paper,  a recent preprint by Shkoller \cite{Shkoller26} announced
a construction of finite-time blow-up for every $\alpha\in(0,\frac{1}{3})$ for axisymmetric no-swirl
data in $C^{1,\alpha}(\mathbb{R}^3)\cap L^2(\mathbb{R}^3)$, which means the
threshold $\alpha=\frac{1}{3}$ is sharp up to the endpoint. Moreover, his blow-up
mechanism is structurally stable, in the sense that it persists for an open set of admissible angular profiles.

Both our work and Shkoller's reduce the axisymmetric Euler dynamics to an ODE mechanism capturing vortex stretching near the symmetry axis. The reductions, however, are fundamentally different: Shkoller works with a Lagrangian clock-and-driver system for the on-axis strain and pressure Hessian, while our argument uses a multiscale ansatz for dyadic vortex rings, leading to an outer-ring-driven ODE cascade for amplitudes and aspect ratios. Our key input is the geometric localization and front-migration mechanism for the Biot--Savart coefficient, whereas his key input is a nonperturbative strain-versus-pressure estimate.

\medskip
\noindent\textbf{The critical H\"older space $C^{1/3}$ for vorticity.}

Given Shkoller's result \cite{Shkoller26}, it is worth noting that at the borderline H\"older exponent $\alpha=1/3$, Danchin's criterion still yields global regularity provided the vanishing of the angular vorticity on the symmetry axis is improved by an integrable logarithmic factor; if
    \begin{align*}
        |\omega_0(r,z)| \sim  \frac{r^{1/3}}{(\log(e/r))^{\beta}}, \quad \beta\geq 0, \quad \text{near $r=0$,}
    \end{align*}
 and $\omega_0$ is bounded with suitable decay at infinity, then 
    \begin{align*}
        \frac{\omega_0}{r}\in L^{3,q}(\mathbb{R}^3), \quad \beta \, q>1, \quad 1\leq q<\infty.
    \end{align*}
Therefore, if $\beta>1$, then $\omega_0/r\in L^{3,1}(\mathbb{R}^3)$ so
the corresponding axisymmetric no-swirl Euler solution is global. 

On the other hand, when $\beta\in (0,1]$, the relative vorticity $\omega_0/r$ only belongs to $L^{3,q}(\mathbb{R}^3)$ for $q>1/\beta\geq 1.$ As our ill-posedness results imply these Lorentz spaces are ill-posed for any $q>1$, one cannot extend the global regularity in the H\"older scale beyond $\beta>1$ towards $\beta\in(0,1]$ by merely using well-posedness in the Lorentz scale for $q>1$.

A schematic relation between the H\"older and the Lorentz criticality is drawn in Figure~\ref{Holder_Lorentz}.

\begin{figure}[t]
\centering
\begin{tikzpicture}[
    x=1cm,y=1cm,
    >=Latex,
    line cap=round,
    line join=round,
    axis/.style={very thick},
    tick/.style={line width=.9pt},
    imp/.style={->,line width=1pt},
    overbrace/.style={decorate,decoration={brace,amplitude=4.5pt}},
    underbrace/.style={decorate,decoration={brace,mirror,amplitude=4.5pt}},
    lab/.style={font=\small},
    sml/.style={font=\footnotesize,align=center}
]

% vertical levels
\def\ytop{6.2}
\def\ymid{3.5}
\def\ybot{0.8}

% common x-locations
\def\xL{3.4}
\def\xR{10.9}
\def\xOneTop{4.8}
\def\xThird{8.55}
\def\xZeroTop{10.35}
\def\xOneMid{4.15}
\def\xKJ{9.95}
\def\xOneBot{8.95}
\def\xZeroBot{10.35}

% left labels
\node[lab,anchor=east] at (2.65,\ytop) {$\omega_0\in C^\alpha$};
\node[lab,anchor=east] at (2.65,\ymid) {$\omega_0/r\in L^{3,q}$};
\node[lab,anchor=east] at (2.65,\ybot)
{$\omega_0\sim \dfrac{r^{1/3}}{(\log(e/r))^{\beta}}$};

% ----------------------------------------------------------------
% Top axis: alpha increases to the left
% ----------------------------------------------------------------
\node[lab] at (3.0,\ytop-0.02) {$\alpha$};
\draw[axis,<-] (\xL,\ytop) -- (\xR,\ytop);
\draw[tick] (\xOneTop,\ytop-0.13) -- (\xOneTop,\ytop+0.13);
% \draw[tick] (\xThird,\ytop-0.13) -- (\xThird,\ytop+0.13);
\draw (\xThird,\ytop) circle (2pt);
\draw (\xZeroTop,\ytop) circle (2pt);

\node[lab,above] at (\xOneTop,\ytop+0.16) {$1$};
\node[lab,above] at (\xThird,\ytop+0.16) {$\frac13$};
\node[lab,above] at (\xZeroTop,\ytop+0.10) {$0$};

\draw[underbrace] (\xOneTop,\ytop-0.20) -- (\xThird,\ytop-0.20)
    node[midway,below=7pt,lab] {Glob. Reg.};

\draw[underbrace] (\xThird,\ytop-0.20) -- (10.45,\ytop-0.20)
    node[midway,below=7pt,lab] {FTB};

\node[sml] at (9.85,7.35) {Elgindi '21\\ CMZ '25};
\draw[imp] (9.95,6.95) -- (9.95,6.33);

\node[lab] at (9.55,5.15) {Shkoller '26};

% ----------------------------------------------------------------
% Middle axis: q increases to the right
% ----------------------------------------------------------------
\draw[axis,-{Latex[length=2.8mm,width=2.2mm]}] (\xL,\ymid) -- (11.15,\ymid);
\node[lab] at (11.35,\ymid-0.02) {$q$};

\draw[tick] (\xOneMid,\ymid-0.13) -- (\xOneMid,\ymid+0.13);
\node[lab,above] at (\xOneMid,\ymid+0.16) {$1$};

\node[lab] at (4.45,2.85) {Glob. Reg.};
\node[lab] at (4.30,4.30) {Danchin '07};

\draw[underbrace] (\xOneMid,\ymid-0.20) -- (10.75,\ymid-0.20)
    node[midway,below=7pt,lab] {our ill-posedness};

\node[sml] at (9.95,4.40) {Kim--Jeong '22\\
norm inflation};
\draw[imp] (\xKJ,4.00) -- (\xKJ,3.63);

% implication to the top line
\draw[imp,double,double distance=0.8pt] (4.95,4.55) -- (6.05,5.20);

% ----------------------------------------------------------------
% Bottom axis: beta increases to the left
% ----------------------------------------------------------------
\node[lab] at (3.0,\ybot-0.02) {$\beta$};
\draw[axis,<-] (\xL,\ybot) -- (\xR,\ybot);

% \draw[tick] (\xOneBot,\ybot-0.13) -- (\xOneBot,\ybot+0.13);
\draw (\xOneBot,\ybot) circle (2pt);
\draw[tick] (\xZeroBot,\ybot-0.13) -- (\xZeroBot,\ybot+0.13);

\node[lab,below] at (\xOneBot,\ybot-0.16) {$1$};
\node[lab,below] at (\xZeroBot,\ybot-0.16) {$0$};

\draw[overbrace] (3.55,\ybot+0.20) -- (\xOneBot,\ybot+0.20)
    node[midway,above=7pt,lab] {Glob. Reg.};

% implication from Danchin endpoint to the logarithmic refinement
\draw[imp,double,double distance=0.8pt] (4.43,2.60) -- (5.55,1.85);

% implication from our Lorentz ill-posedness to the logarithmic borderline
\draw[imp,double,double distance=0.8pt] (8.40,2.45) -- (8.95,1.55);
\node[lab,align=left,anchor=west] at (9.15,2.10)
{cannot extend\\ Glob. Reg. further\\ by using Lorentz};

\end{tikzpicture}
\caption{Schematic relation between the H\"older threshold and the Lorentz threshold for axisymmetric no-swirl Euler. Here, Glob. Reg. and FTB stand for global regularity and finite-time blow-up respectively.
Top: Danchin plus local H\"older theory imply global regularity for $\alpha>\frac13$, while Shkoller's result gives finite-time blow-up for every $\alpha\in(0,\frac13)$; earlier blow-up results of Elgindi and CMZ covered the case when $\alpha>0$ is small.
Middle: Danchin gives global well-posedness at $\omega_0/r\in L^{3,1}$, Kim--Jeong proved ill-posedness for sufficiently large $q$, whereas our result is ill-posedness for every $q>1$.
Bottom: at the borderline $\alpha=\frac13$, the logarithmically improved model $\omega_0\sim r^{1/3}/(\log(e/r))^\beta$ is still covered by Danchin when $\beta>1$; the right-hand arrow indicates that our Lorentz-space ill-posedness prevents one from extending this global regularity statement further by Lorentz methods alone.}
\label{Holder_Lorentz}
\end{figure}

\subsection{Notation Conventions}
\label{sec:out_paper_not}

We record the notation conventions used throughout the paper. First, we distinguish three levels of variables. The physical solution is denoted by $u$ and $\omega$, and physical points are written in cylindrical coordinates $(r,\theta,z)$. When the reference point matters, we write $(r_x,\theta_x,z_x)$ for $x\in\mathbb{R}^3$. The $k$-th vortex ring is described in rescaled coordinates by the profile variables $W_k$, $V_k$, and the flow map $X_k$, together with the amplitude and geometry parameters $x_k$, $R_k$, and $H_k$. Since the $k$-th ring is placed at spatial scale $d^k$, smaller indices correspond to larger, more outer rings; accordingly, sums over $j<k$ represent the influence of outer rings on the $k$-th ring. Finally, the quantities $\Gamma_k$, $\Lambda$, $\Lambda_j$, $b_j$, $S_k$, $\zeta_k$, and $\Psi$ belong to the ODE/stretching cascade and should be read as reduced bookkeeping variables for the multiscale dynamics. A comprehensive lookup table for all these symbols is given in Appendix~\ref{sec:notation}.

In the main parts of the proofs of Theorems~\ref{main_thm1} and ~\ref{thm:instant_blow-up}, we use the following convention for constants. The fundamental parameters are $\tilde\varepsilon>0$ and $q>1$. After $q$ is fixed, the associated quantities $L_q$, $\alpha_q$, and $\phi_q$ are also regarded as fixed. After $\tilde\varepsilon$ is fixed and $\varepsilon=\varepsilon(\tilde\varepsilon,q)$ is chosen in \S~\ref{sec:ini_data_ansatz}, this $\varepsilon$ is likewise treated as fixed. Unsubscripted constants $C,c,\dots$ may depend on all of these fixed parameters including the globally fixed, geometric parameters introduced below, but unsubscripted constants are independent of the running parameters $k,m,A,t,T_N,T_B$ unless explicitly indicated.

We also regard the geometric parameters $r_0,\eta,\mu,d$ as fixed once and for all. In the present paper we take
\[
    r_0=1,\qquad \eta=\frac14,\qquad \mu=\frac1{20},\qquad d=10^{-2}.
\]
We continue to write the letters $r_0$, $\eta$, $\mu$, and $d$ in the estimates in order to keep the geometric meaning of the formulas visible. The particular numerical choice of these constants is not essential. Harmless constants may depend on these fixed parameters without further comment, but we suppress this dependence.

When we write $C(q)$ or $C_E(q)$, the displayed $q$ indicates only the dependence relevant to the discussion; such constants may still depend on the other fixed background parameters introduced above. Similarly, named constants such as $C_E$ or $C_{\mathrm{sep}}$ are simply labels for constants and obey the same convention unless a different dependence is stated locally.

When the domain of integration or of a norm is the whole space and no confusion can arise, we omit it. Unsubscripted constants $C,c,\dots$ are positive and may change from line to line. We likewise suppress nonessential dependence in symbols such as $\omega^{(m)}$, $W_k$, $x_k$, $R_k$, $H_k$, $X_k$, $T_N$, and $T_B$, unless that dependence is being used explicitly. This convention is used only in the main parts of the proofs of Theorems~\ref{main_thm1} and ~\ref{thm:instant_blow-up}. In lemmas, propositions, and appendices that are stated independently of the main theorems, any special dependence and constant conventions are stated locally.

Lastly,  meridional $(r,z)-$functions, such as $W_k,\phi_q$ and $\omega_k$, are also identified with their axisymmetric lifts to $\mathbb{R}^3$. For example, all $L^\infty$ norms are taken with respect to this identification. We will write $\|\phi_q\|_{L^\infty(\mathbb{R}^3)}$ instead of $\|\phi_q\|_{L^\infty(\mathbb{R}_+\times \mathbb{R})}$.

\subsection{Outline of the Paper}

Before we prove the main result, we will first establish a tool (cone-free outer-ring dominance, Proposition~\ref{prop:cone_free_key_lem}) in Section~\ref{sec:cone_free_key_lem}.  Section~\ref{sec:norm_inflation} treats the finite-ring problem and proves the norm inflation statement, Theorem~\ref{main_thm1}. Section~\ref{sec:inst_bu} treats the infinite-ring problem and proves instantaneous blow-up, Theorem~\ref{thm:instant_blow-up}. The first part develops the PDE-to-ODE reduction and closes a bootstrap argument; the second part revisits the same multiscale mechanism in a contradiction framework, where the infinitely many rings are handled through a finite-head bootstrap together with a separate control of the tail.

More precisely, in Section~\ref{sec:norm_inflation} we first fix the initial data, the rescaled variables, and the ansatz in \S~\ref{sec:ini_data_ansatz}, and we summarize the bootstrap strategy in \S~\ref{roadmap_boot_Euler}. In \S~\ref{sec:est_pro}, we derive the Cauchy formula for the rescaled profiles and use the bootstrap assumptions to freeze the transported profiles inside the Biot--Savart integrals. Section~\ref{sec:ODEs_Euler} is the ODE core of the paper: there we analyze the weak cascade for the ring amplitudes and aspect ratios, establish the front-migration lower bound, and obtain quantitative control of the ODE norm-inflation time. In \S~\ref{subsec:Euler_bootstrap_improvement}, we return to the PDE and prove confinement of supports, scale separation, and bounds for the inner, outer, and self-induced velocity contributions; these estimates improve the bootstrap assumptions and identify the bootstrap time with the ODE norm-inflation time. The proof of Theorem~\ref{main_thm1} is then completed in \S~\ref{sec:proof_Thm1}.

Section~\ref{sec:inst_bu} proves Theorem~\ref{thm:instant_blow-up}. In \S~\ref{sec:prel_lem}, we collect the preliminary ingredients needed for Theorem~\ref{thm:instant_blow-up}-(1). In \S~\ref{subsec:non_existence}, we carry out the finite-head bootstrap for the infinite-ring configuration and combine it with the contradiction hypothesis to prove the nonexistence result, Theorem~\ref{thm:instant_blow-up}-(1).  Section~\ref{sec:pf_rem}  shows that subsequential limits of the finite-ring approximations yields an (axisymmetric no-swirl) distributional solution that blows up instantaneously, which proves Theorem~\ref{thm:instant_blow-up}-(2).

The appendices collect auxiliary material. Appendix~\ref{app:two_lemmas} contains the proofs of the Danskin-type lemma and the Cauchy formula. Appendix~\ref{app:ODE_models} records two auxiliary ODE analyses discussed heuristically in \S~\ref{ODE_intro} but not used in the proofs of the main theorems: \S~\ref{HAodes} treats the flattened-coefficient model, while \S~\ref{frozen_pro_ODE_appdx} treats the frozen-profile model with a non-flattened Biot--Savart coefficient. Finally, Appendix~\ref{sec:notation} is a list of notation for quick reference.

\section{Cone-Free Outer-Region Dominance}
\label{sec:cone_free_key_lem}
In this section, before we prove our main results, we prove one of the tools we will use. We will prove that the outer region is dominant when estimating the velocity field $u$ induced by vorticity $\omega$ via the Biot--Savart law provided that $\omega$ is \emph{odd-symmetric in $z$}. 

See our sign convention related to the Biot--Savart law in Appendix~\ref{sec:notation}.

Let us first briefly review a lemma on this topic established by Kim--Jeong \cite[Lemma 2.1]{KimJeong22}. They proved the dominance of the outer-region, that is,
    \begin{align}
    \label{eq:KJ_key_lem}
    \begin{aligned}
        \left| \frac{u^r(x)}{r_x} - \frac{3}{8\pi} \int_{Q(r_x)} \frac{r_y z_y}{|y|^5}  \big( -\omega(y) \big) \, dy \right| 
    &\le 
    C \, \|\omega\|_{L^\infty}, 
    \\
    \left| \frac{u^z(x)}{z_x} + \frac{3}{4\pi}   \int_{Q(r_x)} \frac{r_y z_y}{|y|^5} \big( -\omega(y) \big) \, dy \right| 
    &\le C  \left( 1 + \left| \log \frac{r_x}{|z_x|} \right| \right) \|\omega\|_{L^\infty}
    \end{aligned}
\end{align}
where $Q(r_x) := \{ y \in \mathbb{R}^3 : r_y \ge 2r_x \}$,
under the odd-symmetry assumption together with $\omega\in L^\infty \cap L^2(\mathbb{R}^3)$ and a cone restriction
    \begin{align}
    \label{eq:KJ_cone}
        r_x \geq |z_x|.
    \end{align}

Due to the cone restriction \eqref{eq:KJ_cone}, it is not applicable to our situation since a large cone-slope $L_q=z_{0,q}/r_0$ plays a central role in our analysis. Hence, one needs to generalize their cone restriction to an arbitrary cone slope $L\geq |z_x|/r_x$. At the same time, it is reasonable to try to generalize it while keeping the outer-region $Q(r_x)$, that is independent of the cone slope $L$. Otherwise, a large cone slope $L_q$ will impose a more restrictive scale separation between rings. 

To this end, our strategy is to first completely remove the cone restriction from Kim--Jeong's lemma in order to isolate the role of the cone restriction. Then we apply a generalized cone restriction to the cone-free lemma when we utilize it for our main results.

With the cone restriction, it holds $|x|\sim r_x$. Hence, it is natural to  try to make estimates with $Q(r_x)$ in \eqref{eq:KJ_key_lem} replaced by $Q(|x|)$. However, we still want to keep the outer-region $Q(r_x)$. Hence, we will first derive the $Q(|x|)$-estimates and extend it to $Q(r_x)$-estimates. 

\begin{proposition}[Cone-free outer-region dominance]
\label{prop:cone_free_key_lem}
    Assume that a function $\omega=\omega(r,z) \in L^{\infty}\cap L^2(\mathbb{R}^3)$ and an axisymmetric, no-swirl velocity field $u$ are related via the axisymmetric Biot--Savart law \eqref{eq:axis_BS_law}.
Assume further that $\omega$ is odd with respect to $z$: $\omega(r,z) = -\omega(r,-z)$.
For any $x\in \mathbb{R}^3$ and any positive number $\rho> 0$, define an outer-region domain $Q(\rho)$ by
\[
Q(\rho) := \{ y \in \mathbb{R}^3 : r_y \ge 2\rho \}.
\]
and denote the velocity-errors that we want to estimate by
    \begin{align*}
        E^r(x;\rho)
        &:=
        \frac{u^r(x)}{r_x} - \frac{3}{8\pi} \int_{Q(\rho)} \frac{r_y z_y}{|y|^5} \big(-\omega(y) \big) \, dy, \qquad r_x>0
        \\
        E^z(x;\rho)
        &:=
         \frac{u^z(x)}{z_x} + \frac{3}{4\pi}   \int_{Q(\rho)} \frac{r_y z_y}{|y|^5} \big(-\omega(y) \big) \, dy, \qquad z_x\neq 0.
    \end{align*}
Then there exists an absolute constant $C > 0$ such that for any $x \in \mathbb{R}^3$, the following estimates hold whenever the displayed denominators are nonzero:
\begin{itemize}
\item \emph{\textbf{The outer-region $Q(|x|)$ with the largest cut-off scale $|x|$.}}
 \begin{align}
 \label{eq:Er_x}
     |E^r(x;|x|)| 
     &\leq C \frac{|x|}{r_x} \|\omega\|_{L^\infty (\mathbb{R}^3)},
     \\
     \nonumber
     |E^z(x;|x|)|
     &\leq C\left( 1+ \log \frac{|x|}{|z_x|}\right)\|\omega\|_{L^\infty (\mathbb{R}^3)};
 \end{align}
\item \emph{\textbf{An outer-region $Q(\rho)$ with a smaller cut-off scale $\rho \leq |x|$.}}
For any $0<\rho \leq |x|$,
\begin{align*}
     \quad\quad\quad\quad  |E^r(x;\rho)| 
     &\leq C \left( \frac{|x|}{r_x} 
     + \log \frac{|x|}{\rho} \right)
     \|\omega\|_{L^\infty (\mathbb{R}^3)}, 
     \\
     \quad \quad\quad\quad |E^z(x;\rho)|
     &\leq C\left( 1+ \log \frac{|x|}{|z_x|} + \log \frac{|x|}{\rho }\right)\|\omega\|_{L^\infty (\mathbb{R}^3)};
 \end{align*}
\item  \emph{\textbf{A special outer-region $Q(r_x)$ with $\rho=r_x$}.}
\begin{align}
    \label{eq:key_lem_ur_rx}
     \quad\quad\quad\quad\quad |E^r(x;r_x)| 
     &\leq C \frac{|x|}{r_x} 
     \|\omega\|_{L^\infty (\mathbb{R}^3)}, 
     \\
     \nonumber
     \quad\quad\quad\quad\quad |E^z(x;r_x)|
     &\leq C\left( 1+ \left| \log \frac{r_x}{|z_x|}\right|\right)\|\omega\|_{L^\infty (\mathbb{R}^3)}.
 \end{align}
\end{itemize}
\end{proposition}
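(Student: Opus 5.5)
The proof splits into three stages, one for each bullet. First we prove the $Q(|x|)$ estimates \eqref{eq:Er_x} directly from the Biot--Savart law, essentially redoing the Kim--Jeong argument \cite[Lemma 2.1]{KimJeong22} but measuring every cutoff in units of $|x|$ instead of $r_x$. Then the two remaining bullets follow by comparing the main terms over $Q(\rho)$, $Q(r_x)$ and $Q(|x|)$.

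\emph{Step 1: reduction to a meridional kernel and symmetrization.} Integrate \eqref{eq:axis_BS_law} in $\theta_y$ to write $u^r(x)=\iint G^r(r_x,z_x;r_y,z_y)\,\omega(r_y,z_y)\,r_y\,dr_y\,dz_y$, and similarly for $u^z$. Using oddness of $\omega$ in $z$, replace the kernel by its odd part in $z_y$:
\[
\tfrac12\bigl(G(\cdot;r_y,z_y)-G(\cdot;r_y,-z_y)\bigr),
\]
and integrate only over $z_y>0$. Split the $y$-domain into three regions:
\begin{itemize}
\item the far region $Q(|x|)$, where $|y|\gtrsim r_y\ge 2|x|$;
\item the near region $\{r_y<2|x|,\ |y|\le 4|x|\}$;
\item the intermediate region $\{r_y<2|x|,\ |y|>4|x|\}$, i.e.\ a thin tube around the axis with $|z_y|\gg |x|$.
\end{itemize}

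\emph{Step 2: the far and intermediate regions.} Here $|x|\le |y|/2$. Taylor expand the three-dimensional kernel $\frac{(x-y)\times e_\theta(y)}{|x-y|^3}$ in $x$ around $0$. The zeroth-order term vanishes after $\theta$-integration by axisymmetry. The first-order term, after the angular integration, gives $x$-linear velocity $u^r\approx \tfrac{3}{8\pi}r_x\int \frac{r_yz_y}{|y|^5}(-\omega)$ and $u^z\approx-\tfrac{3}{4\pi}z_x\int\cdots$; the trace relation is forced by incompressibility. The second-order remainder is bounded by $|x|^2/|y|^4$. Divided by $r_x$ this is not directly good, so use oddness in $x$: $u^r$ is odd in the horizontal variable. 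The even-order terms therefore carry an extra factor $r_x$, while the odd-order terms give $r_x|x|^2/|y|^5$.

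Over $Q(|x|)$ this yields an error $\lesssim \|\omega\|_\infty\int_{|y|\ge 2|x|}|x|^2|y|^{-5}\,dy\lesssim\|\omega\|_\infty$. In the intermediate tube, $r_y<2|x|$ and $|y|\sim|z_y|>4|x|$, so the volume element is $|x|^2\,dz_y$. The main term itself is then bounded by $\int |x|\cdot|x|^2 |z|^{-5}\,dz\lesssim 1$, and the remainder is similar, so no log appears and this region is absorbed. For $u^z/z_x$ we instead use oddness of $u^z$ in $z_x$, which follows from oddness of $\omega$, so the same expansion applies with $z_x$ factored out.

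\emph{Step 3: the near region, which is the main obstacle.} Here no expansion is available, so we bound the terms crudely.
\begin{itemize}
\item For $u^r$: $|u^r(x)|\le C\|\omega\|_\infty\int_{|y|\le 4|x|}|x-y|^{-2}\,dy\lesssim |x|\,\|\omega\|_\infty$. The near-region main term is excluded since it lies outside $Q(|x|)$. Dividing by $r_x$ gives the factor $|x|/r_x$ in \eqref{eq:Er_x}.
\item For $u^z/z_x$ this crude bound loses $|x|/|z_x|$, which is too much. Instead use the $z$-odd symmetrized kernel: its difference between $z_y$ and $-z_y$ is $\lesssim |z_x|\,|x-y|^{-3}$ away from $|x-y|\lesssim|z_x|$. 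Integrating $|z_x||x-y|^{-3}$ over $|z_x|\lesssim|x-y|\lesssim|x|$ gives $|z_x|\log(|x|/|z_x|)$. The ball $|x-y|\lesssim|z_x|$ contributes $O(|z_x|)$. This yields the logarithmic term.
\end{itemize}
This careful symmetrization near $x$ is where I expect the work to lie, especially the $\theta_y$-integration when $x$ is close to the axis.

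\emph{Step 4: the other cutoffs.} For $\rho\le|x|$, the difference of main terms is the integral over $Q(\rho)\setminus Q(|x|)=\{2\rho\le r_y<2|x|\}$. Bound it by
\[
\|\omega\|_\infty\int_{2\rho\le r_y<2|x|}\frac{r_y|z_y|}{|y|^5}\,dy\lesssim\int_{2\rho}^{2|x|}\frac{r^2\,dr}{r^3}\lesssim \log\frac{|x|}{\rho},
\]
using $\int_{\mathbb{R}}|z|(r^2+z^2)^{-5/2}\,dz\sim r^{-3}$. This proves the second bullet.

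For $\rho=r_x$, note that if $r_x\ge |z_x|$ then $|x|\sim r_x$, so the log is $O(1)$ and the first bullet's bounds transfer. If $r_x<|z_x|$, then $\log(|x|/r_x)\lesssim 1+\log(|z_x|/r_x)$, and together with $\log(|x|/|z_x|)=O(1)$ this gives the $E^z$ bound in \eqref{eq:key_lem_ur_rx}. For $E^r$ the log would be absorbed into $|x|/r_x$, since $\log s\le s$. Hence $|E^r(x;r_x)|\lesssim |x|/r_x$ follows, and the case $r_x\ge|x|/\sqrt2$ is immediate.
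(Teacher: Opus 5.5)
Your proposal is correct. Its skeleton is the paper's: prove the $Q(|x|)$ bounds first, pass to $Q(\rho)$ via the annular identity $\int_{\{2\rho\le r_y<2|x|\}} r_y|z_y|\,|y|^{-5}\,dy=\frac{4\pi}{3}\log(|x|/\rho)$, and treat $\rho=r_x$ by elementary logarithm arithmetic. Your handling of $u^r$ (first-order expansion on $Q(|x|)$, crude $|x-y|^{-2}$ bound elsewhere) uses the paper's mechanism. So does your treatment of $u^z$ near $x$ (reflection $y\mapsto\bar y=(y_h,-z_y)$).

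The real difference is how you extract the factor $z_x$ away from $x$. The paper works at the kernel level. With $A=|x-y|$, $B=|x-\bar y|$ and $B^2-A^2=4z_xz_y$, it uses the exact identity $A^{-3}-B^{-3}=4z_xz_y(A^2+AB+B^2)/(A^3B^3(A+B))$ and expands the smooth factor around $A=B=|y|$ on $Q(|x|)$. On the \emph{whole} cylinder $\{r_y<2|x|\}$ it uses $|(r_x\cos\theta_y-r_y)(A^{-3}-B^{-3})|\le Cz_xz_y/(A^2B^2)$ and an explicit integral that produces the logarithm. These kernels are integrable over the infinite cylinder (e.g.\ $\int_{\{r_y<2|x|\}}|x-y|^{-2}dy\lesssim|x|$), so no intermediate tube is needed.

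You obtain $z_x$ from parity instead. On a $z$-symmetric part of the domain with $|y|\ge 2|x|$, the induced field $F$ is smooth on $\{|x'|\le|x|\}$ and $F^z$ is odd in $z$. The mean value theorem then gives $|F^z(x)-z_x\partial_zF^z(0)|\le|z_x|\,|x|\sup|D^2F^z|\lesssim|z_x|\,\|\omega\|_{L^\infty}$, and axisymmetry plus $\operatorname{div}F=0$ identify $\partial_zF^z(0)=-2\partial_rF^r(0)$. This route is softer and more structural: it uses only symmetry, smoothness away from the source, and incompressibility. The cost is the extra tube region, and the parity step has to be written as a remainder estimate like the one above, not as a statement about individual Taylor terms.

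Three small corrections:
\begin{itemize}
\item The remark that the second-order remainder for $u^r$ is ``not directly good'' is mistaken. It integrates to $\lesssim|x|\,\|\omega\|_{L^\infty}$ over $Q(|x|)$, which after division by $r_x$ is exactly the claimed $C|x|/r_x$. The parity refinement for $u^r$ is therefore unnecessary. Its even/odd-order description is also muddled, since every term of a field odd in $x_h$ carries a factor of $x_h$.
\item In the tube, $r_y|z_y|/|y|^5\lesssim|x|\,|z_y|^{-4}$, not $|x|\,|z_y|^{-5}$. With the exponent you wrote, the integral would be of size $|x|^{-1}$ rather than $O(1)$.
\item The reflection bound you assert near $x$ holds for all $z_x,z_y>0$ and needs no angular integration, which removes your worry about $x$ near the axis. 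Indeed $|r_x\cos\theta_y-r_y|\le A$ and $B\ge\max\{A,z_y\}$ give $|(r_x\cos\theta_y-r_y)(A^{-3}-B^{-3})|\le 12z_xz_y/(A^2B^2)\le 12z_x/A^3$. The integrable form $12z_xz_y/(A^2B^2)\le 12/A^2$ also covers the small ball $|x-y|\lesssim|z_x|$.
\end{itemize}
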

The estimates for $Q(\rho), \rho \leq |x|,$ and $Q(r_x)$ follow from the $Q(|x|)$ estimates and the elementary annular enlargement bound
\[
\left|
\int_{Q(\rho)\setminus Q(|x|)}
\frac{r_yz_y}{|y|^5}\omega(y)\,dy
\right|
\le
\frac{4\pi}{3}\log\frac{|x|}{\rho}\|\omega\|_{L^\infty},
\qquad 0<\rho\le |x|.
\]
Indeed, in cylindrical coordinates, 
\begin{align}
\label{eq:ele_bd}
\int_{Q(\rho)\setminus Q(|x|)}
\left|\frac{r_yz_y}{|y|^5}\right|\,dy
=2\pi
\int_{2\rho}^{2|x|}
\int_{\mathbb R}
\frac{s^2|z|}{(s^2+z^2)^{5/2}}\,dz\,ds
=\frac{4\pi}{3}
\int_{2\rho}^{2|x|}\frac{ds}{s}
=\frac{4\pi}{3}
\log\frac{|x|}{\rho}.
\end{align}

As a result of removing the cone restriction, the right-hand side of our radial estimate~\eqref{eq:key_lem_ur_rx} has a new factor $|x|/r_x$ as opposed to Kim--Jeong's Key Lemma.
However, a generalized cone condition $L\, r_x \geq |z_x|$ will allow us to bound the new factor $\frac{|x|}{r_x} $ by $\sqrt{1+L^2}$. Thus, the new radial error factor is harmless in our application. 

The $L^2$ condition of $\omega$ is not optimal but suffices for our proof.

To prove our main result, we will only use the $Q(r_x)$-estimates, and we will do so with a generalized cone condition $Lr_x\geq |z_x|$. But we still stated $Q(|x|)$-and $Q(\rho)$-estimates since this statement indicates the structure of our proof, and this general statement might be useful for other problems as various versions of this lemma were derived and used in other literature.
 See \cite[Page~6]{KimJeong22} for a historical note of the ``Key Lemma" in the topic of ill-posedness of the Euler equations for more references.

\begin{proof}
Throughout the proof, \(C>0\) denotes an absolute constant which may change
from line to line. By rotational invariance, we may assume without loss of
generality that
\[
    x=(r_x,0,z_x).
\]
We write
\[
    y=(r_y\cos\theta_y,r_y\sin\theta_y,z_y),
\]
so that
\[
    x\cdot y=r_xr_y\cos\theta_y+z_xz_y.
\]
For every axisymmetric set \(E\subset\mathbb R^3\) and every integrable
axisymmetric function \(F=F(r_y,z_y)\), we shall use
\begin{equation}
\label{eq:angular_cancel_conefree}
    \int_E \cos\theta_y\,F(r_y,z_y)\,dy=0,
    \qquad
    \int_E \cos^2\theta_y\,F(r_y,z_y)\,dy
    =
    \frac12\int_E F(r_y,z_y)\,dy .
\end{equation}
These identities are first justified on truncated sets \(E\cap B_R(0)\) and
then by letting \(R\to\infty\). The required convergence follows from
\(\omega\in L^\infty\cap L^2\) and the fact that the far-field kernels appearing
below are square-integrable at infinity.

First note that the main integral is well-defined. Indeed, on \(Q(\rho)\) we have
\(|y|\ge r_y\ge2\rho>0\), so there is no singularity near the origin, and
\[
    \left|\frac{r_yz_y}{|y|^5}\right|
    \le \frac{1}{|y|^3}.
\]
The function \(|y|^{-3}\) is locally integrable away from the origin and belongs
to \(L^2(\{|y|\ge1\})\), while \(\omega\in L^\infty\cap L^2\).

We will mainly estimate the velocity-errors $E^r(x;|x|), E^z(x,|x|)$ with the outer-region $Q(|x|)$. The other estimates follow from elementary computations, which we will carry out at the last step.

\medskip
\noindent\textbf{Step 1: the radial estimate.}
From the radial component of the axisymmetric Biot--Savart law,
\[
    u^r(x)
    =
    \frac{1}{4\pi}
    \int_{\mathbb R^3}
    \frac{(z_x-z_y)\cos\theta_y}{|x-y|^3}\omega(y) \,dy.
\]
We write
\[
    u^r(x)=\frac{1}{4\pi}(I_{Q}+I_{Q^c}),
\]
where
\begin{align*}
    I_{Q}
    :&=
    \int_{Q(|x|)}
    \frac{(z_x-z_y)\cos\theta_y}{|x-y|^3} \omega(y) \,dy,
    \\
    I_{Q^c}
    :&=
    \int_{Q(|x|)^c}
    \frac{(z_x-z_y)\cos\theta_y}{|x-y|^3}\omega(y)\,dy
\end{align*}
where $Q(|x|)^c$ denotes the complement of $Q(|x|)$.

On \(Q(|x|)\), we have \(|y|\ge r_y\ge 2|x|\), and therefore
\[
    \left|
    \frac{1}{|x-y|^3}
    -
    \frac{1}{|y|^3}
    -
    \frac{3x\cdot y}{|y|^5}
    \right|
    \le
    C\frac{|x|^2}{|y|^5}.
\]
Since \(|z_x-z_y|\le |x|+|y|\le C|y|\) on \(Q(|x|)\), the error
satisfies
\[
    |\mathcal{E}_{Q}|
    \le C|x|^2\|\omega\|_{L^\infty}
    \int_{|y|\ge 2|x|}\frac{1}{|y|^4}\,dy
    \le
    C|x|\|\omega\|_{L^\infty}.
\]
Thus
\[
\begin{aligned}
    I_{Q}
    &=
    \int_{Q(|x|)}
    (z_x-z_y)\cos\theta_y
    \left(
    \frac1{|y|^3}
    +
    \frac{3x\cdot y}{|y|^5}
    \right)\omega(y) \,dy
    +\mathcal{E}_{Q}.
\end{aligned}
\]
The term containing \(|y|^{-3}\) vanishes by the first identity in
\eqref{eq:angular_cancel_conefree}. Since
\[
    x\cdot y=r_xr_y\cos\theta_y+z_xz_y,
\]
the part containing \(z_xz_y\cos\theta_y\) also vanishes by angular integration.
Hence
\[
    I_{Q}
    =
    3r_x
    \int_{Q(|x|)}
    \frac{(z_x-z_y)r_y\cos^2\theta_y}{|y|^5}
    \omega(y) \,dy
    +\mathcal{E}_{Q}.
\]
The contribution of \(z_x\) vanishes because \(Q(|x|)\) is symmetric
with respect to \(z_y\mapsto -z_y\), while
\(r_y|y|^{-5}\cos^2\theta_y\) is even in \(z_y\) and \(\omega\) is odd in \(z_y\).
Therefore, using the second identity in \eqref{eq:angular_cancel_conefree},
\begin{align}
\label{eq:radial_far_error_conefree}
    I_{Q}
    =
    \frac{3r_x}{2}
    \int_{Q(|x|)}
    \frac{r_yz_y}{|y|^5} \big(-\omega(y)\big) \,dy
    +\mathcal{E}_{Q},
\qquad
    |\mathcal{E}_{Q}|
    \le
    C|x|\|\omega\|_{L^\infty}.
\end{align}

Next, we estimate the complementary region \(Q(|x|)^c=\{r_y<2|x|\}\). Since
\[
    |z_x-z_y|\le |x-y|,
\]
we have
\[
    |I_{Q^c}|
    \le
    \|\omega\|_{L^\infty}
    \int_{\{r_y<2|x|\}}\frac{1}{|x-y|^2}\,dy.
\]
Denoting the projections of $x,y$ onto the horizontal plane by $x_h, y_h\in \mathbb{R}^2$,
let \(h=y_h-x_h\) and \(s=z_y-z_x\). If \(r_y<2|x|\), then
\[
    |h|\le r_y+r_x\le 3|x|.
\]
Consequently,
\[
\begin{aligned}
    \int_{\{r_y<2|x|\}}\frac{1}{|x-y|^2}\,dy
    &\le
    \int_{\{|h|\le 3|x|\}}\int_{\mathbb R}
    \frac{1}{|h|^2+s^2}\,ds\,dh
    =
    C\int_0^{3|x|}d\rho=C|x|.
\end{aligned}
\]
Thus
\[
    |I_{Q^c}|
    \le
    C|x|\|\omega\|_{L^\infty}.
\]

Combining the estimates above gives
\[
    \left|
    I_{Q}+I_{Q^c}
    -
    \frac{3r_x}{2}
    \int_{Q(|x|)}
    \frac{r_yz_y}{|y|^5}\big( -\omega(y) \big)\,dy
    \right|
    \le
    C|x|\|\omega\|_{L^\infty}.
\]
Since $u^r(x)=\frac{1}{4\pi}
    (I_{Q}+I_{Q^c}),$
dividing by \(r_x\) proves \eqref{eq:Er_x}.

\medskip
\noindent\textbf{Step 2: an elementary logarithmic integral.}
We shall use the following estimate: for all \(R>0\) and \(a>0\),
\begin{equation}
\label{eq:log_integral_conefree}
    \int_{\{|h|\le R\}}\int_0^\infty
    \frac{z}
    {\big(|h|^2+(z-a)^2\big)\big(|h|^2+(z+a)^2\big)}
    \,dz\,dh
    \le
    C\left(1+\log_+\frac{R}{a}\right),
\end{equation}
where \(\log_+s:=\max\{\log s,0\}\).

Indeed, writing \(\rho=|h|\), we first observe that
\[
\begin{aligned}
    \int_0^\infty
    \frac{z}
    {(\rho^2+(z-a)^2)(\rho^2+(z+a)^2)}
    \,dz
    &=
    \frac{1}{4a}
    \int_0^\infty
    \left[
    \frac{1}{\rho^2+(z-a)^2}
    -
    \frac{1}{\rho^2+(z+a)^2}
    \right]dz
    \\
    &=
    \frac{1}{4a}
    \int_{-a}^{a}\frac{1}{\rho^2+s^2}\,ds
    =
    \frac{1}{2a\rho}\arctan\left(\frac{a}{\rho}\right).
\end{aligned}
\]
 Therefore,
\[
\begin{aligned}
    \int_{\{|h|\le R\}}\int_0^\infty
    \frac{z}
    {\big(|h|^2+(z-a)^2\big)\big(|h|^2+(z+a)^2\big)}
    \,dz\,dh
    =
    \frac{\pi}{a}\int_0^R
    \arctan\left(\frac{a}{\rho}\right)\,d\rho .
\end{aligned}
\]
The part \(0< \rho\le a\) is bounded by an absolute constant, and for
\(\rho\ge a\) we use \(\arctan(a/\rho)\le a/\rho\). This proves
\eqref{eq:log_integral_conefree}.

\medskip
\noindent\textbf{Step 3: the vertical estimate.}
We will estimate for \(u^z\). By the reflection symmetry in \(z\), it is
enough to consider \(z_x>0\). Indeed, if \(\widetilde x=(r_x,0,-z_x)\), then
\[
    u^z(\widetilde x)=-u^z(x),
    \qquad
    \widetilde z_x=-z_x,
    \qquad
    Q(|\widetilde x|)=Q(|x|).
\]
Thus the estimate for \(\widetilde x\) is equivalent to the estimate for \(x\).

Assume now \(z_x>0\). Let
\[
    \bar y:=(y_h,-z_y),
    \qquad
    A:=|x-y|,
    \qquad
    B:=|x-\bar y|.
\]
Using the oddness of \(\omega\) in \(z_y\), we rewrite the vertical Biot--Savart law as
\[
    u^z(x)
    =
    -
    \frac{1}{4\pi}
    \int_{\{z_y>0\}}
    (r_x\cos\theta_y-r_y)
    \left(\frac1{A^3}-\frac1{B^3}\right)
\omega(y)\,dy.
\]
Since
\[
    B^2-A^2=4z_xz_y,
\]
we have the exact identity
\begin{equation}
\label{eq:diff_kernel_vertical_conefree}
    \frac1{A^3}-\frac1{B^3}
    =
    \frac{4z_xz_y(A^2+AB+B^2)}{A^3B^3(A+B)}.
\end{equation}
Write
\[
    u^z(x)=\frac{1}{4\pi}(J_{Q}+J_{Q^c}),
\]
where the two terms are the integrals over
\[
    Q(|x|)\cap\{z_y>0\},
    \qquad
    Q(|x|)^c\cap\{z_y>0\},
\]
respectively.

We first estimate \(J_{Q}\). On \(Q(|x|)\), we have
\[
    \frac12|y|\le A,B\le \frac32|y|.
\]
Define
\[
    G(a,b):=\frac{a^2+ab+b^2}{a^3b^3(a+b)}.
\]
Then \(G\) is smooth near \((1,1)\) and \(G(1,1)=3/2\). Hence
\[
    \left|
    \frac{A^2+AB+B^2}{A^3B^3(A+B)}
    -
    \frac{3}{2|y|^5}
    \right|
    \le
    C\frac{|x|}{|y|^6}
    \qquad\text{on }Q(|x|).
\]
Using \eqref{eq:diff_kernel_vertical_conefree}, we obtain
\[
\begin{aligned}
    J_{Q}
    &=
    -6z_x
    \int_{Q(|x|)\cap\{z_y>0\}}
    (r_x\cos\theta_y-r_y)
    \frac{z_y}{|y|^5}
    \omega(y) \,dy
    +\mathcal{E}_{Q}^z .
\end{aligned}
\]
The term containing \(r_x\cos\theta_y\) vanishes by angular integration, and therefore
\[
    J_{Q}
    =
    6z_x
    \int_{Q(|x|)\cap\{z_y>0\}}
    \frac{r_yz_y}{|y|^5}\omega(y) \,dy
    +\mathcal{E}_{Q}^z .
\]
The error satisfies
\[
\begin{aligned}
    |\mathcal{E}_{Q}^z|
    &\le
    C z_x |x| \|\omega\|_{L^\infty}
    \int_{|y|\ge 2|x|}
    \frac{(r_x+r_y)z_y}{|y|^6}\,dy
    \\
    &\le
    C z_x |x| \|\omega\|_{L^\infty}
    \int_{|y|\ge 2|x|}
    \frac{1}{|y|^4}\,dy
    \le
    C z_x\|\omega\|_{L^\infty}.
\end{aligned}
\]

It remains to estimate \(J_{Q^c}\), the contribution of
\(Q(|x|)^c\cap\{z_y>0\}\). As above,
\[
    \left|
    (r_x\cos\theta_y-r_y)
    \left(\frac1{A^3}-\frac1{B^3}\right)
    \right|
    \le
    C z_x
    \frac{z_y}{A^2B^2}.
\]
On \(Q(|x|)^c=\{r_y<2|x|\}\), if \(h=y_h-x_h\), then \(|h|\le 3|x|\). Thus,
by \eqref{eq:log_integral_conefree} with \(R=3|x|\) and \(a=z_x\),
\[
\begin{aligned}
    |J_{Q^c}|
    &\le
    C z_x\|\omega\|_{L^\infty}
    \int_{\{|h|\le 3|x|\}}\int_0^\infty
    \frac{z_y}
    {\big(|h|^2+(z_y-z_x)^2\big)
     \big(|h|^2+(z_y+z_x)^2\big)}
    \,dz_y\,dh
    \\
    &\le
    C z_x
    \left(1+\log\frac{|x|}{z_x}\right)
    \|\omega\|_{L^\infty}.
\end{aligned}
\]

Combining the estimates for \(J_{Q},J_{Q^c}\), we obtain
\[
    \left|
    J_{Q}+J_{Q^c}
    +
    6z_x
    \int_{Q(|x|)\cap\{z_y>0\}}
    \frac{r_yz_y}{|y|^5}\big( -\omega(y) \big)\,dy
    \right|
    \le
    C z_x
    \left(1+\log\frac{|x|}{z_x}\right)
    \|\omega\|_{L^\infty}.
\]
Finally, since \(z_y\omega(y)\) is even in \(z_y\),
\[
    \int_{Q(|x|)\cap\{z_y>0\}}
    \frac{r_yz_y}{|y|^5}\omega(y)\,dy
    =
    \frac12
    \int_{Q(|x|)}
    \frac{r_yz_y}{|y|^5}\omega(y)\,dy.
\]
Therefore,
\[
    \left|
    J_{Q}+J_{Q^c}
    +
    3z_x
    \int_{Q(|x|)}
    \frac{r_yz_y}{|y|^5} \big( -\omega(y)\big)\,dy
    \right|
    \le
    C z_x
    \left(1+\log\frac{|x|}{z_x}\right)
    \|\omega\|_{L^\infty}.
\]
Since
\[
    u^z(x)
    =
    \frac1{4\pi}
    (J_{Q}+J_{Q^c})
\]
dividing by \(z_x>0\) gives
\[
    \left|
    \frac{u^z(x)}{z_x}
    +
    \frac{3}{4\pi}
    \int_{Q(|x|)}
    \frac{r_yz_y}{|y|^5}\big( -\omega(y) \big)\,dy
    \right|
    \le
    C
    \left(1+\log\frac{|x|}{z_x}\right)
    \|\omega\|_{L^\infty}.
\]
As explained at the beginning of Step 3, the case \(z_x<0\) follows by reflection.
Replacing \(z_x\) by \(|z_x|\) inside the logarithm finishes the proof for the $Q(|x|)$-estimate.

\medskip
\noindent\textbf{Step 4: smaller cutoffs and the special choice \(\rho=r_x\).}
Let
\[
A(\rho):=
\int_{Q(\rho)}
\frac{r_yz_y}{|y|^5} \big( -\omega(y) \big)\,dy.
\]
For \(0<\rho\le |x|\), the annular estimate \eqref{eq:ele_bd} gives
\[
|A(\rho)-A(|x|)|
\le
\frac{4\pi}{3}
\log\frac{|x|}{\rho}\,
\|\omega\|_{L^\infty}.
\]
Hence
\[
|E^r(x;\rho)|
\le
|E^r(x;|x|)|
+
\frac{3}{8\pi}|A(\rho)-A(|x|)|
\le
C\left(
\frac{|x|}{r_x}
+
\log\frac{|x|}{\rho}
\right)
\|\omega\|_{L^\infty},
\]
and similarly
\[
|E^z(x;\rho)|
\le
C\left(
1+\log\frac{|x|}{|z_x|}
+
\log\frac{|x|}{\rho}
\right)
\|\omega\|_{L^\infty}.
\]

Taking \(\rho=r_x\), the radial estimate follows from
\[
\log\frac{|x|}{r_x}\le C\frac{|x|}{r_x}.
\]
For the vertical estimate,  
\[
\log\frac{|x|}{|z_x|}+\log\frac{|x|}{r_x}
=
\log\frac{|x|^2}{|z_x|r_x}
=
\log\left(\frac{r_x}{|z_x|}+\frac{|z_x|}{r_x}\right)
\le
C\left(1+\left|\log\frac{r_x}{|z_x|}\right|\right).
\]
Therefore
\[
|E^z(x;r_x)|
\le
C\left(
1+\left|\log\frac{r_x}{|z_x|}\right|
\right)
\|\omega\|_{L^\infty}.
\]

\end{proof}

\section{Norm Inflation}
\label{sec:norm_inflation}
We are now ready to establish norm inflation.
\subsection{Initial Data and Ansatz}
\label{sec:ini_data_ansatz}
Given any Lorentz exponent $q>1$, we fix
a cone-slope parameter $L_q$ by \eqref{eq:cond_L5} and the logarithmic decay exponent $\alpha_q$ by \eqref{eq:norm_inflation_range_rem}. Then we fix a profile $\phi_q(r,z)$ by \eqref{cond_phi}, which will determine our data \eqref{KJdata2}.

As we fix $r_0=1$, the fixed cone-slope $L_q=z_0/r_0$ determines $z_0$ uniquely. Let us denote this by $z_{0,q}$.

These parameters $L_q$, $\alpha_q$, and $\phi_q$ are fixed once and for all throughout the paper.  We will indicate explicitly where the assumptions on them are used.

For an integer $m\geq2$ and a parameter $\varepsilon>0$, define initial data  $\omega_0^{(m)}$ by \eqref{KJdata2} with the fixed profile $\phi_q $.

As $0<\alpha_q<1$,
\begin{align}
\label{ini_lorentz14}
        \left\| 
        \frac{\omega_0^{(m)}(x)}{r}  
        \right \|^q_{L^{3,q}(\mathbb{R}^3)}
    \leq C(q)
        \varepsilon^q \,\left\|\frac{\phi_q}{r} \right\|_{L^{3,q}(\mathbb{R}^3)}^q    \sum_{k=1}^m \frac{1}{k^{\alpha_q\,  q}}
\leq C(q)
        \varepsilon^q \,\left\|\frac{\phi_q}{r} \right\|_{L^{3,q}(\mathbb{R}^3)}^q    \sum_{k=1}^\infty \frac{1}{k^{\alpha_q\,  q}}
    <\infty.
\end{align}
In addition, as $\omega_{0,k}$'s have  disjoint support,
    \begin{align*}
        \|\omega_0^{(m)}\|_{L^1(\mathbb{R}^3)}
        =\sum_{k=1}^m \|\omega_{0,k}\|_{L^1(\mathbb{R}^3)} =
        \|\phi_q\|_{L^1(\mathbb{R}^3)}
        \sum_{k=1}^m
        \frac{\varepsilon}{k^{\alpha_q}}
        d^{3k} 
        \leq
        \|\phi_q\|_{L^1(\mathbb{R}^3)}
        \, \varepsilon
        \sum_{k=1}^\infty d^{3k}.
    \end{align*}
And $\|\omega_0^{(m)}\|_{L^\infty(\mathbb{R}^3)}\leq \varepsilon \|\phi_q\|_{L^\infty(\mathbb{R}^3)}. $
Therefore, given any $\tilde \varepsilon>0$, we can choose a small $\varepsilon\in(0,\tilde \varepsilon)$ depending only on $\tilde \varepsilon, q$  so that $\|\omega_0^{(m)}\|_{L^\infty\cap L^{1}(\mathbb{R}^3) } +\|\omega_0^{(m)}/r\|_{L^{3,q}(\mathbb{R}^3)}<\tilde \varepsilon$.

Several quantities in the proof, such as $T_N$ and $T_B$, depend on $\varepsilon$. 
However, once $\varepsilon=\varepsilon(\tilde\varepsilon,q)$ is chosen, it is regarded as fixed, and this dependence will be suppressed unless it is used explicitly.
One can refer to \S~\ref{sec:out_paper_not} for notation conventions regarding all these quantities, such as $L_q,\alpha_q,\phi_q$ and $\varepsilon(\tilde\varepsilon,q)$.

As our initial data \eqref{KJdata2} is smooth and has compact support, by Danchin's global existence and uniqueness \cite{Danchin07} together with classical local existence theory and the Beale-Kato-Majda criterion, there exists a unique global-in-time smooth solution $\omega^{(m)}$.

Regarding the solution, $\omega^{(m)},$ odd symmetry of $\omega^{(m)}$ in $z$ and non-positivity of $\omega^{(m)}$ in $\{z>0\}$ are preserved under the evolution. Indeed, the reflected vorticity $\bar \omega^{(m)}(r,z,t):= -\omega^{(m)}(r,-z,t)$ solves the equation \eqref{vor_eq} with the same initial data. Hence from uniqueness, one can obtain $\bar \omega^{(m)}\equiv \omega^{(m)}$. Then one can find corresponding symmetry of the velocity field $u$ via the Biot--Savart law, which implies that the flow map of $u$ preserves the lower and upper half-spaces. Then the transport of $\omega^{(m)}/r$ implies the non-positivity.

The relative vorticity $\omega^{(m)}/r$ is transported by the flow map associated with the velocity field $u^{(m)}$ induced by $\omega^{(m)}$. Hence, the vortex ring that evolved from $\omega_{0,k}$ is well-defined for $t\in [0,\infty)$, and let us denote it by $\omega_k(r,z,t)$. It allows us to decompose our solution into multi vortex rings:
    \begin{align}
    \label{eq:multi_vortex_ring_dec}
        \omega^{(m)}(r,z,t)
    =
        \sum_{k=1}^{m} \omega_k(r,z,t), \qquad 0\leq t <\infty.
    \end{align}

We first define amplitude factors $\{x_k(t)\}_{k=1}^m$ by the ODE systems on $t\in [0,\infty)$:
\begin{align}
    \label{EulerODE}
    \left. 
    \begin{aligned}
        x_k'(t)&= x_k(t)  \, \partial_r u_-^r (0,0,t), \qquad k\geq 2,
        \\
         x_1'(t)&=0,
        \\
        x_k(0)&= \frac{\varepsilon}{k^{\alpha_q}}, \qquad k\geq 1.
    \end{aligned}
    \right\}
    \end{align}
Recall for a fixed scale index $k$, we denote, by $u_-(r,z,t)=\sum_{j=1}^{k-1} u_j(r,z,t)$, the velocity field induced by outer rings $\omega_-(r,z,t)=\sum_{j=1}^{k-1} \omega_j(r,z,t)$.

Then we define $\tilde x_k(t), R_k(t), H_k(t)$ for $k=1,\ldots, m$ by \eqref{tilde_xk_rk_hk}. 
We define time-dependent profiles $W_k(r,z,t)$ by
    \begin{align}
    \label{eq:def_wk94}
       W_k(r,z,t):=
       \frac{1}{x_k(t)} \omega_k (R_k(t) \, r, H_k(t) \, z, t), \quad k=1,\ldots,m
    \end{align}
for $(r,z)$ such that $(R_k(t)\, r, H_k(t) \, z)\in \supp \omega_k(\cdot,\cdot,t).$
This implies the ansatz that we will use:
    \begin{align*}
    \omega_k(r,z,t)= x_k(t) W_k\left(
        \frac{r}{R_k(t)}, \frac{z}{H_k(t)}, t
    \right), \quad k=1,\ldots,m
    \end{align*}
together with 
    \begin{align*}
        W_k(r,z,0)= \phi_q(r,z), \quad k=1,\ldots,m.
    \end{align*}

Our bootstrap argument will show $W_k$ remains close to $\phi_q$ uniformly in $k$, on the relevant time interval. 
The quantity $x_k(t)$ is the amplitude factor and, under the bootstrap bounds, is comparable to the $L^\infty$-size of $\omega_k$.
The point $(R_k(t)r_0, H_k (t)z_{0,q})$ encodes a representative location in the $rz$ plane of the $k$-th vortex ring, and $2\, \eta\,  r_0 \,R_k(t)$  represents the radial width of $\mathrm{supp} \, \omega_k$ whereas $2\, \eta\, z_{0,q}\, H_k(t)$ represents the width of the support in the $z$ direction.

The profiles $W_k$ satisfy the following equation: for all $k=1,\ldots, m$
    \begin{align}
    \label{eq_W_k2}
        \partial_t W_k 
        + V^r_k  \partial_r W_k
        + V^z_k \partial_z W_k = W_k \frac{V^r_k}{r},
    \end{align}
where
    \begin{align}
    \label{def_v_k14}
    \left\{
    \begin{aligned}
        V^r_k (r,z,t)
    &=
        \frac{u^{(m),r} 
        \left( R_k(t) \, r, \, H_k(t)\,  z, \,
        t \right)
        }{ R_k(t) }
        -r \, \frac{x_k'(t)}{x_k(t)},
        \\
        V^z_k (r,z,t)
    &=
        \frac{u^{(m),z} 
        \left( R_k(t) \, r,\, H_k(t) \, z, \, t \right)
        }{
        H_k(t)
        }
        +2z \, \frac{x_k'(t)}{x_k(t)}.
    \end{aligned}
    \right.
    \end{align}

    Note that for $(r,z)\in \mathrm{supp} \, W_k(\cdot, \cdot,t)$, the quantity $V^r_k(r,z,t)/r$ represents the difference between the real and approximate values of the vortex stretching rate $u^r/r$ at the point $(R_k(t) \,r, H_k (t)\, z)$ in the $k$-th vortex ring.
    
    Note that all these quantities $\omega_k,   W_k, x_k,R_k,H_k$ in fact also depend on $m$. However, for simplicity, we do not indicate this dependence in the notation. See \S~\ref{sec:out_paper_not} for more notation conventions.

As a reminder for a notation convention, meridional $(r,z)-$functions, such as $W_k,\phi_q$ and $\omega_k$, are also identified with their axisymmetric lifts to $\mathbb{R}^3$. For example, all $L^\infty$ norms are taken with respect to this identification. We will write $\|\phi_q\|_{L^\infty(\mathbb{R}^3)}$ instead of $\|\phi_q\|_{L^\infty(\mathbb{R}_+\times \mathbb{R})}$.

For $0\leq t$ and $k=1,\ldots,m$, let $X_k(r,z,t)=(X^r_k(r,z,t), X^z_k(r,z,t))$ be the flow map for $W_k$'s given by
    \begin{align*}
        X_k(r,z,  0)&=(r,z) \, \in \mathrm{supp}\,\phi_q,
        \\
        \frac{\partial }{\partial t} 
        X_k (r,z,t)&=
        V^r_k \left( 
                X_k(r,z, t),\, t
            \right) \, e_r
        +V^z_k \left(
                X_k(r,z, t),\, t
            \right) \, e_z.
    \end{align*}

\subsection{Roadmap of a Bootstrap Argument}
\label{roadmap_boot_Euler}

For an arbitrary target amplitude $A>  \varepsilon$, define
\begin{equation}\label{def:Euler_ni_time}
\mu:= \frac{1}{20}, \quad \bar A:= \frac{A}{1-\mu}, 
\quad T_N(A,m):=
\sup \left\{t\geq 0: \max_{1\leq k \leq m} x_k(t) \leq \bar A\right\}
.
\end{equation}
We will call $T_N(A,m)$ the  \emph{norm-inflation time} at the ODE level.

In addition, it holds $T_N(A,m)>0$ because
$\bar A > \varepsilon= \max_{1\leq k\leq m} x_k(0).$ If $T_N(A,m)<\infty$, then by continuity of the finite family $(x_k)_{k=1}^m$, 
    \begin{align*}
        \max_{1\leq k \leq m}
        x_k(T_N(A,m))=\bar A.
    \end{align*}

At this moment, $T_N(A,m)$ could be infinite even though we will show later it is in fact finite.

If $A\leq\varepsilon$, then norm inflation is trivial.

\medskip
\noindent\textbf{Bootstrap bounds.}
We impose the following bootstrap bounds: for all $t\in [0,T],$ 
\begin{align}
    \label{boot:X}
    F_m(t):=
    \sup_{\substack{(r,z)\in \supp \phi_q\\ k=1,\ldots,m}}
    \max\left\{
    \left| \frac{X_k^r(r,z,t)}{r}-1\right|,
    \left|\frac{X_k^z(r,z,t)}{z}-1\right|
    \right\} \leq \mu.
\end{align}
with the bootstrap parameter $\mu=1/20.$ It is satisfied at least at $t=0$.

For each $k,$ the function $(r,z,t)\mapsto (X_k^r/r,X_k^z/z)$ is continuous on the compact set $\supp \phi_q \times [0,T]$, which gives uniform continuity. Hence thanks to finiteness of $k=1,\ldots,m$, the bootstrap deviation functional $F_m(t)$ is continuous.

We define the maximal bootstrap time $T_B(A,m)$ by
\begin{align}
\label{def_TB_Euler}
T_B(A, m)
:=\sup\left\{
\,T\in[0,T_N(A,m)]:
\ \eqref{boot:X} \text{ holds for all }t\in[0,T]
\right\} .
\end{align}
Since $F_m(0)=0$ and $F_m(t)$ is continuous for  $t\geq 0$,  it holds that $T_B(A,m)>0$.
At this stage we do not know whether $T_B(A,m)=T_N(A,m)$ or not; if not, then we cannot use the bootstrap bounds at the norm--inflation time $T_N(A,m)$.

In the roadmap below, comparison symbols such as $\ll$, $\gtrsim$, and $\lesssim$ are used only heuristically to indicate the mechanism. 
All rigorous constants and displayed dependences will be stated in the subsequent lemmas and propositions.

\medskip
\noindent\textbf{Logical structure.}
The proof of the norm inflation statement (Theorem~\ref{main_thm1}) reduces to:

\begin{enumerate}
\item \textbf{ODE control of $T_B(A, m)$.}
Using the bootstrap bound \eqref{boot:X} on $[0,T_B(A,m)]$, we show in \S~\ref{sec:est_pro} that the stretching rate
$\partial_r u^r_-(0,0,t)$ felt by the $k$th ring can be expressed in terms of the profiles $W_j$
and aspect ratios $\Gamma_j(t)$, yielding a weak ODE cascade  for
$(x_k(t))_{k=1}^m$ up to a small multiplicative error.
Section~\ref{sec:ODEs_Euler} then analyzes this ODE system; by using the conditions \eqref{cond_phi},\eqref{eq:cond_L5},\eqref{eq:norm_inflation_range_rem} of $L_q, \phi_q,\alpha_q$, we establish
quantitative upper bounds on $T_B(A,m)$ and in particular $T_B(A,m)\lesssim  m^{-\beta_q}\to0$ as $m\to\infty$.

\item \textbf{Bootstrap improvement.}
Using the flow--map estimates derived under \eqref{boot:X}, we prove in
\S~\ref{subsec:Euler_bootstrap_improvement} that on a short time interval the bootstrap inequalities improve with a strict margin,
provided the short--time smallness mechanism
\[
\bar A\,\log m \, T_B(A,m)\ll1
\]
for large $m$. 
By a standard continuity argument this forces $T_B(A,m)=T_N(A,m)$, so that
\eqref{boot:X} holds on $[0,T_N(A,m)]$, and $T_N(A,m)\to 0$ as $m\to\infty$.

\item \textbf{Completion of Norm Inflation.}
Once \eqref{boot:X} holds on $[0,T_N(A,m)]$, the Cauchy formula for $W_k$ implies a uniform
lower bound $\|W_k(\cdot,\cdot,t)\|_{L^\infty}\gtrsim \|\phi_q\|_{L^\infty}$ on this interval.
Evaluating at $t=T_N(A,m)$ then converts the amplitude bound defining $T_N(A,m)$ into
$\|\omega^{(m)}(\cdot,T_N(A,m))\|_{L^\infty}\gtrsim A$.
Finally, since $T_N(A,m)$ can be made arbitrarily small by choosing $m$ large (via the ODE step
and the choice of $\phi_{q}$), this yields norm inflation on any prescribed time scale.
\end{enumerate}

To summarize it, let us provide a dependency diagram:
\begin{equation*}
\begin{aligned}
\text{initial data}
&\;\Longrightarrow\;
\text{weak ODEs on } [0,T_B(A,m)] \\
&\;\Longrightarrow\;
\text{quantitative control of } T_B(A,m) \text{ using the bootstrap bounds}\\
&\;\Longrightarrow\;
A \log m\, T_B(A,m) \ll 1 \text{ for large }m\\
&\;\Longrightarrow\;
\text{bootstrap improvement} \\
&\;\Longrightarrow\;
T_B = T_N
\;\Longrightarrow\;
L^\infty\text{-norm inflation}.
\end{aligned}
\end{equation*}

It is worth noting that we only use a bootstrap bound on $X$ without introducing a bootstrap bound on $DY$ for a flow map $Y$ associated with the physical flow $u$ as opposed to \cite{CordobaMartinezZheng25}. For norm inflation, in fact, such a bound on $DY$ is available and we can even use it for self-interaction estimate later on, which will give us a stronger estimate without a logarithmic penalty as discussed in \S~\ref{subsec:instant_blow-up}. However, it does not carry over to prove instantaneous blow-up. Therefore, we do not use a bootstrap bound on $DY$ even for norm inflation.

\subsection{Cauchy Formula and Freezing the Profiles}
\label{sec:est_pro}
In this first section, we record the Cauchy formula for the rescaled relative vorticity $W_k/r$ and ``freeze" the profiles $W_k(r,z,t)$ by using the bootstrap bounds \eqref{boot:X}. Keep in mind that we freeze the profiles $W_k(r,z,t)$ not just in terms of time $t$ but also in terms of scale index $k$.

\begin{lemma}[Cauchy formula for $W_k/r$]\label{lem:Euler_cauchy_geometry1}
Fix $k\in\{1,\dots,m\}$. For every $t\in[0,\infty)$, the flow map $X_k(\cdot,\cdot,t)$ associated with $V_k$ satisfies
\begin{equation}\label{cauchy_for_W}
\frac{W_k\bigl(X_k(r,z,t),t\bigr)}{X_k^r(r,z,t)}=\frac{\phi_q(r,z)}{r},\qquad (r,z)\in\supp\phi_q.
\end{equation}
In particular, using the bootstrap bound \eqref{boot:X} for $t\in [0,T_B(A,m)]$,
\[
\|W_k(\cdot,\cdot,t)\|_{L^\infty(\R^3)}\le (1+\mu)\|\phi_q\|_{L^\infty(\R^3)},
\qquad
\|\omega_k(\cdot,t)\|_{L^\infty(\R^3)}\le (1+\mu)x_k(t)\|\phi_q\|_{L^\infty(\R^3)}.
\]
\end{lemma}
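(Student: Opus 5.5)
The plan is to show that $W_k/r$ is transported by the rescaled velocity $V_k$. Integrating along the characteristics $X_k$ then gives \eqref{cauchy_for_W}, and the $L^\infty$ bounds follow from the bootstrap bound \eqref{boot:X}.

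\textbf{Transport identity.} Start from \eqref{eq_W_k2}, which is obtained by rescaling \eqref{vor_k} with the ansatz \eqref{eq:ansatz}. On the support, where $r>0$, compute
\[
(\partial_t + V_k\cdot\nabla)\Big(\frac{W_k}{r}\Big) = \frac{1}{r}(\partial_t+V_k\cdot\nabla)W_k - \frac{W_k}{r^2}V_k^r .
\]
By \eqref{eq_W_k2} the first term equals $W_kV_k^r/r^2$, so the whole expression vanishes. Alternatively, one can check this directly from the physical transport \eqref{eq:rel_vor_eq} for $\omega_k/r$. The rescaling $\omega_k(r,z,t)/r = \frac{x_k}{R_k}\,(W_k/r)(r/R_k,z/H_k,t)$ has prefactor $x_k/R_k = x_k^0/d^k$, which is constant in time by \eqref{tilde_xk_rk_hk}. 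Hence the physical characteristics map to the $X_k$ characteristics, and $W_k/r$ is constant along them. This second route also clarifies that $\omega_k$ is transported by the full field $u^{(m)}$.

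\textbf{Integration along characteristics.} Since $\omega^{(m)}$ is smooth, $V_k$ is smooth and locally Lipschitz in $\{r>0\}$, so $X_k$ is well defined. Integrating the transport identity gives
\[
\frac{W_k(X_k(r,z,t),t)}{X_k^r(r,z,t)} = \frac{W_k(r,z,0)}{r} = \frac{\phi_q(r,z)}{r}
\]
for $(r,z)\in\supp\phi_q$, using $W_k(\cdot,0)=\phi_q$.

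\textbf{$L^\infty$ bounds.} The map $X_k(\cdot,t)$ carries $\supp\phi_q$ onto $\supp W_k(\cdot,t)$, because $\supp\omega_k$ is transported. Therefore every value of $W_k(\cdot,t)$ has the form
\[
\phi_q(r,z)\,\frac{X_k^r}{r}.
\]
By \eqref{boot:X}, for $t\le T_B$ we have $X_k^r/r\le 1+\mu$. This gives $\|W_k(t)\|_{L^\infty}\le(1+\mu)\|\phi_q\|_{L^\infty}$, and the bound for $\omega_k$ follows from $\omega_k = x_k W_k(\cdot/R_k,\cdot/H_k)$.

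\textbf{Main obstacle.} The only delicate point is verifying that $\supp W_k(\cdot,t) = X_k(\supp\phi_q,t)$, i.e.\ that the rescaled flow is exactly the conjugate of the physical flow. This reduces to the constancy of $x_k/R_k$ and the definitions in \eqref{def_v_k14}.
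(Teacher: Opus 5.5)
Your proposal is correct and follows the paper's proof: divide \eqref{eq_W_k2} by $r$ to see that $W_k/r$ is transported by $V_k$, integrate along $X_k$ using $W_k(\cdot,\cdot,0)=\phi_q$, and read off the $L^\infty$ bounds from $X_k^r/r\le 1+\mu$. Your alternative check, conjugating the physical flow using that $x_k/R_k=x_k^0/d^k$ is constant, is the argument the paper uses later to prove \eqref{eq:Cauchy_contradiction} in the infinite-ring setting, and your remark that $W_k(\cdot,t)$ vanishes off $X_k(\supp\phi_q,t)$ makes explicit a point the paper treats as immediate.
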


\begin{proof}
Starting from \eqref{eq_W_k2}, divide by $r$ and use $\partial_r(1/r)=-1/r^2$ to obtain
\[
\partial_t\left(\frac{W_k}{r}\right)+V_{k}^r\,\partial_r\left(\frac{W_k}{r}\right)+V_{k}^z\,\partial_z\left(\frac{W_k}{r}\right)=0.
\]
Therefore $W_k/r$ is transported by the characteristics of $V_k$, i.e. by $X_k$. Since $X_k(r,z,0)=(r,z)$ and $W_k(\cdot,\cdot,0)=\phi_q$, the transport identity gives \eqref{cauchy_for_W}. The $L^\infty$ bounds follow immediately from \eqref{cauchy_for_W} and the bootstrap control of $X_k^r/r$.
\end{proof}

The discussion below is heuristic until the next lemma; comparison symbols are used only to describe the expected mechanism.

Let us now freeze the profiles $W_k(r,z,t)$ using the bootstrap bounds on $[0,T_B(A,m)]$.  
We need to estimate
    \begin{align*}
        -\iint 
        \frac{r^2 z}{(r^2+z^2)^{\frac{5}{2}}}
        W_k\left(
            \frac{r}{R_k(t)}, \frac{z}{H_k(t)}, t
        \right) \, dr \, dz \geq 0.
    \end{align*}
As explained in \S~\ref{ODE_intro}, heuristically, as $W_k(\cdot,z,t)$ is concentrated near $r=r_0$, one can estimate 
    \begin{align*}
        -\iint 
        \frac{r^2 z}{(r^2+z^2)^{\frac{5}{2}}}
        W_k\left(
            \frac{r}{R_k}, \frac{z}{H_k}, t
        \right) \, dr \, dz
    &\sim
        \frac{H_k^2}{R_k^2}
        \int_{\{z\sim z_0\}}
        \frac{z}{ \left (1+ \frac{H_k^2}{R_k^2}  \, z^2 \right)^{\frac{5}{2}} } \, dz.
    \end{align*}
Here, $H_k(t)^2/R_k(t)^2$ decreases in time from $H_k^2(0)/R_k^2(0)=1$ to a small number, and it also decreases in $k$. When $H_k^2/R_k^2\ll1$, we can further simplify the estimate:
    \begin{align*}
         -\iint 
        \frac{r^2 z}{(r^2+z^2)^{\frac{5}{2}}}
        W_k\left(
            \frac{r}{R_k}, \frac{z}{H_k}, t
        \right) \, dr \, dz \sim \frac{H_k(t)^2}{R_k(t)^2}.
    \end{align*}
To be more clear, this estimate only makes sense for large $k$'s, and a fixed time $t>0$ that is not so small. 

Now, let us make this estimate rigorous.
    \begin{lemma}
    \label{lem:pro_est}
        For any $k=1,\ldots,m$ and $t\in[0,T_B(A,m)]$, it holds
            \begin{align}
            \label{heu05}
            c_\mu
                \, 
                \Gamma_k(t)^2
                \,
                \Lambda_{\mathrm{froz}} (\Gamma_k(t))
            \leq
                -\iint \frac{r^2 z}{(r^2+z^2)^{\frac{5}{2}}} W_k
                \left(
                    \frac{r}{R_k (t)},
                    \frac{z}{H_k(t)}, t
                \right)
                 dr dz
            \leq
                C_\mu 
                \, 
                \Gamma_k(t)^2
                \,
                \Lambda_{\mathrm{froz}} (\Gamma_k(t)).
            \end{align}
        where
            \begin{align*}
                \Gamma_k(t) :=\frac{H_k(t)}{R_k(t)},
            \qquad 
            \Lambda_{\mathrm{froz}}(\Gamma):=\int_{-\infty}^\infty \int_0^\infty\frac{r^2 z}{\big(r^2+\Gamma^2 z^2\big)^{5/2}}\, \big(-\phi_q(r,z)\big)\,dr\,dz,
\quad \Gamma\in (0,1]
            \end{align*}
        and
            \begin{align*}
                c_\mu:= \frac{(1-\mu)^3}{(1+\mu)^5},
            \qquad
                C_\mu:=\frac{(1+\mu)^3}{(1-\mu)^5}.
            \end{align*}
    \end{lemma}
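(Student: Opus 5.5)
The plan is to first rescale the integral so that it becomes an integral against $W_k(\cdot,\cdot,t)$, and then pull it back to $\phi_q$ via the Cauchy formula (Lemma~\ref{lem:Euler_cauchy_geometry1}) and the flow map $X_k$. The bootstrap bounds \eqref{boot:X} will then give pointwise multiplicative comparability of every factor.

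\textbf{Step 1 (rescaling).} Substituting $r=R_k\rho$, $z=H_k\sigma$ gives
\[
-\iint \frac{r^2z}{(r^2+z^2)^{5/2}}W_k\Big(\frac{r}{R_k},\frac{z}{H_k},t\Big)\,dr\,dz
=\Gamma_k^2\iint \frac{\rho^2\sigma}{(\rho^2+\Gamma_k^2\sigma^2)^{5/2}}\big(-W_k(\rho,\sigma,t)\big)\,d\rho\,d\sigma,
\]
using $R_k^2 H_k\cdot H_k/R_k^5\cdot R_k H_k\cdot\ldots$; more precisely, the Jacobian $R_kH_k$ combines with $R_k^2H_k/R_k^5$ to give $H_k^2/R_k^2=\Gamma_k^2$. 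So the task reduces to comparing $\Lambda_k(t,\Gamma):=\iint K_\Gamma(-W_k)$ with $\Lambda_{\mathrm{froz}}(\Gamma)$ at $\Gamma=\Gamma_k(t)$.

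\textbf{Step 2 (pull back along $X_k$).} I change variables $(\rho,\sigma)=X_k(r,z,t)$ for $(r,z)\in\supp\phi_q$. Since $W_k(\cdot,t)$ is supported in $X_k(\supp\phi_q,t)$, the integral becomes
\[
\iint K_\Gamma(X_k)\,\big(-W_k(X_k,t)\big)\,|\det D X_k|\,dr\,dz .
\]
Three ingredients then apply.
\begin{itemize}
\item[(a)] Lemma~\ref{lem:Euler_cauchy_geometry1} gives $-W_k(X_k,t)=(X_k^r/r)(-\phi_q)$, which lies in $[(1-\mu),(1+\mu)]\cdot(-\phi_q)$. This factor is nonnegative for $z>0$ and, by oddness, the integrand is even overall.
\item[(b)] The Jacobian. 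Since the relative vorticity $W_k/r$ is transported, and the meridional measure preserved by $V_k$ is $\rho\,d\rho\,d\sigma$ (I expect $\operatorname{div}$ of the lifted $V_k$ to vanish, because rescaling by $(R_k,H_k)$ with $R_k^2H_k=d^{3k}$ constant preserves volume and the correction terms $-r\,x_k'/x_k$, $2z\,x_k'/x_k$ are divergence-free in the lifted sense), we get $X_k^r\,|\det DX_k|=r$. Hence $|\det DX_k|=r/X_k^r\in[(1+\mu)^{-1},(1-\mu)^{-1}]$. Combined with (a), the factor $X_k^r/r$ cancels exactly, leaving $\iint K_\Gamma(X_k)(-\phi_q)\,dr\,dz$.
\item[(c)] The kernel. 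Writing $X_k^r=a r$ and $X_k^z=b z$ with $a,b\in[1-\mu,1+\mu]$,
\[
K_\Gamma(ar,bz)=\frac{a^2b\,r^2z}{(a^2r^2+b^2\Gamma^2z^2)^{5/2}} .
\]
The denominator lies between $(1-\mu)^5$ and $(1+\mu)^5$ times $(r^2+\Gamma^2z^2)^{5/2}$, uniformly in $\Gamma$. Therefore $K_\Gamma(X_k)/K_\Gamma(r,z)\in[(1-\mu)^3/(1+\mu)^5,(1+\mu)^3/(1-\mu)^5]$, which is exactly $[c_\mu,C_\mu]$.
\end{itemize}
Because $K_\Gamma(-\phi_q)\ge0$ pointwise (both factors are odd in $z$), these pointwise bounds integrate to \eqref{heu05}.

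\textbf{Main obstacle.} The delicate point is the Jacobian identity in (b), in particular verifying that the weight is exactly $r$, so that it cancels the factor $X_k^r/r$ from the Cauchy formula. If that identity only held up to a factor, extra powers of $(1\pm\mu)$ would appear and the constants would not match $c_\mu,C_\mu$. I would check it directly from \eqref{def_v_k14}: the quantity $\partial_\rho(\rho V_k^r)+\rho\,\partial_\sigma V_k^z$ should reduce to $R_k^{-1}H_k^{-1}\cdot(\ldots)$ times the physical $\operatorname{div}u=0$, plus $-2\rho\,x_k'/x_k+2\rho\,x_k'/x_k=0$. Should the measure instead turn out to be $d\rho\,d\sigma$ without the weight, I would absorb the leftover factor $X_k^r/r$ into the kernel comparison, because the stated constants appear to allow for one such factor.
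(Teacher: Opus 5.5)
Your proposal is correct and follows the paper's proof essentially step for step. You rescale to extract $\Gamma_k^2$, change variables along $X_k$ using that the lifted $V_k$ is divergence-free in $\mathbb{R}^3$ (the paper phrases this as the 3D Jacobian of $X_k$ being $1$, which is exactly your identity $X_k^r\,|\det DX_k|=r$), cancel the factor $X_k^r/r$ from the Cauchy formula, and compare the kernels pointwise to obtain precisely $c_\mu$ and $C_\mu$.

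Your planned check of the divergence does go through, with one correction: $\partial_\rho(\rho V_k^r)+\rho\,\partial_\sigma V_k^z$ equals $\rho$ (not $R_k^{-1}H_k^{-1}$) times the physical $\operatorname{div}u$, plus $-2\rho\,x_k'/x_k+2\rho\,x_k'/x_k=0$. So your fallback is never needed. It would not have worked anyway, since the kernel ratio alone already uses up all of $c_\mu$ and $C_\mu$.
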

    \begin{proof}
    Fix $k=1,\ldots,m$ and $t\in [0,T_B(A,m)]$. We will omit the argument $t$ in $R_k(t), H_k(t)$ and $ \Gamma_k(t)$ for simplicity.
    
    First, through change of variables, we rescale the variables:
    \begin{align*}
        \iint \frac{r^2\, z}{(r^2+z^2)^{\frac{5}{2}}} 
        W_k \left( \frac{r}{R_k} , \frac{z}{H_k}, t \right) 
        \, dr \, dz 
    &=
        \Gamma_k^2\iint 
        \frac{r^2\, z}{\left( r^2+\Gamma_k^2 \, z^2 \right)^{\frac{5}{2}}} W_k (r,z,t)
        \, dr \, dz.
    \end{align*}
A direct computation shows that $\mathrm{div}_{\mathbb{R}^3}V_k =0$ viewing $V_k$ as a vector fields in $\mathbb{R}^3$ written in cylindrical coordinates. Then the corresponding 3D Jacobian determinant of the flow map $X_k$ is $1$.

Using it and re-writing the variables $(r,z)$ as $(r_x,z_x)$, we  apply change of variables via $(r_x,z_x) = X_k(r_y, z_y,t)$ to obtain
    \begin{align*}
        \int_{-\infty }^\infty \int_0^\infty  \frac{r^2_x \, z_x}{(r_x^2+\Gamma_k^2z_x^2)^{\frac{5}{2}}} W_k(r_x,&z_x,t) \, dr_x dz_x
    =
        \frac{1}{2\pi}\int_{\mathbb{R}^3} \frac{r_x \, z_x}{\big(r_x^2+\Gamma_k^2z_x^2\big)^{\frac{5}{2}}} W_k(r_x,z_x,t) \, dx
    \\
    &=
        \frac{1}{2\pi}\int_{\mathbb{R}^3} \frac{X^r_k \, X^z_k}{\big((X_k^r)^2+\Gamma_k^2(X_k^z)^2\big)^{\frac{5}{2}}} W_k\big(X_k(r_y,z_y,t),t\big) \, dy
    \\
    &=
         \int_{-\infty}^\infty \int_0^{\infty} \frac{X^r_k \, X^z_k}{\big((X_k^r)^2+\Gamma_k^2(X_k^z)^2\big)^{\frac{5}{2}}} W_k\big(X_k(r_y,z_y,t),t\big) \, r_y \, dr_y \, dz_y
    \end{align*}
where $X_k^r:=X_k^r (r_y, z_y,t), X_k^z:= X_k^z(r_y,z_y,t)$. 
Now using the bootstrap bound \eqref{boot:X}, one can obtain:
    \begin{align*}
    \nonumber 
        -\int_{-\infty}^\infty \int_{0}^\infty 
        \frac{r_x^2 \, z_x }
        {(r_x^2 + \Gamma_k^2 \, z_x^2)^{\frac{5}{2}}} \, 
        & W_k (r_x,z_x,t)
        \, dr_x \, dz_x
    \\
    &=
        -\int_{-\infty}^\infty \int_{0}^\infty
        \frac{X^r_k \, X^z_k}{\big((X^r_k)^2+ \Gamma_k^2 \, (X^z_k)^2\big)^{\frac{5}{2}}} W_k \big(X_k(r_y,z_y,t),t\big) \, r_ydr_y\, dz_y
    \\\nonumber
    &=
        -\int_{-\infty}^\infty \int_{0}^\infty
        \frac{(X^r_k)^2 \, X_k^z}{\big((X^r_k)^2+ \Gamma_k^2 \, (X^z_k)^2 \big)^{\frac{5}{2}}} \,
         \phi_q (r_y,z_y) \, dr_y\,  dz_y
    \\
    &\leq
        -\frac{(1+\mu)^3}{(1-\mu)^5}
        \int_{-\infty}^\infty \int_{0}^\infty
        \frac{r_y^2\, z_y}{\big(r_y^2 + \Gamma_k^2 \, z_y^2\big)^{\frac{5}{2}}} \, \phi_q (r_y,z_y) \, dr _y\, dz_y
    \end{align*}
where we used the Cauchy formula for $W_k/r$ \eqref{cauchy_for_W}. From here, one can obtain the upper bound in \eqref{heu05} at once.
Similarly, the lower bound in \eqref{heu05} follows by replacing the upper bootstrap inequalities by the lower ones in the same argument.
\end{proof}

\subsection{Study of ODEs}
\label{sec:ODEs_Euler}

As explained in \S~\ref{roadmap_boot_Euler}, throughout this section we study the weak ODE system \eqref{original_ODEs2} below
on the bootstrap interval $[0,T_B(A,m)]$, using the bootstrap bounds \eqref{boot:X}. 
The constants introduced in this subsection may depend on the fixed background parameters from \S~\ref{sec:out_paper_not}, but they are independent of the running parameters $k,m,A,t,T_N,T_B$ unless such dependence is displayed explicitly. 
In particular, the constants $c_\mu$, $C_\mu$, and $\Theta_\mu$ are fixed once introduced. See \S~\ref{sec:out_paper_not} for more notation conventions.

We use superscript/subscript, such as orig, loc, froz, to distinguish various types of simplifications. In addition, we will denote $x_k^{\mathrm{orig}}(t):=x_k(t)$ throughout this Section~\ref{sec:ODEs_Euler}, but after this section, we will come back to the original notation $x_k(t)$.

\medskip
\noindent\textbf{Roadmap of this subsection.}
As already foreshadowed in the introduction, the purpose of this subsection is to isolate the ODE cascade that drives norm inflation. 
\begin{itemize}
\item  \S~\ref{subsec:freez_prof}:
On the bootstrap interval, we first use the flow-map control to freeze the transported profiles \(W_j\) against the fixed profile \(\phi_q\), thereby reducing the weak ODEs to outer-ring contributions depending essentially on the amplitudes \(x_j\) and aspect ratios \(\Gamma_j\).
\item  \S~\ref{subsec:boot_comp}:
We then rewrite the system in cascade form, which yields an exact identity for the evolution of the \(\Gamma_j\)'s, and then we  compare the frozen and unfrozen quantities. 
\item  \S~\ref{subsec:loc_prof}:
After introducing the cone variables \(\zeta_j=L_q\Gamma_j\), we exploit the monotonicity of the localized Biot--Savart kernel on a fixed productive range: while the front moves toward smaller scales, the cumulative outer stretching must build up at a quantitative rate. 
\item \S~\ref{subsec:front_mig}, \S~\ref{subsec:ODE_norm_inflation}:
This front-migration mechanism gives a front-induced \(t^{-1}\) lower bound for the total stretching, and from there we derive an upper bound for the ODE norm-inflation time. These ODE estimates are the main output of the subsection and will be used in the next subsection to close the bootstrap.
\end{itemize}

\subsubsection{Weak ODEs and Freezing of Profiles.}
\label{subsec:freez_prof}
Fix $k\in\{2,\dots,m\}$.  As derived in \S~\ref{der_weak_ODE}, one needs to study the weak ODEs
\begin{equation}
\label{original_ODEs2}
\frac{d}{dt} \log x_k^{\mathrm{orig}}(t)
=
\sum_{j=1}^{k-1} x_j^{\mathrm{orig}}(t)\,\Gamma_j^{\mathrm{orig}}(t)^2
\Lambda_j(t,\Gamma_j^{\mathrm{orig}}(t)),
\qquad 0\le t<T_B(A,m).
\end{equation}
Introduce the true outer-ring contributions
\begin{equation*}
b_j^{\mathrm{orig}}(t):=x_j^{\mathrm{orig}}(t)\Gamma_j^{\mathrm{orig}}(t)^2\Lambda_j\bigl(t,\Gamma_j^{\mathrm{orig}}(t)\bigr),
\quad
S_k^{\mathrm{orig}}(t):=\sum_{j=1}^k b_j^{\mathrm{orig}}(t),
\quad
B_j^{\mathrm{orig}}(t):=\int_0^t b_j^{\mathrm{orig}}(s)\,ds.
\end{equation*}
Then the cascade form is
\begin{equation}
\label{eq:orig_ODE_cascade_form}
\frac{d}{dt}\log x_k^{\mathrm{orig}}(t)=S_{k-1}^{\mathrm{orig}}(t)\qquad (2\le k\le m),
\end{equation}

On $[0,T_B(A,m)]$, Lemma~\ref{lem:pro_est} allows one to \emph{freeze} each transported profile
$W_j(\cdot,\cdot,t)$ against the initial profile $\phi_q$.
One has the uniform bounds
\begin{equation}
\label{mu_ODE}
c_\mu
\sum_{j=1}^{k-1} x_j^{\mathrm{orig}}(t)\Gamma_j^{\mathrm{orig}}(t)^2 \Lambda_{\mathrm{froz}}(\Gamma_j^{\mathrm{orig}}(t))
\ \leq\
\frac{d}{dt}\log x_k^{\mathrm{orig}} (t)
\ \leq\
C_\mu \sum_{j=1}^{k-1} x_j^{\mathrm{orig}}(t)\Gamma_j^{\mathrm{orig}}(t)^2 \Lambda_{\mathrm{froz}}(\Gamma_j^{\mathrm{orig}}(t)).
\end{equation}

As in \S~\ref{intro:front_mig}, it is convenient to rewrite the outer-ring contribution in terms of
$\Gamma_j^{\mathrm{orig}}(t)$ alone.  One has the exact identities
\begin{equation*}
\Gamma_j^{\mathrm{orig}}(t)
=\left(\frac{x_j^{\mathrm{orig}}(0)}{x_j^{\mathrm{orig}}(t)}\right)^{3},
\qquad
x_j^{\mathrm{orig}}(t)\Gamma_j^{\mathrm{orig}}(t)^2
= x_j^{\mathrm{orig}}(0)\,\Gamma_j^{\mathrm{orig}}(t)^{5/3}.
\end{equation*}
Define the frozen outer-ring contributions and partial sums
\begin{align*}
\begin{aligned}
b_{\mathrm{froz},j}^{\mathrm{orig}}(t)&:=x_j^{\mathrm{orig}}(t)\Gamma_j^{\mathrm{orig}}(t)^2\Lambda_{\mathrm{froz}}(\Gamma_j^{\mathrm{orig}}(t))
= x_j^{\mathrm{orig}}(0)\,\Gamma_j^{\mathrm{orig}}(t)^{5/3}\Lambda_{\mathrm{froz}}(\Gamma_j^{\mathrm{orig}}(t)),
\\
S_{\mathrm{froz},k}^{\mathrm{orig}}(t)&:=\sum_{j=1}^{k} b_{\mathrm{froz},j}^{\mathrm{orig}}(t).
\end{aligned}
\end{align*}
Note that the Biot--Savart coefficient $\Lambda_{\mathrm{froz}}$ is frozen. These definitions of $b_{\mathrm{froz},j}^{\mathrm{orig}}, S_{\mathrm{froz},k}^{\mathrm{orig}}$ are slightly different from \eqref{eq:orig_def_bj_froz} because we are considering both the frozen and original models  to compare them in this rigorous proof whereas we only considered one model in heuristic arguments.
Then \eqref{mu_ODE} becomes
\begin{equation*}
c_\mu\,S_{\mathrm{froz},k-1}^{\mathrm{orig}}(t)
\ \leq\
\frac{d}{dt}\log x_k^{\mathrm{orig}}(t)=S_{k-1}^{\mathrm{orig}}(t)
\ \leq\
C_\mu\,S_{\mathrm{froz},k-1}^{\mathrm{orig}}(t)
\end{equation*}
for $ 2\le k\le m$ and $ 0\le t<T_B(A,m).$

\subsubsection{Cascade Form for the Original Weak ODEs and Bootstrap Comparison.}
\label{subsec:boot_comp}
The cascade form \eqref{eq:orig_ODE_cascade_form} implies the exact cascade identity
\begin{equation}
\label{eq:Gamma_cascade_orig}
\Gamma_{j+1}^{\mathrm{orig}}(t)=\Gamma_j^{\mathrm{orig}} (t)\,e^{-3B_j^{\mathrm{orig}}(t)}.
\end{equation}

On $[0,T_B(A,m)]$, Lemma~\ref{lem:pro_est} yields the pointwise comparison
\begin{equation}
\label{eq:bj_compare_boot}
c_\mu\,b_{\mathrm{froz},j}^{\mathrm{orig}}(t)
\le 
b_j^{\mathrm{orig}}(t)
\le
C_\mu\,b_{\mathrm{froz},j}^{\mathrm{orig}}(t),
\end{equation}
and hence also
\begin{equation}
\label{eq:Sk_compare_boot}
c_\mu\,S_{\mathrm{froz},k}^{\mathrm{orig}}(t)
\le
S_k^{\mathrm{orig}}(t)
\le
C_\mu\,S_{\mathrm{froz},k}^{\mathrm{orig}}(t)
\qquad (1\le k\le m-1).
\end{equation}
We record the fixed ratio
\begin{equation}
\label{eq:def_Theta_mu}
\Theta_\mu:=\frac{C_\mu}{c_\mu}=\left( \frac{1+\mu}{1-\mu} \right)^8.
\end{equation}

Then differentiating $b_{\mathrm{froz},j}^{\mathrm{orig}}$ and using the cascade form \eqref{eq:orig_ODE_cascade_form} gives a differential equation for $b_{\mathrm{froz},j}^{\mathrm{orig}}$.

\begin{lemma}[A differential equation for $b_{\mathrm{froz},j}^{\mathrm{orig}}$]
 For each $j\ge 2$, it holds
\begin{equation}
\label{eq:bjprime_kappa}
\frac{d}{dt}b_{\mathrm{froz},j}^{\mathrm{orig}}(t)
=-\kappa_{\mathrm{froz}}(\Gamma_j^{\mathrm{orig}}(t))\,b_{\mathrm{froz},j}^{\mathrm{orig}}(t)\,
S_{j-1}^{\mathrm{orig}}(t),
\qquad
\kappa_{\mathrm{froz}}(\Gamma):=5+3\Gamma\frac{\Lambda'_{\mathrm{froz}}(\Gamma)}{\Lambda_{\mathrm{froz}}(\Gamma)}.
\end{equation}
\end{lemma}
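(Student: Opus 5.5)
This identity is a direct chain-rule computation. I would start from the representation
\[
b_{\mathrm{froz},j}^{\mathrm{orig}}(t)=x_j^{\mathrm{orig}}(0)\,\Gamma_j^{\mathrm{orig}}(t)^{5/3}\Lambda_{\mathrm{froz}}(\Gamma_j^{\mathrm{orig}}(t))
\]
recorded in \S~\ref{subsec:freez_prof}. The first step is to compute the time derivative of $\Gamma_j^{\mathrm{orig}}$. Since $\Gamma_j^{\mathrm{orig}}=(x_j^{\mathrm{orig}}(0)/x_j^{\mathrm{orig}}(t))^3$, the cascade form \eqref{eq:orig_ODE_cascade_form} gives, for $j\ge2$,
\[
\frac{d}{dt}\log\Gamma_j^{\mathrm{orig}}(t)=-3\frac{d}{dt}\log x_j^{\mathrm{orig}}(t)=-3S_{j-1}^{\mathrm{orig}}(t).
\]
This is exactly why the statement restricts to $j\ge 2$: for $j=1$ we have $x_1'=0$, so $\Gamma_1^{\mathrm{orig}}\equiv 1$ and the formula would read with $S_0^{\mathrm{orig}}=0$.

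The second step is to differentiate $\log b_{\mathrm{froz},j}^{\mathrm{orig}}=\mathrm{const}+\tfrac53\log\Gamma_j^{\mathrm{orig}}+\log\Lambda_{\mathrm{froz}}(\Gamma_j^{\mathrm{orig}})$. This gives
\[
\frac{d}{dt}\log b_{\mathrm{froz},j}^{\mathrm{orig}}=\Bigl(\tfrac53+\Gamma\frac{\Lambda_{\mathrm{froz}}'(\Gamma)}{\Lambda_{\mathrm{froz}}(\Gamma)}\Bigr)\Big|_{\Gamma=\Gamma_j^{\mathrm{orig}}}\frac{d}{dt}\log\Gamma_j^{\mathrm{orig}}.
\]
Substituting $-3S_{j-1}^{\mathrm{orig}}$ for the last factor yields $-(5+3\Gamma\Lambda'_{\mathrm{froz}}/\Lambda_{\mathrm{froz}})\,S_{j-1}^{\mathrm{orig}}$. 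Multiplying back by $b_{\mathrm{froz},j}^{\mathrm{orig}}$ gives \eqref{eq:bjprime_kappa}.

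The only points needing care are the legitimacy of these manipulations, and I expect the main obstacle to be the differentiability of $\Lambda_{\mathrm{froz}}$.

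\emph{Positivity.} I need $\Lambda_{\mathrm{froz}}(\Gamma)>0$ and $\Gamma_j^{\mathrm{orig}}>0$ in order to take logarithms. Positivity of $\Lambda_{\mathrm{froz}}$ follows from \eqref{cond_phi}: $\phi_q$ is odd in $z$ and $\le 0$ for $z>0$, so the integrand $r^2z(-\phi_q)/(r^2+\Gamma^2z^2)^{5/2}$ is nonnegative, and it is not identically zero since $\phi_q\not\equiv0$. Alternatively, one can avoid logarithms entirely by applying the product rule directly.

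\emph{Differentiability of $\Lambda_{\mathrm{froz}}$.} Here I would use that $\phi_q$ is smooth and compactly supported away from the axis: $\supp\phi_q\subset\{r\ge (1-\eta)r_0\}$. Hence the kernel and its $\Gamma$-derivative $-5\Gamma r^2z^3(r^2+\Gamma^2z^2)^{-7/2}$ are bounded uniformly on $\supp\phi_q$ for $\Gamma$ in compact subsets of $[0,\infty)$. Differentiation under the integral sign is therefore justified, and $\Lambda_{\mathrm{froz}}\in C^1$.

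\emph{Regularity in time.} Since $x_j^{\mathrm{orig}}$ is $C^1$ and positive (it is an exponential of an integral of a continuous rate), the composition $t\mapsto\Gamma_j^{\mathrm{orig}}(t)$ is $C^1$, and the chain rule applies. It is also worth noting that the computation uses only the exact cascade form \eqref{eq:orig_ODE_cascade_form}, not the bootstrap, which is why the identity carries $S_{j-1}^{\mathrm{orig}}$ rather than $S_{\mathrm{froz},j-1}^{\mathrm{orig}}$.
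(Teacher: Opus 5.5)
Your proposal is correct and follows essentially the paper's proof. Both arguments derive $\frac{d}{dt}\Gamma_j^{\mathrm{orig}}=-3\Gamma_j^{\mathrm{orig}}S_{j-1}^{\mathrm{orig}}$ from the cascade form and then apply the chain rule; you do this by logarithmic differentiation of the equivalent representation $x_j^{\mathrm{orig}}(0)\,\Gamma^{5/3}\Lambda_{\mathrm{froz}}(\Gamma)$, while the paper uses the product rule on $\Lambda_{\mathrm{froz}}(\Gamma)\,x_j^{\mathrm{orig}}\Gamma^2$. Your justifications of the positivity and $C^1$ regularity of $\Lambda_{\mathrm{froz}}$ are correct, and they fill in details the paper leaves implicit.
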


This equation isolates the sign issue in the ODE analysis. Once $\kappa_{\mathrm{froz}}(\Gamma)$ is shown to be non-positive on the relevant cone range, the frozen quantities $b_{\mathrm{froz},j}^{\mathrm{orig}}$ become monotone, and that monotonicity is exactly what later drives the front-migration argument.

\begin{proof}
Differentiate $b_{\mathrm{froz},j}^{\mathrm{orig}}(t)=\Lambda_{\mathrm{froz}}(\Gamma_j^{\mathrm{orig}}(t))x_j^{\mathrm{orig}}(t)\Gamma_j^{\mathrm{orig}}(t)^2$ and use
\begin{align*}
\frac{d}{dt}\log x_j^{\mathrm{orig}}(t)=S_{j-1}^{\mathrm{orig}}(t)
\end{align*}
from \eqref{eq:orig_ODE_cascade_form}.
Together with $\Gamma_j^{\mathrm{orig}}(t)=(x_j^{\mathrm{orig}}(0)/x_j^{\mathrm{orig}}(t))^3$, we have
\begin{align*}
\frac{d}{dt}\Gamma_j^{\mathrm{orig}}(t)=-3\Gamma_j^{\mathrm{orig}}(t)\,S_{j-1}^{\mathrm{orig}}(t),
\end{align*}
and a direct computation gives
\eqref{eq:bjprime_kappa}.
\end{proof}

\subsubsection{Localization of $\phi_q$ and a Productive Regime.}
\label{subsec:loc_prof}
Define the Biot--Savart profile
\begin{equation*}
\Psi(\zeta):=\zeta^{5/3}(1+\zeta^2)^{-5/2}.
\end{equation*}
Direct computation yields
\begin{align*}
\Psi'(\zeta)\leq 0
\quad \text{for all} \quad \zeta\geq \zeta_*=1/\sqrt2.
\end{align*}
Recall that $\phi_q$ is odd in $z$ and nonpositive on $\{z>0\}$ (cf.\ \eqref{cond_phi}), and we
impose the fixed geometric localization
\begin{equation}
\label{phi_cond2}
\supp \phi_q\cap\{z>0\}\ \subset\
\big[(1-\eta) r_0, (1+\eta)r_0\big]\times \big[(1-\eta) z_{0,q},(1+\eta ) z_{0,q}\big],
\end{equation}
Define the fixed cone slopes
\begin{equation}
\label{eq:def_Lpm}
L_{q,-}:=\frac{1-\eta}{1+\eta}\,L_q,
\qquad
L_{q,+}:=\frac{1+\eta}{1-\eta}\,L_q.
\end{equation}

Localization reduces the profile dependence of $\Lambda_{\mathrm{froz}}(\Gamma) \Gamma^{5/3}$ to a one-parameter shape function of $\zeta=L_q \Gamma$, which is the Biot--Savart profile $\Psi$. 
More precisely, we can estimate $\kappa_{\mathrm{froz}}(\Gamma)$ in terms of $\Psi'$, which yields a monotonicity range for $t\mapsto b_{\mathrm{froz},j}^{\mathrm{orig}}(t)$. 
\begin{lemma}[Comparison to $\Psi$]
\label{lem:comp_to_Psi}
For every $\Gamma\in(0,1]$,
    \begin{align}
    \label{eq:comp_kapp_psi}
        \kappa_{\mathrm{froz}} (\Gamma) 
        \leq 
        3 \, \Gamma L_{q,-}
        \frac{\Psi'(\Gamma L_{q,-})}{\Psi (\Gamma L_{q,-})},
    \end{align}
hence, for $j\geq 2$,
    \begin{align}
    \label{eq:mon_b_froz}
        \frac{d}{dt}b_{\mathrm{froz},j}^{\mathrm{orig}} (t) \geq 0 \qquad \text{whenever} \quad
        \Gamma_j^{\mathrm{orig}}(t) L_q \geq \frac{1+\eta}{1-\eta} \zeta_*.
    \end{align}
\end{lemma}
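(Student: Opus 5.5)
The plan is to compute $\kappa_{\mathrm{froz}}$ directly by differentiating $\Lambda_{\mathrm{froz}}$ under the integral sign. This is legitimate because $\phi_q$ is smooth with compact support contained in $\{r\ge(1-\eta)r_0>0\}$, so the kernel and its $\Gamma$-derivative are bounded on the support, uniformly for $\Gamma\in(0,1]$. We have
\[
\Lambda_{\mathrm{froz}}'(\Gamma)=-5\int_{-\infty}^\infty\int_0^\infty \frac{\Gamma z^2\, r^2 z}{(r^2+\Gamma^2z^2)^{7/2}}\bigl(-\phi_q(r,z)\bigr)\,dr\,dz .
\]
Set $s:=\Gamma|z|/r$ and introduce the weight
\[
w_\Gamma(r,z):=\frac{r^2z\,(-\phi_q(r,z))}{(r^2+\Gamma^2z^2)^{5/2}} .
\]
By the oddness of $\phi_q$ and the sign condition (ii) in \eqref{cond_phi}, $z\,(-\phi_q)\ge0$ everywhere, so $w_\Gamma\ge 0$ and $\Lambda_{\mathrm{froz}}(\Gamma)=\iint w_\Gamma>0$. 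With this notation,
\[
\Gamma\frac{\Lambda_{\mathrm{froz}}'(\Gamma)}{\Lambda_{\mathrm{froz}}(\Gamma)}=-5\,\Big\langle \frac{s^2}{1+s^2}\Big\rangle_{w_\Gamma},
\qquad\text{hence}\qquad
\kappa_{\mathrm{froz}}(\Gamma)=5-15\Big\langle \frac{s^2}{1+s^2}\Big\rangle_{w_\Gamma},
\]
where $\langle\cdot\rangle_{w_\Gamma}$ denotes the $w_\Gamma$-weighted average.

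The key geometric input is the localization \eqref{phi_cond2}, together with its odd reflection for $z<0$. On $\supp\phi_q$ it gives $|z|/r\ge (1-\eta)z_{0,q}/((1+\eta)r_0)=L_{q,-}$. Hence $s\ge \Gamma L_{q,-}=:\zeta$ on the support. Since $s\mapsto s^2/(1+s^2)$ is increasing, the weighted average is at least $\zeta^2/(1+\zeta^2)$. This yields
\[
\kappa_{\mathrm{froz}}(\Gamma)\le 5-\frac{15\zeta^2}{1+\zeta^2}.
\]
A direct logarithmic differentiation of $\Psi(\zeta)=\zeta^{5/3}(1+\zeta^2)^{-5/2}$ gives $\zeta\Psi'(\zeta)/\Psi(\zeta)=\tfrac53-\tfrac{5\zeta^2}{1+\zeta^2}$. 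Multiplying by $3$ gives exactly the right-hand side above, which proves \eqref{eq:comp_kapp_psi}.

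For \eqref{eq:mon_b_froz}, I use \eqref{eq:bjprime_kappa}. Both $b_{\mathrm{froz},j}^{\mathrm{orig}}\ge0$ and $S_{j-1}^{\mathrm{orig}}\ge0$ hold, since $\Lambda_{\mathrm{froz}}>0$ and each $\Lambda_j\ge0$ by the preserved sign structure (equivalently, by \eqref{eq:bj_compare_boot}). It therefore suffices to have $\kappa_{\mathrm{froz}}(\Gamma_j^{\mathrm{orig}}(t))\le 0$. By \eqref{eq:comp_kapp_psi}, this holds once $\Psi'(\Gamma_j^{\mathrm{orig}} L_{q,-})\le0$, which in turn holds when $\Gamma_j^{\mathrm{orig}}L_{q,-}\ge\zeta_*$. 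Using \eqref{eq:def_Lpm}, this condition is the same as $\Gamma_j^{\mathrm{orig}}(t)L_q\ge\frac{1+\eta}{1-\eta}\zeta_*$. The range restriction $\Gamma\in(0,1]$ is automatic, because $\Gamma_j^{\mathrm{orig}}$ is nonincreasing from $1$.

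The only step needing care is the sign and monotonicity argument in the weighted average: identifying the correct nonnegative weight via the oddness of $\phi_q$, and using the lower bound on the cone slope $|z|/r$ rather than the upper one. The remaining steps are routine calculus.
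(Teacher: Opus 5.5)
Your proposal is correct and follows the paper's proof essentially step by step. You write $-\Gamma\Lambda_{\mathrm{froz}}'/\Lambda_{\mathrm{froz}}$ as $5$ times a weighted average of $\Gamma^2z^2/(r^2+\Gamma^2z^2)$, bound it below using $|z|/r\ge L_{q,-}$ on $\supp\phi_q$ and the monotonicity of $a\mapsto a^2/(1+a^2)$, match it with $3\zeta\Psi'(\zeta)/\Psi(\zeta)=5-15\zeta^2/(1+\zeta^2)$, and conclude via \eqref{eq:bjprime_kappa}; your explicit checks that the weight is nonnegative and that $b_{\mathrm{froz},j}^{\mathrm{orig}},S_{j-1}^{\mathrm{orig}}\ge 0$ just fill in details the paper leaves implicit.
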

Therefore, as long as the cone variable $\Gamma_j^{\mathrm{orig}}L_q$ is in the range \eqref{eq:mon_b_froz}, the frozen individual stretching rate $b_{\mathrm{froz},j}^{\mathrm{orig}}(t)$ is non-decreasing despite the geometric slow-down mechanism. We call the range \eqref{eq:mon_b_froz} a \emph{productive} regime.

\begin{proof}
Differentiate under the integral sign:
\[
\Lambda'_{\mathrm{froz}}(\Gamma)=-5\Gamma\int_{-\infty}^{\infty} \int _{0}^{\infty}
\frac{r^2 z^3}{(r^2+\Gamma^2 z^2)^{7/2}}\big(-\phi_q(r,z) \big)\,dr\,dz\le 0.
\]
Moreover,
\[
-\Gamma\frac{\Lambda'_{\mathrm{froz}}(\Gamma)}{\Lambda_{\mathrm{froz}}(\Gamma)}
=
5\,
\frac{\iint K_\Gamma(r,z)\,\frac{\Gamma^2 z^2}{r^2+\Gamma^2 z^2}\,\big(-\phi_q(r,z) \big)\,dr\,dz}
{\iint K_\Gamma(r,z)\,\big(-\phi_q(r,z) \big)\,dr\,dz},
\]
so $-\Gamma\Lambda'_{\mathrm{froz}}/\Lambda_{\mathrm{froz}}$ is a weighted average of $\frac{\Gamma^2 z^2}{r^2+\Gamma^2 z^2}$.
Since $(r,z)\in\supp\phi_q$ implies $z/r\in[L_{q,-},L_{q,+}]$ and $a\mapsto \frac{a^2}{1+a^2}$ is increasing
for $a\ge 0$, we obtain
\[
\frac{(\Gamma L_{q,-})^2}{1+(\Gamma L_{q,-})^2}\le
\frac{\Gamma^2 z^2}{r^2+\Gamma^2 z^2},
\]
and multiplying by $15$ yields 
\begin{equation}
\label{eq:log_derivative_bounds}
15\,\frac{(\Gamma L_{q,-})^2}{1+(\Gamma L_{q,-})^2}
\ \le\
-3\Gamma\frac{\Lambda'_{\mathrm{froz}}(\Gamma)}{\Lambda_{\mathrm{froz}}(\Gamma)}.
\end{equation}
Direct computation yields
\begin{equation}
\label{eq:log_der_Psi}
    3\zeta \frac{\Psi'(\zeta)}{\Psi(\zeta)}
    =5- \frac{15\zeta^2}{1+\zeta^2}.
\end{equation}
Using \eqref{eq:log_derivative_bounds} and \eqref{eq:log_der_Psi}, one can obtain \eqref{eq:comp_kapp_psi} at once.
And \eqref{eq:mon_b_froz} follows from \eqref{eq:bjprime_kappa}.
\end{proof}

\subsubsection{Front-Migration and a Front-Induced $t^{-1}$ Lower Bound.}
\label{subsec:front_mig}
Based on \eqref{eq:mon_b_froz}, it is reasonable to introduce the cone variables
\[
\zeta_j^{\mathrm{orig}}(t):=L_q\,\Gamma_j^{\mathrm{orig}}(t)\in(0,L_q],
\]
and define a fixed threshold $\zeta_\eta$ by 
\begin{align}
\label{eq:def_xi_eta}
 \zeta_\eta:= \frac{1+\eta}{1- \eta} \zeta_*.
\end{align}
From the exact cascade identity \eqref{eq:Gamma_cascade_orig}, $\Gamma_{j+1}^{\mathrm{orig}}(t)=\Gamma_j^{\mathrm{orig}}(t) e^{-3B_j^{\mathrm{orig}}(t)}$, it follows that  
\begin{align}
\label{eq:ordering_zeta}
\zeta_{j+1}^{\mathrm{orig}}(t)\leq \zeta_j^{\mathrm{orig}}(t).
\end{align}
For each $t>0$ define the front index at the fixed threshold $\zeta_\eta$ by
\begin{equation}
\label{eq:def_front_index_frozen}
J_{\zeta_\eta }^{\mathrm{orig}}(t):=\max\Big\{1\le j\le m-1:\ \zeta_j^{\mathrm{orig}}(t)\ge \zeta_\eta\Big\}.
\end{equation}
Then $\zeta_k^{\mathrm{orig}}(t) \geq \zeta_\eta $ for every $k\leq J_{\zeta_\eta}^{\mathrm{orig}}(t)$.

When the front has crossed the $m$-th scale, i.e. when $\zeta_m^{\mathrm{orig}}(t) <\zeta_\eta$, one can estimate  the cumulative outer stretching $S_{m-1}^{\mathrm{orig}}(t)$ using the monotonicity of $b_{\mathrm{froz},j}^{\mathrm{orig}}$ in the productive regime.

\begin{lemma}[Front-migration and a front-induced $t^{-1}$ lower bound]
\label{lem:front_tinv_varcoef}
For  $t\in(0,T_B(A,m))$, if 
    \begin{align*}
    \zeta_m^{\mathrm{orig}}(t)<\zeta_\eta ,
    \end{align*}
 then
\begin{equation}
\label{eq:Sm1_tinv_varcoef}
S_{m-1}^{\mathrm{orig}}(t)\ge \frac{1}{3\Theta_\mu \,t}\log\frac{L_q}{\zeta_\eta},
\qquad \Theta_\mu\ \text{as in \eqref{eq:def_Theta_mu}}.
\end{equation}
\end{lemma}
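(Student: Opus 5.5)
Following the heuristic of \S~\ref{intro:front_mig}, we combine the exact $\Gamma$-cascade with the productive-regime monotonicity of Lemma~\ref{lem:comp_to_Psi}. The only new feature is the comparison \eqref{eq:bj_compare_boot} between original and frozen contributions, which will cost the factor $\Theta_\mu$.

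\textbf{Lower bound on the integrated forcing up to the front.} Fix $t$ with $\zeta_m^{\mathrm{orig}}(t)<\zeta_\eta$ and set $J:=J^{\mathrm{orig}}_{\zeta_\eta}(t)$. Since $x_1'\equiv 0$, we have $\Gamma_1^{\mathrm{orig}}\equiv 1$, so $\zeta_1^{\mathrm{orig}}\equiv L_q\ge\zeta_\eta$ by \eqref{eq:cond_L5}; hence the set defining $J$ is nonempty. By the ordering \eqref{eq:ordering_zeta}, $\zeta_{J+1}^{\mathrm{orig}}(t)<\zeta_\eta$. This holds either because $J<m-1$, by maximality of $J$, or because $J=m-1$ and the hypothesis gives $\zeta_m^{\mathrm{orig}}(t)<\zeta_\eta$. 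Iterating \eqref{eq:Gamma_cascade_orig} from $j=1$ to $J$ gives $\zeta^{\mathrm{orig}}_{J+1}(t)=L_q\exp(-3\sum_{j\le J}B_j^{\mathrm{orig}}(t))$, and therefore
\[
\sum_{j=1}^J B_j^{\mathrm{orig}}(t)\ \ge\ \frac13\log\frac{L_q}{\zeta_\eta}.
\]

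\textbf{Upper bound via monotonicity.} Each $\zeta_j^{\mathrm{orig}}$ is nonincreasing in time, because $\frac{d}{dt}\Gamma_j^{\mathrm{orig}}=-3\Gamma_j^{\mathrm{orig}}S_{j-1}^{\mathrm{orig}}\le 0$ and $S_{j-1}^{\mathrm{orig}}\ge0$ by the sign of $\phi_q$. So for $j\le J$ and $s\in[0,t]$ we have $\zeta_j^{\mathrm{orig}}(s)\ge\zeta_j^{\mathrm{orig}}(t)\ge\zeta_\eta$, that is, $\Gamma_j^{\mathrm{orig}}(s)L_q\ge\frac{1+\eta}{1-\eta}\zeta_*$. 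Then \eqref{eq:mon_b_froz} shows that $s\mapsto b^{\mathrm{orig}}_{\mathrm{froz},j}(s)$ is nondecreasing on $[0,t]$ for $2\le j\le J$. For $j=1$ this holds trivially, since $b^{\mathrm{orig}}_{\mathrm{froz},1}$ is constant. We cannot apply monotonicity to $b_j^{\mathrm{orig}}$ directly, so we pass through the frozen quantity using \eqref{eq:bj_compare_boot}, valid since $s\le t<T_B(A,m)$:
\[
B_j^{\mathrm{orig}}(t)\le C_\mu\int_0^t b^{\mathrm{orig}}_{\mathrm{froz},j}(s)\,ds\le C_\mu\, t\, b^{\mathrm{orig}}_{\mathrm{froz},j}(t)\le \frac{C_\mu}{c_\mu}\, t\, b_j^{\mathrm{orig}}(t)=\Theta_\mu\, t\, b_j^{\mathrm{orig}}(t).
\]
Summing over $j\le J\le m-1$ and using $b_j^{\mathrm{orig}}\ge0$ gives $\sum_{j\le J}B_j^{\mathrm{orig}}(t)\le\Theta_\mu t S_{m-1}^{\mathrm{orig}}(t)$. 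Combining this with the first step yields \eqref{eq:Sm1_tinv_varcoef}.

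\textbf{Expected delicate point.} The main care is in the monotonicity step, for three reasons. Lemma~\ref{lem:comp_to_Psi} was stated only for $j\ge2$ and $\Gamma\in(0,1]$, so we must check that $\Gamma_j^{\mathrm{orig}}\le 1$, which follows from $x_j$ increasing. The threshold in \eqref{eq:mon_b_froz} matches $\zeta_\eta$ exactly by definition \eqref{eq:def_xi_eta}. Finally, the comparison constants must be applied in the right directions at two different times, $s$ and $t$, and this is exactly what produces $\Theta_\mu$ rather than $1$.
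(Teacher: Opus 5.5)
Your proof is correct and follows the paper's argument essentially step by step: iterating the exact cascade identity up to the front index $J$ gives $\sum_{j\le J}B_j^{\mathrm{orig}}(t)\ge\frac13\log(L_q/\zeta_\eta)$, and combining productive-regime monotonicity of $b^{\mathrm{orig}}_{\mathrm{froz},j}$ with \eqref{eq:bj_compare_boot} at times $s$ and $t$ gives $B_j^{\mathrm{orig}}(t)\le\Theta_\mu t\,b_j^{\mathrm{orig}}(t)$. One small attribution fix: $\zeta_{J+1}^{\mathrm{orig}}(t)<\zeta_\eta$ comes from the maximality of $J$ together with the hypothesis, as you then explain, whereas the ordering \eqref{eq:ordering_zeta} is what you actually need later to get $\zeta_j^{\mathrm{orig}}(t)\ge\zeta_\eta$ for every $j\le J$ and not just for $j=J$. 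Your extra checks ($J$ is well defined because $L_q>\zeta_\eta$, and $\Gamma_j^{\mathrm{orig}}\le 1$) are details the paper leaves implicit.
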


\begin{proof}
Let $t\in(0,T_B(A,m))$ and set $J:=J_{\zeta_\eta}^{\mathrm{orig}}(t)$ as in \eqref{eq:def_front_index_frozen}.  If $\zeta_m^{\mathrm{orig}}(t)<\zeta_\eta$,
then $\zeta_{J+1}^{\mathrm{orig}}(t)<\zeta_\eta$ by definition.  Using \eqref{eq:Gamma_cascade_orig} and $\zeta_1^{\mathrm{orig}}(t)\equiv L_q$,
\[
\zeta_{J+1}^{\mathrm{orig}}(t)
=
L_q\exp\Big(-3\sum_{k=1}^{J}B_k^{\mathrm{orig}}(t)\Big),
\]
hence
\begin{equation}
\label{eq:sum_B_lower}
\sum_{k=1}^{J}B_k^{\mathrm{orig}}(t)
=
\frac13\log\frac{L_q}{\zeta_{J+1}^{\mathrm{orig}}(t)}
\ge
\frac13\log\frac{L_q}{\zeta_\eta}.
\end{equation}

Now fix $1\le k\le J$.  Since $\zeta_k^{\mathrm{orig}}(t)\ge\zeta_\eta$ and $\Gamma_k^{\mathrm{orig}}(\cdot)$ is nonincreasing in time,
we have $\Gamma_k^{\mathrm{orig}}(s)L_q\ge\zeta_\eta$ for all $s\in[0,t]$, which implies $\Gamma_k^{\mathrm{orig}}(s) L_{q,-}\geq \zeta_*$.  By Lemma~\ref{lem:comp_to_Psi}
 the frozen
$b_{\mathrm{froz},k}^{\mathrm{orig}}(s)$ is non-decreasing on $[0,t]$ when $k\geq 2$, while $b_{\mathrm{froz},1}^{\mathrm{orig}}(s)$ is  constant, hence non-decreasing.  Therefore $\int_0^t b_{\mathrm{froz},k}^{\mathrm{orig}}(s)\,ds\le t\,b_{\mathrm{froz},k}^{\mathrm{orig}}(t)$, and using the
bootstrap comparison \eqref{eq:bj_compare_boot},
\[
B_k^{\mathrm{orig}}(t)
=
\int_0^t b_k^{\mathrm{orig}}(s)\,ds
\le
C_\mu\int_0^t b_{\mathrm{froz},k}^{\mathrm{orig}}(s)\,ds
\le
C_\mu t\,b_{\mathrm{froz},k}^{\mathrm{orig}}(t)
\le
\frac{C_\mu}{c_\mu}\,t\,b_k^{\mathrm{orig}}(t)
=
\Theta_\mu\,t\,b_k^{\mathrm{orig}}(t).
\]
Summing over $k\le J$ and combining with \eqref{eq:sum_B_lower} yields
\[
S_{m-1}^{\mathrm{orig}}(t)
\ge
\sum_{k=1}^{J}b_k^{\mathrm{orig}}(t)
\ge
\frac{1}{\Theta_\mu\,t}\sum_{k=1}^{J}B_k^{\mathrm{orig}}(t)
\ge
\frac{1}{3\Theta_\mu \,t}\log\frac{L_q}{\zeta_\eta},
\]
which is \eqref{eq:Sm1_tinv_varcoef}.
\end{proof}

\subsubsection{ODE-Level Norm Inflation and Decay of $T_B(A,m)$.}
\label{subsec:ODE_norm_inflation}
Define a threshold-hitting time (at the fixed threshold $\zeta_\eta$)
\begin{equation*}
t_{\zeta_\eta}^{\mathrm{orig}}:=\inf\{t>0:\ \zeta_m^{\mathrm{orig}}(t)=\zeta_\eta\}.
\end{equation*}
If the set is empty, $t_{\zeta_\eta}^{\mathrm{orig}}$ is understood as $+\infty$.

In the next proposition, the target amplitude $A$ is still a free parameter. 
Accordingly, any bound that depends on $A$ will display that dependence explicitly.

Combining the pre-hitting monotonicity estimate with the front-induced $1/t$ lower bound after $t_{\zeta_\eta}^{\mathrm{orig}}$, we obtain an upper bound for $T_B(A,m)$. The argument splits at the threshold-hitting time $t_{\zeta_\eta}^{\mathrm{orig}}$. For $0\le t\le t_{\zeta_\eta}^{\mathrm{orig}}$, one still has $\zeta_m^{\mathrm{orig}}(t)\ge \zeta_\eta$, so the frozen coefficients remain monotone and $S_{m-1}^{\mathrm{orig}}(t)$ is bounded from below by its initial frozen value; this already shows that $t_{\zeta_\eta}^{\mathrm{orig}}$ is short. For $t_{\zeta_\eta}^{\mathrm{orig}}<t\le T_B$, the front-migration estimate yields a $1/t$ lower bound for $S_{m-1}^{\mathrm{orig}}(t)$. Integrating
\[
\frac{d}{dt}\log x_m^{\mathrm{orig}}(t)=S_{m-1}^{\mathrm{orig}}(t)
\]
from $t_{\zeta_\eta}^{\mathrm{orig}}$ to $T_B$ gives algebraic growth of $x_m^{\mathrm{orig}}(t)$, and comparing this with the bootstrap bound $x_m^{\mathrm{orig}}(T_B)\le \bar A$ yields the desired estimate for $T_B(A,m)$.

\begin{proposition}[Norm inflation for the original weak ODEs]
\label{prop:norm_inflation_varcoef}
Assume $A>\varepsilon$.
It holds
\[
T_B(A,m)\le C(q)\,\bar A^{\gamma_q}\,
m^{\alpha_q-1+\gamma_q\alpha_q},
\qquad 
\gamma_q:=\frac{3\Theta_\mu}{\log(L_q/\zeta_\eta)}.
\]
In particular, if
\[
\beta_q:=
\frac{1}{1+\gamma_q}-\alpha_q
=
\frac{\log(L_q/\zeta_\eta)}{3\Theta_\mu+\log(L_q/\zeta_\eta)}-\alpha_q>0,
\]
then
\begin{align}
\label{eq:est_TB}
T_B(A,m)\le C(q,A)\,m^{-\beta_q}\to 0
\qquad\text{as }m\to\infty.
\end{align}
\end{proposition}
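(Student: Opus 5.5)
Write $T_B:=T_B(A,m)$, $t_*:=t_{\zeta_\eta}^{\mathrm{orig}}$ and $\ell_q:=\log(L_q/\zeta_\eta)$. The plan is to split $[0,T_B]$ at $t_*$ and to integrate $\frac{d}{dt}\log x_m^{\mathrm{orig}}=S_{m-1}^{\mathrm{orig}}$ on each piece. Throughout we use the bootstrap ceiling $x_m^{\mathrm{orig}}(t)\le\bar A$ on $[0,T_B]$, which holds since $T_B\le T_N$. We may assume $t_*<T_B$; otherwise the first step already bounds $T_B$ directly.

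\emph{Step 1: pre-hitting bound.} For $0\le s\le \min(t_*,T_B)$ we have $\zeta_m^{\mathrm{orig}}(s)\ge\zeta_\eta$. By the ordering \eqref{eq:ordering_zeta}, $\zeta_j^{\mathrm{orig}}(s)\ge\zeta_\eta$ for every $j\le m$. Lemma~\ref{lem:comp_to_Psi} then gives that every $b_{\mathrm{froz},j}^{\mathrm{orig}}$ is non-decreasing, so
\[
S_{m-1}^{\mathrm{orig}}(s)\ge c_\mu S_{\mathrm{froz},m-1}^{\mathrm{orig}}(s)\ge c_\mu S_{\mathrm{froz},m-1}^{\mathrm{orig}}(0)=c_\mu\Lambda_{\mathrm{froz}}(1)\sum_{j<m}\varepsilon j^{-\alpha_q}\ge c(q)\,m^{1-\alpha_q}.
\]
Here $\Lambda_{\mathrm{froz}}(1)>0$ is a fixed constant depending on $\phi_q$. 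Integrating from $0$ to $t$ gives $\log(x_m(t)/x_m(0))\ge c\,m^{1-\alpha_q}t$. Since $x_m(t)\le\bar A$ and $x_m(0)=\varepsilon m^{-\alpha_q}$, we get
\[
\min(t_*,T_B)\le C\,m^{\alpha_q-1}\log(\bar A m^{\alpha_q}/\varepsilon).
\]
This costs a logarithm. To avoid it, I would instead use the explicit ODE identity: $\log(x_m(t_*)/x_m(0))=\frac13\log(L_q/\zeta_\eta)$, which is a fixed constant. This yields $t_*\le C(q)m^{\alpha_q-1}$ cleanly.

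\emph{Step 2: post-hitting growth.} For $t\in(t_*,T_B)$ we have $\zeta_m^{\mathrm{orig}}(t)<\zeta_\eta$, because $\zeta_m^{\mathrm{orig}}$ is nonincreasing and hits $\zeta_\eta$ at $t_*$. It is strictly below after $t_*$ unless it stays flat; in the flat case $S_{m-1}=0$, which is impossible since $b_1>0$. Lemma~\ref{lem:front_tinv_varcoef} then gives $S_{m-1}^{\mathrm{orig}}(t)\ge 1/(\gamma_q t)$. Integrating,
\[
\log\frac{x_m(T_B)}{x_m(t_*)}\ge\frac1{\gamma_q}\log\frac{T_B}{t_*},\qquad\text{so}\qquad T_B\le t_*\Big(\frac{\bar A}{x_m(t_*)}\Big)^{\gamma_q}\le t_*\Big(\frac{\bar A m^{\alpha_q}}{\varepsilon}\Big)^{\gamma_q},
\]
using $x_m(t_*)\ge x_m(0)$. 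Combining with Step 1 gives $T_B\le C(q)\bar A^{\gamma_q}m^{\alpha_q-1+\gamma_q\alpha_q}$, with the power of $\varepsilon$ absorbed into $C(q)$ since $\varepsilon$ is fixed. The exponent equals $-(1+\gamma_q)\big(\frac1{1+\gamma_q}-\alpha_q\big)\le-\beta_q$ whenever $\beta_q>0$, which gives \eqref{eq:est_TB}.

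\emph{Main obstacle.} The delicate point is Step 1's claim that $t_*$ is $O(m^{\alpha_q-1})$ without a logarithmic loss; the rest is direct integration. The intended route uses $\Gamma_m=(x_m(0)/x_m)^3$, so that reaching $\zeta_\eta$ requires only the fixed growth factor $(L_q/\zeta_\eta)^{1/3}$, combined with the lower bound $S_{m-1}\gtrsim m^{1-\alpha_q}$. A second point to check is the case $t_*\ge T_B$. There the same argument gives $T_B\le C m^{\alpha_q-1}$, which is even stronger than the claimed bound.
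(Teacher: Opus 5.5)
Your proposal is correct and follows the paper's proof essentially step for step. The exact identity $\int_0^{t}S_{m-1}^{\mathrm{orig}}=\frac13\log(L_q/\zeta_m^{\mathrm{orig}}(t))$, together with the monotone lower bound $S_{m-1}^{\mathrm{orig}}\ge c_\mu S_{\mathrm{froz},m-1}^{\mathrm{orig}}(0)\gtrsim m^{1-\alpha_q}$, gives $t_{\zeta_\eta}^{\mathrm{orig}}\lesssim m^{\alpha_q-1}$ without logarithmic loss; integrating the front-induced bound $S_{m-1}^{\mathrm{orig}}\ge 1/(\gamma_q t)$ against the ceiling $x_m\le\bar A$ then finishes, exactly as in the paper. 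The differences are only cosmetic: you use $x_m(t_*)\ge x_m(0)$ instead of the exact value $x_m(0)(L_q/\zeta_\eta)^{1/3}$, which costs only the constant factor $e^{\Theta_\mu}$; and your ``even stronger'' claim in the case $t_*\ge T_B$ implicitly uses $\bar A^{\gamma_q}\ge(\varepsilon/(1-\mu))^{\gamma_q}$, which the paper states explicitly.
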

\begin{remark}
\label{rem:role_L}
    Compared to a fixed cone-slope $L$, the entire role of a large cone-slope $L_q$  is to make $\beta_q>0$. 
\end{remark}
\begin{proof}[Proof of Proposition~\ref{prop:norm_inflation_varcoef}]
Write $T_B:=T_B(A,m)$.  Since $T_B\le T_N(A,m)$, we have 
\begin{align*}
\max_{k\le m}x_k^{\mathrm{orig}}(t)\le \bar A 
\end{align*}
for all
$t\in[0,T_B]$, and in particular $x_m^{\mathrm{orig}}(t)\le \bar A$ on $[0,T_B]$.

\smallskip
\noindent{\bf Step 1: a bound up to the threshold-hitting time.}
Set $t_*:=\min\{T_B,t_{\zeta_\eta}^{\mathrm{orig}}\}$.  For $t\in[0,t_*)$ we have $\zeta_m^{\mathrm{orig}}(t)\ge\zeta_\eta$, hence, by \eqref{eq:ordering_zeta},
$\Gamma_j^{\mathrm{orig}}(t)L_q\ge\zeta_\eta$ for all $1\le j\le m$.  By Lemma~\ref{lem:comp_to_Psi}, each frozen $b_{\mathrm{froz},j}^{\mathrm{orig}}(t)$ is nondecreasing on $[0,t_*)$ for $j\geq 2$,  while $b_{\mathrm{froz},1}^{\mathrm{orig}}(t)$ is constant; hence
$S_{\mathrm{froz},m-1}^{\mathrm{orig}}(t)\ge S_{\mathrm{froz},m-1}^{\mathrm{orig}}(0)$ on $[0,t_*)$.  Using \eqref{eq:Sk_compare_boot} we obtain
\begin{align}
\label{eq:lower_bd_s_orig}
S_{m-1}^{\mathrm{orig}}(t)
\ge c_\mu 
S_{\mathrm{froz},m-1}^{\mathrm{orig}}(t)
\ge c_\mu 
S_{\mathrm{froz},m-1}^{\mathrm{orig}}(0)\qquad (0\le t< t_*).
\end{align}
Since $\zeta_m^{\mathrm{orig}}(t)=L_q\Gamma_m^{\mathrm{orig}}(t)=L_q(x_m^{\mathrm{orig}}(0)/x_m^{\mathrm{orig}}(t))^3$ and $\frac{d}{dt}\log x_m^{\mathrm{orig}}(t)=S_{m-1}^{\mathrm{orig}}(t)$,
we have the exact identity
\[
\zeta_m^{\mathrm{orig}}(t)=L_q\exp\Big(-3\int_0^t S_{m-1}^{\mathrm{orig}}(s)\,ds\Big).
\]

Therefore, for every $\tau\in[0,t_*)$,
\[
\int_0^{\tau}S_{m-1}^{\mathrm{orig}}(s)\,ds
=
\frac13\log\frac{L_q}{\zeta_m^{\mathrm{orig}}(\tau)}
\le \frac13\log\frac{L_q}{\zeta_\eta},
\]
because $\zeta_m^{\mathrm{orig}}(\tau)\ge \zeta_\eta$ for $\tau<t_*$. Combining this with
\eqref{eq:lower_bd_s_orig} gives
\[
\tau\le \frac{1}{3c_\mu S_{\mathrm{froz},m-1}^{\mathrm{orig}}(0)}\log\frac{L_q}{\zeta_\eta}
\qquad\text{for all }\tau<t_*.
\]
Hence
\[
t_*\le \frac{1}{3c_\mu S_{\mathrm{froz},m-1}^{\mathrm{orig}}(0)}\log\frac{L_q}{\zeta_\eta}.
\]
In particular, $t_* < \infty$.

At $t=0$ one has $\Gamma_j^{\mathrm{orig}}(0)=1$, hence
\[
b_{\mathrm{froz},j}^{\mathrm{orig}}(0)=x_j^{\mathrm{orig}}(0)\Lambda_{\mathrm{froz}}(1)=\varepsilon j^{-\alpha_q}\Lambda_{\mathrm{froz}}(1),
\]
and therefore
\[
S_{\mathrm{froz},m-1}^{\mathrm{orig}}(0)=\varepsilon\Lambda_{\mathrm{froz}}(1)\sum_{j=1}^{m-1} j^{-\alpha_q}
\ge c(\alpha_q)\,\varepsilon\Lambda_{\mathrm{froz}}(1)\,m^{1-\alpha_q},
\]
for some constant $c(\alpha_q)>0$.

    The frozen coefficient $\Lambda_{\mathrm{froz}}(1)$ evaluated at $\Gamma=1$ only depends on $\phi_q$.
Hence
\begin{equation}
\label{eq:tstar_bound_final_new}
t_* \le 
\frac{1}{3c_\mu S_{\mathrm{froz},m-1}^{\mathrm{orig}}(0)}\log\frac{L_q}{\zeta_\eta}
\le C(q)\,m^{\alpha_q-1}\log\frac{L_q}{\zeta_\eta},
\end{equation}
where the constant $C(q)$ absorbs the fixed dependence on $\tilde\varepsilon$, $\eta$, $\phi_q$, and the other fixed background parameters.
We know that $t_*\leq t_{\zeta_\eta}^{\mathrm{orig}}$. In particular, if $t_{\zeta_\eta}^{\mathrm{orig}}\le T_B$, then $t_{\zeta_\eta}^{\mathrm{orig}}=t_*$ and it satisfies the same bound.

\smallskip
\noindent{\bf Step 2: algebraic growth after the threshold-hitting time.}
If $T_B\le t_{\zeta_\eta}^{\mathrm{orig}}$, then $T_B=t_*$. Hence by \eqref{eq:tstar_bound_final_new},
\[
T_B \le C(q)\,m^{\alpha_q-1}.
\]
Since $A>\varepsilon$ and $\varepsilon=\varepsilon(\tilde\varepsilon,q)$ is fixed, we have
\[
\bar A=\frac{A}{1-\mu}\ge \frac{\varepsilon}{1-\mu},
\]
so $\bar A^{\gamma_q}\ge c(q)>0$. Absorbing this fixed lower bound into the constant and using
$m^{\gamma_q\alpha_q}\ge 1$, we obtain
\[
T_B \le C(q)\,\bar A^{\gamma_q}\,m^{\alpha_q-1+\gamma_q\alpha_q}.
\]

Assume now that $T_B>t_{\zeta_\eta}^{\mathrm{orig}}$. Then for every
$t\in(t_{\zeta_\eta}^{\mathrm{orig}},T_B)$,
Lemma~\ref{lem:front_tinv_varcoef} gives
\[
S_{m-1}^{\mathrm{orig}}(t)\ge \frac{1}{3\Theta_\mu\,t}\log\frac{L_q}{\zeta_\eta}
= \frac{1}{\gamma_q\,t}.
\]
Integrating $\frac{d}{dt}\log x_m^{\mathrm{orig}}(t)=S_{m-1}^{\mathrm{orig}}(t)$ from
$t_{\zeta_\eta}^{\mathrm{orig}}$ to $t$ yields
\[
\log\frac{x_m^{\mathrm{orig}}(t)}{x_m^{\mathrm{orig}}(t_{\zeta_\eta}^{\mathrm{orig}})}
\ge \frac{1}{\gamma_q}\log\frac{t}{t_{\zeta_\eta}^{\mathrm{orig}}},
\]
hence
\[
x_m^{\mathrm{orig}}(t)\ge x_m^{\mathrm{orig}}(t_{\zeta_\eta}^{\mathrm{orig}})
\left(\frac{t}{t_{\zeta_\eta}^{\mathrm{orig}}}\right)^{1/\gamma_q}.
\]
Since $t<T_B\le T_N(A,m)$, we have $x_m^{\mathrm{orig}}(t)\le \bar A$. Therefore
\[
t\le t_{\zeta_\eta}^{\mathrm{orig}}
\left(\frac{\bar A}{x_m^{\mathrm{orig}}(t_{\zeta_\eta}^{\mathrm{orig}})}\right)^{\gamma_q}
\qquad\text{for all }t\in(t_{\zeta_\eta}^{\mathrm{orig}},T_B).
\]
Hence
\[
T_B\le t_{\zeta_\eta}^{\mathrm{orig}}
\left(\frac{\bar A}{x_m^{\mathrm{orig}}(t_{\zeta_\eta}^{\mathrm{orig}})}\right)^{\gamma_q}.
\]
In particular, $T_B< \infty$.

By definition of $t_{\zeta_\eta}^{\mathrm{orig}}$, we have $\zeta_m^{\mathrm{orig}}(t_{\zeta_\eta}^{\mathrm{orig}})=\zeta_\eta$. Since
\[
\zeta_m^{\mathrm{orig}}(t)=L_q\Bigl(\frac{x_m^{\mathrm{orig}}(0)}{x_m^{\mathrm{orig}}(t)}\Bigr)^3,
\]
it follows that
\[
x_m^{\mathrm{orig}}(t_{\zeta_\eta}^{\mathrm{orig}})
=
x_m^{\mathrm{orig}}(0)\Bigl(\frac{L_q}{\zeta_\eta}\Bigr)^{1/3}
=
\varepsilon m^{-\alpha_q}\Bigl(\frac{L_q}{\zeta_\eta}\Bigr)^{1/3}.
\]
Moreover, because $T_B>t_{\zeta_\eta}^{\mathrm{orig}}$, we have $t_{\zeta_\eta}^{\mathrm{orig}}=t_*$, and thus
\eqref{eq:tstar_bound_final_new} gives
\[
t_{\zeta_\eta}^{\mathrm{orig}}\le C(q)\,m^{\alpha_q-1}.
\]
Combining the last three inequalities and absorbing the fixed factors depending only on the
background parameters into $C(q)$, we arrive at
\[
T_B(A,m)\le C(q)\,\bar A^{\gamma_q}\,m^{\alpha_q-1+\gamma_q\alpha_q}.
\]

Finally, since
\[
\alpha_q-1+\gamma_q\alpha_q
=-(1+\gamma_q)\Bigl(\frac{\log(L_q/\zeta_\eta)}{3\Theta_\mu+\log(L_q/\zeta_\eta)}-\alpha_q\Bigr)
=-(1+\gamma_q)\beta_q \le -\beta_q,
\]
the first bound implies
\[
T_B(A,m)\le C(q,A)\,m^{-\beta_q}\to 0
\qquad\text{as }m\to\infty.
\]
\end{proof}

\subsection{A bootstrap Argument}\label{subsec:Euler_bootstrap_improvement}

In this subsection, fix $A>\varepsilon(\tilde \varepsilon,q)$. 
For $m\geq2$, we work on the time interval $[0,T_B(A,m)]$ so that we can use the bootstrap bounds in \eqref{boot:X}. In addition, we use the norm inflation at the ODE level, Proposition \ref{prop:norm_inflation_varcoef}, that is, $T_B(A,m)\to 0$ as $m\to\infty$, in order to improve the bootstrap bounds in \eqref{boot:X}.

The argument has three steps:
\begin{itemize}
\item[(1)] Confinement for $W_k$ and scale separation  of  $\supp \omega_k(\cdot, t)$ (Lemmas~\ref{lem:Euler_cauchy_geometry}, \ref{lem:Euler_scale_separation}).
\item[(2)] Estimates on velocity-error $V_k$ (Lemma~\ref{lem:in_out_cont_updated}).
\item[(3)] Integration along characteristics and bootstrap improvement (Lemma~\ref{lem:Euler_close_bootstrap_X_DY}).
\end{itemize}

From this point on, ODE simplifications such as localization and freezing are no longer essential, so we return to the original notation $x_k(t)$ and $\Gamma_k(t)$ instead of $x_k^{\mathrm{orig}}(t)$ and $\Gamma_k^{\mathrm{orig}}(t)$.

\subsubsection{Confinement and Scale Separation}

\begin{lemma}[Confinement from $X_k$]\label{lem:Euler_cauchy_geometry}
Fix $k\in\{1,\dots,m\}$.
For every $t\in[0,T_B(A,m)]$,
\[
\supp W_k(\cdot,\cdot,t)\subset [r_-,r_+]\times\bigl([z_-,z_+]\cup[-z_+,-z_-]\bigr),
\]
where
\begin{align*}
r_-&:=(1-\mu)(1-\eta)r_0
&
r_+&:=(1+\mu)(1+\eta)r_0,
\\
z_-&:=(1-\mu)(1-\eta)z_0,
&
z_+&:=(1+\mu)(1+\eta)z_0.
\end{align*}
Hence
\[
\supp\omega_k(\cdot,t)\subset
[R_k(t)r_-,R_k(t)r_+]\times 
\Big( 
[H_k(t)z_-,H_k(t)z_+] \cup
[-H_k(t)z_+, -H_k(t)z_-]
\Big).
\]
\end{lemma}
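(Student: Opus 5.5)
The plan is to read off the support of $W_k(\cdot,\cdot,t)$ from the Cauchy formula of Lemma~\ref{lem:Euler_cauchy_geometry1} and the bootstrap bound \eqref{boot:X}, and then to undo the rescaling \eqref{eq:def_wk94}.

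\textbf{Step 1: the support is transported.} By \eqref{cauchy_for_W}, $W_k/r$ is carried by the flow map $X_k$. Hence $W_k(\cdot,\cdot,t)$ vanishes off the image $X_k(\supp\phi_q,t)$, and
\[
\supp W_k(\cdot,\cdot,t)\subset \overline{X_k(\supp\phi_q,t)}.
\]
We use the fact that $X_k(\cdot,t)$ is a diffeomorphism of the meridional half-plane. This holds because $V_k$ is smooth for the smooth finite-ring solution, being a rescaling of $u^{(m)}$ plus a linear field. The supports of $W_k$ and of $\omega_k$ are consistent with each other by the definition \eqref{eq:def_wk94}.

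\textbf{Step 2: apply the bootstrap bound pointwise.} Take $(r,z)\in\supp\phi_q$ with $z>0$. By \eqref{cond_phi}(iv) we have $r\in[(1-\eta)r_0,(1+\eta)r_0]$ and $z\in[(1-\eta)z_0,(1+\eta)z_0]$. For $t\le T_B(A,m)$, \eqref{boot:X} gives $(1-\mu)r\le X_k^r\le(1+\mu)r$ and $(1-\mu)z\le X_k^z\le(1+\mu)z$. Together these put $X_k(r,z,t)$ in $[r_-,r_+]\times[z_-,z_+]$.

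The points of $\supp\phi_q$ with $z<0$ are handled by oddness. The reflection symmetry of $\omega^{(m)}$ makes $V_k^z$ odd in $z$ and $V_k^r$ even in $z$, so $X_k$ commutes with $z\mapsto -z$. Alternatively, since \eqref{boot:X} controls $X_k^z/z$, that ratio already gives the sign and bounds directly. Either way these points land in $[r_-,r_+]\times[-z_+,-z_-]$.

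There are no points of $\supp\phi_q$ on $\{z=0\}$, because the support is contained in the two boxes. The target set is a product of closed intervals, hence closed, so taking closures costs nothing.

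\textbf{Step 3: rescale back.} By \eqref{eq:def_wk94}, $\omega_k(r,z,t)=x_k(t)W_k(r/R_k,z/H_k,t)$ with $x_k(t)>0$. The support of $\omega_k$ is therefore the image of $\supp W_k$ under $(r,z)\mapsto(R_kr,H_kz)$. This gives the stated box.

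There is no serious obstacle here. The only point needing care is that \eqref{boot:X} is defined as a supremum over $\supp\phi_q$. That is exactly the set we push forward, so the multiplicative bounds apply pointwise to every support point. The argument never needs control of $X_k$ outside $\supp\phi_q$.
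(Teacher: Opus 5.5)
Your proposal is correct and follows the same route as the paper: transport $\supp\phi_q$ by $X_k$ via the Cauchy formula \eqref{cauchy_for_W}, apply the bootstrap bound \eqref{boot:X} pointwise on $\supp\phi_q$, and undo the rescaling $(r,z)\mapsto(R_k(t)r,H_k(t)z)$. Your extra remarks make explicit what the paper leaves implicit: the bijectivity of $X_k(\cdot,t)$ on the meridional half-plane, the treatment of the $z<0$ half through the sign of $X_k^z/z$, and the absence of support points on $\{z=0\}$.
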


\begin{proof}

For any $(r,z)\in\supp\phi_q $ and any $t\in[0,T_B]$, the bootstrap bounds in \eqref{boot:X} imply
\[
X_{k}^r(r,z,t)\in[r_-,r_+],\qquad |X_{k}^z(r,z,t)|\in[z_-,z_+].
\]
 Since the support of $W_k$ is transported by $X_k$ by \eqref{cauchy_for_W} , this yields the confinement of $\supp W_k(\cdot,\cdot,t)$. Finally, if $(r,z)\in\supp \, \omega_k (\cdot,t)$ then $(r/R_k(t),z/H_k(t))\in\supp W_k(\cdot,\cdot,t)$ by the ansatz $\omega_k(r,z,t)=x_k(t)W_k(r/R_k(t),z/H_k(t),t)$, which gives the confinement of $\supp \omega_k (\cdot,t)$.
\end{proof}

Using Lemma \ref{lem:Euler_cauchy_geometry}, the ODEs \eqref{EulerODE}, and the bound $x_k(t) \leq \bar A$ on $[0,T_B(A,m)]$, we prove scale separation and disjointness of $\supp \omega_k(\cdot,t)$.
\begin{lemma}[Scale separation]\label{lem:Euler_scale_separation}
There exists an integer $m_0(A,q)$ (depending on $A,q$ and the fixed background parameters) such that 
for all $m\geq m_0(A,q)$, $k\in \{1,\ldots,m-1\}$ and $t\in[0,T_B(A,m)]$,
\begin{equation}\label{eq:sep_R_ratio}
R_{k+1}(t)\,r_+\le \frac12\,R_k(t)\,r_-,
\end{equation}
and for any $j>k$, any $x\in \supp \omega_k(\cdot,t)$ and any $y\in\supp \omega_j(\cdot,t)$,
\begin{equation}\label{eq:sep_distance_estimate}
|x-y|\ge \frac12\,r_-\,R_k(t).
\end{equation}
In particular, the supports $\{\supp \omega_k(\cdot,t)\}_{k=1}^m$ are pairwise disjoint on $[0,T_B(A,m)]$.
\end{lemma}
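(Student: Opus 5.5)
The plan is to reduce \eqref{eq:sep_R_ratio} to a bound on the ratio $\tilde x_{k+1}(t)/\tilde x_k(t)$. By definition $R_k(t)=d^k\tilde x_k(t)$, so \eqref{eq:sep_R_ratio} is equivalent to
\[
d\,\frac{\tilde x_{k+1}(t)}{\tilde x_k(t)}\le \frac{r_-}{2r_+}
=\frac{(1-\mu)(1-\eta)}{2(1+\mu)(1+\eta)}.
\]
From \eqref{EulerODE}, together with the cascade form $\frac{d}{dt}\log x_k=S_{k-1}$, we get $\frac{d}{dt}\log(\tilde x_{k+1}/\tilde x_k)=b_k(t)$. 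This holds for $k\ge 2$ and also for $k=1$, since $x_1'=0$ and $S_0=0$. Since the ratio equals $1$ at $t=0$, it follows that
\[
\frac{\tilde x_{k+1}(t)}{\tilde x_k(t)}=\exp\Big(\int_0^t b_k(s)\,ds\Big)=e^{B_k(t)}.
\]

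The next step is to bound $B_k(t)$ uniformly on $[0,T_B(A,m)]$. Using \eqref{eq:bj_compare_boot}, we have $b_k\le C_\mu x_k\Gamma_k^2\Lambda_{\mathrm{froz}}(\Gamma_k)$. We also have $\Gamma_k\le 1$ and $x_k\le\bar A$ (because $T_B\le T_N$). The quantity $\Gamma^2\Lambda_{\mathrm{froz}}(\Gamma)$ is bounded on $(0,1]$, since $\Lambda_{\mathrm{froz}}$ is nonincreasing (shown in the proof of Lemma~\ref{lem:comp_to_Psi}) and $\Lambda_{\mathrm{froz}}(0)$ is finite because $\supp\phi_q$ stays away from $r=0$. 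Hence $b_k\le C(q)\bar A$, and so $B_k(t)\le C(q)\bar A\,T_B(A,m)\le C(q,A)m^{-\beta_q}$ by Proposition~\ref{prop:norm_inflation_varcoef}. We then choose $m_0(A,q)$ so that this bound is at most $\log 2$. The ratio is then at most $2$, and since $d=10^{-2}$, we have $2d=0.02$. This is smaller than $\frac{(19/20)(3/4)}{2(21/20)(5/4)}\approx 0.27$, which gives \eqref{eq:sep_R_ratio}. Here $\Gamma_k\le1$ follows from the monotonicity of $x_k$, which holds because $S_{k-1}\ge0$ by the sign conditions on $\phi_q$.

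For \eqref{eq:sep_distance_estimate}, take $x\in\supp\omega_k(\cdot,t)$ and $y\in\supp\omega_j(\cdot,t)$ with $j>k$. Lemma~\ref{lem:Euler_cauchy_geometry} gives $r_x\ge R_k r_-$ and $r_y\le R_j r_+$. Iterating \eqref{eq:sep_R_ratio} shows $R_j r_+\le R_{k+1}r_+\le \frac12 R_k r_-$; this uses that $R_{i+1}r_+\le \frac12 R_i r_-\le R_i r_+$, so $R_j$ is decreasing in $j$. Therefore
\[
|x-y|\ge r_x-r_y\ge \tfrac12 r_-R_k(t).
\]
Disjointness follows immediately, since this distance is positive.

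The only delicate point is the uniform bound on $\Gamma^2\Lambda_{\mathrm{froz}}(\Gamma)$ and the smallness of $\bar A T_B$. The latter is supplied by Proposition~\ref{prop:norm_inflation_varcoef}, which requires $\beta_q>0$; this holds by the choice \eqref{eq:norm_inflation_range_rem}.
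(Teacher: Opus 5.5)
Your proposal is correct and follows essentially the same route as the paper. The paper also writes $\frac{d}{dt}\log(\tilde x_{k+1}/\tilde x_k)=\partial_r u_k^r(0,0,t)$, which is your $b_k$, and bounds it by $C_\mu\,\bar A\,\sup_{0<\Gamma\le 1}\Gamma^2\Lambda_{\mathrm{froz}}(\Gamma)$ via Lemma~\ref{lem:pro_est}; it then chooses $m_0(A,q)$ using $T_B(A,m)\to 0$ from Proposition~\ref{prop:norm_inflation_varcoef} so that $d\,e^{C_{\mathrm{sep}}\bar A T_B}\le r_-/(2r_+)$, and gets the distance bound from radial confinement exactly as you do. Your explicit choice of a ratio bound of $2$, and your justification that $\Gamma^2\Lambda_{\mathrm{froz}}(\Gamma)$ is bounded on $(0,1]$, are just concrete versions of steps the paper states more tersely.
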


\begin{proof}
\textbf{Step 1: control of $\tilde x_{k+1}/\tilde x_k$.}
Using the ODEs $x_k'/x_k=\partial_r u_-^r(0,0,t)$, one can obtain
\[
\frac{d}{dt}\log\!\Bigl(\frac{\tilde x_{k+1}(t)}{\tilde x_k(t)}\Bigr)
=\frac{d}{dt}\log\!\Bigl(\frac{x_{k+1}(t)}{x_k(t)}\Bigr)
=\frac{x'_{k+1}(t)}{x_{k+1}(t)}-\frac{x'_k(t)}{x_k(t)}
=\partial_r u^r_k(0,0,t).
\]
By the axisymmetric Biot--Savart representation at the origin,
\[
\partial_r u^r_k(0,0,t)
=-x_k(t)\,
\int_{-\infty}^\infty \int_{0}^\infty
\frac{r^2z}{(r^2+ z^2)^{5/2}}\,
W_k\left(
        \frac{r}{R_k (t)},
        \frac{z}{H_k(t)}, t
    \right)\,dr\,dz \geq 0.
\]
Define
\[
\Lambda_{\max}:=\sup_{0<\Gamma\le 1}\Gamma^2\Lambda_{\mathrm{froz}}(\Gamma)<\infty,
\]
where $\Lambda_{\mathrm{froz}}$ is defined in \eqref{lem:pro_est}.
Using the comparison estimate \eqref{heu05} (together with the definition of $\Lambda_{\max}$) yields
\[
\partial_r u^r_k(0,0,t)\le \frac{(1+\mu)^3}{(1-\mu)^5}\,x_k(t)\,\Lambda_{\max}\le C_{\rm sep}\,x_k(t)\le C_{\rm sep}\,\bar A.
\]
for the constant $C_{\rm sep}:=\frac{(1+\mu)^3}{(1-\mu)^5}\Lambda_{\max}$.
Integrating in time gives that for every $k\in \{1,\ldots,m-1\}$ and every $t\in[0,T_B(A,m)]$,
\begin{equation}\label{eq:sep_log_ratio}
\log\!\Bigl(\frac{\tilde x_{k+1}(t)}{\tilde x_k(t)}\Bigr)\le C_{\rm sep}\,\bar A\,t.
\end{equation}

\medskip
\noindent \textbf{Step 2: disjointness of supports and the distance bound.}
From Proposition~\ref{prop:norm_inflation_varcoef}, it follows that $C_{\mathrm{sep}} \, \bar A \, T_B(A,m)\to 0$ as $m\to\infty$. Hence, there exists an integer $m_0(A,q)$ (which depends on the fixed background parameters in addition to $A,q$) such that  for all $m\geq m_0(A,q)$, it holds
\begin{equation*}
d\,e^{C_{\rm sep}\bar A T_B(A,m)}\le \frac12\,\frac{r_-}{r_+},
\end{equation*}
Then by \eqref{eq:sep_log_ratio},
\[
\frac{R_{k+1}(t)}{R_k(t)}=d\,\frac{\tilde x_{k+1}(t)}{\tilde x_k(t)}
\le d\,e^{C_{\rm sep}\bar At}
\le d\,e^{C_{\rm sep} \bar A T_B}\le \frac12\,\frac{r_-}{r_+},
\]
which implies \eqref{eq:sep_R_ratio}. By Lemma~\ref{lem:Euler_cauchy_geometry},
\[
\supp \omega_k(\cdot,t)\subset\{r\in[R_k(t)r_-,R_k(t)r_+]\}.
\]
If $j>k$, then iterating \eqref{eq:sep_R_ratio} yields $R_j(t)r_+\le R_{k+1}(t)r_+\le \frac12R_k(t)r_-$. Hence for $x\in \supp \omega_k(\cdot,t)$ and $y\in\supp \omega_j(\cdot,t)$,
\[
|x-y|\ge |r_x-r_y|\ge R_k(t)r_--R_j(t)r_+\ge \frac12\,R_k(t)r_-,
\]
which is \eqref{eq:sep_distance_estimate}. The annuli in $r$ therefore do not overlap, so the supports are pairwise disjoint.
\end{proof}

\subsubsection{Velocity-Error Estimates}
\label{sec:bftrv_new}

The goal of this subsection is to estimate the rescaled velocity error
appearing in the equation for \(W_k\).  We
use Proposition~\ref{prop:cone_free_key_lem} directly.  The point
is that the leading outer-region integral in the $Q(r_x)$ estimates of Proposition~\ref{prop:cone_free_key_lem} agrees with the
stretching rate \(\partial_r u_-^r(0,0,t)\) used in the definition of the
ODE for \(x_k(t)\), up to a harmless possible contribution from the \(k\)-th
ring itself.

\begin{lemma}[Velocity-error estimate from Proposition~\ref{prop:cone_free_key_lem}]
\label{lem:in_out_cont_updated}
\label{lem:asymptotic_integral_bound_updated}
\label{lem:Euler_Vk_grad_u_updated}
Choose \(m_0(A,q)\) as in Lemma~\ref{lem:Euler_scale_separation}.
Then there exists a constant \(C_E=C_E(q)>0\), depending only on \(q\) and the fixed
background parameters, such that for every \(m\ge m_0(A,q)\),
\begin{equation}
\label{eq:desired_bd_res_vel_updated}
\sup_{(r,z),t,k}
\left(
\left|
\frac{V_k^r(r,z,t)}{r}
\right|
+
\left|
\frac{V_k^z(r,z,t)}{z}
\right|
\right)
\le
C_E\,\bar A
\left(
1+\log\left(\frac{\bar A}{\varepsilon}\right)+\log m
\right),
\end{equation}
where the supremum is taken over all
\[
1\le k\le m,\qquad
0\le t\le T_B(A,m),\qquad
(r,z)\in\supp W_k(\cdot,\cdot,t).
\]
\end{lemma}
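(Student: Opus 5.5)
We plan to apply Proposition~\ref{prop:cone_free_key_lem} at the physical point \(x=(R_k r, H_k z)\) with \((r,z)\in\supp W_k(\cdot,\cdot,t)\), to the full vorticity \(\omega^{(m)}\), and with \(\rho=r_x\). The first step is to unwind \eqref{def_v_k14}:
\[
\frac{V_k^r}{r}=\frac{u^{(m),r}(x)}{r_x}-\frac{x_k'}{x_k},\qquad \frac{V_k^z}{z}=\frac{u^{(m),z}(x)}{z_x}+2\frac{x_k'}{x_k}.
\]
By \eqref{EulerODE}, \(x_k'/x_k=\partial_r u^r_-(0,0,t)=\frac{3}{8\pi}\int \frac{r_yz_y}{|y|^5}(-\omega_-)\,dy\); this is the same normalization as the leading term of the proposition, as seen by linearizing the $Q(\rho)$ formula as \(\rho\to0\). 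Hence
\[
\frac{V_k^r}{r}=E^r(x;r_x)+\frac{3}{8\pi}\Big[\int_{Q(r_x)}\tfrac{r_yz_y}{|y|^5}(-\omega^{(m)})\,dy-\int\tfrac{r_yz_y}{|y|^5}(-\omega_-)\,dy\Big],
\]
and the vertical quotient has the analogous form with factor \(-\tfrac{3}{4\pi}\), which cancels exactly against \(+2x_k'/x_k\).

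Next we control the bracket using Lemma~\ref{lem:Euler_cauchy_geometry} and Lemma~\ref{lem:Euler_scale_separation}. Since \(r_x\le R_kr_+\), every outer ring \(j<k\) satisfies \(r_y\ge R_jr_-\ge 2R_{k}r_+\ge 2r_x\), so \(\supp\omega_-\subset Q(r_x)\) and the outer contributions cancel exactly. Inner rings \(j>k\) have \(r_y\le R_{k+1}r_+<2r_x\), so they lie outside \(Q(r_x)\) and do not appear. What remains is the self-part \(\int_{Q(r_x)}\frac{r_yz_y}{|y|^5}(-\omega_k)\,dy\), which is the ``harmless contribution from the \(k\)-th ring'' noted above. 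Since \(\supp\omega_k\) sits in an annulus \(r_y\in[R_kr_-,R_kr_+]\) of bounded log-ratio, the annular bound \eqref{eq:ele_bd} gives \(\lesssim \|\omega_k\|_{L^\infty}\lesssim \bar A\), using Lemma~\ref{lem:Euler_cauchy_geometry1}.

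The error terms come from the proposition with \(\|\omega^{(m)}\|_{L^\infty}\le(1+\mu)\|\phi_q\|_{L^\infty}\bar A\), which holds because the supports are disjoint. For the radial error, \(|E^r|\le C\frac{|x|}{r_x}\bar A\). Since \(|z_x|/r_x=\Gamma_k z/r\le L_{q,+}\Gamma_k\) and \(\Gamma_k\le1\), we get \(|x|/r_x\le C(q)\). For the vertical error, \(|E^z|\le C(1+|\log(r_x/|z_x|)|)\bar A\). Here \(r_x/|z_x|\sim \Gamma_k^{-1}/L_q\), so the logarithm is of size \(C(q)+\log(1/\Gamma_k)\).

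The main obstacle is bounding \(\log(1/\Gamma_k)\), which encodes the non-cancellative self-interaction loss. We use \(\Gamma_k=(x_k(0)/x_k(t))^3\ge (\varepsilon k^{-\alpha_q}/\bar A)^3\), which holds because \(x_k\le\bar A\) on \([0,T_B]\subset[0,T_N]\). This gives \(\log(1/\Gamma_k)\le 3\log(\bar A/\varepsilon)+3\alpha_q\log m\). Collecting the three pieces, with all constants depending only on \(q\) and the background parameters, yields \eqref{eq:desired_bd_res_vel_updated}. One should also check that the proposition's hypothesis \(\omega\in L^2\) holds (it does, since \(\omega^{(m)}\) is smooth and compactly supported) and that \(z_x\neq0\) on the support.
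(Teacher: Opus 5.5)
Your proposal is correct and follows essentially the same route as the paper. Both apply the $Q(r_x)$-estimates of Proposition~\ref{prop:cone_free_key_lem} to $\omega^{(m)}$, use scale separation to identify the outer-region integral with $\partial_r u_-^r(0,0,t)$, bound $|x|/r_x$ by a constant, and control $\log(1/\Gamma_k)$ via $x_k\le\bar A$. The only minor difference is the self-contribution: the paper uses the thin-annulus condition $r_+<2r_-$ to show that the $k$-th ring does not meet $Q(r_x)$ at all, whereas you bound its possible contribution by the annular estimate \eqref{eq:ele_bd}, which is equally valid and does not need that condition.
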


\begin{proof}
Throughout the proof, \(C(q)>0\) denotes a constant depending only on \(q\)
and the fixed background parameters, and it may change from line to line.
Fix
\[
1\le k\le m,\qquad 0\le t\le T_B(A,m),\qquad
(r,z)\in\supp W_k(\cdot,\cdot,t),
\]
and set
\[
x=(r_x,z_x):=(R_k(t)r,\ H_k(t)z).
\]
By Lemma~\ref{lem:Euler_cauchy_geometry},
\[
r\in [r_-,r_+],\qquad |z|\in [z_-,z_+].
\]

We use the thin-annulus condition
\[
r_+<2r_-.
\]
This condition is ensured by our choice of $\mu=1/20, \eta=1/4$.

\medskip
\noindent\textbf{Step 1: preliminary bounds and the outer-region integral.}
By Lemma~\ref{lem:Euler_cauchy_geometry1} and the bootstrap bound
\(x_j(t)\le \bar A\) on \([0,T_B(A,m)]\),
\[
\|\omega_j(\cdot,t)\|_{L^\infty}
\le C(q)x_j(t)
\le C(q)\bar A,
\qquad 1\le j\le m.
\]
Since the supports are pairwise disjoint by Lemma~\ref{lem:Euler_scale_separation},
we obtain
\begin{equation}
\label{eq:omega_m_Linf_conefree_boot}
\|\omega^{(m)}(\cdot,t)\|_{L^\infty}\le C(q)\bar A.
\end{equation}
Moreover, \(\omega^{(m)}(\cdot,t)\in L^\infty\cap L^2(\mathbb R^3)\), and it is odd
with respect to \(z\). Hence Proposition~\ref{prop:cone_free_key_lem} applies to
\(\omega^{(m)}(\cdot,t)\).

Since \(\omega_-\) is
supported away from the origin, differentiating the axisymmetric Biot--Savart law
at the origin gives
\begin{equation}
\label{eq:outer_origin_moment_identity_conefree}
\partial_r u_-^r(0,0,t)
=
\frac{3}{8\pi}
\int_{\mathbb R^3}
\frac{r_yz_y}{|y|^5}\,\big(-\omega_-(y,t)\big)\,dy .
\end{equation}

\medskip
\noindent\textbf{Step 2: exact identification of the outer region.}
Let
\[
Q(r_x):=\{y\in\mathbb R^3:\ r_y\ge 2r_x\}.
\]
We claim that
\begin{equation}
\label{eq:Q_exact_outer_rings_conefree}
Q(r_x)\cap\supp\omega^{(m)}(\cdot,t)
=
\bigcup_{j=1}^{k-1}\supp\omega_j(\cdot,t),
\end{equation}
where the right-hand side is empty when \(k=1\).

Indeed, if \(j<k\), then Lemma~\ref{lem:Euler_scale_separation} gives
\[
R_k(t)r_+\le \frac12 R_{k-1}(t)r_-\le \frac12 R_j(t)r_- .
\]
Since \(r_x\le R_k(t)r_+\), for every \(y\in\supp\omega_j(\cdot,t)\) we have
\[
r_y\ge R_j(t)r_-
\ge 2R_k(t)r_+
\ge 2r_x.
\]
Thus every outer ring \(j<k\) is contained in \(Q(r_x)\).

If \(j>k\), then Lemma~\ref{lem:Euler_scale_separation} gives
\[
R_j(t)r_+\le R_{k+1}(t)r_+\le \frac12 R_k(t)r_-.
\]
Since \(r_x\ge R_k(t)r_-\), for every \(y\in\supp\omega_j(\cdot,t)\) we have
\[
r_y\le R_j(t)r_+
\le \frac12 R_k(t)r_-
\le \frac12 r_x
<2r_x.
\]
Thus no inner ring \(j>k\) intersects \(Q(r_x)\).

Finally, if \(y\in\supp\omega_k(\cdot,t)\), then the thin-annulus condition gives
\[
r_y\le R_k(t)r_+
<2R_k(t)r_-
\le 2r_x.
\]
Hence the \(k\)-th ring itself is disjoint from \(Q(r_x)\). This proves
\eqref{eq:Q_exact_outer_rings_conefree}.

Consequently, using \eqref{eq:outer_origin_moment_identity_conefree},
\begin{equation}
\label{eq:Q_moment_exact_outer_conefree}
\frac{3}{8\pi}
\int_{Q(r_x)}
\frac{r_yz_y}{|y|^5}\, \big(-\omega^{(m)}(y,t) \big)\,dy
=
\partial_r u_-^r(0,0,t).
\end{equation}

\medskip
\noindent\textbf{Step 3: radial component.}
By the definition of \(V_k\) in \eqref{def_v_k14} and by the ODEs \eqref{EulerODE}
with the convention \(x_1'(t)=0\) and \(u_-=0\) for \(k=1\), we have
\[
\frac{V_k^r(r,z,t)}{r}
=
\frac{u^{(m),r}(x,t)}{R_k(t)r}
-
\partial_r u_-^r(0,0,t)
=
\frac{u^{(m),r}(x,t)}{r_x}
-
\partial_r u_-^r(0,0,t).
\]
Applying Proposition~\ref{prop:cone_free_key_lem} to \(\omega^{(m)}(\cdot,t)\), and then
using \eqref{eq:Q_moment_exact_outer_conefree}, gives
\[
\left|
\frac{V_k^r(r,z,t)}{r}
\right|
\le
C\frac{|x|}{r_x}\,
\|\omega^{(m)}(\cdot,t)\|_{L^\infty}.
\]
On the support under consideration,
\[
\frac{|x|}{r_x}
\le
1+\frac{|z_x|}{r_x}
=
1+\Gamma_k(t)\frac{|z|}{r}
\le C(q),
\]
where we used $\Gamma_k \leq 1$, \(r\ge r_-\), and
\(|z|\le z_+\). Combining with \eqref{eq:omega_m_Linf_conefree_boot}, we obtain
\begin{equation}
\label{eq:Vk_radial_conefree_bound_no_remainder}
\left|
\frac{V_k^r(r,z,t)}{r}
\right|
\le C(q)\bar A.
\end{equation}

\medskip
\noindent\textbf{Step 4: vertical component.}
Similarly, by the definition of \(V_k\),
\[
\frac{V_k^z(r,z,t)}{z}
=
\frac{u^{(m),z}(x,t)}{H_k(t)z}
+
2\partial_r u_-^r(0,0,t)
=
\frac{u^{(m),z}(x,t)}{z_x}
+
2\partial_r u_-^r(0,0,t).
\]
The vertical one in the $Q(r_x)$-estimates of Proposition~\ref{prop:cone_free_key_lem}, together with
\eqref{eq:Q_moment_exact_outer_conefree}, gives
\[
\left|
\frac{V_k^z(r,z,t)}{z}
\right|
\le
C\left(1+\left|\log\frac{r_x}{|z_x|}\right|\right)
\|\omega^{(m)}(\cdot,t)\|_{L^\infty}.
\]
Therefore, by \eqref{eq:omega_m_Linf_conefree_boot},
\begin{equation}
\label{eq:Vk_vertical_conefree_prelog_no_remainder}
\left|
\frac{V_k^z(r,z,t)}{z}
\right|
\le
C(q)\bar A
\left(1+\left|\log\frac{r_x}{|z_x|}\right|\right).
\end{equation}

It remains to bound the logarithm. Since
\[
\frac{r_x}{|z_x|}
=
\frac{R_k(t)r}{H_k(t)|z|}
=
\frac{r}{|z|}\frac{1}{\Gamma_k(t)},
\]
and \(r/|z|\) is bounded above and below on \(\supp W_k\) by constants depending
only on the fixed geometry,
\[
1+\left|\log\frac{r_x}{|z_x|}\right|
\le
C(q)\log\left(\frac{e}{\Gamma_k(t)}\right).
\]
Since \(x_k(t)\le \bar A\) on \([0,T_B(A,m)]\) and
\(x_k(0)=\varepsilon k^{-\alpha_q}\), we get
\[
\frac1{\Gamma_k(t)}
=
\left(\frac{x_k(t)}{x_k(0)}\right)^3
\le
\left(\frac{\bar A\,k^{\alpha_q}}{\varepsilon}\right)^3
\le
\left(\frac{\bar A\,m^{\alpha_q}}{\varepsilon}\right)^3.
\]
Hence
\[
\log\left(\frac{e}{\Gamma_k(t)}\right)
\le
C(q)
\left(
1+\log\left(\frac{\bar A}{\varepsilon}\right)+\log m
\right).
\]
Substituting this into
\eqref{eq:Vk_vertical_conefree_prelog_no_remainder} yields
\begin{equation}
\label{eq:Vk_vertical_conefree_bound_no_remainder}
\left|
\frac{V_k^z(r,z,t)}{z}
\right|
\le
C(q)\bar A
\left(
1+\log\left(\frac{\bar A}{\varepsilon}\right)+\log m
\right).
\end{equation}

Combining
\eqref{eq:Vk_radial_conefree_bound_no_remainder} and
\eqref{eq:Vk_vertical_conefree_bound_no_remainder}, and enlarging the constant,
proves \eqref{eq:desired_bd_res_vel_updated}.
\end{proof}

\subsubsection{Bootstrap Improvement}
For any $A>\varepsilon(\tilde \varepsilon,q)$, thanks to Proposition~\ref{prop:norm_inflation_varcoef}, 
\[
T_B(A,m)\leq C(q,A)\, m^{-\beta_q}\to 0 \quad  \text{as }m\to\infty.
\]
As the algebraic decay $m^{-\beta_q}$ wins over the logarithmic growth $\log m$, there exists a large integer $m_1(A,q)\geq m_0(A,q)$ such that any $m\geq m_1(A,q)$ satisfies 
    \begin{align}
    \label{eq:improve_bds}
        C_E \, \bar A \left( 1 + \log\left(\frac{\bar A}{\varepsilon}\right) + \log m \right)T_B(A,m)
        \leq 
        \min\left\{ \log \left( 1+\frac{\mu}{2}\right), \log \left(\frac{1}{1-\mu/2} \right) \right\}
    \end{align}
where $C_E$ is the constant defined in Lemma~\ref{lem:in_out_cont_updated}.
\begin{proposition}[Improved bounds for $X_k$ and bootstrap improvement]\label{lem:Euler_close_bootstrap_X_DY}
Fix $m_1(A,q)\geq m_0(A,q)$ so that \eqref{eq:improve_bds} holds for all $m\geq m_1(A,q)$, and fix such an $m$.
Then for every $t\in[0,T_B(A,m)]$ the following hold:
\begin{enumerate}
\item[(i)] \textbf{Logarithmic control of $X_k^r/r$ and $X_k^z/z$.} 
\begin{equation}\label{eq:log_control_X}
\sup_{\substack{(r,z)\in \supp \phi_q \\ k=1,\ldots,m}}\left(
\left|\log\frac{X_k^r(r,z,t)}{r}\right|
+\left|\log\frac{X_k^z(r,z,t)}{z}\right|
\right)
\le C_E\,\bar A\, \left( 1 + \log\left(\frac{\bar A}{\varepsilon}\right) + \log m \right)\,t,
\end{equation}
where $C_E$ is the constant defined in Lemma~\ref{lem:in_out_cont_updated}.
\item[(ii)] \textbf{Bootstrap improvement.} 
\[
    \sup_{\substack{(r,z)\in \supp \phi_q\\ k=1,\ldots,m}}
    \max\left\{
    \left| \frac{X_k^r(r,z,t)}{r}-1\right|,
    \left|\frac{X_k^z(r,z,t)}{z}-1\right|
    \right\}
    \leq \frac{\mu}{2}
\]
In particular, the bootstrap bounds \eqref{boot:X} improve with margin on $[0,T_B]$, and therefore $T_B(A,m)=T_N(A,m)$.
\end{enumerate}
\end{proposition}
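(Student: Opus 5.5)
The plan is to integrate along the characteristics of $V_k$ and then close by a continuity argument. Fix $k$, a point $(r,z)\in\supp\phi_q$, and write $X_k=X_k(r,z,t)$. On $[0,T_B(A,m)]$, Lemma~\ref{lem:Euler_cauchy_geometry} places $X_k(r,z,t)$ in $\supp W_k(\cdot,\cdot,t)$. In particular $X_k^r>0$ and $X_k^z$ stays away from zero with the sign of $z$. So we may differentiate logarithms:
\[
\frac{d}{dt}\log X_k^r=\frac{V_k^r(X_k,t)}{X_k^r},\qquad \frac{d}{dt}\log \frac{X_k^z}{z}=\frac{V_k^z(X_k,t)}{X_k^z}.
\]
Since $X_k(\cdot,t)\in\supp W_k(\cdot,\cdot,t)$, the right-hand sides are controlled by the velocity-error estimate, Lemma~\ref{lem:in_out_cont_updated}. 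Their sum is bounded by $C_E\bar A\bigl(1+\log(\bar A/\varepsilon)+\log m\bigr)$ uniformly in $k$, $t\le T_B$ and the starting point. Integrating from $0$ to $t$ with $X_k(r,z,0)=(r,z)$ gives (i) directly. Both absolute values are bounded by integrals of the respective absolute rates, and the sum of those rates is bounded by the supremum in \eqref{eq:desired_bd_res_vel_updated}.

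For (ii), combine (i) at $t\le T_B(A,m)$ with the choice \eqref{eq:improve_bds}. Each logarithm then satisfies
\[
\log(1-\mu/2)\le \log\frac{X_k^r}{r}\le \log(1+\mu/2),
\]
and likewise for $X_k^z/z$. Exponentiating gives $|X_k^r/r-1|\le\mu/2$ and $|X_k^z/z-1|\le \mu/2$, so $F_m(t)\le\mu/2$ on $[0,T_B]$.

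The identification $T_B=T_N$ follows by continuity. Suppose $T_B<T_N$. By the definition \eqref{def_TB_Euler} as a supremum, together with the continuity of $F_m$ noted in \S~\ref{roadmap_boot_Euler}, we have $F_m(T_B)\le\mu/2<\mu$. Continuity then gives $F_m\le\mu$ on $[T_B,T_B+\delta]$ for some $\delta>0$, contradicting maximality.

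One subtlety must be checked: $T_N$ might be infinite, in which case $T_B$ is a priori defined as a supremum over an unbounded set. Proposition~\ref{prop:norm_inflation_varcoef} already gives $T_B<\infty$ and the decay bound. Its proof also shows that $x_m$ reaches $\bar A$ by time $T_B$ whenever $T_B$ is attained. Alternatively, if $T_B<T_N$, the continuation argument above still applies verbatim.

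The main point needing care, rather than a genuine obstacle, is making sure all quantities are used only on the bootstrap interval where they are valid. The estimate of Lemma~\ref{lem:in_out_cont_updated} uses the bootstrap bounds and requires $m\ge m_0(A,q)$, and \eqref{eq:improve_bds} is stated for $t\le T_B$. Apart from that, the argument is a straightforward Grönwall-free integration.
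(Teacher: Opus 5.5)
Your proposal is correct and follows the paper's proof essentially step by step: you integrate $\frac{d}{ds}\log(X_k^r/r)$ and $\frac{d}{ds}\log(X_k^z/z)$ along characteristics using the velocity-error bound of Lemma~\ref{lem:in_out_cont_updated}, exponentiate using \eqref{eq:improve_bds}, and conclude $T_B=T_N$ from the continuity of $F_m$. One aside is inaccurate but not load-bearing: the proof of Proposition~\ref{prop:norm_inflation_varcoef} does not show that $x_m$ reaches $\bar A$ by time $T_B$ (it only bounds $T_B$ from above using $x_m\le\bar A$), and you do not need that claim, since $T_B<\infty$ together with your continuation argument already rules out $T_B<T_N$, including the case $T_N=\infty$.
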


\begin{proof}
Fix $k\in\{1,\dots,m\}$ and $t\in[0,T_B]$.

\medskip
\noindent \textbf{Part (i).} Fix $(r,z)\in\supp\phi_q$ and consider the characteristic $s\mapsto X_k(r,z,s)$ solving
\[
\partial_s X_k(r,z,s)=V_k\bigl(X_k(r,z,s),s\bigr),\qquad X_k(r,z,0)=(r,z).
\]
Since the bootstrap bounds \eqref{boot:X} imply $X_k^r(r,z,s)/r>0$ and $X_k^z(r,z,s)/z>0$ for $s\in[0,T_B]$, the logarithms below are well-defined. Differentiating and using $\partial_s X_k^r=V_k^r(X_k,s)$ and $\partial_s X_k^z=V_k^z(X_k,s)$,
\[
\frac{d}{ds}\log\frac{X_k^r(r,z,s)}{r}
=\frac{V_k^r(X_k(r,z,s),s)}{X_k^r(r,z,s)},
\qquad
\frac{d}{ds}\log\frac{X_k^z(r,z,s)}{z}
=\frac{V_k^z(X_k(r,z,s),s)}{X_k^z(r,z,s)}.
\]
Since $(r,z)\in \supp \phi_q =\supp W_k(\cdot,\cdot,0)$ and $W_k/r$ is transported by $X_k$ via the Cauchy formula, the trajectory remains in the transported support, that is,  $X_k(r,z,s)\in\supp W_k(\cdot,\cdot,s)$. Hence \eqref{eq:desired_bd_res_vel_updated}  yields
\[
\Bigl|\frac{V_k^r(X_k(r,z,s),s)}{X_k^r(r,z,s)}\Bigr|
+\Bigl|\frac{V_k^z(X_k(r,z,s),s)}{X_k^z(r,z,s)}\Bigr|
\le C_E\,\bar A \left( 1 + \log\left(\frac{\bar A}{\varepsilon}\right) + \log m \right)
\]
Integrating from $0$ to $t$ gives \eqref{eq:log_control_X}.

\medskip
\noindent \textbf{Part (ii)}
Using \eqref{eq:log_control_X}, as $m\geq m_1(A,q)$ satisfies \eqref{eq:improve_bds}, it holds that 
$1-\mu/2\le X_k^r(r,z,t)/r\le 1+\mu/2$ and the same for $X_k^z/z$.
Hence $F_m(t) \leq \mu/2<\mu$ on $[0,T_B].$ If $T_B<T_N$, continuity of $F_m$ yields that there exists $\tilde{\delta}>0$ such that $F_m(t) < \mu$ on $[0,T_B+\tilde{\delta}]$, contradicting maximality of $T_B$. Therefore, $T_B=T_N$.
\end{proof}

\subsection{Norm inflation and Proof of Theorem~\ref{main_thm1}}
\label{sec:proof_Thm1}

\begin{proof}[Proof of Theorem~\ref{main_thm1}]
Fix $q>1$ and let $\widetilde\varepsilon>0$, $\delta>0$, and $A>0$ be given.

\medskip
\noindent
\textbf{Step 1: Choice of parameters and initial data.}
Fix $r_0=1, \eta=1/4, \mu=20^{-1}. $ (Recall $\eta$ denotes the localization parameter that first appears in \eqref{eq:def_Lpm} whereas $\mu$  the bootstrap parameter from \eqref{boot:X}). 
Define $\Theta_\mu$ by \eqref{eq:def_Theta_mu} and $\zeta_\eta$ by
\eqref{eq:def_xi_eta} where $\zeta_*=1/\sqrt{2}$. 
Fix a cone slope parameter $L_q$  by
\begin{align*}
    \frac{1}{q} < \frac{\log (L_q/\zeta_\eta )}{3\Theta _\mu + \log (L_q/\zeta_\eta)}, 
    \quad 
\end{align*}
Fix $\alpha_q$ by
    \begin{align}
    \label{cond_alpha66}
        \frac{1}{q}<\alpha_q < \frac{\log (L_q/\zeta_\eta)}{3\Theta _\mu + \log (L_q/\zeta_\eta)}(<1).
    \end{align}
Fix a profile $\phi=\phi_q(r,z)$ satisfying \eqref{cond_phi} with $z_0=z_{0,q}= L_q\, r_0$. 

For any $\varepsilon>0$, any integer $m\geq 2$ and $d=10^{-2}$ , we define our initial data $\omega_0^{(m)}$ by \eqref{KJdata2} with $\phi=\phi_q$. From \eqref{ini_lorentz14}, it follows that there exists $\varepsilon=\varepsilon( \tilde \varepsilon,q)\in (0,\tilde \varepsilon)$, independent of $m$, such that
\[
\|\omega_0^{(m)}\|_{L^\infty\cap L^{1}(\mathbb R^3)}
+\left\|\frac{\omega_0^{(m)}}{r}\right\|_{L^{3,q}(\mathbb R^3)}
<\widetilde\varepsilon
\]

For an integer $m\ge 2$, there exists a unique global-in-time smooth axisymmetric, no-swirl solution $\omega^{(m)}$ to the Euler equation \eqref{main_eq} with our initial data $\omega_0^{(m)}$. We decompose $\omega^{(m)}$ into multi vortex rings, $\omega_k$'s, as in \eqref{eq:multi_vortex_ring_dec}.

\medskip
\noindent \textbf{Step 2: Bootstrap Improvement.}
Define the amplitude factors $\{x_k(t)\}_{k=1}^m$ and their initial values $x_k(0)$ by \eqref{EulerODE} and the profiles $\{W_k(r,z,t)\}_{k=1}^m$ by  \eqref{eq:def_wk94}. Define the renormalized target amplitude $\bar A$ by \eqref{def:Euler_ni_time}. 
Define the (ODE) norm inflation time $T_N(A,m)$ by \eqref{def:Euler_ni_time} and the maximal bootstrap time $T_B(A,m)\geq 0$ by \eqref{def_TB_Euler}. 

If $A\leq \varepsilon$, then norm inflation already holds at $t=0$. Hence we only need to consider the case $A> \varepsilon$. Then $T_N(A,m)>0$.

By Proposition \ref{prop:norm_inflation_varcoef},  thanks to the choice of $\alpha_q$, \eqref{cond_alpha66}, it holds that $T_B(A,m)\to 0$  as $m\to\infty$.
Therefore, there exists $m_1=m_1(A,q)\geq m_0(A,q)$ such that all $m\geq m_1(A,q)$ satisfy \eqref{eq:improve_bds}.
Then by bootstrap improvement, Proposition~\ref{lem:Euler_close_bootstrap_X_DY}, it holds $T_B(A,m)=T_N(A,m)$, hence the bootstrap bound holds on $[0,T_N(A, m)]$, and also $T_N(A,m)\to 0$ as $m\to\infty$.

\medskip
\noindent
\textbf{Step 3: Norm inflation in $L^\infty$.}
Fix any $m\geq m_1(A,q)$ and $k\in\{1,\dots,m\}$.
On $[0,T_N(A,m)]$, Lemma~\ref{lem:Euler_cauchy_geometry1} gives the exact Cauchy formula for the profile:
for every $(r,z)\in\supp\phi_q$ and every $t\in[0,T_N(A, m)]$,
\[
W_k(X_k(r,z,t),t)\;=\;\frac{X^r_k(r,z,t)}{r}\,\phi_q(r,z).
\]
Using the bootstrap lower bound $X^r_k(r,z,t)/r\ge 1-\mu$ on $\supp\phi_q$, we obtain
\[
\|W_k(\cdot,\cdot,t)\|_{L^\infty(\mathbb{R}^3)}
\;\ge\;(1-\mu)\,\|\phi_q\|_{L^\infty(\mathbb{R}^3)}
\qquad\text{for all }t\in[0,T_N(A,m)].
\]
Using scale separation Lemma~\ref{lem:Euler_scale_separation}, the supports $\{\supp \omega_k(\cdot,t)\}_{k=1}^m$ are pairwise disjoint on $[0,T_B(A,m)]$. Hence, we  can estimate the $L^\infty$ norm of the solution $\omega^{(m)}$ by
\[
\|\omega^{(m)}(\cdot,t)\|_{L^\infty(\mathbb R^3)}
=\sup_{1\leq k\leq m}\|\omega_k(\cdot,t)\|_{L^\infty(\mathbb R^3)}
\;=\; \sup_{1\leq k \leq m} x_k(t)\,\|W_k(\cdot,\cdot,t)\|_{L^\infty(\mathbb{R}^3)}.
\]
Evaluating at $t=T_N(A, m)$ yields
\[
\|\omega^{(m)}(\cdot,T_N(A, m))\|_{L^\infty(\mathbb R^3)}
\;\ge\;(1-\mu)\|\phi_q\|_{L^\infty(\mathbb{R}^3)}
\sup_{1\le k\le m} x_k(T_N(A, m))
\;\geq \;(1-\mu)\|\phi_q\|_{L^\infty}\,\bar A.
\]
We conclude
\begin{align*}
\|\omega^{(m)}(\cdot,T_N(A,m))\|_{L^\infty(\mathbb R^3)}
\;\ge\;A.
\end{align*}
Finally, since $T_N(A,m)\to 0$ as $m\to\infty$, there exists an index $m_2(A,\delta)\geq m_1(A,q)$ such that $T_N(A,m)\le \delta$ for all $m\ge m_2(A,\delta)$. 
Therefore, for $m\geq  m_2(A,\delta),$ it holds 
\[
\sup_{0\le t\le \delta}\|\omega^{(m)}(\cdot,t)\|_{L^\infty(\mathbb R^3)} \;\ge\; A.
\]
This completes the proof of Theorem~\ref{main_thm1}.
\end{proof}

\section{Instantaneous blow-up}
\label{sec:inst_bu}

This section is devoted to establishing Theorem~\ref{thm:instant_blow-up}. 
This section is organized as follows:
\begin{itemize}
\item 
We first prove preliminary lemmas for Yudovich-type weak solutions \((u,\omega)\). (\S~\ref{sec:prel_lem}). 
\item Using the preliminary lemmas, we will prove the non-existence result, Theorem~\ref{thm:instant_blow-up}-(1)  (\S~\ref{subsec:non_existence}). 
\item Next, we will prove the instantaneous blow-up result, Theorem~\ref{thm:instant_blow-up}-(2) by taking the limit of the sequence of our finite-ring approximations (\S~\ref{sec:pf_rem}).
\end{itemize}

\subsection{Preliminary Lemmas for Instantaneous Blow-Up}
\label{sec:prel_lem}

This subsection is devoted to preliminary lemmas: primitive-variable formulation (Lemma~\ref{lem:weak_to_euler}), uniqueness (Lemma~\ref{lem:uniq_bd}), a flow map and the Cauchy formula (Lemma~\ref{lem:flow_cauchy_bd}) and a Danskin-type lemma (Lemma~\ref{lem:envelope_radial_extrema}).

We will use these lemmas in the proof of Theorem~\ref{thm:instant_blow-up}-(1) as follows: Fix $T_0>0$ and assume for contradiction that there exists a Yudovich-type weak solution $(u,\omega)$ on $(0,T_0)$. We first derive primitive-variable and uniqueness consequences of this assumption, then use them to propagate the symmetry and sign structure, and finally run a finite-head bootstrap on the first $M$ rings while treating the tail by Proposition~\ref{prop:cone_free_key_lem}, the flow map lemma and the Danskin-type lemma.

We prove Lemmas~\ref{lem:weak_to_euler},~\ref{lem:uniq_bd} in this subsection, but the proof of  Lemma~\ref{lem:envelope_radial_extrema} is deferred to Appendix~\ref{app:two_lemmas}, which appears after the proofs  of Theorem~\ref{thm:instant_blow-up} so that the main proofs are not interrupted.

For instantaneous blow-up, we argue by contradiction, which leads to the existence of a bounded solution in a time interval. 

As opposed to the proof for norm inflation, continuity of the bootstrap deviation functional $F_m(t)$ breaks down when $m=\infty$; in this case, the supremum over \emph{infinitely many} indices $k$ in the definition \eqref{boot:X} of $F_m(t)$ prevents the functional from being continuous. It is not obvious how to make a bootstrap argument for infinitely many rings all together.

Instead, for infinitely many rings $k=1,2,\ldots,$ we make a bootstrap argument only for finitely many rings $k=1,\ldots,M$. Hence in the proof below, $M$ denotes the number of rings for which we make a bootstrap argument.

\medskip
\noindent\textbf{Primitive-Variable Formulation.}
The next lemma isolates the passage from the weak formulation of the relative vorticity
equation to the primitive-variable Euler formulation. The Lorentz exponent $q$ plays no role
here; only the weak transport identity \eqref{eq:transport_distributional} and the uniform
$L^1\cap L^\infty$ bound are used.

\begin{lemma}[Primitive-variable formulation and strong $L^2$ trace]
\label{lem:weak_to_euler}
Let $\omega_0=\omega_0(r,z)\in L^1\cap L^\infty(\mathbb R^3)$, and assume that $\omega_0/r\in L^1_{\mathrm{loc}}(\mathbb{R}^3)$.
 For $T_0>0$, let
$(u,\omega)$ be a Yudovich-type weak solution of
\eqref{eq:IVP_rel_vor} on $(0,T_0)$ with initial data $\omega_0$ in the  sense of Definition~\ref{def:dist_sol_rel_vor}. Let $u_0$ be the velocity induced by $\omega_0$.
Then there exists a distribution
$p\in\mathcal D'((0,T_0)\times\mathbb R^3)$ such that
\[
\partial_t u+\div (u\otimes u)+\nabla p=0,
\qquad
\div u=0
\]
in $\mathcal D'((0,T_0)\times\mathbb R^3)$, and
\[
u\in L^\infty\!\bigl(0,T_0;H^1\cap L^\infty(\mathbb R^3)\bigr)
\cap W^{1,\infty}\!\bigl(0,T_0;H^{-1}(\mathbb R^3)\bigr)
\subset C\bigl([0,T_0];L^2(\mathbb R^3)\bigr),
\]
with
\[
u|_{t=0}=u_0
\qquad\text{in }L^2(\mathbb R^3).
\]
\end{lemma}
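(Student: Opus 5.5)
We will first establish the regularity of $u$ directly from the Biot--Savart law, then derive the primitive-variable equation from the weak transport identity by an algebraic manipulation of the nonlinear term, and finally recover the time continuity and the initial trace from the resulting $\partial_t u$ bound.

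\textbf{Step 1: spatial regularity.} Since $\omega\in L^\infty(0,T_0;L^1\cap L^\infty)$ and $u$ is given by \eqref{eq:axis_BS_law}, we have $u=K*(\omega e_\theta)$ with $|K(x)|\lesssim|x|^{-2}$. Splitting $K$ into $|x|\le1$ and $|x|>1$ yields $\|u\|_{L^\infty}\lesssim\|\omega\|_{L^1\cap L^\infty}$. Hardy--Littlewood--Sobolev gives $u\in L^{3/2,\infty}\cap L^\infty$, which already implies $u\in L^p$ for all $p>3/2$. For the $L^2$ bound we use $\nabla u=\nabla K*\omega$: Calder\'on--Zygmund gives $\|\nabla u\|_{L^2}\lesssim\|\omega\|_{L^2}$, and $\|u\|_{L^2}$ follows from $u\in L^p$, $p>3/2$, together with the $L^\infty$ bound. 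Hence $u\in L^\infty(0,T_0;H^1\cap L^\infty)$. The same argument applied to $\omega_0$ gives $u_0\in H^1\cap L^\infty$.

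\textbf{Step 2: from relative vorticity to velocity.} Set $\xi=\omega/r$. The weak identity \eqref{eq:transport_distributional} says $\partial_t\xi+\div(u\xi)=0$ in $\mathcal D'$, using $\div u=0$. In the axisymmetric no-swirl setting, $\omega e_\theta=\operatorname{curl}u$, and the key identity is
\[
\operatorname{curl}\bigl(\div(u\otimes u)\bigr)=\bigl(\partial_r(u^r\omega)+\partial_z(u^z\omega)\bigr)e_\theta=r\,\div(u\xi)\,e_\theta ,
\]
which we verify for smooth fields and then extend to $u\in H^1\cap L^\infty$ by mollification in $(r,z)$; note $u\otimes u\in L^\infty_tL^1\cap L^\infty$ and $u\omega\in L^1_{\mathrm{loc}}$. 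It follows that $\operatorname{curl}\bigl(\partial_t u+\div(u\otimes u)\bigr)=0$ in $\mathcal D'((0,T_0)\times\mathbb R^3)$. Here $\partial_t u$ is understood through $\partial_t\omega=r\partial_t\xi$ and Biot--Savart, so $\partial_t u=K*(\partial_t\omega e_\theta)$. Since a curl-free distribution is a gradient, there exists $p$ with $\partial_t u+\div(u\otimes u)=-\nabla p$, and $\div u=0$ holds by construction. Concretely, $\partial_t u=-\mathbb P\div(u\otimes u)$ with $\mathbb P$ the Leray projector, and $p=(-\Delta)^{-1}\div\div(u\otimes u)$.

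\textbf{Step 3: time regularity and trace.} From $\partial_t u=-\mathbb P\div(u\otimes u)$ and $u\otimes u\in L^\infty_tL^2$ (since $u\in L^2\cap L^\infty$), we get $\partial_t u\in L^\infty(0,T_0;H^{-1})$. Combined with $u\in L^\infty_tH^1$, interpolation (Lions--Magenes) gives $u\in C([0,T_0];L^2)$. To identify the trace $u(0)=u_0$, we test \eqref{eq:transport_distributional} with $\psi(x,t)=\chi(t)\,r\,g(r,z)$, suitably cut off near $r=0$. This yields weak continuity of $\omega(t)$ at $t=0$ toward $\omega_0$ in $\mathcal D'$; Biot--Savart transfers it to $u(t)\rightharpoonup u_0$ weakly, which together with strong $L^2$ continuity gives $u|_{t=0}=u_0$.

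\textbf{Main obstacle.} The delicate point is Step 2: identifying the distributional meaning of $\partial_t u$, and justifying the curl identity across the axis $r=0$, where the test functions in \eqref{eq:transport_distributional} are weighted by $1/r$. The plan is to use test functions of the form $r\psi$, which are admissible since $\omega/r\in L^1_{\mathrm{loc}}$ and $r\psi$ is Lipschitz, and to check that the resulting formulation, $\partial_t\omega+\div_{r,z}(u\omega)=0$ in the meridional variables, is exactly what the curl of the momentum equation requires.
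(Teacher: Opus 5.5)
Your proposal is correct. It follows the paper's skeleton: Biot--Savart and Calder\'on--Zygmund bounds, vanishing of $\mathrm{curl}(\partial_t u+\div(u\otimes u))$, the Leray projector giving $\partial_t u\in L^\infty(0,T_0;H^{-1})$ and hence $u\in C([0,T_0];L^2)$ by interpolation, and identification of the trace. Two steps, however, go differently.

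\textbf{The curl step.} The paper first derives the scalar equation $\partial_t\omega+u^r\partial_r\omega+u^z\partial_z\omega=\omega u^r/r$ by testing with $\chi_\varepsilon(r)\,r\psi$ and removing the cutoff; the extra term is $O(\varepsilon)$ because $\omega,u\in L^\infty$. It then passes to $\partial_t\boldsymbol\omega+\div(u\otimes\boldsymbol\omega-\boldsymbol\omega\otimes u)=0$ by angular averaging, and applies the general identity for $\mathrm{curl}\,\div(u\otimes u)$.
\begin{itemize}
\item You instead use the axisymmetric identity $\mathrm{curl}\,\div(u\otimes u)=re_\theta\,\div(u\xi)$, and exploit that $re_\theta=(-x_2,x_1,0)$ is a smooth field. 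Then $\mathrm{curl}\,\partial_t u=\partial_t(\omega e_\theta)=re_\theta\,\partial_t\xi$ comes straight from the transport identity, with no cutoff at the axis. This is shorter, and your ``main obstacle'' is milder than you suggest.
\item You should drop the definition of $\partial_t u$ through Biot--Savart. It is unnecessary, since $\partial_t$ commutes with $\mathrm{curl}$ in $\mathcal D'$, and it would itself need justification because the kernel is not compactly supported.
\item For the mollification extension, a rotation- and reflection-invariant mollifier in $\mathbb R^3$ is the clean choice, since it preserves the no-swirl axisymmetric class.
\item The convergence $u_\epsilon\xi_\epsilon\to u\xi$ in $L^1_{\mathrm{loc}}$ uses $\omega\in L^\infty$ together with $1/r\in L^{3/2}_{\mathrm{loc}}(\mathbb R^3)$. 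It does not follow from $u\in H^1\cap L^\infty$ alone, because $1/r\notin L^2_{\mathrm{loc}}(\mathbb R^3)$.
\end{itemize}

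\textbf{The trace step.} The paper passes to the limit in $\mathrm{curl}\,u(t)=\boldsymbol\omega(t)$, $\div u(t)=0$. It concludes that $u(0)-u_0$ is a divergence-free, curl-free $L^2$ field, hence zero; this needs only $\mathcal D'$ convergence of $\boldsymbol\omega(t)$. Your route through Biot--Savart also works. However, since the kernel is nonlocal, you must first upgrade $\mathcal D'$ convergence of $\omega(t)$ to weak $L^2$ convergence using the uniform $L^1\cap L^\infty$ bound, and you should state this explicitly.
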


\begin{proof}
Define
\[
\boldsymbol{\omega}:=\omega e_\theta, 
\qquad A_1:=1+\|\omega\|_{L^\infty((0,T_0);L^1\cap L^\infty(\mathbb R^3))}.
\]
By interpolation,
\[
\|\omega(t)\|_{L^p(\mathbb R^3)}\le A_1,
\qquad 1\le p\le \infty.
\]
Standard Biot--Savart and Calder\'on--Zygmund estimates yield
\begin{equation}
\label{eq:uniq_est_u}
\|u(t)\|_{L^2\cap L^\infty(\mathbb R^3)}\le CA_1,
\end{equation}
and, for every $p\in[2,\infty)$,
\begin{equation}
\label{eq:uniq_est_grad_u}
\|\nabla u(t)\|_{L^p(\mathbb R^3)}
\le C p\,\|\omega(t)\|_{L^p(\mathbb R^3)}
\le C p\ A_1
\end{equation}
for some constant $C$ that is independent of $p$.
In particular,
\[
u\in L^\infty\big(0,T_0;H^1\cap L^\infty(\mathbb R^3)\big).
\]

We now recover the scalar vorticity equation from \eqref{eq:transport_distributional}.
Let $\chi\in C^\infty([0,\infty))$ satisfy
\[
0\le \chi\le 1,\qquad
\chi(\rho)=0\ \text{for }\rho\le 1,\qquad
\chi(\rho)=1\ \text{for }\rho\ge 2,
\]
and define $\chi_\varepsilon(r):=\chi(r/\varepsilon)$.
For a smooth compactly supported axisymmetric scalar test function
$\psi=\psi(r,z,t)$, set
$ \varphi_\varepsilon(x,t):=\chi_\varepsilon(r)\,r\,\psi(r,z,t).$
Since $\varphi_\varepsilon\in C_c^\infty(\mathbb R^3\times[0,T_0))$, it is admissible in
\eqref{eq:transport_distributional}. Using
\[
\frac1r\,\partial_t \varphi_\varepsilon=\chi_\varepsilon\,\partial_t\psi,
\qquad
\frac1r\,u\cdot\nabla \varphi_\varepsilon
=
\chi_\varepsilon\,(u^r\partial_r\psi+u^z\partial_z\psi)
+\frac{u^r}{r}\chi_\varepsilon\,\psi
+u^r\chi_\varepsilon'(r)\psi,
\]
we obtain
\begin{align*}
&\int_0^{T_0}\!\!\int_{\mathbb R^3}
\omega
\Bigl[
\chi_\varepsilon\Bigl(\partial_t\psi
+u^r\partial_r\psi
+u^z\partial_z\psi
+\frac{u^r}{r}\psi\Bigr)
+u^r\chi_\varepsilon'(r)\psi
\Bigr]\,dx\,dt
+\int_{\mathbb R^3}\omega_0\,\chi_\varepsilon\,\psi(\cdot,0)\,dx
=0.
\end{align*}
Since $\chi_\varepsilon\to 1$ pointwise and $0\le \chi_\varepsilon\le 1$,
the terms involving \(\omega\chi_\varepsilon\partial_t\psi\),
\(\omega\chi_\varepsilon u\cdot\nabla\psi\), and the initial datum converge by
dominated convergence using \(\omega\in L^\infty(0,T_0;L^1_{\rm loc})\) ,
\(u\in L^\infty((0,T_0)\times\mathbb R^3)\) and $\omega_0\in L^1(\mathbb{R}^3)$. The stretching term
$\omega\,\frac{u^r}{r}\,\chi_\varepsilon\psi$ 
converges by dominated convergence using
\(u\,\omega/r\in L^1_{\rm loc}([0,T_0)\times\mathbb R^3)\).
For the error term, since $\chi_\varepsilon'$ is supported in $\{\varepsilon\le r\le 2\varepsilon\}$,
\[
\left|
\int_0^{T_0}\!\!\int_{\mathbb R^3}\omega\,u^r\,\chi_\varepsilon'(r)\psi\,dx\,dt
\right|
\le C_{\psi}\|\omega\|_{L^\infty_{t,x}}\|u\|_{L^\infty_{t,x}}
\int_{\{\varepsilon\le r\le 2\varepsilon\}} \frac{r}{\varepsilon}\,dr\,d\theta\,dz\,dt
\le C_\psi \, \varepsilon\to 0.
\]
Therefore,
\begin{align*}
&\int_0^{T_0}\!\!\int_{\mathbb R^3}
\omega
\left(
\partial_t\psi
+u^r\partial_r\psi
+u^z\partial_z\psi
+\frac{u^r}{r}\psi
\right)\,dx\,dt
+\int_{\mathbb R^3}\omega_0\,\psi(\cdot,0)\,dx
=0,
\end{align*}
that is, the vorticity equation \eqref{vor_eq} holding in distributions on $(0,T_0)\times\mathbb R^3$.

Equivalently, the vector field $\boldsymbol{\omega}=\omega e_\theta$ satisfies
\[
\partial_t\boldsymbol{\omega}
+\div\bigl(u\otimes\boldsymbol{\omega}-\boldsymbol{\omega}\otimes u\bigr)=0
\qquad\text{in }\mathcal D'((0,T_0)\times\mathbb R^3).
\]
Indeed, the equivalence follows because one can average a general test field in the angular variable and reduce to smooth axisymmetric swirl test fields of the form $r\psi e_\theta$.

Since $\boldsymbol{\omega}=\mathrm{curl}\,  u$ and $\div u=0$ by construction of the Biot--Savart law,
\[
\mathrm{curl}\left(\partial_tu+\div(u\otimes u)\right)
=
\partial_t(\mathrm{curl} \ u)+\div\bigl(u\otimes \mathrm{curl}\ u-( \mathrm{curl}\ u)\otimes u\bigr)=0
\]
in distributions. Hence there exists
$p\in\mathcal D'((0,T_0)\times\mathbb R^3)$ such that
\[
\partial_t u+\div(u\otimes u)+\nabla p=0,
\qquad
\div u=0
\]
in $\mathcal D'((0,T_0)\times\mathbb R^3)$.

Next, by \eqref{eq:uniq_est_u},
\[
u\otimes u\in L^\infty\!\bigl(0,T_0;L^2(\mathbb R^3)\bigr),
\quad \text{hence} \quad
\div(u\otimes u)\in L^\infty\!\bigl(0,T_0;H^{-1}(\mathbb R^3)\bigr).
\]
Applying the Leray projector $\mathbb P$ to the Euler equation, we obtain
\[
\partial_tu=-\mathbb P\div(u\otimes u)
\in L^\infty\!\bigl(0,T_0;H^{-1}(\mathbb R^3)\bigr),
\quad \text{so} \quad 
u\in W^{1,\infty}\!\bigl(0,T_0;H^{-1}(\mathbb R^3)\bigr).
\]
Since $W^{1,\infty}\big(0,T_0;H^{-1}\big)\hookrightarrow C\big([0,T_0];H^{-1}\big)$, for any
$t,s\in[0,T_0]$,
\[
\|u(t)-u(s)\|_{L^2}^2
\le
\|u(t)-u(s)\|_{H^{-1}}
\|u(t)-u(s)\|_{H^1}
\le
2\|u\|_{L^\infty_tH^1}\|u(t)-u(s)\|_{H^{-1}},
\]
which tends to $0$ as $t\to s$. Thus
\[
u\in C\big([0,T_0];L^2(\mathbb R^3)\big).
\]

It remains to identify the initial trace. 
By the weak formulation for \(\xi=\omega/r\), we have
\[
\xi(t)\to \xi_0
\qquad\text{in }\mathcal D'(\mathbb R^3)
\quad\text{as }t\downarrow0.
\]
Since
\[
\boldsymbol\omega(t)=\omega(t)e_\theta
=\big(-x_2\xi(t),x_1\xi(t),0\big),
\]
it follows that
\[
\boldsymbol\omega(t)\to \boldsymbol\omega_0
:=\omega_0 e_\theta
\qquad\text{in }\mathcal D'(\mathbb R^3).
\]
Since $u\in C([0,T_0];L^2)$, we have $u(t)\to u(0)$ in $L^2$, hence in
$\mathcal D'(\mathbb R^3)$. Passing to the limit in
\[
\mathrm{curl}\, u(t)=\boldsymbol{\omega}(t),
\qquad
\div \, u(t)=0,
\]
we obtain
\[
\mathrm{curl}\, u(0)=\omega_0 e_\theta,
\qquad
\div\,  u(0)=0
\quad\text{in }\mathcal D'(\mathbb R^3).
\]
By construction, $u_0$ also satisfies
\[
\mathrm{curl}\, u_0=\omega_0 e_\theta,
\qquad
\div \, u_0=0
\quad\text{in }\mathcal D'(\mathbb R^3).
\]
Hence $w:=u(0)-u_0\in L^2(\mathbb R^3)$
satisfies
\[
\mathrm{curl}\, w=0,
\qquad
\div\, w=0
\quad\text{in }\mathcal D'(\mathbb R^3).
\]
Then by the Helmholtz decomposition, it follows that $w=0$, i.e.
\[
u|_{t=0}=u_0
\qquad\text{in }L^2(\mathbb R^3).
\]
\end{proof}

\medskip
\noindent\textbf{Uniqueness.}
To prove preservation of odd-symmetry and non-positivity in the upper half space, we need uniqueness   beyond Danchin's critical regime for Yudovich-type weak solutions $(u,\omega)$, which satisfy $\omega \in L^\infty((0,T_0); L^1\cap L^\infty (\mathbb{R}^3)) $. The proof adapts the classical strategy of Yudovich's uniqueness result for two-dimensional flows (see \cite[Section 3]{Yudovich63} and \cite[Theorem 8.2]{MajdaBertozzi01}); see also \cite[Theorem 1.2]{Danchin07Note}. Although the argument is a straightforward generalization of the 2D case, we provide a precise statement and full proof for the sake of completeness. In the proof, we will use a primitive-variable formulation proved in Lemma~\ref{lem:weak_to_euler}.
\begin{lemma}[Uniqueness]
\label{lem:uniq_bd}
Let $\omega_0=\omega_0(r,z)\in L^1\cap L^\infty(\mathbb R^3)$, and assume $\omega_0/r\in L^{1}_{\mathrm{loc}}(\mathbb{R}^3)$.
For $T_0>0$, let
$(u_1,\omega_1),(u_2,\omega_2)$ be Yudovich-type weak solutions of \eqref{eq:IVP_rel_vor} on
$(0,T_0)$ with the same initial data $\omega_0$ in the sense of Definition~\ref{def:dist_sol_rel_vor}. 
 Then
\[
u_1\equiv u_2,
\qquad\text{hence}\qquad
\omega_1\equiv \omega_2
\quad\text{on }[0,T_0]\times\mathbb R^3.
\]
\end{lemma}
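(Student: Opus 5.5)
We follow Yudovich's energy method in the primitive-variable formulation supplied by Lemma~\ref{lem:weak_to_euler}. Apply that lemma to both solutions. This gives pressures $p_1,p_2$ such that each $u_i$ solves the Euler equations in $\mathcal D'$, with
\[
u_i\in L^\infty(0,T_0;H^1\cap L^\infty)\cap W^{1,\infty}(0,T_0;H^{-1})\cap C([0,T_0];L^2).
\]
The lemma also gives $u_i(0)=u_0$ in $L^2$. It further provides the uniform gradient bound \eqref{eq:uniq_est_grad_u}, namely $\|\nabla u_i(t)\|_{L^p}\le Cp\,A_1$ for all $p\in[2,\infty)$, with $A_1$ bounding both solutions.

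Set $w:=u_1-u_2\in C([0,T_0];L^2)$, so that $w(0)=0$. Subtracting the two equations and using $\div u_i=0$ gives
\[
\partial_t w+u_1\cdot\nabla w+w\cdot\nabla u_2+\nabla(p_1-p_2)=0
\]
in $\mathcal D'$. Next we justify the energy identity. Since $\partial_t w\in L^\infty(0,T_0;H^{-1})$ and $w\in L^\infty(0,T_0;H^1)$, the duality pairing $\langle\partial_t w,w\rangle$ is well defined. The Lions--Magenes lemma then gives $\tfrac{d}{dt}\|w\|_{L^2}^2=2\langle\partial_t w,w\rangle$ for a.e.\ $t$.

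We evaluate the pairing term by term. Working with the Leray-projected form $\partial_t u_i=-\mathbb P\div(u_i\otimes u_i)$, the pressure term drops out because $w$ is divergence free. The transport term $\int (u_1\cdot\nabla w)\cdot w$ vanishes: it equals $\tfrac12\int u_1\cdot\nabla|w|^2$, and this is zero since $u_1\in L^\infty$, $w\in H^1\cap L^\infty$, and $\div u_1=0$. Making this rigorous requires one mollification, or else density of smooth divergence-free fields in $H^1$; all products involved are integrable, so this step is routine. We are left with
\[
\frac{d}{dt}\|w\|_{L^2}^2\le 2\int|w|^2|\nabla u_2|\,dx .
\]

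The main obstacle is that $\nabla u_2$ is not bounded, so we use the $p$-dependent estimate. Hölder's inequality gives
\[
\int|w|^2|\nabla u_2|\le \|\nabla u_2\|_{L^p}\,\|w\|_{L^\infty}^{2/p}\,\|w\|_{L^2}^{2-2/p}.
\]
We have $\|w\|_{L^\infty}\le 2CA_1$ by \eqref{eq:uniq_est_u}. Writing $E(t):=\|w(t)\|_{L^2}^2$, this yields
\[
E'(t)\le C p\,A_1(2CA_1)^{2/p}E^{1-1/p}\quad\text{for every } p\ge2 .
\]
Since $E(0)=0$, integrating gives $E(t)\le (C'A_1 t)^p$ uniformly in $p$, up to harmless factors. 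For $t<t_0:=1/(2C'A_1)$, letting $p\to\infty$ gives $E(t)=0$. Alternatively, choosing $p=\log(1/E)$ gives the Osgood-type inequality $E'\lesssim E\log(1/E)$, and Osgood's lemma with $E(0)=0$ gives $E\equiv0$.

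The time $t_0$ depends only on $A_1$, which is uniform on the whole interval. Iterating from $t_0,2t_0,\dots$ therefore covers $[0,T_0]$, so $u_1\equiv u_2$. Finally, $\omega_i e_\theta=\mathrm{curl}\,u_i$, so $\omega_1\equiv\omega_2$ as well.
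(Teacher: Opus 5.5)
Your proposal is correct and follows essentially the same route as the paper: the primitive-variable formulation from Lemma~\ref{lem:weak_to_euler}, the Lions--Magenes energy identity for the Leray-projected difference equation, cancellation of the transport term by density using $u_i\in L^\infty$ and $\operatorname{div}u_i=0$, and the $p$-dependent H\"older bound $E'\le Cp\,A^2E^{1-1/p}$. That bound gives $E(t)\le (CA^2t)^p$, and one then lets $p\to\infty$ and restarts on intervals of fixed length. The differences are cosmetic (you put the transport term on $u_1$ rather than $u_2$), plus one routine point: $E^{1/p}$ need not be absolutely continuous where $E$ vanishes, so you should integrate $(E+\epsilon)^{1/p}$ and then let $\epsilon\downarrow 0$, as the paper does.
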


\begin{proof}
Set
\[
A_2:=1+\sum_{i=1}^2
\|\omega_i\|_{L^\infty((0,T_0);L^1\cap L^\infty(\mathbb R^3))}.
\]
By Lemma~\ref{lem:weak_to_euler}, for $i=1,2$,
\[
u_i\in L^\infty\!\bigl(0,T_0;H^1\cap L^\infty(\mathbb R^3)\bigr)
\cap W^{1,\infty}\!\bigl(0,T_0;H^{-1}(\mathbb R^3)\bigr)
\subset C\bigl([0,T_0];L^2(\mathbb R^3)\bigr),
\]
with $u_i|_{t=0}=u_0$. Moreover, by \eqref{eq:uniq_est_u} and
\eqref{eq:uniq_est_grad_u},
\[
\|u_i(t)\|_{L^2\cap L^\infty(\mathbb R^3)}\le C A_2,
\qquad
\|\nabla u_i(t)\|_{L^p(\mathbb R^3)}\le C p\,A_2,
\qquad p\in[2,\infty)
\]
for some constant $C>0$ that is independent of $p$.

Let
\[
v:=u_1-u_2.
\]
Then
\[
v\in L^\infty\!\bigl(0,T_0;H^1(\mathbb R^3)\bigr)
\cap W^{1,\infty}\!\bigl(0,T_0;H^{-1}(\mathbb R^3)\bigr)
\subset C\bigl([0,T_0];L^2(\mathbb R^3)\bigr),
\qquad
v(0)=0,
\]
and, subtracting the two Euler equations and applying the Leray projector,
\[
\partial_t v
=
-\mathbb P\bigl((u_2\cdot\nabla)v+(v\cdot\nabla)u_1\bigr)
\quad\text{in }L^\infty\!\bigl(0,T_0;H^{-1}(\mathbb R^3)\bigr).
\]

Set
\[
E(t):=\|v(t)\|_{L^2(\mathbb R^3)}^2.
\]
Since $v\in L^\infty_tH^1_x\cap W^{1,\infty}_tH^{-1}_x$, the Lions--Magenes lemma yields
that $E$ is absolutely continuous on $[0,T_0]$ and
\[
\frac12 E'(t)=\langle \partial_t v(t),v(t)\rangle_{H^{-1},H^1}
\qquad\text{for a.e. }t\in[0,T_0].
\]
Because $\div\, v=0$, we have $\mathbb P v=v$, and therefore
\[
\langle \mathbb P f,v\rangle_{H^{-1},H^1}
=
\langle f,\mathbb P v\rangle_{H^{-1},H^1}
=
\langle f,v\rangle_{H^{-1},H^1}.
\]
Thus
\begin{equation}
\label{eq:uniq_L2_identity}
\frac12 E'(t)
=
-\big\langle (u_2\cdot\nabla)v(t),v(t)\big\rangle_{H^{-1},H^1}
-\big\langle (v\cdot\nabla)u_1(t),v(t)\big\rangle_{H^{-1},H^1}
\end{equation}
for a.e. $t$.

For a.e. fixed $t$, define
\[
B_t(a,b):=\int_{\mathbb R^3}(u_2(t)\cdot\nabla a)\cdot b\,dx,
\qquad a,b\in H^1(\mathbb R^3).
\]
Since $u_2(t)\in L^\infty(\mathbb R^3)$, $B_t$ is continuous on
$H^1(\mathbb R^3)\times H^1(\mathbb R^3)$. If
$a\in C_c^\infty(\mathbb R^3)$, then
\[
B_t(a,a)
=
\frac12\int_{\mathbb R^3}u_2(t)\cdot\nabla \left( |a|^2 \right)\,dx
=0
\]
because $\div u_2(t)=0$. By density, $B_t(a,a)=0$ for every
$a\in H^1(\mathbb R^3)$, hence
\[
\big\langle (u_2\cdot\nabla)v(t),v(t)\big\rangle_{H^{-1},H^1}=0
\qquad\text{for a.e. }t.
\]
Also,
\[
(v\cdot\nabla)u_1\in L^\infty\!\bigl(0,T_0;L^2(\mathbb R^3)\bigr),
\]
since $v\in L^\infty_{t,x}$ and $\nabla u_1\in L^\infty_tL^2_x$.
Therefore \eqref{eq:uniq_L2_identity} becomes
\[
\frac12 E'(t)
=
-\int_{\mathbb R^3}(v\cdot\nabla)u_1\cdot v\,dx
\qquad\text{for a.e. }t\in[0,T_0].
\]

Let $p\in[2,\infty)$. By H\"older's inequality, interpolation, and
\eqref{eq:uniq_est_grad_u},
\begin{align*}
\left|
\int_{\mathbb R^3}(v\cdot\nabla)u_1\cdot v\,dx
\right|
&\le
\|\nabla u_1\|_{L^p}
\|v\|_{L^{\frac{2p}{p-1}}}^2
\le
C p\,A_2\,
\|v\|_{L^2}^{2(1-\frac1p)}
\|v\|_{L^\infty}^{\frac2p}
\end{align*}
for some constant $C>0$ that is independent of $p$.
Using \eqref{eq:uniq_est_u},
\[
\|v(t)\|_{L^\infty}
\le
\|u_1(t)\|_{L^\infty}+\|u_2(t)\|_{L^\infty}
\le CA_2,
\]
and therefore
\[
E'(t)\le C p\,A_2^2\,E(t)^{1-\frac1p}
\qquad\text{for a.e. }t\in[0,T_0].
\]

Fix $\varepsilon>0$. Then, for a.e. $t\in[0,T_0]$,
\[
\frac{d}{dt}(E(t)+\varepsilon)^{1/p}
=
\frac1p(E(t)+\varepsilon)^{\frac1p-1}E'(t)
\le
C A_2^2
\left(\frac{E(t)}{E(t)+\varepsilon}\right)^{1-\frac1p}
\le C A_2^2.
\]
Since $E(0)=0$, integration gives
\[
(E(t)+\varepsilon)^{1/p}\le \varepsilon^{1/p}+C A_2^2 t.
\]
Letting $\varepsilon\downarrow0$, we obtain
\[
E(t)\le (C A_2^2 t)^p
\qquad\text{for every }p\in[2,\infty).
\]
Hence, if $t<(CA_2^2)^{-1}$, letting $p\to\infty$ yields $E(t)=0$. Therefore
\[
v\equiv 0
\qquad\text{on }\bigl[0,(CA_2^2)^{-1}\bigr].
\]

Since $v\in C([0,T_0];L^2)$, we still have
$v((CA_2^2)^{-1})=0$. Restarting the same argument finitely many times, we conclude that
\[
v\equiv 0
\qquad\text{on }[0,T_0]\times\mathbb R^3.
\]
Thus $u_1\equiv u_2$. Taking curl, we obtain
\[
\omega_1 e_\theta=\mathrm{curl}\, u_1=\mathrm{curl}\, u_2=\omega_2 e_\theta,
\]
hence $\omega_1=\omega_2$ a.e. on $(0,T_0)\times \mathbb{R}^3$.
\end{proof}

\medskip
\noindent \textbf{Flow map and Cauchy formula under bounded vorticity.}
Next, we prove the existence of a flow map and the Cauchy formula under bounded vorticity with a velocity field of finite energy. 
This lemma implies that as we consider bounded vorticity, it yields $\nabla u \in BMO$, hence $u$ is log-Lipschitz. Then the Lagrangian transport of $\xi=\omega/r$ is available.

We will use this lemma for both (1) and (2) of Theorem~\ref{thm:instant_blow-up}.
\begin{lemma}[Flow map and Cauchy formula under bounded vorticity]
\label{lem:flow_cauchy_bd}
Let $T_0>0$. Let $u$ be divergence-free and axisymmetric without swirl on $(0,T_0)\times\mathbb R^3$, and
let $\omega_0=\omega_0(r,z), \omega=\omega(r,z,t)$ be axisymmetric. Set
\[
\xi:=\frac{\omega}{r},\qquad \xi_0:=\frac{\omega_0}{r}.
\]
Assume that
\[
u\in L^\infty\bigl(0,T_0;L^2(\mathbb R^3)\bigr),
\qquad
\omega\in L^\infty\bigl((0,T_0)\times\mathbb R^3\bigr),
\]
that
\[
\operatorname{curl}u=\omega e_\theta,
\qquad
\operatorname{div}u=0
\]
in $\mathcal D'((0,T_0)\times\mathbb R^3)$, and that $\xi$ satisfies the distributional
transport identity \eqref{eq:transport_distributional} with initial data $\xi_0$,
where $\xi_0\in L^1(\mathbb R^3)$.

Then the following hold.

\smallskip
\noindent
(i) There exists a constant $C>0$ such that
\begin{equation}
\label{eq:u_log_Lip_common}
|u(x,t)-u(y,t)|
\le
C |x-y|\log\!\left(2+\frac{1}{|x-y|}\right)
\end{equation}
for all $x,y\in\mathbb R^3$ and for a.e. $t\in(0,T_0)$. In addition, $u\in L^\infty\bigl((0,T_0)\times\mathbb R^3\bigr).$

\smallskip
\noindent
(ii) After modifying $u$ on a null set in time, the ODE
\[
\partial_t Y(a,t)=u(Y(a,t),t),
\qquad
Y(a,0)=a
\]
admits a unique flow map
\[
Y:\mathbb R^3\times[0,T_0]\to\mathbb R^3.
\]
For each $a\in\mathbb R^3$, the map $t\mapsto Y(a,t)$ is absolutely continuous,
whereas $(a,t)\mapsto Y(a,t)$ is continuous. Moreover, for each
$t\in[0,T_0]$, the map $Y_t:=Y(\cdot,t)$ is a homeomorphism of $\mathbb R^3$
onto itself and is measure-preserving. Furthermore, 
there exists $\gamma\in(0,1]$ such that 
for every compact set
$K\subset\mathbb R^3$,  there exists a constant $C_{K,T_0}>0$ such
that for all $
x_1,x_2\in K$ and $t\in[0,T_0]$,
\begin{align}
\label{eq:Y_reg_x_common}
\begin{aligned}
|Y_t(x_1)-Y_t(x_2)|
+
|Y_t^{-1}(x_1)-Y_t^{-1}(x_2)|
\le
C_{K,T_0} |x_1-x_2|^\gamma.
\end{aligned}
\end{align}

\smallskip
\noindent
(iii) The relative vorticity is transported by the flow:
\begin{equation}
\label{eq:cauchy_xi_common}
\xi(Y(a,t),t)=\xi_0(a)
\qquad
\text{for a.e. }(a,t)\in\mathbb R^3\times(0,T_0).
\end{equation}
Equivalently,
\begin{equation}
\label{eq:cauchy_omega_common}
\frac{\omega(Y(a,t),t)}{Y^r(a,t)}
=
\frac{\omega_0(a)}{r_a}
\qquad
\text{for a.e. }(a,t)\in\mathbb R^3\times(0,T_0).
\end{equation}

\end{lemma}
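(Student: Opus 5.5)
The proof splits along the three items: (i) is an elliptic estimate, (ii) is Osgood theory, and (iii) is uniqueness for the transport equation.

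For (i), the plan is to use that $u$ is the divergence-free, finite-energy field with $\operatorname{curl}u=\omega e_\theta$. This follows since $u\in L^2$, $\operatorname{div}u=0$ and $\operatorname{curl}u=\boldsymbol\omega$, so $u-\mathrm{BS}[\boldsymbol\omega]$ is harmonic and in $L^2+L^\infty$, hence constant and then zero. Indeed $\mathrm{BS}[\boldsymbol\omega]$ lies in $L^2$ once we know $\omega\in L^2$, which we get by interpolating $\xi_0\in L^1$ against the measure-preservation of the flow. To avoid that circularity, I will instead split the Biot--Savart kernel: the near part uses $\|\omega\|_{L^\infty}$, and the far part uses $\|u\|_{L^2}$ via $u=\nabla\times(-\Delta)^{-1}\boldsymbol\omega$ and a Littlewood--Paley splitting. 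Concretely, the low frequencies $\Delta_{\le 0}u$ are bounded by $\|u\|_{L^2}$ through Bernstein. The high frequencies satisfy $\|\Delta_j\nabla u\|_{L^\infty}\lesssim\|\Delta_j\boldsymbol\omega\|_{L^\infty}\lesssim\|\omega\|_{L^\infty}$ for $j\ge0$, uniformly in $j$. The standard dyadic sum $\sum_{j\le N}2^{j}|x-y|\,\|\omega\|_\infty 2^{-j}\dots$ then gives $u\in L^\infty$ and the log-Lipschitz bound \eqref{eq:u_log_Lip_common}, with constant depending on $\operatorname*{ess\,sup}_t(\|u\|_{L^2}+\|\omega\|_{L^\infty})$. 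This is the same estimate as in \cite[Lemma 1]{AzzamBedrossian15}, and I would cite it in that form.

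For (ii), I will first modify $u$ on a null set of times so that \eqref{eq:u_log_Lip_common} and the $L^\infty$ bound hold for every $t$. The modulus $\rho(s)=s\log(2+1/s)$ satisfies the Osgood condition $\int_0 ds/\rho(s)=\infty$, and $u$ is bounded and measurable in $t$. Carathéodory existence together with Osgood uniqueness then gives a unique absolutely continuous flow $Y$. The Gronwall-type comparison for $\delta(t)=|Y(a_1,t)-Y(a_2,t)|$, namely $\delta'\le C\delta\log(2+1/\delta)$, yields $\delta(t)\le C\delta(0)^{e^{-Ct}}$ for small $\delta(0)$. This is the Hölder bound \eqref{eq:Y_reg_x_common} with $\gamma=e^{-CT_0}$, and joint continuity in $(a,t)$ follows. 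Running the backward flow with $-u(\cdot,t_0-s)$ gives the inverse with the same estimate, so each $Y_t$ is a homeomorphism. Measure preservation follows from $\operatorname{div}u=0$ by mollification: the smooth flows $Y^\varepsilon$ of $u^\varepsilon=u*\rho_\varepsilon$ preserve measure and converge locally uniformly to $Y$ by the same Osgood stability argument, and pushforwards pass to the limit. The flow map also preserves axisymmetry, since the field is axisymmetric.

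For (iii), which I expect to be the main obstacle, the danger is that distributional solutions of transport by a merely log-Lipschitz field need not a priori coincide with the Lagrangian one. I will define $\tilde\xi(x,t):=\xi_0(Y_t^{-1}(x))$, which belongs to $L^\infty_tL^1_x$ by measure preservation. Checking against test functions via the change of variables $x=Y_t(a)$ and absolute continuity in $t$, I will verify that $\tilde\xi$ satisfies \eqref{eq:transport_distributional}. It then remains to show uniqueness of $L^\infty_t L^1$ distributional solutions of $\partial_t\xi+u\cdot\nabla\xi=0$ with $\operatorname{div}u=0$. I would use the duality method: for $\psi\in C_c^\infty$, solve the backward transport equation with the mollified field, $\partial_t\varphi^\varepsilon+u^\varepsilon\cdot\nabla\varphi^\varepsilon=\psi$, using the smooth flows $Y^\varepsilon$. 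The functions $\varphi^\varepsilon$ are uniformly bounded, compactly supported uniformly in $\varepsilon$ since $u\in L^\infty$, and converge uniformly by the flow stability from (ii). Testing the difference $\xi-\tilde\xi$ (after cutoff, admissible by density since it is only Lipschitz in $x$ for fixed $\varepsilon$) gives $\int\!\!\int(\xi-\tilde\xi)\psi=\int\!\!\int(\xi-\tilde\xi)(u-u^\varepsilon)\cdot\nabla\varphi^\varepsilon$. The delicate point is that $\nabla\varphi^\varepsilon$ is not uniformly bounded. I will handle it with the Hölder bound on $Y^\varepsilon$, rewriting the error term as $\int(\xi-\tilde\xi)\,\operatorname{div}((u-u^\varepsilon)\varphi^\varepsilon)$ and integrating back. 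Alternatively, the DiPerna--Lions commutator argument applies, since $\nabla u\in L^\infty_t L^p_{loc}$ for all $p<\infty$ by Calderón--Zygmund from the bounded $\omega$; this gives $W^{1,1}_{loc}$, which suffices for renormalization and uniqueness against $L^\infty_tL^1$ data, provided we work with $\xi$ localized near compact sets where it is $L^\infty$ away from the axis. I would take the DiPerna--Lions route, and this gives $\xi=\tilde\xi$ a.e., which is \eqref{eq:cauchy_xi_common}. Multiplying by $Y^r$ gives \eqref{eq:cauchy_omega_common}.
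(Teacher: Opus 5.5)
Your treatment of (i) and (ii) is a legitimate, more self-contained alternative to the paper's. The paper combines the $L^\infty\to BMO$ bound for double Riesz transforms with local div--curl estimates, then quotes \cite[Lemma~1]{AzzamBedrossian15} for the log-Lipschitz bound and \cite[Th\'eor\`eme~5.2.1]{Chemin95} for the flow. Your Littlewood--Paley splitting and the Osgood/mollification arguments reach the same conclusions.

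The gap is in the uniqueness step of (iii). The hypotheses only give $\xi\in L^1_{\mathrm{loc}}$: indeed $|\xi|\le\|\omega\|_{L^\infty}/r$, with no decay at infinity. So $\xi$ need not lie in the class $L^\infty_tL^1$ in which you propose to prove uniqueness. More seriously, the DiPerna--Lions commutator lemma pairs $\nabla u\in L^p_{\mathrm{loc}}$ with solutions in $L^{p'}_{\mathrm{loc}}$. Hence $u\in W^{1,1}_{\mathrm{loc}}$ does \emph{not} give renormalization, and hence uniqueness, for merely $L^1$ solutions; that would require $\nabla u\in L^\infty$. Restricting to compact sets away from the axis, where $\xi$ is bounded, leaves untreated exactly the neighbourhood of the axis, which is where $\xi$ and $\tilde\xi$ are unbounded.

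Your first duality attempt does not close either. Since $\operatorname{div}(u-u^\varepsilon)=0$, the expression $\operatorname{div}((u-u^\varepsilon)\varphi^\varepsilon)$ is identically $(u-u^\varepsilon)\cdot\nabla\varphi^\varepsilon$. The rewriting gains nothing, and there is no derivative you can move onto $\xi-\tilde\xi$. It can be salvaged only quantitatively: $\|\nabla u^\varepsilon\|_{L^\infty}\lesssim\log(2+1/\varepsilon)$ gives $\|\nabla\varphi^\varepsilon\|_{L^\infty}\lesssim\varepsilon^{-CT}$, against $\|u-u^\varepsilon\|_{L^\infty}\lesssim\varepsilon\log(2+1/\varepsilon)$. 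This closes only on short time windows, which must then be iterated.

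The missing idea, which is how the paper proves Proposition~\ref{prop:transport_by_flow}, is to mollify the dual solution rather than the vector field. Take $\Phi(x,t)=\int_t^{T_0}\chi(\tau)\,\eta\bigl(Y_\tau(Y_t^{-1}(x))\bigr)\,d\tau$, built from the exact flow. It solves $\partial_t\Phi+u\cdot\nabla\Phi=-\chi\eta$ in $\mathcal D'$ with the true $u$. It is supported in a fixed compact set, by finite speed since $u\in L^\infty$. It is uniformly $C^\beta$ in $x$ by \eqref{eq:Y_reg_x_common}. For $\Phi_\varepsilon=\rho_\varepsilon*\Phi$, incompressibility turns the commutator into
$R_\varepsilon(x,t)=\int\nabla\rho_\varepsilon(y)\cdot\bigl(u(x,t)-u(x-y,t)\bigr)\bigl(\Phi(x-y,t)-\Phi(x,t)\bigr)\,dy$.
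By the H\"older--log-Lipschitz pairing this is $O(\varepsilon^\beta\log(2+1/\varepsilon))$ \emph{uniformly}. An $L^\infty$-small error is exactly what can be paired with $\xi\in L^1_{\mathrm{loc}}$. Testing the weak formulation with $\Phi_\varepsilon$ and letting $\varepsilon\to0$ then gives $\xi=\tilde\xi$.

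If you want to keep DiPerna--Lions instead, you must first show $\xi,\tilde\xi\in L^\infty_tL^q_{\mathrm{loc}}$ for some $q\in(1,2)$. For $\xi$, use $1/r\in L^q_{\mathrm{loc}}(\mathbb R^3)$. For $\tilde\xi$ you need $|\omega_0|\le\|\omega\|_{L^\infty}$ a.e. This follows from the weak formulation by testing with $\varphi(x)\max\{0,1-t/h\}$ and letting $h\downarrow0$, combined with measure preservation and finite speed of $Y$. Then pair with $\nabla u\in L^{q'}_{\mathrm{loc}}$. Finally, run a localized, finite-speed uniqueness argument, since $\xi$ is not known to be globally integrable.
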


\begin{proof}[Proof of Lemma~\ref{lem:flow_cauchy_bd}]
For a.e. fixed \(t\), set \(\Omega(t):=\omega(t)e_\theta\). Since
\[
\operatorname{curl}u(t)=\Omega(t),
\qquad
\operatorname{div}u(t)=0
\]
in \(\mathcal D'(\mathbb R^3)\), we have
\[
-\Delta u(t)=\operatorname{curl}\Omega(t).
\]
The finite-energy condition \(u(t)\in L^2(\mathbb R^3)\) removes the possible
harmonic ambiguity in this div--curl inversion. Hence, for every \(i,j\),
\[
\partial_j u_i(t)
=
\sum_{\ell,m=1}^3
\varepsilon_{i\ell m} R_jR_\ell \Omega_m(t)
\qquad\text{in }\mathcal D'(\mathbb R^3),
\]
where \(R_j\) denotes the \(j\)-th Riesz transform. Since the double Riesz
transforms are Calder\'on--Zygmund operators and are bounded from
\(L^\infty(\mathbb R^3)\) to \(BMO(\mathbb R^3)\), it follows that
\begin{align}
\label{eq:BMO_grad_u_common}
[\nabla u(t)]_{BMO(\mathbb R^3)}
\le C\|\Omega(t)\|_{L^\infty(\mathbb R^3)}
=
C\|\omega(t)\|_{L^\infty(\mathbb R^3)}.
\end{align}

On the other hand, standard local div-curl estimates imply that for every
$p\in[1,\infty)$,
\begin{equation}
\label{eq:local_W1p_u_common}
\sup_{x_0\in\mathbb R^3}
\|u(t)\|_{W^{1,p}(B_1(x_0))}
\le
C(p)\Bigl(
\|u(t)\|_{L^2(\mathbb R^3)}
+\|\omega(t)\|_{L^\infty(\mathbb R^3)}
\Bigr)
\end{equation}
for a.e. $t\in(0,T_0)$. Choosing $p>3$ and using Sobolev embedding on unit balls, we obtain
\[
\|u(t)\|_{L^\infty(\mathbb R^3)}
\le
C(p)\Bigl(
\|u(t)\|_{L^2(\mathbb R^3)}
+\|\omega(t)\|_{L^\infty(\mathbb R^3)}
\Bigr)
\]
for a.e. $t\in(0,T_0)$, and therefore
\[
u\in L^\infty\bigl((0,T_0)\times\mathbb R^3\bigr).
\]
Also, \eqref{eq:local_W1p_u_common} with $p=1$ gives
\[
\sup_{x_0\in\mathbb R^3}\int_{B_1(x_0)}|\nabla u(t,x)|\,dx
\le
C\Bigl(
\|u(t)\|_{L^2(\mathbb R^3)}
+\|\omega(t)\|_{L^\infty(\mathbb R^3)}
\Bigr).
\]

Combining this bound with \eqref{eq:BMO_grad_u_common}, Lemma~1 in
\cite{AzzamBedrossian15} yields
\[
|u(x,t)-u(y,t)|
\le
C |x-y|\,|\log |x-y||
\qquad
\text{for }|x-y|\le \frac12,
\]
uniformly for a.e. $t\in(0,T_0)$. Enlarging the constant and using the
$L^\infty$ bound for $u$, we obtain \eqref{eq:u_log_Lip_common} for all
$x,y\in\mathbb R^3$ and a.e. $t\in(0,T_0)$. In particular,
\[
u\in L^1\bigl(0,T_0;LL(\mathbb R^3)\bigr)
\]
in the sense of \cite[Définition~5.2.1]{Chemin95}.

Hence, after modifying $u$ on a null set in time, \cite[Th\'eor\`eme~5.2.1]{Chemin95}
implies that $u$ admits a unique flow map $Y$.
For each $a\in\mathbb R^3$, the map $t\mapsto Y(a,t)$ is absolutely continuous,
whereas $(a,t)\mapsto Y(a,t)$ is continuous. Moreover, for each
$t\in[0,T_0]$, the map $Y_t:=Y(\cdot,t)$ is a homeomorphism of $\mathbb R^3$
onto itself. Since $\operatorname{div}u=0$, the flow is measure-preserving.
Finally,  the local H\"older estimate
\eqref{eq:Y_reg_x_common} holds on compact sets for $Y_t$. 

Then using the H\"older estimate for $Y_t$, one can also prove the H\"older estimate for the inverse $Y_t^{-1}$ by considering the reversed vector field and applying Chemin's theorem again.
See also \cite[Lemma~8.2]{MajdaBertozzi01} for another related reference for this matter.

Applying Proposition~\ref{prop:transport_by_flow} (which will be proved in Appendix~\ref{app:two_lemmas} below) to the distributional solution
$\xi$ with the flow map $Y$ from part~(ii), we obtain
\[
\xi(x,t)=\xi_0(Y_t^{-1}(x))
\quad\text{for a.e. }(x,t)\in\mathbb R^3\times(0,T_0),
\]
hence
\[
\xi(Y(a,t),t)=\xi_0(a)
\quad\text{for a.e. }(a,t)\in\mathbb R^3\times(0,T_0).
\]
Since $\omega=r\xi$, this gives
\[
\frac{\omega(Y(a,t),t)}{Y^r(a,t)}=\frac{\omega_0(a)}{r_a}
\quad\text{for a.e. }(a,t)\in\mathbb R^3\times(0,T_0).
\]
This completes the proof.
\end{proof}

\medskip
\noindent\textbf{Danskin-Type Lemma.}
Next, we record a Danskin-type lemma, which will be used to handle rings closer to the origin. This lemma is  likely known in related forms, but as we could not find a statement that exactly fits our situation, we include it here for completeness.
This is an absolutely-continuous-in-time envelope lemma tailored to our Lagrangian radial extrema, rather than a direct invocation  of the classical smooth Danskin theorem (See \cite{Danskin67}, \cite{BernhardRapaport95}).

\begin{lemma}[Danskin-type lemma]\label{lem:envelope_radial_extrema}
Fix $T_0>0$, and let $K\subset \mathbb{R}^N$ be compact, and let
\[
Y^r:K\times [0,T_0]\to \mathbb{R}
\]
be continuous. Assume that there exists a bounded function
\[
g:K\times (0,T_0)\to \mathbb{R}
\] measurable in $t$ for each fixed $x\in K$, such that, for every
$x\in K$ and every $t\in [0,T_0]$,
\[
Y^r(x,t)=Y^r(x,0)+\int_0^t g(x,s)\,ds.
\]
Assume furthermore that there exists a full-measure set $E_g\subset(0,T_0)$ and  a modulus of continuity $\rho:[0,\infty)\to [0,\infty)$  with $\lim_{s\to0+} \rho(s)=0$, such that
\begin{align}
\label{eq:mod_cont}
|g(x,t)-g(y,t)|\le \rho(|x-y|)
\qquad
\text{for all }x,y\in K,\ t\in E_g.
\end{align}

Define
\[
R_K(t):=\max_{x\in K} Y^r(x,t),
\qquad
r_K(t):=\min_{x\in K} Y^r(x,t),
\]
and
\[
\mathcal M_K(t):=\operatorname*{argmax}_{x\in K} Y^r(x,t),
\qquad
\mathcal A_K(t):=\operatorname*{argmin}_{x\in K} Y^r(x,t).
\]
The definition of $\operatorname*{argmax}_{x\in K} Y^r(x,t)$ is as follows:
    \begin{align*}
        \operatorname*{argmax}_{x\in K} Y^r(x,t)
    =
        \{y\in K: Y^r(y,t)= \max_{x\in K}Y^r(x,t)\},
    \end{align*}
and we similarly define $\operatorname{argmin}_{x\in K} Y^r(x,t)$.
Then $R_K$ and $r_K$ are Lipschitz on $[0,T_0]$. Moreover, there exists a full-measure set $E\subset E_g$ such that for every $t\in E$
\[
R_K'(t)=\max_{x\in \mathcal M_K(t)} g(x,t),
\qquad
r_K'(t)=\min_{x\in \mathcal A_K(t)} g(x,t).
\]
In particular, these identities hold for a.e. $t\in (0,T_0)$.
\end{lemma}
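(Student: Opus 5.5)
The plan is to treat $R_K$ (the case of $r_K$ follows by applying the result to $-Y^r$, which satisfies the same hypotheses with $-g$). Throughout, let $M:=\sup|g|<\infty$.

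\emph{Lipschitz continuity.} First, each $t\mapsto Y^r(x,t)$ is $M$-Lipschitz, uniformly in $x$. A maximum of uniformly Lipschitz functions is Lipschitz with the same constant, so $R_K$ is $M$-Lipschitz. By Rademacher's theorem $R_K$ is then differentiable on a full-measure set. Also, $\mathcal M_K(t)\neq\emptyset$ and is compact, since $K$ is compact and $Y^r(\cdot,t)$ is continuous.

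\emph{Choice of $E$.} Lebesgue points of $g(x,\cdot)$ depend on $x$, so I cannot use them pointwise. Instead I pick a countable dense set $D\subset K$ and let $E$ be the set of $t\in E_g$ that satisfy three conditions: $R_K$ is differentiable at $t$; $t$ is a Lebesgue point of $g(y,\cdot)$ for every $y\in D$; and $t$ is a Lebesgue point of the function $s\mapsto \mathbf 1_{E_g}(s)$, which just means using $E_g$ to full measure. This $E$ has full measure. For $t\in E$, any $x\in K$, and any $y\in D$, the bound \eqref{eq:mod_cont} gives
\[
\left|\frac1h\int_t^{t+h}g(x,s)\,ds-g(x,t)\right|\le 2\rho(|x-y|)+\left|\frac1h\int_t^{t+h}g(y,s)\,ds-g(y,t)\right|,
\]
using $|g(x,s)-g(y,s)|\le\rho(|x-y|)$ for a.e.\ $s$. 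Sending $h\to0$ and then $y\to x$ shows that
\[
\partial_t Y^r(x,t)=g(x,t)\qquad\text{for all } x\in K,\ t\in E,
\]
and even more: the difference quotients $h^{-1}\bigl(Y^r(x,t+h)-Y^r(x,t)\bigr)$ converge to $g(x,t)$ \emph{uniformly} in $x\in K$. Uniformity follows by covering $K$ with finitely many $\rho$-small balls centered in $D$. I expect this uniform-in-$x$ differentiability to be the main technical point.

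\emph{Envelope identity.} Fix $t\in E$.

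For the lower bound, take any $x\in\mathcal M_K(t)$. Then $R_K(t+h)-R_K(t)\ge Y^r(x,t+h)-Y^r(x,t)$. Dividing by $h>0$ and by $h<0$ gives
\[
D^+R_K(t)\ge g(x,t)\ge D^-R_K(t).
\]
Since $R_K$ is differentiable at $t$, this yields $R_K'(t)\ge g(x,t)$. Hence $R_K'(t)\ge\max_{\mathcal M_K(t)}g(\cdot,t)$, and this maximum is attained because $g(\cdot,t)$ is continuous by \eqref{eq:mod_cont}.

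For the upper bound, take $h_n\downarrow0$ and choose $x_n\in\mathcal M_K(t+h_n)$. Then
\[
R_K(t+h_n)-R_K(t)\le Y^r(x_n,t+h_n)-Y^r(x_n,t)=h_n\bigl(g(x_n,t)+o(1)\bigr)
\]
by the uniform differentiability. Passing to a subsequence, $x_n\to\bar x$. Continuity of $Y^r$ and of $R_K$ gives $\bar x\in\mathcal M_K(t)$, and continuity of $g(\cdot,t)$ then gives $R_K'(t)\le g(\bar x,t)\le\max_{\mathcal M_K(t)}g(\cdot,t)$.

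Combining the two bounds proves the formula for $R_K'$, and the formula for $r_K'$ follows by symmetry.
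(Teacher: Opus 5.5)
Your proposal is correct and follows essentially the same route as the paper. Both arguments use a countable dense set of Lebesgue points extended to all of $K$ via the modulus $\rho$, get the lower bound from a maximizer at time $t$, get the upper bound from maximizers $x_h\in\mathcal M_K(t+h)$ plus a compactness extraction, and handle $r_K$ through $-Y^r$. The only inessential difference is that you first upgrade to uniform-in-$x$ convergence of the difference quotients. The paper instead bounds $\frac1{h}\int_t^{t+h}g(x_{h},s)\,ds-g(x_*,t)$ directly by $\rho(|x_h-x_*|)$ plus the Lebesgue-point error at the limit point $x_*$. So the uniformity, the two-sided quotients, and the extra condition on $\mathbf 1_{E_g}$ are not actually needed.
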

A proof of this Lemma~\ref{lem:envelope_radial_extrema} is written in Appendix~\ref{app:two_lemmas}.

\subsection{Proof of Theorem~\ref{thm:instant_blow-up}-(1)}
\label{subsec:non_existence}
We are now ready to prove Theorem~\ref{thm:instant_blow-up}-(1). 
Fix $q>1$ and let $\tilde\varepsilon>0$ be given. We fix once and for all the parameters
$L_q,\alpha_q,\phi_q$ and the geometric constants $r_0,\mu,\eta,d$ exactly as in the proof of
Theorem~\ref{main_thm1} (see Section~\ref{sec:ini_data_ansatz}). For a parameter $\varepsilon>0$, let $\omega_0^{(\infty)}$ be the initial data obtained from \eqref{KJdata2}
with infinitely many vortex rings, i.e. $m=\infty$.

The smallness in Theorem~\ref{thm:instant_blow-up}-(1) follows from \eqref{ini_lorentz14}: since $\alpha_q q>1$, the series
$\sum_{k\ge1}k^{-\alpha_q q}$ converges, hence there exists $\varepsilon=\varepsilon(\tilde\varepsilon,q)$ such that
\[
\left\|\omega_0^{(\infty)}\right\|_{L^\infty\cap L^{1}(\R^3)}+\left\|\frac{\omega_0^{(\infty)}}{r}\right\|_{L^{3,q}(\R^3)}\le \tilde\varepsilon.
\]

Assume for contradiction there exist $T_0>0$ and a Yudovich-type weak solution $(u^{(\infty)},\omega^{(\infty)})$
to \eqref{eq:IVP_rel_vor} on $[0,T_0]$ with initial data $\omega_0^{(\infty)}$ in the sense of Definition~\ref{def:dist_sol_rel_vor}.
Recall a Yudovich-type weak solution satisfies, by definition,
\[
\omega^{(\infty)}\in L^\infty([0,T_0];L^\infty\cap L^1(\R^3)).
\]
Set $A_\infty:=\|\omega^{(\infty)}\|_{L^\infty([0,T_0]\times\R^3)}<\infty$. Let
\[
A:=2\max\{A_\infty,\varepsilon(\tilde\varepsilon,q)\},\qquad \bar A:=\frac{A}{1-\mu}.
\]

Since $\omega^{(\infty)} \in L^\infty(0,T_0; L^\infty\cap L^1(\mathbb{R}^3))$, it holds $u^{(\infty)}\in L^\infty (0,T_0; L^2(\mathbb{R}^3))$ by Lemma~\ref{lem:weak_to_euler}. Hence we can apply Lemma~\ref{lem:flow_cauchy_bd} to obtain a unique flow map $Y$ and the Cauchy formula~\eqref{eq:cauchy_omega_common}.

We claim that 
\begin{align}
\label{eq:odd_sign}
    \begin{cases}
        \text{$\omega^{(\infty)}(r,z,t)=-\omega^{(\infty)}(r,-z,t)$ almost everywhere,  and }
        \\
        \text{$\omega^{(\infty)}(r,z,t)\leq 0$ almost everywhere in $\{z>0\}$. }
    \end{cases}
\end{align}
\begin{proof}[Proof of \eqref{eq:odd_sign}]
Let
\[
\bar \omega^{(\infty)}(r,z,t):=-\omega^{(\infty)}(r,-z,t),
\]
and let $\bar u^{(\infty)}$ be the velocity induced by $\bar \omega^{(\infty)}$ through the
Biot--Savart law. By symmetry of \eqref{eq:rel_vor_eq} and \eqref{eq:axis_BS_law} under
$z\mapsto -z$, the pair $(\bar u^{(\infty)},\bar \omega^{(\infty)})$ is also an
(axisymmetric no-swirl) Yudovich-type weak solution with the same initial data
$\omega_0^{(\infty)}$. Hence Lemma~\ref{lem:uniq_bd} gives
\[
\bar \omega^{(\infty)}\equiv \omega^{(\infty)},
\qquad
\bar u^{(\infty)}\equiv u^{(\infty)}.
\]
Thus $\omega^{(\infty)}(r,z,t)=-\omega^{(\infty)}(r,-z,t)$ almost everywhere, and
\[
u^{(\infty),r}(r,z,t)=u^{(\infty),r}(r,-z,t),\qquad
u^{(\infty),z}(r,z,t)=-u^{(\infty),z}(r,-z,t).
\]
Hence the flow map associated with $u^{(\infty)}$ preserves the upper and lower half-spaces.
Since $\omega^{(\infty)}/r$ is transported by the flow and
$\omega_0^{(\infty)}\le 0$ on $\{z>0\}$, it follows that
$\omega^{(\infty)}(r,z,t)\le 0$ almost everywhere in $\{z>0\}$.
\end{proof}

We decompose $\omega_0^{(\infty)}=\sum_{k=1}^\infty \omega_{0,k}$ as in \eqref{KJdata2}. 
For each $k\geq 1$, \emph{define} $\omega_k(x,t)$ by 
    \begin{align*}
        \omega_k(x,t):=
        r_x
        \left(
        \frac{\omega_{0,k}}{r}
        \right)\big( Y^{-1}(x,t)\big) , \quad x=(r_x,z_x).
    \end{align*}
Hence the Cauchy formula for $\omega_k/r$ holds \emph{by definition}.
Then 
$\omega^{(\infty)}(r,z,t)=\sum_{k=1}^\infty\omega_k(r,z,t)$ for a.e. $(r,z)$ and $t$. We define $u_k(r,z,t)$ by using $\omega_k$ through the Biot--Savart law, and for $k \geq 2$, we define $u_-:=\sum_{j=1}^{k-1} u_j,\,  u_+:=\sum_{j=k+1}^\infty u_j.$ Then $u^{(\infty)}=u_-+u_k +u_+$ for every $(r,z)$ and $t$.

Next, we will use the existence of the flow map $Y$ to prove the following claim:
    \begin{align}
    \label{claim_reg_outer}
    \begin{cases}
        \text{ $\exists \rho_k>0$ s.t. for all $t\in[0,T_0],$ the velocity $u_-(\cdot,t)$ is $C^1$ on $B_{\rho_k} (0) \subset \mathbb{R}^3$.}
    \\
       \text{the map: $t\mapsto \nabla u_-(0,0,t)$ is continuous on $[0,T_0]$.}  
    \\
        \text{In particular, the map: $t\mapsto \partial_r u^r_-(0,0,t)$ is continuous on $[0,T_0]$.} 
    \end{cases}
    \end{align}

\begin{proof}[Proof of \eqref{claim_reg_outer}]

For each fixed $t\in[0,T_0]$, the flow map $Y_t:=Y(\cdot,t)$ is a homeomorphism of
$\mathbb R^3$ with inverse $Y_t^{-1}=Y^{-1}(\cdot,t)$. Since $u^{(\infty)}(0,0,t)=0$, the origin is a
fixed trajectory, so $Y_t(0)=0$ and hence $Y_t^{-1}(0)=0$. Therefore, for each $1\le j\le k-1$, because
$\supp \omega_{0,j}$ is compact and does not contain the origin, its image
$Y_t(\supp \omega_{0,j})$ also does not contain the origin for any $t\in[0,T_0]$.
Since $(a,t)\mapsto Y(a,t)$ is continuous on the compact set
$\supp \omega_{0,j}\times[0,T_0]$, the set
\[
K_j:=\bigcup_{t\in[0,T_0]}Y(\supp \omega_{0,j},t)
\]
is a compact subset of $\mathbb R^3\setminus\{0\}$.
 Hence,
\[
\rho_k:=\frac12\,\operatorname{dist}\!\left(0,\bigcup_{j=1}^{k-1}K_j\right)>0.
\]

Fix $t\in[0,T_0]$. For $x\in B_{\rho_k}(0)$ and $1\le j\le k-1$, the distance from $x$ to
$\supp\omega_j(\cdot,t)$ is at least $\rho_k$. Hence the Biot--Savart kernel and its first
$x$-derivatives are smooth and bounded on
\[
B_{\rho_k}(0)\times \left(\bigcup_{j=1}^{k-1}K_j\right).
\]
Differentiating under the integral sign in the Biot--Savart formula therefore gives that each
$u_j(\cdot,t)$ is $C^1$ on $B_{\rho_k}(0)$, and hence so is $u_-(\cdot,t)$.

To prove continuity in time at the origin, let $\mathcal{K}(y)$ denote the matrix-valued kernel
appearing in the formula for $\nabla u(0,t)$ after differentiating the Biot--Savart law in the
evaluation variable. Since $\mathcal{K}$ is smooth on $\mathbb{R}^3\setminus\{0\}$, for each
$1\le j\le k-1$ we may write
\[
\nabla u_j(0,t)=\int_{\mathbb{R}^3}\mathcal{K}(y)\,\omega_j(y,t)\,dy.
\]
Using volume preservation of $Y$ and the definition of $\omega_j$, this becomes
\[
\nabla u_j(0,t)
=
\int_{\supp\omega_{0,j}}
\mathcal{K}(Y(a,t))\,
\frac{Y^r(a,t)}{r_a}\,
\omega_{0,j}(a)\,da,
\]
where $r_a$ denotes the radial coordinate of $a$.

For $a\in\supp\omega_{0,j}$ and $t\in[0,T_0]$, the point $Y(a,t)$ stays in the compact set
$K_j\subset \mathbb{R}^3\setminus\{0\}$. Thus the integrand is continuous in $t$ and uniformly
bounded on $\supp\omega_{0,j}\times[0,T_0]$. By dominated convergence,
$t\mapsto \nabla u_j(0,t)$ is continuous. Summing over $j=1,\dots,k-1$ yields continuity of
$t\mapsto \nabla u_-(0,t)$. By axisymmetry, this implies continuity of
$t\mapsto \partial_r u_-^r(0,0,t)$. It finishes the proof of \eqref{claim_reg_outer}.
\end{proof}

For every $k\ge 2$, define $x_k(t)$ by
\[
x_k'(t)=x_k(t)\,\partial_r u_-^r(0,0,t),
\qquad
x_k(0)=\frac{\varepsilon}{k^{\alpha_q}},
\]
and set
\[
x_1(t)\equiv \varepsilon.
\]
Define $\tilde x_k$, $R_k$, and $H_k$ from $x_k$ by \eqref{tilde_xk_rk_hk}, and define
$W_k$ by \eqref{eq:def_wk94}. Also define $V_k$ by \eqref{def_v_k14}.

In the present contradiction setting, we define the rescaled flow map $X_k$ directly from
the global flow map $Y$. For $(r,z)\in \supp \phi_q$, set
\[
X_k(r,z,t):=
\left(
\frac{Y^r(d^k r,d^k z,t)}{R_k(t)},
\frac{Y^z(d^k r,d^k z,t)}{H_k(t)}
\right).
\]
Since $\tilde x_k(0)=1$, we have $R_k(0)=H_k(0)=d^k$, hence
\[
X_k(r,z,0)=(r,z).
\]
Moreover, since $Y$ is continuous and $R_k,H_k$ are continuous and strictly positive on
$[0,T_0]$, the map $(r,z,t)\mapsto X_k(r,z,t)$ is continuous on
$\supp \phi_q\times [0,T_0]$.

Just by using the definition of $X_k$ and $W_k$, we claim that the Cauchy formula \eqref{cauchy_for_W} holds pointwise in the present contradiction setting as well:
\begin{align}
\label{eq:Cauchy_contradiction}
\frac{W_k(X_k(r,z,t),t)}{X_k^r(r,z,t)}
=
\frac{\phi_q(r,z)}{r},
\qquad
(r,z)\in \supp \phi_q,\ \ t\in[0,T_0].
\end{align}

\begin{proof}[Proof of the Cauchy formula~\eqref{eq:Cauchy_contradiction}]
By \eqref{tilde_xk_rk_hk},
\[
\frac{R_k'(t)}{R_k(t)}=\frac{x_k'(t)}{x_k(t)},
\qquad
\frac{H_k'(t)}{H_k(t)}=-2\frac{x_k'(t)}{x_k(t)}.
\]
Therefore, differentiating the definition of $X_k$ in time (for a.e. $t$), we obtain
\[
\partial_t X_k^r(r,z,t)
=
\frac{u^r(R_k(t)X_k^r(r,z,t),\,H_k(t)X_k^z(r,z,t),\,t)}{R_k(t)}
-
X_k^r(r,z,t)\frac{x_k'(t)}{x_k(t)}
=
V_k^r(X_k(r,z,t),t),
\]
and similarly
\[
\partial_t X_k^z(r,z,t)
=
\frac{u^z(R_k(t)X_k^r(r,z,t),\,H_k(t)X_k^z(r,z,t),\,t)}{H_k(t)}
+
2X_k^z(r,z,t)\frac{x_k'(t)}{x_k(t)}
=
V_k^z(X_k(r,z,t),t).
\]
Hence $X_k$ is precisely the flow map associated with $V_k$.

Finally, fix $(r,z)\in \supp \phi_q$ and set
\[
a:=(d^k r,d^k z).
\]
By the definition of $\omega_k$ through the global flow $Y$,
\[
\omega_k(Y(a,t),t)
=
Y^r(a,t)\frac{\omega_{0,k}(a)}{r_a}.
\]
Since
\[
\omega_{0,k}(a)=x_k(0)\phi_q(r,z),
\qquad
r_a=d^k r,
\qquad
Y^r(a,t)=R_k(t)X_k^r(r,z,t)
=
d^k\frac{x_k(t)}{x_k(0)}X_k^r(r,z,t),
\]
we obtain
\[
\omega_k(Y(a,t),t)
=
x_k(t)\,X_k^r(r,z,t)\,\frac{\phi_q(r,z)}{r}.
\]
On the other hand, by the definition of $W_k$,
\[
\omega_k(Y(a,t),t)
=
\omega_k(R_k(t)X_k^r(r,z,t),\,H_k(t)X_k^z(r,z,t),\,t)
=
x_k(t)\,W_k(X_k(r,z,t),t).
\]
Therefore, one can obtain the pointwise Cauchy formula~\eqref{eq:Cauchy_contradiction}, and it finishes the proof of~\eqref{eq:Cauchy_contradiction}.
\end{proof}

\medskip
\noindent\textbf{Hierarchy of index thresholds.}
Throughout the proof, we introduce several integer thresholds depending on $(A,q)$:
\[
m_0(A,q), \qquad m_{\mathrm{cone}}(A,q), \qquad m_{\mathrm{tail}}(A,q), \qquad m_1(A,q), \qquad m_2(A,q).
\]
For clarity, we summarize their roles and dependencies.

\begin{itemize}
\item[(i)] $m_0(A,q)$ ensures scale separation for the first $M$ rings (Lemma~\ref{lem:Euler_scale_separation}). In particular, for all $M \ge m_0(A,q)$, the supports $\{\supp \omega_k\}_{k=1}^M$ are pairwise disjoint and satisfy the geometric separation bounds \eqref{eq:sep_R_ratio}--\eqref{eq:sep_distance_estimate}. See \eqref{eq:def_m0_blow_up} and the paragraph following Proposition~\ref{prop:finite_head_reduction}.

\item[(ii)] $m_{\mathrm{cone}}(A,q) \ge m_0(A,q)$ guarantees that all trajectories starting from $\supp \omega_0^{(\infty)}$ remain in the cone region $L r \ge |z|$ on $[0,T_B(A,M)]$ for $M \ge m_{\mathrm{cone}}(A,q)$. This allows us to use Proposition~\ref{prop:cone_free_key_lem} uniformly in time with the factor $|x|/r_x$ bounded by $\sqrt{1+L^2}$. See \eqref{eq:def_m_cone}.

\item[(iii)] $m_{\mathrm{tail}}(A,q) \ge m_{\mathrm{cone}}(A,q)$ ensures strict separation between the head ($k \le M$) and the tail ($k > M$), namely
\[
\frac{r_{M,-}(t)}{R_{\mathrm{tail}}(t)} \ge 4
\quad \text{for all } t \in [0,T_B(A,M)],
\]
which yields the uniform distance bound \eqref{eq:tail_distance_bound} and allows us to control the tail contribution. See \eqref{eq:m_tail_def}.

\item[(iv)] $m_1(A,q) \ge m_{\mathrm{tail}}(A,q)$ is chosen large enough so that
\[
C_E \bar A \bigl(1+\log(\bar A/\varepsilon)+\log M\bigr) T_B(A,M) \ll 1,
\]
which enables the bootstrap improvement and yields $T_B(A,M)=T_N(A,M)$. See \eqref{eq:m_1_def2}.

\item[(v)] $m_2(A,q) \ge m_1(A,q)$ is chosen so that the norm-inflation time satisfies
\[
T_N(A,M) < T_0 
\]
for all $M \ge m_2(A,q)$. This ensures that the ODE norm inflation occurs within the prescribed time interval and allows us to evaluate the solution at $t = T_N(A,M)$.

\end{itemize}

In particular, for all $M \ge m_2(A,q)$, all the above properties hold simultaneously.

\medskip
\noindent{\bf Step 1: Finite--ring bootstrap  (tail handled by the contradiction hypothesis).}

For any integer $M\geq 2,$ define
$T_N(A,M)$ and $T_B(A,M)$ by 
    \begin{align*}
        T_N(A,M)
        &:= \sup \left\{t\in [0,T_0]: \max_{1\leq k \leq M} x_k(t) \leq \bar A\right\}
        \\
        T_B(A,M)
        &:= \sup \left\{T\in [0,T_N(A,M)]: F_M(t) \leq \mu  \text{ all t} \in [0,T] \right\}.
    \end{align*}
where the bootstrap functional
$F_M(t)$ is defined in  \eqref{boot:X}.
Since $M$ is finite and the function $(r,z,t)\mapsto X_k(r,z,t)$ is continuous, $F_M(t)$ is continuous and $F_M(0)=0$, hence $T_B(A,M)>0$. 

By the definition, $T_B(A,M)\leq T_N(A,M)$, hence $x_k(t) \leq \bar A$ on $[0,T_B(A,M)]$.

Before we state the main goal of Step 1, 
by using the bootstrap bounds on $[0,T_B(A,M)]$ and the fact that $x_k(t) \leq \bar A $ on $[0,T_B(A,M)]$, we prove confinement and scale-separation for the head rings in the same way as in the proof for norm inflation.
\begin{proposition}[Finite-head reduction for the first $M$ rings]
\label{prop:finite_head_reduction}
 The following statements hold on
$[0,T_B(A,M)]$ for the first $M$ rings.

\begin{enumerate}
\item[(i)] For every $1\le k\le M$, the Cauchy formula \eqref{eq:Cauchy_contradiction} holds and
\[
(1-\mu)\|\phi_q\|_{L^\infty(\mathbb{R}^3)}
\le
\|W_k(\cdot,\cdot,t)\|_{L^\infty(\mathbb{R}^3)}
\le
(1+\mu)\|\phi_q\|_{L^\infty(\mathbb{R}^3)},
\]
\[
\|\omega_k(\cdot,t)\|_{L^\infty(\mathbb{R}^3)}
\le
(1+\mu)x_k(t)\|\phi_q\|_{L^\infty(\mathbb{R}^3)}
\le
(1+\mu)\bar A\|\phi_q\|_{L^\infty(\mathbb{R}^3)}.
\]

\item[(ii)] For every $2\le k\le M$, the amplitudes satisfy the same weak ODE inequalities as in
\eqref{mu_ODE}. Consequently, repeating the proof of
Proposition~\ref{prop:norm_inflation_varcoef} with $T_B:=T_B(A,M)$ gives
\[
T_B(A,M)\le \frac{C(q,A)}{M^{\beta_q}}.
\]

\item[(iii)] The confinement and scale-separation conclusions of
Lemmas~\ref{lem:Euler_cauchy_geometry} and \ref{lem:Euler_scale_separation} hold for
$1\le k\le M$ on $[0,T_B(A,M)]$. In particular, if the following inequality holds:
\begin{equation}
\label{eq:def_m0_blow_up}
d\,e^{C_{\rm sep}\bar A T_B(A,M)}\le \frac{r_-}{2r_+}.
\end{equation}
 then
\eqref{eq:sep_R_ratio} and \eqref{eq:sep_distance_estimate} are valid for the first $M$
rings on $[0,T_B(A,M)]$.
\end{enumerate}
\end{proposition}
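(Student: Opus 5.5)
The plan is to check that each ingredient of the finite-ring norm-inflation argument in Section~\ref{sec:norm_inflation} only uses information about the first $M$ rings on $[0,T_B(A,M)]$. The tail $k>M$ enters only through $\partial_r u_-^r(0,0,t)$ for $k\le M$, and there it does not enter at all, since $u_-$ involves only $j<k\le M$.

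\textbf{Part (i).} By construction, $X_k$ is continuous and is the flow map of $V_k$, and the pointwise Cauchy formula \eqref{eq:Cauchy_contradiction} has already been established. The two-sided $L^\infty$ bounds for $W_k$ follow exactly as in Lemma~\ref{lem:Euler_cauchy_geometry1} and Step~3 of the proof of Theorem~\ref{main_thm1}. Take the supremum over $(r,z)\in\supp\phi_q$ in $W_k(X_k,t)=(X_k^r/r)\phi_q$ and use $1-\mu\le X_k^r/r\le 1+\mu$ from \eqref{boot:X}. Surjectivity of $X_k(\cdot,t)$ onto $\supp W_k(\cdot,t)$ follows because $Y_t$ is a homeomorphism. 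The bound on $\omega_k$ then follows from the ansatz together with $x_k\le\bar A$ on $[0,T_N]$.

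\textbf{Part (ii).} The weak ODEs \eqref{mu_ODE} came from Lemma~\ref{lem:pro_est}, which rewrites $\partial_r u^r_-(0,0,t)$ as a sum over $j<k$ of $x_j\Gamma_j^2$ times the rescaled integral of $W_j$. I would redo that change of variables using measure preservation of $Y$ in place of $\operatorname{div}V_k=0$. The Jacobian identity is replaced by the pushforward formula $\int F\,\omega_j(\cdot,t)\,dx=\int F(Y(a,t))\,\omega_{0,j}(a)\,Y^r(a,t)/r_a\,da$, which holds because $\omega_j$ is defined through $Y$. Rescaling $a=(d^jr,d^jz)$ gives the same integrand as in Lemma~\ref{lem:pro_est}, so the bootstrap bounds yield \eqref{heu05} for $j\le M$.

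Two further points are needed. First, the ODE $x_k'=x_k\,\partial_r u^r_-(0,0,t)$ has a continuous coefficient by \eqref{claim_reg_outer}, so $x_k\in C^1$ and the cascade form and exact identity \eqref{eq:Gamma_cascade_orig} hold classically. Second, Lemmas~\ref{lem:comp_to_Psi} and \ref{lem:front_tinv_varcoef} and the proof of Proposition~\ref{prop:norm_inflation_varcoef} are purely ODE arguments on indices $\le M$ with $m$ replaced by $M$. Hence $T_B(A,M)\le C(q,A)M^{-\beta_q}$. The constant is independent of $T_0$, apart from the trivial truncation $T_N\le T_0$, which only helps.

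\textbf{Part (iii).} This repeats Lemmas~\ref{lem:Euler_cauchy_geometry} and \ref{lem:Euler_scale_separation}. Confinement follows from \eqref{boot:X} and the Cauchy formula. For separation, the identity $\frac{d}{dt}\log(\tilde x_{k+1}/\tilde x_k)=\partial_r u^r_k(0,0,t)$ for $k\le M-1$ holds since both sides are continuous. The upper bound $C_{\rm sep}\bar A$ comes from the same rescaled computation as in part (ii), applied to the single ring $k$. Under \eqref{eq:def_m0_blow_up}, this gives \eqref{eq:sep_R_ratio} and \eqref{eq:sep_distance_estimate}.

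\textbf{Main obstacle.} I expect the main difficulty to be justifying the change of variables and regularity in the low-regularity setting. Here $X_k$ is only absolutely continuous in $t$ and Hölder in space, and $\omega_k$ is defined only a.e. I would handle this through the a.e. Cauchy formula \eqref{eq:cauchy_omega_common} and measure preservation. Since the relevant integrands are bounded and supported away from the origin, dominated convergence suffices.
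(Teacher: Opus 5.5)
Your proposal is correct and follows the paper's proof. The paper also gets (i) from the pointwise Cauchy formula \eqref{eq:Cauchy_contradiction} together with $F_M\le\mu$, and it notes that Lemma~\ref{lem:pro_est}, the ODE analysis behind Proposition~\ref{prop:norm_inflation_varcoef}, and Lemmas~\ref{lem:Euler_cauchy_geometry}--\ref{lem:Euler_scale_separation} involve only the rings $j\le M$ and the finite-head bootstrap, so they carry over with $m$ replaced by $M$; your added justifications (measure preservation of $Y$ in place of the Jacobian identity from $\operatorname{div}V_k=0$, and continuity of $t\mapsto\partial_r u_-^r(0,0,t)$ from \eqref{claim_reg_outer} to make $x_k\in C^1$) supply details the paper leaves implicit in its phrase ``apply without change.''
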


Since $T_B(A,M)\to0$ as $M\to\infty$, there exists a large integer $m_0(A,q)$
such that for all $M\ge m_0(A,q)$, the inequality \eqref{eq:def_m0_blow_up} holds.
Hence by Proposition~\ref{prop:finite_head_reduction}, \eqref{eq:sep_R_ratio}, \eqref{eq:sep_distance_estimate} hold for all $M\geq m_0(A,q)$.

\begin{proof}[Proof of Proposition~\ref{prop:finite_head_reduction}.]
The pointwise Cauchy formula \eqref{eq:Cauchy_contradiction} was established above directly from the
global flow map $Y$ and the definition of $\omega_k$; in particular, it remains valid in the
contradiction setting and does not rely on Lemma~\ref{lem:Euler_cauchy_geometry1}.
Combining \eqref{eq:Cauchy_contradiction} with the bootstrap bound $F_M(t)\le \mu$ gives the
$L^\infty$ bounds in part (i).

Next fix $2\le k\le M$.  The ODE
\[
x_k'(t)=x_k(t)\,\partial_r u_-^r(0,0,t)
\]
depends only on the rings with indices $j<k\le M$. Therefore the proofs of
Lemma~\ref{lem:pro_est} and the weak ODE bounds \eqref{mu_ODE} in
Section~\ref{sec:ODEs_Euler} apply without change to the family $\{\omega_j\}_{j=1}^M$ on
$[0,T_B(A,M)]$. Since $T_B(A,M)\leq T_N(A,M)$, we have $x_k(t)\le \bar A$ for all $1\le k\le M$
and $0\le t\le T_B(A,M)$. Thus, because it only uses the first $k-1$ rings and the finite-head bootstrap $F_M\leq \mu$, the proof of Proposition~\ref{prop:norm_inflation_varcoef}
yields the bound in (ii).

Finally, Lemma~\ref{lem:Euler_cauchy_geometry} depends only on the bootstrap control of
$X_k$, and Lemma~\ref{lem:Euler_scale_separation} uses
Lemma~\ref{lem:Euler_cauchy_geometry}, the ODE \eqref{EulerODE}, and the bound
$x_k(t)\le \bar A$ on $[0,T_B(A,M)]$. Hence the same proofs give (iii) for the first $M$ rings. It finishes the proof of Proposition~\ref{prop:finite_head_reduction}.
\end{proof}

The goal of Step 1 is to prove, using a bootstrap argument, that for large $M,$
\begin{equation}\label{eq:TB_ge_min_T0_TN}
T_B(A,M)=  T_N(A,M).
\end{equation}
Assume for contradiction that $T_B(A,M)< T_N(A,M).$

For a bootstrap argument, for large integers $M$, which are at least bigger than $m_0(A,q),$ we control the rescaled velocities $V_k=(V_k^r,V_k^z)$ (defined in \eqref{def_v_k14}) on $\supp W_k(\cdot,\cdot,t)$ for $1\leq k \leq M$ and $0\leq t\leq T_B(A,M)$.
Decompose the vorticity $\omega^{(\infty)}$ into \emph{head} ($k\leq M$) and \emph{tail} ($k>M$):
\[
\omega^{(\infty)}=\omega^{(\leq M)}+\omega^{(>M)},\qquad
\omega^{(\leq M)}:=\sum_{j=1}^M\omega_j,\quad \omega^{(>M)}:=\sum_{j=M+1}^\infty\omega_j,
\]
and decompose $u^{(\infty)}=u^{(\leq M)}+u^{(>M)}$ accordingly.
Let  $V_{k,(\leq M)}=(V_{k,(\leq M)}^r,V_{k,(\leq M)}^z)$ denote the rescaled velocity built from $u^{(\leq M)}$ by the same formula as in
\eqref{def_v_k14}. Then for $1\leq k \leq M$
    \begin{align*}
        V_k^r(r,z,t)
    =
        V_{k,(\leq M)}^r (r,z,t) +\frac{u^{(>M),r}(R_k(t) r, H_k(t) z, t)}{R_k(t)}
    \end{align*}
and we make a corresponding decomposition for $V_k^z$. 
We divide estimates of $V_k$ into three parts: (a) head contribution $V_{k,(\leq M)}$; (b) tail contribution $u^{(>M)}$; (c) Conclusion from (a) and (b).

\medskip
\noindent \emph{Step 1-(a): Estimate the head contribution $V_{k,(\leq M)}^r, V_{k,(\leq M)}^z$}

For the head vorticity $\omega^{(\leq M)}$, using Proposition~\ref{prop:finite_head_reduction}(i), (iii) together with Proposition~\ref{prop:cone_free_key_lem}, one can 
argue in the same way as Lemma~\ref{lem:in_out_cont_updated}  to obtain that for all $M\geq m_0(A,q),$
\begin{align}\label{eq:Vk_est_first_m}
\sup_{(r,z),t,k}
\left(
\left|\frac{V_{k,(\leq M)}^r(r,z,t)}{r}\right|+\left|\frac{V_{k,(\leq M)}^z(r,z,t)}{z}\right| \right) 
\ \le\ C_E(q)\bar A\left( 1 + \log\left(\frac{\bar A}{\varepsilon}\right) + \log M \right),
\end{align}
where the supremum is taken over all $x,t,k$ such that
    \begin{align*}
        1\leq k \leq M,\quad
        0\leq t \leq T_B(A,M),\quad
        (r,z)\in \supp W_k(\cdot,t).
    \end{align*}

\medskip
\noindent \emph{Step 1-(b): Estimate the tail contribution $u^{(>M),r}, u^{(>M),z}$}

In this part, we use Proposition~\ref{prop:cone_free_key_lem}. Under the flow, the odd-symmetry of $\omega^{(\infty)}$ in $z$ is preserved.   Therefore, the odd-symmetry assumption of Proposition~\ref{prop:cone_free_key_lem} is satisfied for $\omega^{(\infty)}(\cdot,t)$ for all $t>0$. In addition, as we are considering a Yudovich-type weak solution $\omega^{(\infty)}\in L^\infty_t (L^1\cap L^\infty)$, all the assumptions of Proposition~\ref{prop:cone_free_key_lem} are satisfied.

By the construction of the initial data \eqref{KJdata2} and the profile support condition \eqref{phi_cond2}, we know that $\supp \omega_0 ^{(\infty)}\cap \{z>0\}$ is strictly contained in the cone $z \le L_{q,+} r$, where $L_{q,+} = \frac{1+\eta}{1-\eta} L_q$.

Now, we want to prove that for all $t\in [0,T_B(A,M)]$ 
    \begin{align}
    \label{eq:cone_condition}
        2L_{q,+}r\geq |z|>0 
        \quad
        \text{for  all $(r,z)\in \cup_{j=1}^\infty \supp \omega_j(\cdot,\cdot,t)$,}
    \end{align}
for a sufficiently large $M\geq m_{\mathrm{cone}}(A,q)$ (defined in \eqref{eq:def_m_cone} below), which will allow us to 
estimate the factor $|x|/r_x$
in the $Q(r_x)$-estimates of Proposition~\ref{prop:cone_free_key_lem}
  by $\sqrt{1+4L_{q,+}^2}$   
for all time $t\in[0,T_B(A,M)]$.
\begin{proof}[\textbf{Proof of the cone condition \eqref{eq:cone_condition}}] 
This proof is motivated by the proof of Lemma~3.1 in \cite{KimJeong22}.
At time $t=0$, we have $Y^z(x,0)/Y^r(x,0) \le L_{q,+}$ for all $x \in \supp \omega_0^{(\infty)} \cap \{z>0\}$.

We analyze the evolution of the ratio $Y^z(x,t)/Y^r(x,t)$ for a Lagrangian trajectory $Y(x,t)$ starting at $x \in \supp \omega_0 ^{(\infty)}\cap \{z>0\}$. 
Consider the intermediate region $\mathcal{R} := \{ (r,z) : L_{q,+} \le z/r \le 2L_{q,+} \}$.
While the trajectory $Y(x,t)$ is in the region $\mathcal{R}$, we can bound the factor $|x|/r_x$ by a constant, $\sqrt{1+4L_{q,+}^2}$, when applying the $Q(r_x)$-estimate of Proposition~\ref{prop:cone_free_key_lem} since $2L_{q,+}r \ge z > 0$.
The contradiction hypothesis yields $\|\omega^{(\infty)}(\cdot,t)\|_{L^\infty(\mathbb{R}^3)} \le A_\infty \le \bar A$. 
Because $-z_y \omega^{(\infty)}(y,t) \ge 0$ everywhere (due to odd symmetry and non-positivity on the upper half-plane), the integral term in Proposition~\ref{prop:cone_free_key_lem} is non-negative.
Therefore, 
Proposition~\ref{prop:cone_free_key_lem} implies that whenever $Y(x,t)$ is in $\mathcal{R}$,
\begin{align*}
    \frac{u^{(\infty),r}(Y,t)}{Y^r} \ge -C \sqrt{1+4L_{q,+}^2} \,\|\omega^{(\infty)}(\cdot,t)\|_{L^\infty} \ge -C\sqrt{1+4L_{q,+}^2} \bar A.
\end{align*}
Similarly, by an analogous application of Proposition~\ref{prop:cone_free_key_lem} for the vertical component $u^{(\infty),z}$, the rescaled velocity  satisfies an upper bound:
\begin{align*}
    \frac{u^{(\infty),z}(Y,t)}{Y^z} \le C \left(1 + \left| \log \frac{Y^r}{Y^z}  \right| \right) \|\omega^{(\infty)}(\cdot,t)\|_{L^\infty} \le C (1+|\log (2L_{q,+})|) \bar A,
\end{align*}
for an absolute constant $C>0$, bounding the logarithmic term since $Y^r/Y^z \in [1/(2L_{q,+}), 1/L_{q,+}]$  in the region $\mathcal{R}$.

Using these inequalities, we monitor the aspect ratio of the trajectory inside $\mathcal{R}$:
\begin{align*}
    \frac{d}{dt} \log \frac{Y^z(x,t)}{Y^r(x,t)} 
    &= \frac{u^{(\infty),z}(Y(x,t),t)}{Y^z(x,t)} - \frac{u^{(\infty),r}(Y(x,t),t)}{Y^r(x,t)}
    \\
    &\le \left(1+|\log (4L_{q,+})|\, + \sqrt{1+4L_{q,+}^2}\right)\bar A 
    = C(q)\, \bar A
\end{align*}
where the constant $C$ ultimately depends on $q$ and fixed background parameters, but independent of $M,t,k$.

Suppose that a trajectory $Y(x,t)$ enters the region $\mathcal{R}$ at some time $t=T_{\mathcal{R}} \le T_B(A,M)$. 
Until it escapes the region $\mathcal{R}$, the trajectory complies with the above differential inequality. 
Integrating from $T_{\mathcal{R}}$ to $t \le T_B(A,M)$, we obtain:
\begin{align*}
    \log \frac{Y^z(x,t)}{Y^r(x,t)} 
    &\le \log \frac{Y^z(T_{\mathcal{R}},x)}{Y^r(T_{\mathcal{R}},x)} + C(q) \bar A (t - T_{\mathcal{R}}) \\
    &\le \log L_{q,+} + C(q) \bar A T_B(A,M).
\end{align*}

Since $T_B(A,M) \to 0$ as $M \to \infty$ by Proposition~\ref{prop:finite_head_reduction}, there exists a large integer $m_{\mathrm{cone}}(A,q)\geq m_0(A,q)$ such that all $M\geq m_{\mathrm{cone}}(A,q)$ satisfy
\begin{align}
\label{eq:def_m_cone}
C(q) \bar A T_B(A,M) \le \log \frac32.
\end{align}
Under this condition, it follows that
\begin{align*}
    \log \frac{Y^z(x,t)}{Y^r(x,t)} \le \log \left(L_{q,+}\right) + \log\left( \frac32 \right) = \log \left( \frac{3}{2}L_{q,+} \right).
\end{align*}
From this, we conclude that $Y^z(x,t) / Y^r(x,t) \le \frac{3}{2}L_{q,+}$, meaning the trajectory $Y(x,t)$ can never reach the boundary $z = 2L_{q,+} r$ for all $t \in [0, T_B(A,M)]$. 
Since $Y^z(x,t) > 0$ is also strictly preserved by the flow (as the $z=0$ plane is invariant due to odd symmetry), the condition $2L_{q,+}r \ge |z| > 0$ holds for all $x \in \cup_{j=1}^\infty \supp \omega_j(\cdot,t)$. This completes the proof of the cone condition \eqref{eq:cone_condition}.
\end{proof}

Thanks to \eqref{eq:cone_condition}, we can apply the $Q(r_x)$-estimates of Proposition~\ref{prop:cone_free_key_lem} to $\omega^{(\infty)}(\cdot,\cdot,t)$ with the factor $|x|/r_x$  bounded by $\sqrt{1+4L_{q,+}^2}$, for all $t\in[0,T_B(A,M)]$ for all  $M\geq m_{\mathrm{cone}}(A,q)$.
Set
\[
K_{\mathrm{tail}}
:=
\overline{\bigcup_{j=M+1}^{\infty}\operatorname{supp}\omega_{0,j}}
=
\Bigl(\bigcup_{j=M+1}^{\infty}\operatorname{supp}\omega_{0,j}\Bigr)\cup \{0\},
\]
and define the outer radial boundary of the tail $R_{\mathrm{tail}}(t)$ and  the inner radial boundary of the head $r_{M,-}(t)$ by
\[
r_{M,-}(t):=
\min_{x\in \operatorname{supp}\omega_{0,M}} Y^r(x,t),
\qquad
R_{\mathrm{tail}}(t):=
\max_{x\in K_{\mathrm{tail}}} Y^r(x,t).
\]
We claim that the head and the tail rings are separated. More precisely, there exists a large $m_{\mathrm{tail}}(A,q)\geq m_{\mathrm{cone}}(A,q)$ such that all $M\geq m_{\mathrm{tail}}(A,q)$ satisfy
    \begin{align}
    \label{eq:tail_head_separation}
        \frac{r_{M,-}(t)}{R_{\mathrm{tail}}(t)} \geq 4 \quad \text{for all }t\in [0,T_B(A,M)].
    \end{align}

\begin{proof}[\textbf{Proof of the tail scale separation bound \eqref{eq:tail_head_separation}}]
We use a bootstrap argument. Define 
\begin{align*}
    T^*_B := \sup \left\{ T \in [0, T_B(A,M)] : \frac{r_{M,-}(t)}{R_{\mathrm{tail}}(t)} \ge 4 \quad \text{for all } t \in [0, T] \right\}.
\end{align*}

At $t=0$, the separation between the $M$-th ring and the tail is exactly:
\begin{align*}
    \frac{r_{M,-}(0)}{R_{\mathrm{tail}}(0)} \geq  \frac{(1-\eta)d^{M}}{(1+\eta)d^{M+1}} = \frac{1-\eta}{1+\eta} \, \frac{1}{d}=
    \frac{3/4}{5/4} (100) = 60.
\end{align*}

Under the contradiction hypothesis, the velocity field $u^{(\infty)}$ is also globally bounded, that is, $u^{(\infty)}\in L^\infty(0,T_0; L^\infty(\mathbb{R}^3)),$ hence the flow map $Y^r(x,t)$ is \emph{uniformly} Lipschitz continuous in time, that is,
$|Y^r(x,t)-Y^r(x,s)| \leq C |t-s|$ for all $t,s\in [0,T_0]$ and $x\in \mathbb{R}^3$ where the constant $C$ is independent of $t,s,x$. Consequently, 
both $t\mapsto r_{M,-}(t)$ and $t\mapsto R_{\mathrm{tail}}(t)$ are continuous as extrema over compact sets. Hence $T^*_B>0$.

Fix  $M\geq \max\{m_0(A,q),m_{\mathrm{cone}}(A,q)\}$. 
Since $u$ is uniformly bounded and uniformly log-Lipschitz in space on $[0,T_0]$ (see \eqref{eq:u_log_Lip_common}), and
since $Y$ is continuous on compact sets, the map
\[
g(x,t):=u^{(\infty),r}(Y(x,t),t)
\]
is bounded. Using log-Lipschitzness of $u^{(\infty)}$ and H\"older estimate \eqref{eq:Y_reg_x_common} of $Y$, it follows that $g$ admits a modulus of continuity in $x$, uniform in $t$, on both compact
sets
\[
K_M:=\operatorname{supp}\omega_{0,M},
\qquad
K_{\mathrm{tail}}
\]
using the uniform H\"older continuity of $Y$ \eqref{eq:Y_reg_x_common} on the relevant compact sets.
More precisely, $g$ admits the following modulus of continuity:
    \begin{align*}
        |g(x,t)- g(y,t)|
    \leq
        C |x-y|^\gamma \log (2+C|x-y|^{-\gamma})=: \rho (|x-y|).
    \end{align*}
Hence Lemma~\ref{lem:envelope_radial_extrema} applies with $K=K_M$ and $K=K_{\mathrm{tail}}$ and shows that $r_{M,-}$ and
$R_{\mathrm{tail}}$ are Lipschitz on $[0,T_0]$, and for a.e. $t\in (0,T_0)$,
\[
r_{M,-}'(t)
=
\min_{x\in \mathcal A_M(t)} u^{(\infty),r}(Y(x,t),t),
\qquad
R_{\mathrm{tail}}'(t)
=
\max_{x\in \mathcal M_{\mathrm{tail}}(t)} u^{(\infty),r}(Y(x,t),t),
\]
where
\[
\mathcal A_M(t)
:=
\operatorname*{argmin}_{x\in K_M} Y^r(x,t),
\qquad
\mathcal M_{\mathrm{tail}}(t)
:=
\operatorname*{argmax}_{x\in K_{\mathrm{tail}}} Y^r(x,t).
\]
In particular, for a.e. such $t$, there exist
$x_t\in \mathcal A_M(t)$ and $x_t'\in \mathcal M_{\mathrm{tail}}(t)$ such that
\[
r_{M,-}'(t)=u^r(Y(x_t,t),t),
\qquad
R_{\mathrm{tail}}'(t)=u^r(Y(x_t',t),t).
\]
Therefore,
\[
\frac{d}{dt}\log\frac{r_{M,-}(t)}{R_{\mathrm{tail}}(t)}
=
\frac{u^{(\infty),r}(Y(x_t,t),t)}{r_{M,-}(t)}
-
\frac{u^{(\infty),r}(Y(x_t',t),t)}{R_{\mathrm{tail}}(t)}
\qquad
\text{for a.e. }t\in (0,T_0).
\]
From this point on, all differential inequalities are understood for a.e. $t$, and are
then integrated in time.

Setting $Y=Y(x_t,t), Y'=Y(x_t',t)$, (hence $Y'^{r}=R_{\mathrm{tail}}(t)$)
we evaluate the contraction via the $Q(r_x)$-estimates of  Proposition~\ref{prop:cone_free_key_lem}:
\begin{align*}
    \frac{d}{dt} \log R_{\mathrm{tail}}(t) - \frac{d}{dt} \log r_{M,-}(t) \le \frac{3}{8\pi} \int_{Q(Y'^r) \setminus Q(Y^r)} \frac{r_y z_y}{|y|^5} \big(-\omega^{(\infty)}(t,y)\big) \, dy + 2C_L \bar A, \quad \text{a.e. }\, t\in [0,T_B^*].
\end{align*}
The domain of integration is $\{y \in \mathbb{R}^3 : 2 Y'^r \le r_y < 2 Y^r \}$. We check its boundaries:
\begin{itemize}
    \item \textbf{Upper boundary:} By Lemma~\ref{lem:Euler_cauchy_geometry} and \eqref{eq:sep_R_ratio}, it holds 
    \begin{align*}
    2 Y^r \le 2 R_M(t)  \, r_+\le R_{M-1}(t) \, r_-\leq r_{M-1,-}(t).
    \end{align*}
    Thus, the domain strictly avoids rings $1$ through $M-1$.
    \item \textbf{Lower boundary:} Because $Y'^r$ represents the outer boundary of the tail, $2 Y'^r> R_{\mathrm{tail}}(t)$.  Thus, the domain strictly overshoots the entire tail mass.
\end{itemize}

Consequently, the integration domain intersects no ring except possibly the $M$-th ring. Therefore, its contribution is bounded by the integral over $\supp \omega_M(\cdot,t)$.
  We bound the integral using exact volume conservation $|\supp \omega_M(\cdot,t)|=|\supp\omega_{0,M}|= C(q) \,d^{3M}$:
\begin{align*}
    \frac{d}{dt} \log \frac{r_{M,-}(t)}{R_{\mathrm{tail}}(t)} 
    &\ge - C (q) \, \bar A \int_{\supp \omega_M(\cdot,t)} \frac{1}{|y|^3} \, dy - 2C_L \bar A.
\end{align*}
 The singular integral is therefore uniformly bounded:
\begin{align*}
    \int_{\supp \omega_M(\cdot,t)} \frac{1}{|y|^3} \, dy 
    \le \frac{|\supp \, \omega_{M}(\cdot,t)|}{r_{M,-}(t)^3} 
    \le C(q)\frac{d^{3M}}{R_{M}(t)^3 }
    \le C(q).
\end{align*}
Therefore,
\begin{align*}
    \frac{d}{dt} \log \frac{r_{M,-}(t)}{R_{\mathrm{tail}}(t)} \ge -C(q)\, \bar A.
\end{align*}
Integrating this inequality from $t=0$ yields 
\begin{align*}
\frac{r_{M,-}(t)}{R_{\mathrm{tail}}(t)} 
\geq \frac{r_{M,-}(0)}{R_{\mathrm{tail}}(0)} e^{-C(q) \bar A t}
\ge \frac{1-\eta}{1+\eta} \,\frac{1}{d}\,  e^{-C(q) \bar A t}.
\end{align*}
Since $T_B(A,M)\to 0$ as $M\to\infty$,  there exists a large integer $m_{\mathrm{tail}}(A,q)\geq m_{\mathrm{cone}}(A,q)$ such that all $M\geq m_{\mathrm{tail}}(A,q)$ satisfy
    \begin{align}
    \label{eq:m_tail_def}
    \frac{1-\eta}{1+\eta} \, \frac{1}{d} \, e^{-C(q) \bar A T_B(A,M)} = 60 e^{-C(q) \bar A T_B(A,M)} >4.
    \end{align}
Then for $M\geq m_{\mathrm{tail}}(A,q)$,  the bootstrap condition strictly improves on $[0, T^*_B]$, hence it forces $T^*_B = T_B(A,M)$. It finishes the proof of the tail scale separation bound~\eqref{eq:tail_head_separation}.
\end{proof}

\noindent \emph{Completion of Step 1-(b).}
Using the strict scale separation \eqref{eq:tail_head_separation}, we can establish a uniform distance bound between the $k$-th ring and any tail ring $j > M$, allowing us to rigorously bound the tail velocity contribution.

Fix $k \in \{1,\ldots,M\}$ and $t \in [0,T_B(A,M)]$. Let $x \in \supp \omega_k(\cdot,t)$ and $y \in \supp \omega_j(\cdot,t)$ for some $j > M$. The 3D Euclidean distance is bounded below by the radial distance:
\begin{align*}
    |x-y| \ge |r_x - r_y|.
\end{align*}
For $M\geq \max\{m_{\mathrm{tail}}(A,q), m_0(A,q)\} $, by using \eqref{eq:tail_head_separation}, since $j > M \ge k$, the $j$-th ring is strictly localized closer to the $z$-axis than the $k$-th ring:
\begin{align*}
    r_y 
    \le 
        \sup_{\xi \in \supp \omega_{0,j}} Y^r(\xi, t) 
    \le R_{\mathrm{tail}}(t) 
    \le \frac{1}{2} r_{M,-}(t)
    \le 
        \frac{1}{2} \inf_{\xi \in \supp \omega_{0,k}} Y^r(\xi, t) 
    \le \frac{1}{2} r_x.
\end{align*}
This guarantees a uniform lower bound on the distance:
\begin{align}
\label{eq:tail_distance_bound}
    |x-y| \ge r_x - \frac{1}{2} r_x = \frac{1}{2} r_x\geq \frac{1}{2} R_k(t) r_-.
\end{align}
where we used the finite-ring bootstrap Lemma~\ref{lem:Euler_cauchy_geometry}, which ensures $r_x = R_k(t) r$ with $r \ge r_-$. 

Next, we estimate the velocity $u_j(x,t)$ induced by the $j$-th ring. Because the velocity field is divergence-free, the physical volume of the $j$-th ring is exactly conserved over time. From the initial data \eqref{KJdata2}, its volume is strictly determined by the initial geometric scale $d^j$:
\begin{align*}
    |\supp \omega_j(\cdot, t)|
    = |\supp \omega_{0,j}|
= C(q) (d^j)^3 .
\end{align*}
Using the Biot--Savart law, the contradiction hypothesis $\|\omega_j(\cdot,t)\|_{L^\infty} \le A_\infty\le\bar A$, and our uniform distance bound \eqref{eq:tail_distance_bound}, we obtain:
\begin{align}
\label{eq:tail_uj_bound}
    |u_j(x,t)| 
    \le C_{\mathrm{BS}} \int_{\supp \omega_j(\cdot,t)} \frac{|\omega_j(y,t)|}{|x-y|^2} \, dy 
    \le \frac{C_{\mathrm{BS}}}{\left(\frac{1}{2} r_- R_k(t)\right)^2} \|\omega_j\|_{L^\infty} |\supp \omega_j|
    \le C(q) \bar A \frac{d^{3j}}{R_k(t)^2}.
\end{align}

We now bound the rescaled velocity contributions on the support of the $k$-th ring. For the radial component, we divide \eqref{eq:tail_uj_bound} by $R_k(t) r$. Using $r \ge r_-$ and the fact that amplitudes do not decrease ($R_k(t) = d^k \tilde{x}_k(t) \ge d^k$), we get:
\begin{align*}
    \frac{|u_j^r(x,t)|}{R_k(t) r} 
    \le \frac{C(q) \bar A}{R_k(t)^3 r_-} d^{3j} 
    \le C(q) \bar A \frac{d^{3j}}{(d^k)^3} 
    = C(q) \bar A (d^3)^{j-k}.
\end{align*}
For the vertical component, we divide \eqref{eq:tail_uj_bound} by $H_k(t) z$. By definition \eqref{tilde_xk_rk_hk}, we have the exact geometric relation $R_k(t)^2 H_k(t) = (d^k \tilde{x}_k(t))^2 (d^k / \tilde{x}_k(t)^2) = d^{3k}$. Since $|z| \ge z_-$, this yields:
\begin{align*}
    \left| \frac{u_j^z(x,t)}{H_k(t) z} \right|
    \le \frac{C(q) \bar A}{R_k(t)^2 H_k(t) z_-} d^{3j} 
    = C(q) \bar A \frac{d^{3j}}{d^{3k}} 
    = C(q) \bar A (d^3)^{j-k}.
\end{align*}
Because the base of the exponent $d^3 = 10^{-6} < 1$, summing these contributions over all tail rings $j > M$ results in a rapidly converging geometric series:
\begin{align*}
    \frac{|u^{(>M),r}(x,t)|}{R_k(t) r} + \frac{|u^{(>M),z}(x,t)|}{H_k(t) |z|}
    \le C(q) \bar A \sum_{j=M+1}^\infty (d^3)^{j-k} 
    \le C(q) \bar A \frac{d^3}{1-d^3}.
\end{align*}
We conclude that the tail contribution is uniformly bounded by the target amplitude: for all  $M\geq m_{\mathrm{tail}}(A,q),$
\begin{align}
\label{eq:tail_est_37}
    \sup_{(r,z),t,k} \left( \left| \frac{u^{(>M),r}(R_k(t) r, H_k(t) z, t)}{R_k(t) r} \right| + \left| \frac{u^{(>M),z}(R_k(t) r, H_k(t) z, t)}{H_k(t) z} \right| \right) \le C(q) \bar A,
\end{align}
where the supremum is taken over all $x,t,k$ from the same set as \eqref{eq:Vk_est_first_m}.

\medskip
 \noindent \emph{Step 1-(c): Conclusion from (a) and (b).}
Combining \eqref{eq:Vk_est_first_m} and \eqref{eq:tail_est_37}, we can conclude that
for all $M\geq \max\{{m_{\mathrm{tail}}(A,q),m_0(A,q)}\}$,
\begin{align*}
\sup_{(r,z),t,k} 
\left(
\left|\frac{V_k^r(r,z,t)}{r}\right|
+\left|\frac{V_k^z(r,z,t)}{z}\right|
\right) \leq C_E(q) \bar A 
\left(
1+\log \left( \frac{\bar A}{\varepsilon} \right) 
+ \log M
\right)
\end{align*}
where the supremum is taken from the same set as in \eqref{eq:Vk_est_first_m}.

\medskip
To complete bootstrap improvement, we argue in the same way as in Proposition~\ref{lem:Euler_close_bootstrap_X_DY}.
Recall $T_B(A,M)\lesssim M^{-\beta_q}\to 0$ as $M\to\infty$. As the algebraic decay $M^{-\beta_q}$ wins over the logarithmic growth $\log M$, there exists a large integer $m_1(A,q)\geq m_{\mathrm{tail}}(A,q)$ such that for any $M\geq m_1(A,q)$,
    \begin{align}
    \label{eq:m_1_def2}
        C_E \bar A 
        \left(
        1+ \log \left( \frac{\bar A}{\varepsilon} \right) 
        + \log M
        \right) T_B(A,M)
        \leq
        \min \left\{
        \log \left( 1+\frac{\mu}{2}\right),
        \log  \left( \frac{1}{1-\mu/2}\right)
        \right\}.
    \end{align}

Therefore, we can obtain the improved bounds
$1-\mu/2\le X_k^r/r\le 1+\mu/2$ and $1-\mu/2\le X_k^z/z\le 1+\mu/2$ on $\supp\phi_q$ for all
$0\le t\le T_B(A,M)$ and all $1\le k\le M$ when $M\geq m_1(A,q)$. This contradicts maximality of $T_B(A,M)$, using the continuity
of $F_M$ and the fact that $T_B(A,M)>0$. Hence \eqref{eq:TB_ge_min_T0_TN} holds, that is, $T_B(A,M)=T_N(A,M)$, and it completes Step 1.

\medskip
\noindent{\bf Step 2: ODE norm inflation yields a contradiction.}
 Recall on
$[0,T_B(A,M)]$, we can obtain 
\begin{equation}\label{eq:Wk_lower_infty_layers}
\|W_k(\cdot,\cdot,t)\|_{L^\infty(\R^3)}\ge (1-\mu)\|\phi_q\|_{L^\infty(\R^3)}
\qquad (1\le k\le M,\ 0\le t\le T_B(A,M)).
\end{equation}
In addition, the amplitudes $\{x_k(t)\}_{k=1}^M$
satisfy the same weak ODE inequalities \eqref{mu_ODE} as in Sections~\ref{sec:est_pro}--\ref{sec:ODEs_Euler}.
By Proposition~\ref{prop:finite_head_reduction} and the choice of $\alpha_q$ in \eqref{cond_alpha66}, we have $T_B(A,M)\to0$ as
$M\to\infty$. For $M\geq m_1(A,q)$, Step~1 yields
$T_B(A,M)=T_N(A,M)$.
There exists  $m_2(A,q) \geq m_1(A,q)$ such that 
\[
\frac{C(q,A)}{M^{\beta_q}}<T_0 \qquad \text{for all } M\geq m_2(A,q)
\]
where $C(q,A)$ is the constant in \eqref{eq:est_TB}.
Since Proposition~\ref{prop:finite_head_reduction} gives
$T_B(A,M)\le C(q,A) M^{-\beta_q}$, it follows that $T_N(A,M)=T_B(A,M)<T_0$, which implies
\begin{align*}
    \sup_{1\leq k \leq M  } x_k (T_N(A,M)) = \bar A.
\end{align*}

Since each \(\omega_k(\cdot,t)\le0\) on \(\{z>0\}\), the functions
\(-\omega_k(\cdot,t)\) are nonnegative there. Hence
\[
-\omega^{(\infty)}(\cdot,t)
=
\sum_j (-\omega_j(\cdot,t))
\ge
-\omega_k(\cdot,t)
\qquad\text{on }\{z>0\}.
\]
Therefore
\[
\|\omega^{(\infty)}(\cdot,t)\|_{L^\infty}
\ge
\|\omega_k(\cdot,t)\|_{L^\infty}.
\]
Evaluating at $t=T_N(A,M)$ and using \eqref{eq:Wk_lower_infty_layers}, we obtain
\begin{align*}
\Big\|\omega^{(\infty)}\big(T_N(A,M)\big)\Big\|_{L^\infty(\mathbb{R}^3)}
&\ge \sup_{1\le k\le M}\Big\|\omega_k\big(T_N(A,M)\big)\Big\|_{L^\infty(\mathbb{R}^3)}
\\
&= \sup_{1\le k\le M}x_k\big(T_N(A,M)\big)\Big\|W_k\big(T_N(A,M)\big)\Big\|_{L^\infty(\mathbb{R}^3)}
\\
&\ge (1-\mu)\|\phi_q\|_{L^\infty}\sup_{1\le k\le M}x_k\big(T_N(A,M)\big)
\\
&= (1-\mu)\|\phi_q\|_{L^\infty}\bar A
=\|\phi_q\|_{L^\infty}A\ge A>A_\infty.
\end{align*}
In fact, 
\[
\|\omega^{(\infty)}(t)\|_{L^\infty(\mathbb R^3)}>A_\infty,
\]
for a.e. $t\in[T_N(A,M)-\tau,T_N(A,M)]$, for some sufficiently small $\tau>0$,
contradicting the definition of $A_\infty$.
In other words, this contradicts the assumed existence of an essentially bounded vorticity solution on $[0,T_0]$. Therefore, no such $T_0>0$ exists, proving Theorem~\ref{thm:instant_blow-up}-(1).
\qed

\subsection{Proof of Theorem~\ref{thm:instant_blow-up}-(2)}
\label{sec:pf_rem}
This subsection proves Theorem~\ref{thm:instant_blow-up}-(2). In fact, the argument below yields a single global-in-time distributional solution; Theorem~\ref{thm:instant_blow-up}-(2) then follows by restricting that solution to an arbitrary finite time interval.

We first show that a subsequential limit of the smooth finite-ring solutions from Section~\ref{sec:norm_inflation} gives rise to a distributional solution on an arbitrary finite time interval.

\begin{proposition}[Finite-ring approximation of a distributional solution]
\label{prop:finite_ring_limit_distributional}
Let
\[
\omega_0^{(\infty)}=\sum_{k=1}^\infty \omega_{0,k}
\]
be the infinite-ring initial data from \eqref{KJdata2} with $\alpha=\alpha_q$ and $\phi=\phi_q$, and for each \(m\ge 1\) let
\[
\omega_0^{(m)}:=\sum_{k=1}^m \omega_{0,k}.
\]
Let \((u^{(m)},\omega^{(m)})\) be the unique global smooth axisymmetric no-swirl
solution of \eqref{main_eq} with the initial condition \(\omega^{(m)}|_{t=0}=\omega_0^{(m)}\), and set
\[
\xi^{(m)}:=\frac{\omega^{(m)}}{r},
\qquad
\xi_0^{(m)}:=\frac{\omega_0^{(m)}}{r}.
\]
Then, for every \(T>0\), there exist a subsequence \(m_j\to\infty\) and
axisymmetric functions
\[
u^{(\infty)}\in L^\infty\bigl(0,T;L^2(\mathbb{R}^3)\bigr)
\cap L^\infty\bigl(0,T;H^1_{\mathrm{loc}}(\mathbb{R}^3)\bigr),
\qquad
\xi^{(\infty)}\in L^\infty\bigl(0,T;L^2_{\mathrm{loc}}(\mathbb{R}^3)\bigr),
\]
such that, writing \(\omega^{(\infty)}:=r\xi^{(\infty)}\),
\begin{align*}
u^{(m_j)} &\overset{*}{\rightharpoonup} u^{(\infty)}
\quad\text{in }L^\infty\bigl(0,T;L^2(\mathbb{R}^3)\bigr),
\\
u^{(m_j)} &\to u^{(\infty)}
\quad\text{strongly in }L^2\bigl((0,T)\times B_R\bigr)
\quad\text{for every }R>0,
\\
\xi^{(m_j)} &\overset{*}{\rightharpoonup} \xi^{(\infty)}
\quad\text{in }L^\infty\bigl(0,T;L^2(B_R)\bigr)
\quad\text{for every }R>0.
\end{align*}
Moreover, \((u^{(\infty)},\omega^{(\infty)})\) is an (axisymmetric no-swirl) distributional solution of
\eqref{eq:IVP_rel_vor} on \((0,T)\) with the initial data \(\omega_0^{(\infty)}\) in the sense of
Definition~\ref{def:dist_sol_rel_vor}.
\end{proposition}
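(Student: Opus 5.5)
The plan is a standard compactness argument driven by the two $m$-independent bounds available for the smooth finite-ring solutions. These are conservation of energy, $\|u^{(m)}(t)\|_{L^2}=\|u_0^{(m)}\|_{L^2}$, and conservation of the Lorentz norm of the transported quantity, $\|\xi^{(m)}(t)\|_{L^{3,q}}=\|\xi_0^{(m)}\|_{L^{3,q}}\le C$ by \eqref{ini_lorentz14}. The energy is bounded uniformly in $m$ because $u_0^{(m)}=\mathrm{BS}(\omega_0^{(m)})$ and $\|\omega_0^{(m)}\|_{L^{6/5}}$ is uniformly bounded, since $\omega_0^{(m)}$ is bounded in $L^1\cap L^\infty$. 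Since $L^{3,q}\subset L^2_{\mathrm{loc}}$ (indeed $L^{3,q}\subset L^{p}_{\mathrm{loc}}$ for all $p<3$), the family $\xi^{(m)}$ is bounded in $L^\infty(0,T;L^2(B_R))$ for every $R$. The weak-$*$ limits $u^{(\infty)}$ and $\xi^{(\infty)}$ then come from Banach--Alaoglu together with a diagonal argument over $R\in\mathbb N$.

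The first step is a uniform local $H^1$ bound for $u^{(m)}$. I would write $\omega^{(m)}=r\xi^{(m)}$, so that $|\omega^{(m)}|\le R|\xi^{(m)}|$ on $B_R$, which bounds $\omega^{(m)}$ in $L^\infty(0,T;L^2(B_R))$. Local div--curl (elliptic) estimates, $\|\nabla u\|_{L^2(B_R)}\lesssim \|\omega\|_{L^2(B_{2R})}+\|u\|_{L^2(B_{2R})}$, then give $u^{(m)}$ bounded in $L^\infty(0,T;H^1(B_R))$.

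The second step is time compactness. I would use the velocity equation in the form $\partial_t u^{(m)}=-\mathbb P\,\mathrm{div}(u^{(m)}\otimes u^{(m)})$. Because $u^{(m)}\otimes u^{(m)}$ is bounded in $L^\infty_tL^1_x$, this bounds $\partial_tu^{(m)}$ in $L^\infty(0,T;H^{-s})$ for some large $s$ (handling $\mathbb P$ via testing against divergence-free fields, or working locally with the pressure estimated by Calderón--Zygmund in $L^1$-type spaces). Aubin--Lions on $B_R$ then gives strong convergence $u^{(m_j)}\to u^{(\infty)}$ in $L^2((0,T)\times B_R)$, and a further diagonal argument covers all $R$.

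The third step is passing to the limit in \eqref{eq:transport_distributional}. The linear terms $\xi^{(m)}\partial_t\psi$ and the initial datum term converge; for the latter, $\xi_0^{(m)}\to\xi_0^{(\infty)}$ in $L^1_{\mathrm{loc}}$ trivially, since only finitely many rings differ on any compact set away from the origin and the remaining tail is small in $L^{3,q}$. The nonlinear term $\xi^{(m)}u^{(m)}\cdot\nabla\psi$ is a product of a weakly-$*$ convergent factor in $L^\infty_tL^2(B_R)$ and a strongly convergent factor in $L^2_{t,x}(B_R)$, so it converges to $\xi^{(\infty)}u^{(\infty)}\cdot\nabla\psi$ in $\mathcal D'$.

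The expected main obstacle is verifying $u^{(\infty)}\xi^{(\infty)}\in L^1_{\mathrm{loc}}$ together with the remaining structural requirements. For the integrability, $u\in L^\infty_tL^6_{\mathrm{loc}}$ (from $H^1_{\mathrm{loc}}$) and $\xi\in L^\infty_tL^2_{\mathrm{loc}}$ suffice, provided Sobolev embedding on balls is used; alternatively, weak lower semicontinuity keeps $\xi^{(\infty)}$ in $L^\infty_tL^{3,q}$, which pairs with $L^6$. For the structural part, $\mathrm{curl}\,u^{(\infty)}=\omega^{(\infty)}e_\theta$ and $\mathrm{div}\,u^{(\infty)}=0$ pass to the limit since they are linear, and axisymmetry without swirl is a closed linear condition preserved under weak limits.
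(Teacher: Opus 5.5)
Your proposal is correct and follows essentially the same route as the paper. The steps match: uniform energy and transported $L^{3,q}$ bounds, local $H^1$ control via div--curl, an $m$-independent negative-Sobolev bound on $\partial_t u^{(m)}=-\mathbb P\,\mathrm{div}(u^{(m)}\otimes u^{(m)})$, then Aubin--Lions with a diagonal argument, and a weak-times-strong passage to the limit in the nonlinear term. The differences are cosmetic. You bound $\|u_0^{(m)}\|_{L^2}$ by Hardy--Littlewood--Sobolev from $L^{6/5}$, whereas the paper splits the kernel $|x|^{-2}$ and applies Young's inequality with $L^1\cap L^2$. You also handle the initial term through smallness of the $L^{3,q}$ tail, whereas the paper estimates the $L^1$ tail directly.
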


\begin{remark}
Proposition~\ref{prop:finite_ring_limit_distributional} pertains to a sequential limit of our finite-ring approximations. If one only cares about the existence of a distributional solution with our initial data in the sense of Definition~\ref{def:dist_sol_rel_vor}, without keeping track of the finite-ring approximations, then this can also be obtained from \cite[Theorems~1--2]{NobiliSeis22}.
\end{remark}

\begin{proof}[Proof of Proposition~\ref{prop:finite_ring_limit_distributional}]
Set
\[
\xi_{0,k}:=\frac{\omega_{0,k}}{r},
\qquad
\xi_0^{(m)}:=\sum_{k=1}^m \xi_{0,k},
\qquad
\xi_0^{(\infty)}:=\sum_{k=1}^\infty \xi_{0,k}.
\]
Since the ring supports are pairwise disjoint, the scaling computations from
Section~\ref{sec:ini_data_ansatz} give, for \(p=1,2\),
\[
\|\omega_{0,k}\|_{L^p(\mathbb R^3)}^p
=
\frac{\varepsilon^p}{k^{\alpha p}}
d^{3k}\|\phi_q\|_{L^p(\mathbb R^3)}^p,
\]
and
\[
\|\xi_{0,k}\|_{L^p(\mathbb R^3)}^p
=
\frac{\varepsilon^p}{k^{\alpha p}}
d^{(3-p)k}
\left\|\frac{\phi_q}{r}\right\|_{L^p(\mathbb R^3)}^p.
\]
Here \(\phi_q/r\in L^p(\mathbb R^3)\) because \(\phi_q\) is supported away from the symmetry axis.
Consequently,
\[
\sup_{m\ge1}
\left\|\omega_0^{(m)}\right\|_{L^1\cap L^2(\mathbb R^3)}
<\infty,
\qquad
\xi_0^{(\infty)}\in L^1\cap L^2(\mathbb R^3).
\]
Moreover,
\[
\left\|\xi_0^{(\infty)}-\xi_0^{(m)}\right\|_{L^1(\mathbb R^3)}
\le
C\varepsilon
\sum_{k=m+1}^\infty
\frac{d^{2k}}{k^\alpha}
\to 0 \quad \text{as}\quad m\to\infty.
\]
Finally, by the Lorentz estimate already proved in Section~\ref{sec:ini_data_ansatz},
\[
\sup_{m\ge1}
\left\|\xi_0^{(m)}\right\|_{L^{3,q}(\mathbb R^3)}
<\infty.
\]
Thus
\begin{equation}
\label{eq:uniform_est_finite_ring_vor_rel}
\sup_{m\ge 1}\left\|\omega_0^{(m)}\right\|_{L^1\cap L^2(\mathbb{R}^3)}<\infty,
\qquad
\sup_{m\ge 1}\left\|\xi_0^{(m)}\right\|_{L^{3,q}(\mathbb{R}^3)}<\infty.
\end{equation}

From the more general form of the Biot--Savart law~\eqref{eq:gen_Biot-Savart},
we obtain
\[
\left|u_0^{(m)}(x)\right|
\lesssim
\int_{\mathbb{R}^3}\frac{\left|\omega_0^{(m)}(y)\right|}{|x-y|^2}\,dy.
\]
Write
\[
K_1(x):=|x|^{-2}\mathbf 1_{\big\{|x|\le 1\big\}},
\qquad
K_2(x):=|x|^{-2}\mathbf 1_{\big\{|x|>1\big\}}.
\]
Then \(K_1\in L^1(\mathbb{R}^3)\) and \(K_2\in L^\infty(\mathbb{R}^3)\cap L^2(\mathbb{R}^3)\).
Young's inequality yields
\[
\left\|u_0^{(m)}\right\|_{L^2}
\lesssim
\|K_1\|_{L^1} \left\|\omega_0^{(m)} \right\|_{L^2}
+\|K_2\|_{L^2}\left\|\omega_0^{(m)}\right\|_{L^1}.
\]
Hence, using \eqref{eq:uniform_est_finite_ring_vor_rel},
\[
\sup_{m\ge 1}\left\|u_0^{(m)}\right\|_{L^2(\mathbb{R}^3)}<\infty.
\]
Since \((u^{(m)},\omega^{(m)})\) is a smooth Euler solution, energy is conserved, so
\begin{equation}
\sup_{m\ge 1}\left\|u^{(m)}\right\|_{L^\infty(0,T;L^2(\mathbb{R}^3))}<\infty.
\label{eq:finite_ring_energy_bd}
\end{equation}

Moreover, \(\xi^{(m)}=\omega^{(m)}/r\) solves
\[
\partial_t\xi^{(m)}+u^{(m)}\cdot\nabla\xi^{(m)}=0
\]
and is transported by the smooth volume-preserving flow associated with \(u^{(m)}\).
Hence its Lorentz norm is conserved:
\[
\left\|\xi^{(m)}(t)\right\|_{L^{3,q}(\mathbb{R}^3)}
=
\left\|\xi_0^{(m)}\right\|_{L^{3,q}(\mathbb{R}^3)}
\le C
\qquad (0\le t\le T),
\]
with \(C\) independent of \(m\).

Fix \(R>0\). Since \(B_{2R}\) has finite measure and \(L^{3,q}(B_{2R})\hookrightarrow
L^2(B_{2R})\), it follows that
\begin{equation}
\label{eq:finite_ring_loc_rel_vor}
\sup_{m\ge 1}\sup_{0\le t\le T}
\left\|\xi^{(m)}(t)\right\|_{L^2(B_{2R})}
\le C(R).
\end{equation}
Because \(\omega^{(m)}=r\xi^{(m)}\), we also have
\begin{equation}
\sup_{m\ge 1}\sup_{0\le t\le T}
\left\|\omega^{(m)}(t)\right\|_{L^2(B_{2R})}
\le C(R).
\label{eq:finite_ring_local_vor_bd}
\end{equation}

Choose \(\chi_R\in C_c^\infty(B_{2R})\) such that \(\chi_R\equiv 1\) on \(B_R\).
The standard div--curl estimate on \(\mathbb{R}^3\) gives
\begin{multline*}
\left\|\chi_R u^{(m)}(t)\right\|_{H^1(\mathbb{R}^3)}
\\
\leq C\left( 
\left\|\chi_R u^{(m)}(t)\right\|_{L^2(\mathbb{R}^3)}
+\left\|\operatorname{div}\left(\chi_R u^{(m)}(t)\right)\right\|_{L^2(\mathbb{R}^3)}
+\left\|\operatorname{curl}\left(\chi_R u^{(m)}(t)\right)\right\|_{L^2(\mathbb{R}^3)}
\right).
\end{multline*}
Since
\[
\operatorname{div}\left(\chi_R u^{(m)}\right)=\nabla\chi_R\cdot u^{(m)},
\qquad
\operatorname{curl}\left(\chi_R u^{(m)}\right)
=\chi_R\,\omega^{(m)}e_\theta+\nabla\chi_R\times u^{(m)},
\]
the bounds \eqref{eq:finite_ring_energy_bd} and
\eqref{eq:finite_ring_local_vor_bd} imply
\begin{equation}
\sup_{m\ge 1}\left\|u^{(m)}\right\|_{L^\infty\left(0,T;H^1(B_R)\right)}\le C(R).
\label{eq:finite_ring_local_H1_bd}
\end{equation}

Next, since \(u^{(m)}\) is a smooth Euler solution, applying the Leray projector \(\mathbb P\)
gives
\[
\partial_t u^{(m)}=-\mathbb P\operatorname{div}(u^{(m)}\otimes u^{(m)}).
\]
For every \(\Phi\in H^3(\mathbb{R}^3)\),
\begin{align*}
\left|\left\langle \operatorname{div}\left(u^{(m)}\otimes u^{(m)}\right),\Phi\right\rangle\right|
=
\left|\int_{\mathbb{R}^3}\left(u^{(m)}\otimes u^{(m)}\right):\nabla\Phi\,dx\right|
&\le
\left\|u^{(m)}\right\|_{L^2}^2\|\nabla\Phi\|_{L^\infty}
\\
&\leq C
\left\|u^{(m)}\right\|_{L^2}^2\|\Phi\|_{H^3}.
\end{align*}
Hence
\[
\sup_{m\ge 1}
\left\|\partial_t u^{(m)}\right\|_{L^\infty(0,T;H^{-3}(\mathbb{R}^3))}\le C.
\]
Since multiplication by \(\chi_R\) is bounded on \(H^{-3}(\mathbb{R}^3)\),
\[
\sup_{m\ge 1}
\left\|\partial_t\left(\chi_R u^{(m)}\right)\right\|_{L^\infty(0,T;H^{-3}(\mathbb{R}^3))}\le C(R).
\]

Hence, by the Aubin--Lions lemma,
\(\{\chi_R u^{(m)}\}_{m\ge 1}\) is relatively compact in
\(L^2((0,T);L^2(\mathbb{R}^3))\).
Combining this with
\eqref{eq:finite_ring_loc_rel_vor},
\eqref{eq:finite_ring_energy_bd}, \eqref{eq:finite_ring_local_H1_bd},
and Banach--Alaoglu, and then using a diagonal argument over \(R\in\mathbb{N}\), we
obtain a subsequence, still denoted by \(m\), and functions
\[
u^{(\infty)}\in L^\infty\big(0,T;L^2(\mathbb{R}^3)\big)
\cap L^\infty\big(0,T;H^1_{\mathrm{loc}}(\mathbb{R}^3)\big),
\qquad
\xi^{(\infty)}\in L^\infty\big(0,T;L^2_{\mathrm{loc}}(\mathbb{R}^3)\big),
\]
such that
\[
u^{(m)}\overset{*}{\rightharpoonup} u^{(\infty)}
\quad\text{in }L^\infty\big(0,T;L^2(\mathbb{R}^3)\big),
\]
\[
u^{(m)}\to u^{(\infty)}
\quad\text{strongly in }L^2\big((0,T)\times B_R\big)
\quad\text{for every }R>0,
\]
and
\[
\xi^{(m)}\overset{*}{\rightharpoonup} \xi^{(\infty)}
\quad\text{in }L^\infty\big(0,T;L^2(B_R)\big)
\quad\text{for every }R>0.
\]
Since the class of axisymmetric no-swirl vector fields is closed under local
\(L^2\)-convergence, \(u^{(\infty)}\) is still axisymmetric without swirl. 
Similarly, since each \(\xi^{(m)}\) is axisymmetric and the class of axisymmetric
scalar functions is closed under weak convergence in \(L^2_{\mathrm{loc}}\),
the limit \(\xi^{(\infty)}\) is axisymmetric.
Set \(\omega^{(\infty)}:=r\xi^{(\infty)}\).
Then, for every \(R>0\),
\[
\omega^{(m)}=r\xi^{(m)}\overset{*}{\rightharpoonup}\omega^{(\infty)}
\quad\text{in }L^\infty\big(0,T;L^2(B_R)\big).
\]

We now pass to the limit in the transport identity. For each \(m\), smoothness of
\((u^{(m)},\omega^{(m)})\) gives
\[
\int_0^T\int_{\mathbb{R}^3}\xi^{(m)}(x,t)
\bigl(\partial_t\psi(x,t)+u^{(m)}(x,t)\cdot\nabla\psi(x,t)\bigr)\,dx\,dt
+\int_{\mathbb{R}^3}\xi_0^{(m)}(x)\psi(x,0)\,dx=0
\]
for every \(\psi\in C_c^\infty\big(\mathbb{R}^3\times[0,T)\big)\). Fix such a \(\psi\), and choose
\(R>0\) so that \(\operatorname{supp}\psi\subset B_R\times[0,T)\). The terms involving
\(\partial_t\psi\) and the initial data pass to the limit by the weak-\(*\) convergence
of \(\xi^{(m)}\) in \(L^\infty\big(0,T;L^2(B_R)\big)\) and the strong convergence
\(\xi_0^{(m)}\to\xi_0^{(\infty)}\) in \(L^1(\mathbb{R}^3)\). For the nonlinear term, write
\[
\int_0^T\int_{B_R}\xi^{(m)}u^{(m)}\cdot\nabla\psi
-
\int_0^T\int_{B_R}\xi^{(\infty)} u^{(\infty)}\cdot\nabla\psi
=
I_m+J_m,
\]
where
\[
I_m:=\int_0^T\int_{B_R}\xi^{(m)}(u^{(m)}-u^{(\infty)})\cdot\nabla\psi,
\qquad
J_m:=\int_0^T\int_{B_R}(\xi^{(m)}-\xi^{(\infty)})u^{(\infty)}\cdot\nabla\psi.
\]
Since \(\xi^{(m)}\) is uniformly bounded in \(L^2\big((0,T)\times B_R\big)\) and
\(u^{(m)}\to u\) strongly in \(L^2\big((0,T)\times B_R\big)\), we have \(I_m\to 0\). Also
\(u\cdot\nabla\psi\in L^1\big(0,T;L^2(B_R)\big)\), so the weak-\(*\) convergence of
\(\xi^{(m)}\) implies \(J_m\to 0\). Hence
\[
\int_0^T\int_{\mathbb{R}^3}\xi^{(\infty)}(x,t)
\bigl(\partial_t\psi(x,t)+u^{(\infty)}(x,t)\cdot\nabla\psi(x,t)\bigr)\,dx\,dt
+\int_{\mathbb{R}^3}\xi_0^{(\infty)}(x)\psi(x,0)\,dx=0.
\]
Equivalently,
\[
\int_0^T\int_{\mathbb{R}^3}\frac{\omega^{(\infty)}(x,t)}{r}
\bigl(\partial_t\psi(x,t)+u^{(\infty)}(x,t)\cdot\nabla\psi(x,t)\bigr)\,dx\,dt
+\int_{\mathbb{R}^3}\frac{\omega_0^{(\infty)}(x)}{r}\psi(x,0)\,dx=0.
\]

It remains to verify the curl and divergence conditions. For each \(m\),
\[
\operatorname{div}u^{(m)}=0,
\qquad
\operatorname{curl}u^{(m)}=\omega^{(m)}e_\theta
=
\left(-x_2\xi^{(m)},x_1\xi^{(m)},0\right)
\]
in distributions. Passing to the limit using the strong local \(L^2\) convergence of
\(u^{(m)}\) and the weak-\(*\) convergence of \(\xi^{(m)}\) on every ball gives
\[
\operatorname{div}u^{(\infty)}=0,
\qquad
\operatorname{curl}u^{(\infty)}=\omega^{(\infty)} e_\theta
\]
in \(\mathcal D'\big((0,T)\times\mathbb{R}^3\big)\).

Finally, for every \(T'<T\) and \(R>0\), the bounds
\[
u^{(\infty)}\in L^\infty\big(0,T;L^2(\mathbb R^3)\big),
\qquad
\xi^{(\infty)}\in L^\infty\big(0,T;L^2(B_R)\big)
\]
imply
\[
\xi^{(\infty)}\in L^1\big((0,T')\times B_R\big),
\qquad
u^{(\infty)}\xi^{(\infty)}\in L^1\big((0,T')\times B_R\big).
\]
Since the time slice \(\{0\}\times\mathbb R^3\) has measure zero, this gives
\[
\xi^{(\infty)}\in L^1_{\mathrm{loc}}\big([0,T)\times\mathbb R^3\big),
\qquad
u^{(\infty)}\xi^{(\infty)}\in L^1_{\mathrm{loc}}\big([0,T)\times\mathbb R^3\big).
\]
Thus all conditions in Definition~\ref{def:dist_sol_rel_vor} are satisfied, and
\((u^{(\infty)},\omega^{(\infty)})\) is a distributional solution of \eqref{eq:IVP_rel_vor} on \((0,T)\)
with initial data \(\omega_0^{(\infty)}\).
\end{proof}

\begin{corollary}[Diagonal extraction: a single global distributional solution]
\label{cor:global_distributional_limit}
There exist a subsequence \(m_j\to\infty\) and axisymmetric functions
\[
u^{(\infty)}\in L^\infty_{\mathrm{loc}}\bigl([0,\infty);L^2(\mathbb{R}^3)\bigr)
\cap L^\infty_{\mathrm{loc}}\bigl([0,\infty);H^1_{\mathrm{loc}}(\mathbb{R}^3)\bigr),
\qquad
\xi^{(\infty)}\in L^\infty_{\mathrm{loc}}\bigl([0,\infty);L^2_{\mathrm{loc}}(\mathbb{R}^3)\bigr),
\]
such that, writing \(\omega^{(\infty)}:=r\xi^{(\infty)}\), the following holds: for every \(T>0\), as $j\to\infty,$
\begin{align*}
u^{(m_j)} &\overset{*}{\rightharpoonup} u^{(\infty)}
\quad\text{in }L^\infty\bigl((0,T);L^2(\mathbb{R}^3)\bigr),
\\
u^{(m_j)} &\to u^{(\infty)}
\quad\text{strongly in }L^2\bigl((0,T)\times B_R\bigr)
\quad\text{for every }R>0,
\\
\xi^{(m_j)} &\overset{*}{\rightharpoonup} \xi^{(\infty)}
\quad\text{in }L^\infty\bigl((0,T);L^2(B_R)\bigr)
\quad\text{for every }R>0,
\end{align*}
and \((u^{(\infty)},\omega^{(\infty)})\) is an (axisymmetric no-swirl) distributional solution of
\eqref{eq:IVP_rel_vor} on \((0,\infty)\) with initial data \(\omega_0^{(\infty)}\) in the sense of
Definition~\ref{def:dist_sol_rel_vor}.
\end{corollary}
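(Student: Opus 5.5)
The plan is a diagonal extraction over an increasing sequence of time horizons $T_n:=n\in\mathbb N$, using Proposition~\ref{prop:finite_ring_limit_distributional} on each $(0,T_n)$ with nested subsequences. The key point is that all uniform bounds in that proof are global in time: the energy bound \eqref{eq:finite_ring_energy_bd} holds on $[0,\infty)$ by energy conservation, and the local $L^2$ bound \eqref{eq:finite_ring_loc_rel_vor} on $\xi^{(m)}$ comes from conservation of the $L^{3,q}$ norm. Hence the constants are independent of $m$ and of the horizon, except possibly for the time-derivative bound in $H^{-3}$. That bound is uniform on each bounded interval, which is all that Aubin--Lions needs.

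\textbf{Step 1 (nested subsequences).} First, apply Proposition~\ref{prop:finite_ring_limit_distributional} with $T=1$ to get a subsequence $\{m^{(1)}_j\}$ and a limit $(u_1,\xi_1)$ on $(0,1)$. Then, inductively, apply the same proof with $T=n$, starting from the subsequence $\{m^{(n-1)}_j\}$ rather than the full sequence. This is legitimate because the proof only uses bounds uniform in $m$, so it works equally well along any subsequence. This gives $\{m^{(n)}_j\}\subset\{m^{(n-1)}_j\}$ and limits $(u_n,\xi_n)$ on $(0,n)$.

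\textbf{Step 2 (consistency).} Check that the limits agree on overlaps: $u_n=u_{n-1}$ and $\xi_n=\xi_{n-1}$ a.e.\ on $(0,n-1)\times\mathbb R^3$. This follows from uniqueness of weak-$*$ and strong $L^2_{\rm loc}$ limits, since restricting a convergent subsequence to the smaller interval preserves convergence. We then define $u^{(\infty)}$ and $\xi^{(\infty)}$ on $(0,\infty)$ by $u^{(\infty)}:=u_n$ and $\xi^{(\infty)}:=\xi_n$ on $(0,n)$.

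\textbf{Step 3 (diagonal subsequence).} Take $m_j:=m^{(j)}_j$. For each fixed $T$, choose $n\ge T$. The tail $\{m_j\}_{j\ge n}$ is a subsequence of $\{m^{(n)}_j\}$, so all three convergences hold on $(0,T)$. Metrizability is not needed, because these are ordinary sequential convergences along a subsequence. The regularity classes $L^\infty_{\rm loc}([0,\infty);\cdot)$ follow from the per-interval bounds.

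\textbf{Step 4 (weak formulation on $(0,\infty)$).} Any test function $\psi\in C_c^\infty(\mathbb R^3\times[0,\infty))$ is supported in $[0,T)$ for some finite $T$. The identity \eqref{eq:transport_distributional} on $(0,T)$ then follows from Proposition~\ref{prop:finite_ring_limit_distributional}, and so do the curl/div conditions and the local integrability requirements. Together these give a distributional solution on $(0,\infty)$.

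The only delicate point is Step 2. One must make sure the weak-$*$ limit in $L^\infty(0,T;L^2)$ restricts correctly to subintervals. This does hold, since testing against functions supported in $(0,T')$ is permitted.
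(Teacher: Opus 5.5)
Your proposal is correct and follows essentially the same route as the paper: nested subsequences obtained by applying Proposition~\ref{prop:finite_ring_limit_distributional} (to the relabeled previous subsequence) on $(0,N)$, compatibility of the limits on overlaps via uniqueness of strong and weak-$*$ limits, the diagonal subsequence $m_j:=m_j^{(j)}$, and reduction of the weak formulation to finite horizons. Your Step~4 spells out the passage to $(0,\infty)$, using that every $\psi\in C_c^\infty(\mathbb R^3\times[0,\infty))$ is supported in some $[0,T)$; the paper simply restricts to arbitrary finite $T$, so the difference is only in presentation.
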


\begin{proof}
For \(N=1\), apply Proposition~\ref{prop:finite_ring_limit_distributional} with \(T=1\).
This gives a subsequence \(\{m_j^{(1)}\}_{j\ge1}\) and a limit
\((u^{[1]},\xi^{[1]})\) on \((0,1)\).

Assume by induction that, for some \(N\ge2\), we have already chosen a subsequence
\(\{m_j^{(N-1)}\}_{j\ge1}\) of positive integers such that the corresponding finite-ring
solutions converge on \((0,N-1)\). Relabel this subsequence as a sequence and apply
Proposition~\ref{prop:finite_ring_limit_distributional} with \(T=N\). We obtain a further
subsequence \(\{m_j^{(N)}\}_{j\ge1}\subset\{m_j^{(N-1)}\}_{j\ge1}\) and a limit
\((u^{[N]},\xi^{[N]})\) on \((0,N)\).

Thus we have constructed nested subsequences
\[
\{m_j^{(1)}\}\supset \{m_j^{(2)}\}\supset \cdots .
\]
Define the diagonal subsequence
\[
m_j:=m_j^{(j)}.
\]
Fix \(N\in\mathbb N\). Then for every \(j\ge N\), the index \(m_j\) belongs to
\(\{m_\ell^{(N)}\}_{\ell\ge1}\), because the subsequences are nested. Therefore
\(\{m_j\}_{j\ge N}\) is a subsequence of \(\{m_\ell^{(N)}\}_{\ell\ge1}\), and so the diagonal
subsequence converges on \((0,N)\) to \((u^{[N]},\xi^{[N]})\) with the same properties as in
Proposition~\ref{prop:finite_ring_limit_distributional}.

We now check compatibility on overlaps. Since
\(\{m_j^{(N+1)}\}_{j\ge1}\subset\{m_j^{(N)}\}_{j\ge1}\), the sequence
\(\{u^{(m_j^{(N+1)})}\}_{j\ge1}\) converges on \((0,N)\) both to \(u^{[N]}\) and to
\(u^{[N+1]}\!\restriction_{(0,N)}\) in the strong topology
\(L^2((0,N)\times B_R)\) for every \(R>0\). By uniqueness of strong limits,
\[
u^{[N+1]}=u^{[N]}
\qquad\text{a.e. on }(0,N)\times B_R
\]
for every \(R>0\), hence a.e. on \((0,N)\times\mathbb R^3\). Likewise,
\(\xi^{[N+1]}=\xi^{[N]}\) a.e. on \((0,N)\times B_R\) for every \(R>0\), by uniqueness
of the weak-\(*\) limit in \(L^\infty((0,N);L^2(B_R))\).

Therefore the local limits patch together. Define
\[
u^{(\infty)}:=u^{[N]},\qquad \xi^{(\infty)}:=\xi^{[N]}
\qquad\text{on }(0,N)\times\mathbb R^3.
\]
This is well-defined by the compatibility just proved, and the stated local-in-time bounds
follow from the bounds on each finite interval \((0,N)\). Finally, for any \(T>0\), choose
\(N\in\mathbb N\) with \(N>T\). Since \((u^{[N]},r\xi^{[N]})\) is a distributional solution on
\((0,N)\), its restriction to \((0,T)\) gives the desired statement.
\end{proof}

\begin{proposition}[Bounded infinite-ring limits are Yudovich-type]
\label{prop:bounded_limit_upgrade}
Let \((u^{(\infty)},\omega^{(\infty)})\) be the global distributional solution furnished by Corollary~\ref{cor:global_distributional_limit}.
 Set
$
\xi^{(\infty)}=\omega^{(\infty)}/r.$
Assume in addition that for some \(T\in(0,\infty)\),
\[
\omega^{(\infty)}\in L^\infty\bigl((0,T)\times \mathbb R^3\bigr).
\]
Then
\[
\omega^{(\infty)}\in L^\infty\bigl(0,T;L^1\cap L^\infty(\mathbb R^3)\bigr).
\]
In particular, \((u^{(\infty)},\omega^{(\infty)})\) is a Yudovich-type weak solution on \((0,T)\).
\end{proposition}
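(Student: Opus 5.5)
Since the $L^\infty$ bound in space--time is assumed, the only missing piece is a uniform-in-time $L^1$ bound on $\omega^{(\infty)}(t)$. My plan is to get it from Lemma~\ref{lem:flow_cauchy_bd}. That lemma gives the Cauchy formula for $\xi^{(\infty)}$ along a measure-preserving flow, and the $L^1$ norm of $\xi$ is then conserved. The danger is the factor $r$: $\omega=r\xi$, so I also have to control how far mass can move radially. That control comes from the boundedness of $u^{(\infty)}$.

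\emph{Step 1: the hypotheses of Lemma~\ref{lem:flow_cauchy_bd}.} By Corollary~\ref{cor:global_distributional_limit}, $u^{(\infty)}\in L^\infty(0,T;L^2(\mathbb R^3))$. It is divergence-free and axisymmetric without swirl, and $\operatorname{curl}u^{(\infty)}=\omega^{(\infty)}e_\theta$ holds in distributions. The transport identity \eqref{eq:transport_distributional} holds with initial data $\xi_0^{(\infty)}\in L^1$ (shown in the proof of Proposition~\ref{prop:finite_ring_limit_distributional}). Together with the extra assumption $\omega^{(\infty)}\in L^\infty((0,T)\times\mathbb R^3)$, this is exactly what Lemma~\ref{lem:flow_cauchy_bd} requires. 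It therefore supplies:
\begin{itemize}
\item $u^{(\infty)}\in L^\infty((0,T)\times\mathbb R^3)$, say $\|u^{(\infty)}\|_{L^\infty}\le U$;
\item a measure-preserving homeomorphic flow $Y$;
\item the Cauchy formula $\omega^{(\infty)}(Y(a,t),t)=\frac{Y^r(a,t)}{r_a}\,\omega_0^{(\infty)}(a)$ for a.e. $(a,t)$.
\end{itemize}

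\emph{Step 2: the $L^1$ estimate.} For a.e. $t$, change variables $x=Y(a,t)$, which is allowed because $Y_t$ preserves measure:
\[
\|\omega^{(\infty)}(t)\|_{L^1}=\int_{\mathbb R^3}|\omega_0^{(\infty)}(a)|\,\frac{Y^r(a,t)}{r_a}\,da .
\]
Since $|\partial_tY|\le U$, we have $Y^r(a,t)\le r_a+Ut$. Hence
\[
\|\omega^{(\infty)}(t)\|_{L^1}\le\|\omega_0^{(\infty)}\|_{L^1}+UT\,\|\xi_0^{(\infty)}\|_{L^1},
\]
and both norms on the right are finite. Taking the essential supremum over $t\in(0,T)$ and combining with the assumed $L^\infty$ bound gives $\omega^{(\infty)}\in L^\infty(0,T;L^1\cap L^\infty)$.

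\emph{Step 3: the remaining Yudovich-type requirements.} It remains to check that $u^{(\infty)}$ is given by the Biot--Savart law \eqref{eq:axis_BS_law}. The Biot--Savart velocity $\tilde u$ of $\omega^{(\infty)}(t)$, which lies in $L^1\cap L^\infty$, belongs to $L^2$ by the Young-inequality splitting $K_1+K_2$ already used in the proof of Proposition~\ref{prop:finite_ring_limit_distributional}. It has the same curl and divergence as $u^{(\infty)}(t)$. So $u^{(\infty)}(t)-\tilde u$ is an $L^2$ field that is both curl-free and divergence-free, hence zero, exactly as at the end of the proof of Lemma~\ref{lem:weak_to_euler}. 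The integrability conditions of Definition~\ref{def:dist_sol_rel_vor} hold already.

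I expect the main obstacle to be Step 1. The issue is verifying, with no a priori $L^\infty_x$ bound on $u^{(\infty)}$, that Lemma~\ref{lem:flow_cauchy_bd} applies and in particular gives $u^{(\infty)}\in L^\infty$. Its proof uses only $u\in L^\infty_tL^2$ and $\omega\in L^\infty$, through local div--curl estimates, so this should go through. A minor related point is fixing the right time representative of $u$ so that the Lipschitz-in-time bound on $Y$ is valid.
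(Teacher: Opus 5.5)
Your proposal is correct and follows essentially the same route as the paper. You apply Lemma~\ref{lem:flow_cauchy_bd} to obtain the bounded velocity, the measure-preserving flow and the Cauchy formula, bound $\|\omega^{(\infty)}(t)\|_{L^1}=\|r\xi^{(\infty)}(t)\|_{L^1}$ using the bounded radial displacement, and identify $u^{(\infty)}$ with the Biot--Savart velocity by $L^2$ div--curl uniqueness. The only cosmetic difference is in the $L^1$ step: the paper uses the compact support of $\xi_0^{(\infty)}$ to confine $Y_t(\operatorname{supp}\xi_0^{(\infty)})$ to a ball $B_{R_*}$, whereas your bound $Y^r(a,t)\le r_a+Ut$ avoids compact support and uses $\omega_0^{(\infty)},\xi_0^{(\infty)}\in L^1$ instead.
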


\begin{proof}[Proof of Proposition~\ref{prop:bounded_limit_upgrade}.]
By Corollary~\ref{cor:global_distributional_limit}, we already know that
\[
u^{(\infty)}\in L^\infty\bigl(0,T;L^2(\mathbb R^3)\bigr)
\cap L^\infty\bigl(0,T;H^1_{\mathrm{loc}}(\mathbb R^3)\bigr),
\qquad
\xi^{(\infty)}\in L^\infty\bigl(0,T;L^2_{\mathrm{loc}}(\mathbb R^3)\bigr),
\]
and that \((u^{(\infty)},\omega^{(\infty)})\) satisfies the distributional transport identity for \(\xi^{(\infty)}=\omega^{(\infty)}/r\).

Since \(\boldsymbol{\omega}^{(\infty)}:=\omega^{(\infty)} e_\theta\in L^\infty((0,T)\times\mathbb R^3)\) and
\[
\operatorname{curl}u^{(\infty)}=\boldsymbol{\omega}^{(\infty)},
\qquad
\operatorname{div}u^{(\infty)}=0
\]
in distributions, we can apply Lemma~\ref{lem:flow_cauchy_bd} to obtain a unique flow
map \(Y\), together with the Cauchy formula~\eqref{eq:cauchy_omega_common} and the
transported representative~\eqref{eq:cauchy_xi_common}.

Since \(Y_t\) is measure-preserving,
\[
\left\|\xi^{(\infty)}(t)\right\|_{L^1\cap L^2(\mathbb R^3)}
=
\left\|\xi_0^{(\infty)}\right\|_{L^1\cap L^2(\mathbb R^3)}
\qquad\text{for every }t\in[0,T].
\]
Also, by Lemma~\ref{lem:flow_cauchy_bd}, \(u^{(\infty)}\in L^\infty((0,T)\times\mathbb R^3)\), so
trajectories move at uniformly bounded speed. Since \(\xi_0^{(\infty)}\) has compact support, there
exists \(R_*>0\) such that
\[
Y_t\left(\operatorname{supp}\xi_0^{(\infty)}\right)\subset B_{R_*}
\qquad\text{for all }t\in[0,T].
\]
Therefore,
\[
\left\|\omega^{(\infty)}(t)\right\|_{L^1(\mathbb R^3)}
=
\left\|r\,\xi^{(\infty)}(t)\right\|_{L^1(\mathbb R^3)}
\le
R_*\,\left\|\xi^{(\infty)}(t)\right\|_{L^1(\mathbb R^3)}
=
R_*\,\left\|\xi_0^{(\infty)}\right\|_{L^1(\mathbb R^3)}
\]
for every \(t\in[0,T]\). Hence
\[
\omega^{(\infty)}\in L^\infty\bigl(0,T;L^1\cap L^\infty(\mathbb R^3)\bigr).
\]
Since \(\operatorname{div}u^{(\infty)}=0\), \(\operatorname{curl}u^{(\infty)}=\omega^{(\infty)} e_\theta\) in distributions, and
\(u^{(\infty)}\in L^\infty(0,T;L^2(\mathbb R^3))\), the standard Biot--Savart representation identifies
\(u^{(\infty)}\) with the velocity induced by \(\omega^{(\infty)}\). Thus \((u^{(\infty)},\omega^{(\infty)})\) is a Yudovich-type weak
solution on \((0,T)\).
\end{proof}

\begin{remark}
\label{rem:diff_bd_dis_sol}
In Proposition~\ref{prop:bounded_limit_upgrade}, we upgrade the distributional solution furnished by Corollary~\ref{cor:global_distributional_limit} to a Yudovich-type weak solution provided the distributional solution is uniformly bounded. One may not upgrade any uniformly bounded distributional solution to a Yudovich-type weak solution. In that case, the corresponding velocity field $u$ may not be uniformly bounded. Then the associated flow map might escape to infinity in finite time. 
\end{remark}

\begin{proof}[Proof of Theorem~\ref{thm:instant_blow-up}-(2)]
Let $\omega_0^{(\infty) }$ be the same initial data as in the first paragraph of the proof for non-existence (\S~\ref{subsec:non_existence}).
Let \((u^{(\infty)},\omega^{(\infty)})\) be the global distributional solution furnished by
Corollary~\ref{cor:global_distributional_limit}. We claim that
\[
\omega^{(\infty)}\notin L^\infty\bigl((0,T)\times\mathbb R^3\bigr)
\qquad\text{for every }T>0.
\]
Indeed, if this were false, then for some \(T>0\) we would have
\[
\omega^{(\infty)}\in L^\infty\bigl((0,T)\times\mathbb R^3\bigr).
\]
Applying Proposition~\ref{prop:bounded_limit_upgrade} to the restriction of
\((u^{(\infty)},\omega^{(\infty)})\) to \([0,T]\), we would obtain
\[
\omega^{(\infty)}\in L^\infty\bigl(0,T;L^1\cap L^\infty(\mathbb R^3)\bigr),
\]
so that \((u^{(\infty)},\omega^{(\infty)})\) is a Yudovich-type weak solution on \((0,T)\). This contradicts
Theorem~\ref{thm:instant_blow-up}-(1). Therefore the claim is proved, and Theorem~\ref{thm:instant_blow-up}-(2) follows.
\end{proof}

\appendix

\section{Proof of the Danskin-type lemma and the Cauchy formula}
\label{app:two_lemmas}
This appendix is devoted to prove the Danskin-type Lemma~\ref{lem:envelope_radial_extrema} and the Cauchy formula~\eqref{eq:cauchy_xi_common}. 

For the Cauchy formula, we first state a proposition before proving it. 

For both the lemma and the proposition, our proofs are standard, but we could not find a reference that fits our situation. Hence, we prove them here in this appendix for completeness.

\begin{proof}[Proof of Lemma~\ref{lem:envelope_radial_extrema}]
Let
\[
M:=\sup_{x\in K, \ 0<t<T_0}|g(x,t)|.
\]
For every $x\in K$ and every $0\le s\le t\le T_0$,
\[
|Y^r(x,t)-Y^r(x,s)|
\le \int_s^t |g(x,\tau)|\,d\tau
\le M|t-s|.
\]
Hence
\[
|R_K(t)-R_K(s)|
\le \sup_{x\in K}|Y^r(x,t)-Y^r(x,s)|
\le M|t-s|,
\]
and similarly
\[
|r_K(t)-r_K(s)|
\le \sup_{x\in K}|Y^r(x,t)-Y^r(x,s)|
\le M|t-s|.
\]
Thus $R_K$ and $r_K$ are Lipschitz.

Choose a countable dense set $\{x_n\}_{n\ge 1}\subset K$. Let $E_0\subset (0,T_0)$ be a
full-measure set on which both $R_K$ and $r_K$ are differentiable. For each $n\ge 1$,
let $E_n\subset (0,T_0)$ be a full-measure set of right Lebesgue points of the function
$t\mapsto g(x_n,t)$, namely
\[
\lim_{h\downarrow 0}\frac{1}{h}\int_t^{t+h} g(x_n,s)\,ds = g(x_n,t)
\qquad
\text{for every }t\in E_n.
\]
Set
\[
E:=E_0\cap E_g \cap \bigcap_{n=1}^\infty E_n.
\]
Then $E$ still has full measure.

We claim that for every $t\in E$ and every $x\in K$,
\[
\lim_{h\downarrow 0}\frac{1}{h}\int_t^{t+h} g(x,s)\,ds = g(x,t).
\]
Indeed, fix $t\in E$ and $x\in K$, and choose $x_{n_j}\to x$. Then
\[
\begin{aligned}
\left|
\frac{1}{h}\int_t^{t+h} g(x,s)\,ds - g(x,t)
\right|
&\le
\frac{1}{h}\int_t^{t+h}|g(x,s)-g(x_{n_j},s)|\,ds \\
&\quad +
\left|
\frac{1}{h}\int_t^{t+h} g(x_{n_j},s)\,ds - g(x_{n_j},t)
\right| \\
&\quad + |g(x_{n_j},t)-g(x,t)| \\
&\le
2\rho(|x-x_{n_j}|) +
\left|
\frac{1}{h}\int_t^{t+h} g(x_{n_j},s)\,ds - g(x_{n_j},t)
\right|.
\end{aligned}
\]
First let $h\downarrow 0$, using that $t\in E_{n_j}$, and then let $j\to\infty$. This proves
the claim.

Now fix $t\in E$. Since $Y^r(\cdot,t)$ is continuous on the compact set $K$, the sets
$\mathcal M_K(t)$ and $\mathcal A_K(t)$ are nonempty compact. Since $g(\cdot,t)$ is
continuous on $K$, the extrema of $g(\cdot,t)$ over these sets are attained.

We first prove the formula for $R_K'(t)$. Let $x\in \mathcal M_K(t)$. Then for every $h>0$
small enough,
\[
R_K(t+h)\ge Y^r(x,t+h),
\]
hence
\[
\frac{R_K(t+h)-R_K(t)}{h}
\ge
\frac{Y^r(x,t+h)-Y^r(x,t)}{h}
=
\frac{1}{h}\int_t^{t+h} g(x,s)\,ds.
\]
Letting $h\downarrow 0$ and using the claim above, we obtain
\[
\liminf_{h\downarrow 0}\frac{R_K(t+h)-R_K(t)}{h}\ge g(x,t).
\]
Since this holds for every $x\in \mathcal M_K(t)$,
\[
\liminf_{h\downarrow 0}\frac{R_K(t+h)-R_K(t)}{h}
\ge
\max_{x\in \mathcal M_K(t)} g(x,t).
\]

For the reverse inequality, for each $h>0$ choose $x_h\in \mathcal M_K(t+h)$, so that
\[
R_K(t+h)=Y^r(x_h,t+h).
\]
By compactness of $K$, along any sequence $h_j\downarrow 0$ we may pass to a subsequence,
still denoted $h_j$, such that $x_{h_j}\to x_*\in K$. Since
\[
Y^r(x_{h_j},t+h_j)=R_K(t+h_j),
\]
continuity of $Y^r$ and of $R_K$ yields
\[
Y^r(x_*,t)=R_K(t),
\]
so $x_*\in \mathcal M_K(t)$. Moreover,
\begin{align*}
\frac{R_K(t+h_j)-R_K(t)}{h_j}
&=
\frac{Y^r(x_{h_j},t+h_j)-R_K(t)}{h_j}
\\
&\le
\frac{Y^r(x_{h_j},t+h_j)-Y^r(x_{h_j},t)}{h_j}
=
\frac{1}{h_j}\int_t^{t+h_j} g(x_{h_j},s)\,ds.
\end{align*}
Therefore,
\[
\begin{aligned}
\left|
\frac{1}{h_j}\int_t^{t+h_j} g(x_{h_j},s)\,ds - g(x_*,t)
\right|
&\le
\frac{1}{h_j}\int_t^{t+h_j}|g(x_{h_j},s)-g(x_*,s)|\,ds \\
&\quad +
\left|
\frac{1}{h_j}\int_t^{t+h_j} g(x_*,s)\,ds - g(x_*,t)
\right| \\
&\le
\rho(|x_{h_j}-x_*|) +
\left|
\frac{1}{h_j}\int_t^{t+h_j} g(x_*,s)\,ds - g(x_*,t)
\right|.
\end{aligned}
\]
Letting $j\to\infty$ and using the claim again, we obtain
\[
\limsup_{h\downarrow 0}\frac{R_K(t+h)-R_K(t)}{h}
\le g(x_*,t)
\le \max_{x\in \mathcal M_K(t)} g(x,t).
\]
Combining the lower and upper bounds, and using that $R_K$ is differentiable at $t$,
we conclude
\[
R_K'(t)=\max_{x\in \mathcal M_K(t)} g(x,t)
\]
since the right derivative of $R_K$ equals to the derivative.

Finally, apply the already proved formula to the function $-Y^r$. Since
\[
-r_K(t)=\max_{x\in K}(-Y^r(x,t)),
\]
and the corresponding derivative is $-g$, we get
\[
-r_K'(t)=\max_{x\in \mathcal A_K(t)}(-g(x,t))
= - \min_{x\in \mathcal A_K(t)} g(x,t),
\]
that is,
\[
r_K'(t)=\min_{x\in \mathcal A_K(t)} g(x,t).
\]
This completes the proof of Lemma~\ref{lem:envelope_radial_extrema}.
\end{proof}

Now we state the Cauchy formula under assumptions that fit our situation. 
\begin{proposition}[Cauchy formula of the transport equation along a given flow]
\label{prop:transport_by_flow}
Let $T_0>0$. Let
\[
u\in L^\infty\bigl((0,T_0)\times\mathbb R^3\bigr)
\]
be divergence-free, and assume that there exists a flow map
\[
Y:\mathbb R^3\times[0,T_0]\to\mathbb R^3
\]
such that:

\smallskip
\noindent
(i) for every $a\in\mathbb R^3$, the map $t\mapsto Y(a,t)$ is absolutely continuous and
\[
\partial_tY(a,t)=u\big(Y(a,t),t\big)
\qquad\text{for a.e. }t\in(0,T_0),
\qquad
Y(a,0)=a;
\]

\smallskip
\noindent
(ii) for every $t\in[0,T_0]$, the map $Y_t:=Y(\cdot,t)$ is a measure-preserving
homeomorphism of $\mathbb R^3$ onto itself;

\smallskip
\noindent
(iii) there exists $\gamma\in(0,1]$ such that for every compact set $K\subset\mathbb R^3$,
\[
\sup_{t\in[0,T_0]}
\Bigl(
[Y_t]_{C^\gamma(K)}+[Y_t^{-1}]_{C^\gamma(K)}
\Bigr)
<\infty .
\]

Let $\xi_0\in L^1(\mathbb R^3)$, and let
\[
\xi\in L^1_{\mathrm{loc}}\bigl([0,T_0)\times\mathbb R^3\bigr),
\qquad
u\xi\in L^1_{\mathrm{loc}}\bigl([0,T_0)\times\mathbb R^3\bigr)
\]
satisfy
\begin{equation}
\label{eq:transport_distributional_appendix}
\int_0^{T_0}\int_{\mathbb R^3}
\xi(x,t)\bigl(\partial_t\psi(x,t)+u(x,t)\cdot\nabla\psi(x,t)\bigr)\,dx\,dt
+\int_{\mathbb R^3}\xi_0(x)\psi(x,0)\,dx
=0
\end{equation}
for every $\psi\in C_c^\infty(\mathbb R^3\times[0,T_0))$.

Assume in addition that there exists a constant $C_u>0$ such that
\begin{equation}
\label{eq:u_loglip_appendix}
|u(x,t)-u(y,t)|
\le
C_u |x-y|\log\!\left(2+\frac{1}{|x-y|}\right)
\end{equation}
for all $x,y\in\mathbb R^3$ and for a.e. $t\in(0,T_0)$.

Then
\[
\xi(x,t)=\xi_0\bigl(Y_t^{-1}(x)\bigr)
\qquad
\text{for a.e. }(x,t)\in\mathbb R^3\times(0,T_0).
\]
\end{proposition}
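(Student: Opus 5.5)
The plan is a duality argument: push the weak formulation forward through the flow map and reduce it to showing that the pulled-back density is constant in time. Set $\eta(a,t):=\xi(Y_t(a),t)$. Since $Y_t$ is measure preserving, the claim $\xi(x,t)=\xi_0(Y_t^{-1}(x))$ a.e. is equivalent to $\eta(a,t)=\xi_0(a)$ a.e. To establish this, it suffices to prove that for every $\varphi\in C_c^\infty(\mathbb R^3)$ and a.e. $t$ one has $\int\xi(x,t)\varphi(Y_t^{-1}(x))\,dx=\int\xi_0\varphi\,dx$. We will do this by testing \eqref{eq:transport_distributional_appendix} against the Lagrangian test function $\psi(x,t)=\theta(t)\varphi(Y_t^{-1}(x))$, where $\theta\in C_c^\infty([0,T_0))$. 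Formally, $\varphi\circ Y_t^{-1}$ is transported, so $\partial_t\psi+u\cdot\nabla\psi=\theta'(t)\varphi(Y_t^{-1}(x))$. The weak formulation then gives $\int_0^{T_0}\theta'(t)\int\xi\,\varphi\circ Y_t^{-1}\,dx\,dt+\theta(0)\int\xi_0\varphi=0$. This says that $t\mapsto\int\xi\,\varphi\circ Y_t^{-1}$ is a.e. equal to the constant $\int\xi_0\varphi$. Letting $\varphi$ range over a countable dense family yields the conclusion.

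The difficulty is that $\psi$ is only H\"older in $x$ by (iii), not smooth, so it is not an admissible test function. I would therefore mollify in space. Let $\psi_\epsilon:=\theta(t)\,(\varphi\circ Y_t^{-1})*\rho_\epsilon$. By (i)–(iii), $\psi_\epsilon$ is smooth in $x$ and Lipschitz in $t$ with compact support. The support is compact uniformly in $t$ because $u$ is bounded, so trajectories stay in a fixed ball. By a standard density argument, such $\psi_\epsilon$ is admissible in \eqref{eq:transport_distributional_appendix}, since $\xi,u\xi\in L^1_{\rm loc}$. Writing $f(x,t):=\varphi(Y_t^{-1}(x))$, which solves $\partial_tf+u\cdot\nabla f=0$ in distributions, we get
\[
\partial_t(f*\rho_\epsilon)+u\cdot\nabla(f*\rho_\epsilon)=[u\cdot\nabla,\rho_\epsilon*]f=:r_\epsilon,
\]
a DiPerna--Lions commutator. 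We need $\int\!\!\int\xi\,\theta\,r_\epsilon\to0$.

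This commutator estimate is the main obstacle, because $\xi$ is only $L^1_{\rm loc}$ and $f$ is only H\"older. I would use \eqref{eq:u_loglip_appendix} together with $\operatorname{div}u=0$ to write
\[
r_\epsilon(x)=\int\bigl(u(x)-u(y)\bigr)\cdot\nabla\rho_\epsilon(x-y)\,\bigl(f(y)-f(x)\bigr)\,dy .
\]
This gives $|r_\epsilon|\lesssim\epsilon\log(1/\epsilon)\cdot\epsilon^{-1}\cdot\omega_f(\epsilon)$, where $\omega_f$ is the modulus of continuity of $f$. By (iii), $\omega_f(\epsilon)\lesssim\epsilon^\gamma$ uniformly in $t$ on the compact support, so $\|r_\epsilon\|_{L^\infty}\lesssim\epsilon^\gamma\log(1/\epsilon)\to0$. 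Dominated convergence against $\xi\in L^1_{\rm loc}$ then gives $\int\!\!\int\xi\,\theta\,r_\epsilon\to0$.

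It remains to pass to the limit in the other terms. We have $\psi_\epsilon\to\theta f$ uniformly, and $\theta'(f*\rho_\epsilon)\to\theta'f$ uniformly. The initial term converges since $\xi_0\in L^1$. Finally, a change of variables through the measure-preserving $Y_t$ converts the resulting identity into the claimed a.e. equality $\eta(\cdot,t)=\xi_0$.
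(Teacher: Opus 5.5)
Your proposal is correct and is essentially the paper's argument. Both proofs test the weak formulation against a function built from the flow and mollify it in space. Both rewrite the DiPerna--Lions commutator using $\operatorname{div}u=0$ and bound it by $\epsilon^{\gamma}\log(2+1/\epsilon)$ from the log-Lipschitz bound and the H\"older continuity of the flow. Both then make the mollified test function admissible by a density argument.

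The only difference is the choice of test function. You use the forward-transported $\theta(t)\varphi(Y_t^{-1}(x))$. This needs only H\"older continuity of $Y_t^{-1}$, not of $Y_\tau\circ Y_t^{-1}$. It does require the extra steps of showing $t\mapsto\int\xi\,\varphi\circ Y_t^{-1}$ is a.e.\ constant and then a countable-density/Fubini identification. The paper instead uses the backward Duhamel solution $\Phi(x,t)=\int_t^{T_0}\chi(\tau)\eta(Y_\tau(Y_t^{-1}(x)))\,d\tau$ of $\partial_t\Phi+u\cdot\nabla\Phi=-\chi\eta$, which yields $\xi=\xi_0\circ Y_t^{-1}$ directly in $\mathcal D'$.
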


\begin{proof}
Define
\[
\widetilde{\xi}(x,t):=\xi_0\bigl(Y_t^{-1}(x)\bigr).
\]
Since each $Y_t$ is measure-preserving,
\[
\|\widetilde{\xi}(t)\|_{L^1(\mathbb R^3)}=\|\xi_0\|_{L^1(\mathbb R^3)}
\qquad\text{for every }t\in[0,T_0],
\]
hence
\[
\widetilde{\xi}\in L^\infty\bigl(0,T_0;L^1(\mathbb R^3)\bigr)
\subset L^1_{\mathrm{loc}}\bigl((0,T_0)\times\mathbb R^3\bigr).
\]

Fix $\eta\in C_c^\infty(\mathbb R^3)$ and $\chi\in C_c^\infty(0,T_0)$. Set
\[
K:=\operatorname{supp}\eta,
\qquad
M:=\|u\|_{L^\infty((0,T_0)\times\mathbb R^3)}.
\]
Define
\[
K_*:=\{x\in\mathbb R^3:\operatorname{dist}(x,K)\le MT_0\},
\qquad
K_{**}:=\{x\in\mathbb R^3:\operatorname{dist}(x,K)\le 2MT_0\}.
\]
For $(x,t)\in\mathbb R^3\times[0,T_0]$, define
\[
\Phi(x,t):=
\int_t^{T_0}\chi(\tau)\,\eta\Bigl(Y_\tau \big(Y_t^{-1}(x)\big)\Bigr)\,d\tau.
\]

We first record some elementary properties of $\Phi$.

\smallskip
\noindent\emph{Step 1: support and H\"older regularity of $\Phi$.}
If $\Phi(x,t)\neq 0$, then there exists $\tau\in \operatorname{supp}\chi$ such that
$Y_\tau(Y_t^{-1}(x))\in K$. Writing $a:=Y_t^{-1}(x)$ and using the flow equation,
\[
|x-Y_\tau(a)|=|Y_t(a)-Y_\tau(a)|
\le M|t-\tau|
\le MT_0,
\]
hence $x\in K_*$. Therefore
\[
\operatorname{supp}\Phi(\cdot,t)\subset K_*
\qquad\text{for every }t\in[0,T_0].
\]

Next, if $x\in K_*$ and $a=Y_t^{-1}(x)$, then
\[
\operatorname{dist}(a,K)\le \operatorname{dist}(a,x)+\operatorname{dist}(x,K)\le 2MT_0,
\]
so
\[
Y_t^{-1}(K_*)\subset K_{**}
\qquad\text{for every }t\in[0,T_0].
\]
By assumption (iii), there exists $\beta:=\gamma^2\in(0,1]$ and a constant $C>0$
depending only on $K_*,K_{**}$ such that for all $t,\tau\in[0,T_0]$,
\[
\left|Y_\tau\big(Y_t^{-1}(x_1)\big)-Y_\tau\big(Y_t^{-1}(x_2)\big)
\right|
\le C |x_1-x_2|^\beta,
\qquad x_1,x_2\in K_*.
\]
Hence
\[
\bigl[\eta\circ Y_\tau\circ Y_t^{-1}\bigr]_{C^\beta(K_*)}
\le C\|\nabla\eta\|_{L^\infty},
\]
uniformly in $t,\tau\in[0,T_0]$. It follows that
\begin{equation}
\label{eq:Phi_holder_appendix}
\sup_{t\in[0,T_0]}[\Phi(\cdot,t)]_{C^\beta(K_*)}
\le C\|\chi\|_{L^1(0,T_0)}\|\nabla\eta\|_{L^\infty}.
\end{equation}

Since $\supp \Phi(\cdot,t)\subset K_*$ and $K_*$ is closed, we have
$\Phi(\cdot,t)=0$ on $\mathbb R^3\setminus K_*$. By continuity, it also follows that
$\Phi(\cdot,t)=0$ on $\partial K_*$. Consequently,
\[
[\Phi(\cdot,t)]_{C^\beta(\mathbb R^3)}
\le
[\Phi(\cdot,t)]_{C^\beta(K_*)}.
\]
Indeed, if $x,y\in K_*$ or $x,y\notin K_*$, the claim is immediate. If $x\in K_*$ and
$y\notin K_*$, choose $z\in [x,y]\cap\partial K_*$
 where $[x,y]$ denotes the straight line segment connecting $x$ to $y$.
Then $\Phi(z,t)=\Phi(y,t)=0$, so
\[
|\Phi(x,t)-\Phi(y,t)|
=
|\Phi(x,t)-\Phi(z,t)|
\le
[\Phi(\cdot,t)]_{C^\beta(K_*)}\,|x-z|^\beta
\le
[\Phi(\cdot,t)]_{C^\beta(K_*)}\,|x-y|^\beta.
\]
The case $x\notin K_*$ and $y\in K_*$ is symmetric. Combining this with
\eqref{eq:Phi_holder_appendix}, we obtain
\begin{equation}
\label{eq:Phi_holder_global_appendix}
\sup_{t\in[0,T_0]}[\Phi(\cdot,t)]_{C^\beta(\mathbb R^3)}
\le
C\|\chi\|_{L^1(0,T_0)}\|\nabla\eta\|_{L^\infty}.
\end{equation}

\smallskip
\noindent\emph{Step 2: $\Phi$ solves a backward transport equation.}
Let $\varphi\in C_c^\infty(\mathbb R^3\times(0,T_0))$. By the change of variables
$x=Y_t(a)$ and the measure-preserving property of $Y_t$,
\begin{multline*}
\int_0^{T_0}\int_{\mathbb R^3}
\Phi(x,t)\bigl(\partial_t\varphi(x,t)+u(x,t)\cdot\nabla\varphi(x,t)\bigr)\,dx\,dt 
\\
=
\int_{\mathbb R^3}\int_0^{T_0}
\left(\int_t^{T_0}\chi(\tau)\eta(Y_\tau(a))\,d\tau\right)
\frac{d}{dt}\varphi(Y_t(a),t)\,dt\,da .
\end{multline*}
Set
\[
G_a(t):=\int_t^{T_0}\chi(\tau)\eta(Y_\tau(a))\,d\tau.
\]
Since $G_a$ is absolutely continuous and
\[
G_a'(t)=-\chi(t)\eta(Y_t(a))
\qquad\text{for a.e. }t,
\]
integration by parts in $t$ gives
\begin{align*}
\int_0^{T_0}\int_{\mathbb R^3}
\Phi(x,t)\bigl(\partial_t\varphi(x,t)+u(x,t)\cdot\nabla\varphi(x,t)\bigr)\,dx\,dt 
&=
-\int_{\mathbb R^3}\int_0^{T_0}G_a'(t)\varphi(Y_t(a),t)\,dt\,da \\
&=
\int_{\mathbb R^3}\int_0^{T_0}\chi(t)\eta(Y_t(a))\varphi\big(Y_t(a),t\big)\,dt\,da \\
&=
\int_0^{T_0}\int_{\mathbb R^3}\chi(t)\eta(x)\varphi(x,t)\,dx\,dt .
\end{align*}
Therefore, in distributions on $(0,T_0)\times\mathbb R^3$,
\begin{equation}
\label{eq:Phi_backward_appendix}
\partial_t\Phi+u\cdot\nabla\Phi=-\chi(t)\eta(x).
\end{equation}

\smallskip
\noindent\emph{Step 3: spatial mollification and commutator.}
Let $\rho\in C_c^\infty(\mathbb R^3)$ be a standard nonnegative mollifier supported in
$B_1(0)$ with $\int\rho=1$, and set
\[
\rho_\varepsilon(x):=\varepsilon^{-3}\rho(x/\varepsilon),
\qquad
\Phi_\varepsilon:=\rho_\varepsilon * \Phi,
\qquad
\eta_\varepsilon:=\rho_\varepsilon * \eta.
\]
Then $\Phi_\varepsilon$ is smooth in $x$, compactly supported in $K_*+B_\varepsilon$, and
from \eqref{eq:Phi_backward_appendix},
\[
\partial_t\Phi_\varepsilon+\rho_\varepsilon*(u\cdot\nabla\Phi)
=
-\chi(t)\eta_\varepsilon .
\]
Hence
\begin{equation}
\label{eq:Phi_eps_eq_appendix}
\partial_t\Phi_\varepsilon+u\cdot\nabla\Phi_\varepsilon
=
-\chi(t)\eta_\varepsilon+R_\varepsilon,
\end{equation}
where
\[
R_\varepsilon
:=
u\cdot\nabla\Phi_\varepsilon-\rho_\varepsilon*(u\cdot\nabla\Phi).
\]
Using $\operatorname{div}u=0$, we may rewrite $R_\varepsilon$ as
\[
R_\varepsilon(x,t)
=
\int_{\mathbb R^3}
\nabla\rho_\varepsilon(y)\cdot\bigl(u(x,t)-u(x-y,t)\bigr)
\bigl(\Phi(x-y,t)-\Phi(x,t)\bigr)\,dy .
\]
Indeed, the term containing $\Phi(x,t)$ alone vanishes because
\[
\int_{\mathbb R^3}\nabla\rho_\varepsilon(y)\cdot\bigl(u(x,t)-u(x-y,t)\bigr)\,dy
=
-\int_{\mathbb R^3}\rho_\varepsilon(y)\,\operatorname{div}u(x-y,t)\,dy
=0.
\]

If $x\notin K_*+\overline B_\varepsilon$, then $\operatorname{dist}(x,K_*)>\varepsilon$. Since
$\supp \nabla\rho_\varepsilon\subset B_\varepsilon(0)$, for every
$y\in\supp \nabla\rho_\varepsilon$ we have
\[
\operatorname{dist}(x-y,K_*)\ge \operatorname{dist}(x,K_*)-|y|>0.
\]
Hence $\Phi(x,t)=\Phi(x-y,t)=0$, and therefore $R_\varepsilon(x,t)=0$. Thus
\begin{equation}
\label{eq:R_eps_support_appendix}
\supp R_\varepsilon(\cdot,t)\subset K_*+\overline B_\varepsilon
\qquad\text{for every }t\in[0,T_0].
\end{equation}

Using \eqref{eq:u_loglip_appendix} and \eqref{eq:Phi_holder_global_appendix}, we obtain
for all $(x,t)\in\mathbb R^3\times(0,T_0)$
\begin{align*}
|R_\varepsilon(x,t)|
&\le
C[\Phi(\cdot,t)]_{C^\beta(\mathbb R^3)}
\int_{\mathbb R^3}
|\nabla\rho_\varepsilon(y)|\,|y|^\beta\,|u(x,t)-u(x-y,t)|\,dy \\
&\le
C\int_{|y|<\varepsilon}
|\nabla\rho_\varepsilon(y)|\,|y|^{1+\beta}
\log\!\left(2+\frac1{|y|}\right)\,dy \\
&\le
C\,\varepsilon^\beta\log\!\left(2+\frac1\varepsilon\right).
\end{align*}
Therefore
\begin{equation}
\label{eq:R_eps_to_zero_appendix}
\|R_\varepsilon\|_{L^\infty((0,T_0)\times\mathbb R^3)}
\longrightarrow 0
\qquad\text{as }\varepsilon\to 0.
\end{equation}

\smallskip
\noindent\emph{Step 4: testing the weak formulation with $\Phi_\varepsilon$.}
Since $\chi\in C_c^\infty(0,T_0)$, there exists $\delta_0\in(0,T_0)$ such that
\[
\chi(t)=0
\qquad\text{for }t\in[T_0-\delta_0,T_0].
\]
Hence, by the definition of $\Phi$,
\[
\Phi(x,t)=0
\qquad\text{for all }(x,t)\in\mathbb R^3\times[T_0-\delta_0,T_0],
\]
and therefore the same holds for $\Phi_\varepsilon$.

Also, \eqref{eq:Phi_eps_eq_appendix} yields
\[
\partial_t\Phi_\varepsilon
=
-u\cdot\nabla\Phi_\varepsilon-\chi(t)\eta_\varepsilon+R_\varepsilon
\in L^\infty\bigl((0,T_0)\times\mathbb R^3\bigr),
\]
so $\Phi_\varepsilon$ is compactly supported and belongs to
$W^{1,\infty}(\mathbb R^3\times[0,T_0])$.

Choose $\theta\in C_c^\infty((-1,1))$ such that
\[
\theta\equiv 1
\qquad\text{on }[-\tfrac12,\tfrac12],
\]
and define $\widetilde{\Phi}_\varepsilon$ on $\mathbb R^3\times\mathbb R$ by
\[
\widetilde{\Phi}_\varepsilon(x,t):=
\begin{cases}
\theta(t)\Phi_\varepsilon(x,0), & t<0,\\[1mm]
\Phi_\varepsilon(x,t), & 0\le t\le T_0,\\[1mm]
0, & t>T_0.
\end{cases}
\]
Then $\widetilde{\Phi}_\varepsilon\in W^{1,\infty}(\mathbb R^3\times\mathbb R)$ and has
compact support.

Let $\vartheta\in C_c^\infty(\mathbb R^4)$ be a standard mollifier, and set
\[
\vartheta_\sigma(z):=\sigma^{-4}\vartheta(z/\sigma),
\qquad
\Psi_{\varepsilon,\sigma}:=\vartheta_\sigma*\widetilde{\Phi}_\varepsilon.
\]
For $0<\sigma<\delta_0/4$, the restriction of $\Psi_{\varepsilon,\sigma}$ to
$\mathbb R^3\times[0,T_0)$ belongs to
$C_c^\infty(\mathbb R^3\times[0,T_0))$.

Moreover, as $\sigma\to0$,
\[
\Psi_{\varepsilon,\sigma}\to \Phi_\varepsilon
\qquad\text{uniformly on }\mathbb R^3\times[0,T_0],
\]
\[
\partial_t\Psi_{\varepsilon,\sigma}\to \partial_t\Phi_\varepsilon,
\qquad
\nabla\Psi_{\varepsilon,\sigma}\to \nabla\Phi_\varepsilon
\quad\text{a.e. on }\mathbb R^3\times(0,T_0),
\]
and
\[
\Psi_{\varepsilon,\sigma}(\cdot,0)\to \Phi_\varepsilon(\cdot,0)
\qquad\text{uniformly on }\mathbb R^3.
\]
These functions are uniformly bounded, and their supports are contained in one fixed compact
subset of $\mathbb R^3\times[0,T_0)$; likewise,
$\Psi_{\varepsilon,\sigma}(\cdot,0)$ are supported in one fixed compact subset of
$\mathbb R^3$.

Testing \eqref{eq:transport_distributional_appendix} with
$\Psi_{\varepsilon,\sigma}$ and passing to the limit $\sigma\to0$ by dominated convergence,
using $\xi,u\xi\in L^1_{\mathrm{loc}}$ and $\xi_0\in L^1$, we obtain
\[
0=
\int_0^{T_0}\int_{\mathbb R^3}
\xi\bigl(\partial_t\Phi_\varepsilon+u\cdot\nabla\Phi_\varepsilon\bigr)\,dx\,dt
+\int_{\mathbb R^3}\xi_0(x)\Phi_\varepsilon(x,0)\,dx .
\]
Using \eqref{eq:Phi_eps_eq_appendix},
\begin{equation}
\label{eq:key_identity_appendix}
\int_0^{T_0}\int_{\mathbb R^3}\xi(x,t)\chi(t)\eta_\varepsilon(x)\,dx\,dt
=
\int_0^{T_0}\int_{\mathbb R^3}\xi(x,t)R_\varepsilon(x,t)\,dx\,dt
+\int_{\mathbb R^3}\xi_0(x)\Phi_\varepsilon(x,0)\,dx .
\end{equation}

\smallskip
\noindent\emph{Step 5: passage to the limit.}
Because $\eta_\varepsilon\to\eta$ uniformly and $\operatorname{supp}\eta_\varepsilon$ stays
inside a fixed compact set, while $\xi\in L^1_{\mathrm{loc}}$, the left-hand side of
\eqref{eq:key_identity_appendix} converges to
\[
\int_0^{T_0}\int_{\mathbb R^3}\xi(x,t)\chi(t)\eta(x)\,dx\,dt.
\]
Also, by \eqref{eq:R_eps_support_appendix}, for $0<\varepsilon<1$ we have
\[
\supp R_\varepsilon(\cdot,t)\subset K_*+\overline B_1
\qquad\text{for every }t\in[0,T_0].
\]
Hence, using \eqref{eq:R_eps_to_zero_appendix} and $\xi\in L^1_{\mathrm{loc}}$, we obtain
\[
\left|
\int_0^{T_0}\int_{\mathbb R^3}\xi(x,t)R_\varepsilon(x,t)\,dx\,dt
\right|
\le
\|R_\varepsilon\|_{L^\infty((0,T_0)\times\mathbb R^3)}
\int_0^{T_0}\int_{K_*+\overline B_1}|\xi(x,t)|\,dx\,dt
\longrightarrow 0.
\]

Finally, since $\Phi(\cdot,0)$ is compactly supported and H\"older continuous,
$\Phi_\varepsilon(\cdot,0)\to\Phi(\cdot,0)$ uniformly, hence
\begin{align*}
\int_{\mathbb R^3}\xi_0(x)\Phi_\varepsilon(x,0)\,dx
&\longrightarrow
\int_{\mathbb R^3}\xi_0(a)\Phi(a,0)\,da \\
&=
\int_{\mathbb R^3}\xi_0(a)\int_0^{T_0}\chi(\tau)\eta\big(Y_\tau(a)\big)\,d\tau\,da \\
&=
\int_0^{T_0}\chi(\tau)\int_{\mathbb R^3}\xi_0(a)\eta\big(Y_\tau(a)\big)\,da\,d\tau 
=
\int_0^{T_0}\chi(\tau)\int_{\mathbb R^3}\widetilde{\xi}(x,\tau)\eta(x)\,dx\,d\tau,
\end{align*}
where in the last step we used the change of variables $x=Y_\tau(a)$ and the
measure-preserving property of $Y_\tau$.

Passing to the limit $\varepsilon\to0$ in \eqref{eq:key_identity_appendix}, we obtain
\[
\int_0^{T_0}\int_{\mathbb R^3}\xi(x,t)\chi(t)\eta(x)\,dx\,dt
=
\int_0^{T_0}\int_{\mathbb R^3}\widetilde{\xi}(x,t)\chi(t)\eta(x)\,dx\,dt.
\]
Since $\chi\in C_c^\infty(0,T_0)$ and $\eta\in C_c^\infty(\mathbb R^3)$ are arbitrary,
it follows that
\[
\xi=\widetilde{\xi}
\qquad\text{in }\mathcal D'((0,T_0)\times\mathbb R^3),
\]
hence a.e. on $(0,T_0)\times\mathbb R^3$ because both functions are locally integrable.
This proves
\[
\xi(x,t)=\xi_0\bigl(Y_t^{-1}(x)\bigr)
\qquad\text{for a.e. }(x,t)\in\mathbb R^3\times(0,T_0),
\]
and completes the proof.
\end{proof}

\section{ODE Models}
\label{app:ODE_models}
This appendix records the two auxiliary ODE analyses announced in
\S~\ref{ODE_intro}.  We retain the notation introduced there and only recall the
definitions needed below.  
These propositions are not used in the proof of the
main theorems; they serve only as to explain motivations behind the localized model.
This appendix provides rigorous justifications of the heuristic discussion in
\S~\ref{flattened_kernel_ODE_intro} and \S~\ref{variable_coeff_intro}.

\subsection{Sharp Dichotomy for the Flattened-Coefficient ODEs}
\label{HAodes}

We consider the flattened model \eqref{ode_simplified} from \S~\ref{flattened_kernel_ODE_intro}:
for $k\ge2$ with $x_k^{\mathrm{flat}}(0)=:x_k^0=\frac{\varepsilon}{k^\alpha}$.

\begin{proposition}\label{prop:flat_dichotomy}
Fix $0<\alpha<1$ and $0<\varepsilon\le1$. Then the unique positive solution of
\eqref{ode_simplified} with $x_k^{\mathrm{flat}}(0)=:x_k^0=\frac{\varepsilon}{k^\alpha}$ satisfies:
\begin{enumerate}
\item If $\alpha<\frac27$, then for every fixed $t>0$,
it holds $ x_k^{\mathrm{flat}}(t)\to\infty$ as $k\to\infty.$
\item If $\alpha>\frac27$, then for every fixed $t>0$,it holds $ x_k^{\mathrm{flat}}(t)\to 0$ as $k\to\infty.$
\end{enumerate}
At the borderline $\alpha=\frac27$, the family $\{x_k^{\mathrm{flat}}(t)\}_{k\ge1}$ stays
uniformly bounded for each fixed $t>0$.
\end{proposition}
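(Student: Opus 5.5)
The plan is to work entirely with the cascade variables of \S~\ref{flattened_kernel_ODE_intro}. Set $b_j:=b_j^{\mathrm{flat}}$, $S_k:=S_k^{\mathrm{flat}}$ and $I_k(t):=\int_0^t S_k(s)\,ds$. Since $\frac{d}{dt}\log x_k^{\mathrm{flat}}=S_{k-1}$ and $\Gamma_k^{\mathrm{flat}}=(x_k^0/x_k^{\mathrm{flat}})^3$, we have the exact representations
\[
x_k^{\mathrm{flat}}(t)=x_k^0\,e^{I_{k-1}(t)},\qquad b_k(t)=x_k^0\,e^{-5I_{k-1}(t)}.
\]
Both parts of the dichotomy therefore reduce to the two-sided estimate
\[
I_{k-1}(t)=\tfrac25(1-\alpha)\log k+O_{t,\alpha,\varepsilon}(1),
\]
because this gives $x_k^{\mathrm{flat}}(t)\asymp_t k^{-\alpha+\frac25(1-\alpha)}$. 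The sign of the exponent changes exactly at $\alpha=2/7$, and at $\alpha=2/7$ the same estimate gives uniform boundedness. The tool throughout is the Riccati identity \eqref{ode_Sk_quad}, together with $S_k(0)=\varepsilon\sum_{j\le k}j^{-\alpha}\asymp \varepsilon k^{1-\alpha}$.

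\emph{Lower bound (part (1)).} Dropping $\sum_j b_j^2\ge 0$ in \eqref{ode_Sk_quad} gives $S_k'\ge -\frac52 S_k^2$, hence \eqref{low_bd_skt}: $S_k(t)\ge \bigl(\tfrac52 t+S_k(0)^{-1}\bigr)^{-1}$. Integrating,
\[
I_k(t)\ge \tfrac25\log\bigl(1+\tfrac52 t\,S_k(0)\bigr)\ge \tfrac25(1-\alpha)\log k - C(t,\varepsilon,\alpha).
\]
This gives $x_k^{\mathrm{flat}}(t)\ge c(t)\,k^{-\alpha+\frac25(1-\alpha)}\to\infty$ when $\alpha<2/7$. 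The same bound also produces decay of the individual coefficients, which is the input for the upper bound:
\[
b_j(t)=x_j^0e^{-5I_{j-1}(t)}\le \varepsilon j^{-\alpha}\bigl(1+c\,t\,j^{1-\alpha}\bigr)^{-2}.
\]

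\emph{Upper bound (part (2) and the borderline).} Here the forcing term must be kept. From \eqref{ode_Sk_quad}, $S_k'\le -\frac52\bigl(S_k^2-D(t)\bigr)$ with $D(t):=\sum_{j\ge1}b_j(t)^2$, which is bounded independently of $k$. By the decay bound for $b_j$, $D(t)\le\sum_j\min\{\varepsilon^2j^{-2\alpha},\,C t^{-4}j^{-4+2\alpha}\}$. Splitting at the crossover index $j_*(t)\sim t^{-1/(1-\alpha)}$ gives $D(t)\lesssim t^{-\theta}$ with some $\theta<2$, which is strictly less singular than $t^{-2}$. A comparison argument for the Riccati inequality then yields
\[
S_k(t)\le \min\Bigl\{S_k(0),\ \frac{1}{\frac52(t-t_k)}+C\sqrt{D(t/2)}\Bigr\},\qquad t_k\sim S_k(0)^{-1}\sim k^{\alpha-1}.
\]
One can compare with the explicit solution of $y'=-\frac52(y^2-\bar D)$ on dyadic intervals $[t/2,t]$ with $\bar D=D(t/2)$, or simply observe that $S_k-\sqrt{D}$ can only decrease at the Riccati rate while it is large. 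Integrating, $\int_0^{t_k}S_k\le C$, while $\int_{t_k}^t S_k\le \frac25\log(t/t_k)+C\int_0^t\sqrt{D}$. The last integral is finite because $\theta/2<1$. Hence $I_{k}(t)\le \frac25(1-\alpha)\log k+C(t)$, which gives $x_k^{\mathrm{flat}}(t)\le C(t)k^{-\alpha+\frac25(1-\alpha)}$. This tends to $0$ for $\alpha>2/7$ and stays bounded for $\alpha=2/7$.

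The main obstacle is this upper bound, specifically controlling the forcing $\sum_j b_j^2$ uniformly in $k$ in such a way that it perturbs the Riccati constant $\frac52$ only by an additive integrable error. I would first try the bootstrap structure described above: the lower bound on $S$ gives decay of $b_j$, and that decay makes $D(t)$ integrable after a square root. The delicate regime is $t\lesssim t_k$, where $S_k$ is of size $k^{1-\alpha}$. There I would use the crude monotonicity $S_k'\le 0$, which follows from \eqref{ode_bj}, so that $S_k\le S_k(0)$ and the short interval $[0,t_k]$ contributes only $O(1)$ to $I_k$.
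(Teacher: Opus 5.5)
Your reduction to the two-sided estimate for $I_{k-1}(t)$ and your lower bound (part (1)) are exactly the paper's argument. The gap is in the upper bound, at the comparison step you flag yourself. As described, that step does not keep the Riccati constant $\frac52$, and the threshold $\frac27$ depends on that constant exactly.

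Freeze $\bar D=D(t/2)$ on $[t/2,t]$ and compare with $y'=-\frac52(y^2-\bar D)$. Without further information on $S_k(t/2)$, this gives only $S_k(t)\le\sqrt{\bar D}\coth\bigl(\frac54\sqrt{\bar D}\,t\bigr)\le\sqrt{D(t/2)}+\frac{4}{5t}$, which corresponds to the constant $\frac54$. Integrating yields $I_k(t)\le\frac45(1-\alpha)\log k+C$. That gives decay only for $\alpha>\frac49$, so $(\frac27,\frac49]$ and the borderline remain unproved.

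The bound $\bigl(\frac52(t-t_k)\bigr)^{-1}$ you write down would require running the comparison from time $t_k$. But then the frozen forcing is $D(t_k)$. Since $I_{j-1}(t_k)\le t_kS_k(0)=O(1)$ for $j\le k$, one has $D(t_k)\gtrsim\sum_{j\le k}\varepsilon^2j^{-2\alpha}\sim k^{1-2\alpha}$ when $\alpha<\frac12$. Its square root, integrated over $[t_k,t]$, puts a positive power of $k$ into the exponent and ruins the estimate.

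Your alternative remark, that $S_k-\sqrt D$ decreases at the Riccati rate, overlooks the source term $-\frac{d}{dt}\sqrt D\ge0$. The route can be repaired, for instance by working with $z=1/S_k$. It satisfies $z'\ge\frac52(1-Dz^2)$, and your lower bound gives $z\le\frac52 s+S_k(0)^{-1}$, which can be used to show that $Dz^2$ is a lower-order perturbation. But that argument still has to be carried out, and it is not what you describe.

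The paper avoids all of this with an observation you already have in hand. From $b_j'=-5b_jS_{j-1}\le0$ (which you need anyway for $D$ to be non-increasing) you get $0<b_j(t)\le b_j(0)=\varepsilon j^{-\alpha}\le\varepsilon\le1$. Hence $\sum_{j\le k}b_j^2\le(\max_j b_j)\,S_k\le S_k$, and so $S_k'\le-\frac52(S_k^2-S_k)$. Compare this with the explicitly solvable logistic equation $U'=-\frac52(U^2-U)$, $U(0)=S_k(0)$, to get $\int_0^tS_k\le\frac25\log\bigl(1+(e^{5t/2}-1)S_k(0)\bigr)$.

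The forcing is absorbed as a linear term that costs only the factor $e^{5t/2}$ and leaves $\frac52$ intact. Therefore $x_k^{\mathrm{flat}}(t)\le C(\varepsilon,t,\alpha)\,k^{-\alpha}(k-1)^{\frac25(1-\alpha)}$ follows at once. This covers part (2) and the borderline without $D(t)$, without the decay of individual $b_j$, and without any analysis near $t_k$. It is also where the hypothesis $\varepsilon\le1$ is used.
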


\begin{proof}
Since the system is triangular and the right-hand side in \eqref{ode_simplified}
depends only on the lower modes, one constructs the solution inductively in $k$.
Moreover, $x_k^{\mathrm{flat}}(t)$ is positive and nondecreasing for every $k$.

Using the definition of $\Gamma_j^{\mathrm{flat}}$ together with $\frac{d}{dt}\log x_k^{\mathrm{flat}}(t)=S_{k-1}^{\mathrm{flat}}(t)$, it holds
\begin{equation}
\label{eq:app_flat_Gamma_prime}
\bigl(\Gamma_j^{\mathrm{flat}}\bigr)'(t)
=
-3\,\Gamma_j^{\mathrm{flat}}(t)\,S_{j-1}^{\mathrm{flat}}(t).
\end{equation}

\medskip
\noindent\textbf{Step 1: depletion identities.}
Differentiating $b_j^{\mathrm{flat}}$ 
and using $\frac{d}{dt}\log x_k^{\mathrm{flat}}(t)=S_{k-1}^{\mathrm{flat}}(t),$ and \eqref{eq:app_flat_Gamma_prime}, we obtain
\begin{equation}
\label{eq:app_flat_b_prime}
\bigl(b_j^{\mathrm{flat}}\bigr)'(t)
=
-5\,b_j^{\mathrm{flat}}(t)\,S_{j-1}^{\mathrm{flat}}(t)
\qquad (j\ge1).
\end{equation}
Here, $S_0^{\mathrm{flat}}\equiv 0.$
Summing over $1\le j\le k$ gives the equation \eqref{ode_bj} of $S_k^{\mathrm{flat}}$.
We now use the elementary identity
$S_j^{\mathrm{flat}}=S_{j-1}^{\mathrm{flat}}+b_j^{\mathrm{flat}},$
which yields
\[
\bigl(S_j^{\mathrm{flat}}\bigr)^2-\bigl(S_{j-1}^{\mathrm{flat}}\bigr)^2
=
2\,b_j^{\mathrm{flat}}\,S_{j-1}^{\mathrm{flat}}
+\bigl(b_j^{\mathrm{flat}}\bigr)^2.
\]
Summing this relation over $1\le j\le k$, we obtain
\[
2\sum_{j=1}^k b_j^{\mathrm{flat}}(t)\,S_{j-1}^{\mathrm{flat}}(t)
=
\bigl(S_k^{\mathrm{flat}}(t)\bigr)^2-\sum_{j=1}^k \bigl(b_j^{\mathrm{flat}}(t)\bigr)^2.
\]
Inserting this into the equation \eqref{ode_bj} of $S_{k}^{\mathrm{flat}}$, we arrive at \eqref{ode_Sk_quad}.

\medskip
\noindent\textbf{Step 2: norm inflation when $\alpha<\frac27$.}
Since $\sum_{j=1}^k (b_j^{\mathrm{flat}})^2\ge0$, \eqref{ode_Sk_quad} implies
\[
\bigl(S_k^{\mathrm{flat}}\bigr)'(t)\ge -\frac52\bigl(S_k^{\mathrm{flat}}(t)\bigr)^2.
\]
By comparison with the scalar ODE $U'=-\frac52U^2$, we obtain
\begin{equation*}
S_k^{\mathrm{flat}}(t)
\ge
\frac{1}{\frac52\,t+\frac1{S_k^{\mathrm{flat}}(0)}}
\qquad\text{for all }t\ge0.
\end{equation*}
Integrating in time yields
\begin{equation}
\label{eq:app_flat_intS_lower}
\int_0^t S_k^{\mathrm{flat}}(s)\,ds
\ge
\frac25\log\!\Bigl(1+\frac52\,t\,S_k^{\mathrm{flat}}(0)\Bigr).
\end{equation}

At time $t=0$ we have $\Gamma_j^{\mathrm{flat}}(0)=1$, hence
$
b_j^{\mathrm{flat}}(0)=x_j^0=\frac{\varepsilon}{j^\alpha}$ and $
S_k^{\mathrm{flat}}(0)
=
\varepsilon\sum_{j=1}^k \frac{1}{j^\alpha}.$
By the integral test, for $k\ge2$,
\begin{equation}
\label{eq:app_flat_S0_bounds}
\frac{\varepsilon}{1-\alpha}\bigl(k^{1-\alpha}-1\bigr)
\le
S_k^{\mathrm{flat}}(0)
\le
\varepsilon+\frac{\varepsilon}{1-\alpha}\bigl(k^{1-\alpha}-1\bigr)
\leq C(\alpha) \, \varepsilon\,k^{1-\alpha}.
\end{equation}
Using \eqref{ode_simplified}, \eqref{eq:app_flat_intS_lower}, and the lower bound in
\eqref{eq:app_flat_S0_bounds}, we get
\begin{align*}
x_k^{\mathrm{flat}}(t)
&=
x_k^0\exp\!\Bigl(\int_0^t S_{k-1}^{\mathrm{flat}}(s)\,ds\Bigr)
\nonumber\\
&\ge
\frac{\varepsilon}{k^\alpha}
\Bigl(1+\frac52\,t\,S_{k-1}^{\mathrm{flat}}(0)\Bigr)^{2/5}
\geq C(\varepsilon,t,\alpha)
k^{-\alpha}(k-1)^{\frac25(1-\alpha)}.
\end{align*}
Therefore, if $\frac25(1-\alpha)>\alpha,$
equivalently $\alpha<\frac27$, then $x_k^{\mathrm{flat}}(t)\to\infty$ as $k\to\infty$ for
every fixed $t>0$.

\medskip
\noindent\textbf{Step 3: decay when $\alpha>\frac27$.}
From \eqref{eq:app_flat_b_prime} and $S_{j-1}^{\mathrm{flat}}\ge0$, we have $\bigl(b_j^{\mathrm{flat}}\bigr)'(t)\le0.$
Hence $0<b_j^{\mathrm{flat}}(t)\le b_j^{\mathrm{flat}}(0)=\frac{\varepsilon}{j^\alpha}\le \varepsilon\le1.$
In particular,
\[
\sum_{j=1}^k \bigl(b_j^{\mathrm{flat}}(t)\bigr)^2
\le
\sum_{j=1}^k b_j^{\mathrm{flat}}(t)
=
S_k^{\mathrm{flat}}(t).
\]
Using this in \eqref{ode_Sk_quad}, we obtain
\begin{equation*}
\bigl(S_k^{\mathrm{flat}}\bigr)'(t)
\le
-\frac52\Bigl(\bigl(S_k^{\mathrm{flat}}(t)\bigr)^2-S_k^{\mathrm{flat}}(t)\Bigr).
\end{equation*}
Let $U_k$ solve
$U_k'(t)=-\frac52\bigl(U_k(t)^2-U_k(t)\bigr)$ with $
U_k(0)=S_k^{\mathrm{flat}}(0). $
By scalar comparison,
\[
S_k^{\mathrm{flat}}(t)\le U_k(t)\qquad\text{for all }t\ge0.
\]
The explicit solution is
\[
U_k(t)=\frac{1}{1-\Bigl(1-\frac{1}{S_k^{\mathrm{flat}}(0)}\Bigr)e^{-5t/2}},
\]
and therefore
\[
\int_0^t S_k^{\mathrm{flat}}(s)\,ds
\le
\int_0^t U_k(s)\,ds
=
\frac25\log\!\Bigl(1+\bigl(e^{5t/2}-1\bigr)S_k^{\mathrm{flat}}(0)\Bigr).
\]
Using again \eqref{ode_simplified} and \eqref{eq:app_flat_S0_bounds}, we conclude
\begin{align}
\label{eq:xk_upper_flat}
x_k^{\mathrm{flat}}(t)
&=
x_k^0\exp\!\Bigl(\int_0^t S_{k-1}^{\mathrm{flat}}(s)\,ds\Bigr)
\nonumber\\
&\le
\frac{\varepsilon}{k^\alpha}
\Bigl(1+\bigl(e^{5t/2}-1\bigr)S_{k-1}^{\mathrm{flat}}(0)\Bigr)^{2/5}
\leq
C(\varepsilon,t,\alpha)
k^{-\alpha}(k-1)^{\frac25(1-\alpha)}.
\end{align}
Thus, if $\alpha>\frac27$, then $\frac25(1-\alpha)-\alpha<0,$
and therefore $x_k^{\mathrm{flat}}(t)\to0$ as $k\to\infty$ for every fixed $t>0$.

At the borderline $\alpha=\frac27$, the bound
\eqref{eq:xk_upper_flat} show that $x_k^{\mathrm{flat}}(t)$ stays uniformly bounded in $k$
for each fixed $t>0$.
\end{proof}

\subsection{Frozen-Profile ODEs}
\label{frozen_pro_ODE_appdx}

We now treat the frozen-profile model \eqref{ode_full}
from \S~\ref{variable_coeff_intro}. Recall that
$\Lambda_{\mathrm{froz}}:(0,1]\to(0,\infty)$ is the frozen Biot--Savart coefficient introduced
there; in particular, $\Lambda_{\mathrm{froz}}$ is $C^1$, positive, and non-increasing.  We consider the frozen model \eqref{ode_full} with $x_k^{\mathrm{froz}}(0)=:x_k^0=\frac{\varepsilon}{k^\alpha}, k\geq 2$ and $x_1^{\mathrm{froz}}(t)\equiv x_1^0=\varepsilon$.

Recall the logarithmic-derivative correction $Q_{\mathrm{froz}}(\Gamma)$ defined in \eqref{eq:log_der_cor}, and we denote its supremum by $Q_*:=\sup_{\Gamma\in(0,1]}Q_{\mathrm{froz}}(\Gamma)\in[0,\infty].$

\begin{proposition}
Fix $\varepsilon>0$ and $0<\alpha<1$. Then the unique positive solution of
\eqref{ode_full} with $x_k^{\mathrm{froz}}(0)=\frac{\varepsilon}{k^\alpha}, k\geq 2$ and $x_1^{\mathrm{froz}}(t)\equiv\varepsilon$ satisfies:
\begin{enumerate}
\item If $\alpha<\frac27$, then for every fixed $t>0$, $x_k^{\mathrm{froz}}(t)\to\infty$ as $k\to\infty.$
\item Assume in addition that $Q_*<5$. Then,
for every fixed $t>0$,
\begin{equation}
\label{eq:xk_upper_frozen}
x_k^{\mathrm{froz}}(t)
\le
\frac{C(\varepsilon,t,\alpha,Q_*,\Lambda_{\mathrm{froz}}(1))}{k^\alpha}\,
(k-1)^{\frac{2}{5-Q_*}(1-\alpha)}.
\end{equation}
In particular, if $\alpha>\frac{2}{7-Q_*},$
then $x_k^{\mathrm{froz}}(t)\to0$ as $k\to\infty$.
\end{enumerate}
\end{proposition}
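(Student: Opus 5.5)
The plan is to repeat the argument of Proposition~\ref{prop:flat_dichotomy} with the frozen quantities $b_j^{\mathrm{froz}}=x_j^{\mathrm{froz}}(\Gamma_j^{\mathrm{froz}})^2\Lambda_{\mathrm{froz}}(\Gamma_j^{\mathrm{froz}})$ and $S_k^{\mathrm{froz}}=\sum_{j\le k}b_j^{\mathrm{froz}}$. The only new ingredient is the correction $Q_{\mathrm{froz}}$ in the depletion law \eqref{eq:log_der_cor}, which we bound from both sides. We first construct the solution inductively in $k$, using the triangular structure of \eqref{ode_full}. Each $x_k^{\mathrm{froz}}$ is positive and nondecreasing, and
\[
(\Gamma_j^{\mathrm{froz}})'=-3\Gamma_j^{\mathrm{froz}}S_{j-1}^{\mathrm{froz}}.
\]
Differentiating $b_j^{\mathrm{froz}}$ gives
\[
(b_j^{\mathrm{froz}})'=b_j^{\mathrm{froz}}S_{j-1}^{\mathrm{froz}}\bigl(-5+Q_{\mathrm{froz}}(\Gamma_j^{\mathrm{froz}})\bigr),
\]
with $0\le Q_{\mathrm{froz}}\le Q_*$ because $\Lambda_{\mathrm{froz}}'\le 0$.

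For part (1), we use $Q_{\mathrm{froz}}\ge0$ to get $(b_j^{\mathrm{froz}})'\ge -5b_j^{\mathrm{froz}}S_{j-1}^{\mathrm{froz}}$. Summing over $j$ and using the telescoping identity $2\sum_{j\le k} b_jS_{j-1}=S_k^2-\sum_{j\le k} b_j^2\le S_k^2$, we obtain $(S_k^{\mathrm{froz}})'\ge -\tfrac52(S_k^{\mathrm{froz}})^2$. Scalar comparison then gives
\[
\int_0^tS_k^{\mathrm{froz}}\,ds\ge\tfrac25\log\bigl(1+\tfrac52tS_k^{\mathrm{froz}}(0)\bigr).
\]
At $t=0$ we have $\Gamma_j=1$, so $S_k^{\mathrm{froz}}(0)=\Lambda_{\mathrm{froz}}(1)\varepsilon\sum_{j\le k}j^{-\alpha}\gtrsim k^{1-\alpha}$. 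Inserting this into $x_k^{\mathrm{froz}}=x_k^0\exp\int_0^tS_{k-1}^{\mathrm{froz}}$ yields $x_k^{\mathrm{froz}}(t)\gtrsim k^{-\alpha+\frac25(1-\alpha)}$, which tends to infinity when $\alpha<2/7$.

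For part (2), set $\kappa:=5-Q_*>0$. Then $(b_j^{\mathrm{froz}})'\le-\kappa b_j^{\mathrm{froz}}S_{j-1}^{\mathrm{froz}}\le 0$, so each $b_j^{\mathrm{froz}}$ is nonincreasing and
\[
b_j^{\mathrm{froz}}(t)\le b_j^{\mathrm{froz}}(0)\le \varepsilon\Lambda_{\mathrm{froz}}(1)=:\beta_0 .
\]
Summing and using the same identity gives
\[
(S_k^{\mathrm{froz}})'\le-\tfrac{\kappa}{2}\Bigl((S_k^{\mathrm{froz}})^2-\sum_{j\le k}(b_j^{\mathrm{froz}})^2\Bigr)\le -\tfrac\kappa2\bigl((S_k^{\mathrm{froz}})^2-\beta_0S_k^{\mathrm{froz}}\bigr),
\]
where we used $\sum b_j^2\le \beta_0\sum b_j$. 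We compare with the logistic solution $U'=-\tfrac\kappa2(U^2-\beta_0U)$, $U(0)=S_k^{\mathrm{froz}}(0)$, which can be integrated explicitly. This gives
\[
\int_0^t S_k^{\mathrm{froz}}\le \tfrac{2}{\kappa}\log\Bigl(1+\bigl(e^{\kappa\beta_0t/2}-1\bigr)S_k^{\mathrm{froz}}(0)/\beta_0\Bigr).
\]
Combined with $S_{k-1}^{\mathrm{froz}}(0)\le C\varepsilon\Lambda_{\mathrm{froz}}(1)(k-1)^{1-\alpha}$, this yields \eqref{eq:xk_upper_frozen} with exponent $\frac{2}{5-Q_*}(1-\alpha)$. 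Decay follows exactly when $\frac{2}{5-Q_*}(1-\alpha)<\alpha$, i.e.\ $\alpha>\frac{2}{7-Q_*}$.

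The main technical point is the upper comparison in part (2). We must check that the sign of $-5+Q_{\mathrm{froz}}$ is uniformly controlled by $-\kappa$ for all $\Gamma\in(0,1]$; this holds since $\Gamma_j^{\mathrm{froz}}\in(0,1]$ is preserved. We must also handle the lower-order term $\sum b_j^2$ without the normalization $\varepsilon\le1$ used in Proposition~\ref{prop:flat_dichotomy}; we do this by carrying $\beta_0$ through the logistic comparison, where it only enters the constant $C(\varepsilon,t,\alpha,Q_*,\Lambda_{\mathrm{froz}}(1))$. Finally, the case $j=1$ is harmless, since $b_1^{\mathrm{froz}}$ is constant and $S_0^{\mathrm{froz}}\equiv0$.
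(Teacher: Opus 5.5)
Your proposal is correct and follows essentially the same route as the paper's proof. Part (1) uses $Q_{\mathrm{froz}}\ge 0$ to get $(S_k^{\mathrm{froz}})'\ge-\tfrac52(S_k^{\mathrm{froz}})^2$, and part (2) uses the uniform bound $b_j^{\mathrm{froz}}\le\varepsilon\Lambda_{\mathrm{froz}}(1)$ (your $\beta_0$, the paper's $\Lambda^*$) in an explicitly integrated logistic comparison with rate $\tfrac{5-Q_*}{2}$, which gives the stated upper bound and the decay threshold $\alpha>\tfrac{2}{7-Q_*}$.
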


\begin{remark}[How the profile geometry enters the decay threshold]
The quantity $Q_*$ depends only on the frozen coefficient
$\Lambda_{\mathrm{froz}}$, hence only on the frozen profile geometry from
\S~\ref{variable_coeff_intro}. In the localized regime discussed there, the
logarithmic-derivative bounds for $\Lambda_{\mathrm{froz}}$ show that $Q_*$ can be made
arbitrarily small by 
choosing the frozen profile support inside a cone of sufficiently small slope.
Indeed, arguing in a similar way as in the proof of Lemma~\ref{lem:comp_to_Psi}, one can obtain
    \begin{align*}
        Q_{\mathrm{froz}}(\Gamma) \leq 15 \frac{(\Gamma L_+)^2}{1+(\Gamma L_+)^2}
        \leq 15 \frac{L^2_+}{1+L^2_+}, 
        \quad L_+:=\sup _{(r,z)\in \supp \phi  } \frac{|z|}{r}.
    \end{align*}
Thus, for any fixed
$\alpha>\frac27$, one can choose the frozen profile geometry so that
$\alpha>\frac{2}{7-Q_*}$ and therefore the decay conclusion in part~(2) applies.
\end{remark}

\begin{proof}
As in the flattened case, the system is triangular and therefore has a unique global
positive solution.

Define $S_0^{\mathrm{froz}}(t):=0.$
Using the definition of $S_k^{\mathrm{froz}}$, the frozen model equation \eqref{ode_full} becomes
$\frac{d}{dt}\log x_k^{\mathrm{froz}}(t)=S_{k-1}^{\mathrm{froz}}(t)$ for $k\geq 2$.
Also,$\bigl(\Gamma_j^{\mathrm{froz}}\bigr)'(t)
=
-3\,\Gamma_j^{\mathrm{froz}}(t)\,S_{j-1}^{\mathrm{froz}}(t).$
Differentiating $b_j^{\mathrm{froz}}$ yields \eqref{eq:log_der_cor} for $j\geq 1$.
(The case $j=1$ is harmless because $S_0^{\mathrm{froz}}\equiv0$.)  As before,
\begin{equation}
\label{eq:app_frozen_sum_identity}
\sum_{j=1}^k b_j^{\mathrm{froz}}(t)\,S_{j-1}^{\mathrm{froz}}(t)
=
\frac12\Bigl(\bigl(S_k^{\mathrm{froz}}(t)\bigr)^2
-\sum_{j=1}^k \bigl(b_j^{\mathrm{froz}}(t)\bigr)^2\Bigr).
\end{equation}

\medskip
\noindent\textbf{Step 1: norm inflation when $\alpha<\frac27$.}
Since $Q_{\mathrm{froz}}(\Gamma)\ge0$, from \eqref{eq:log_der_cor} we get
\[
\bigl(b_j^{\mathrm{froz}}\bigr)'(t)\ge
-5\,b_j^{\mathrm{froz}}(t)\,S_{j-1}^{\mathrm{froz}}(t).
\]
Summing over $1\le j\le k$ and using \eqref{eq:app_frozen_sum_identity}, we obtain
\[
\bigl(S_k^{\mathrm{froz}}\bigr)'(t)
\ge
-\frac52\Bigl(\bigl(S_k^{\mathrm{froz}}(t)\bigr)^2
-\sum_{j=1}^k \bigl(b_j^{\mathrm{froz}}(t)\bigr)^2\Bigr)
\ge
-\frac52\bigl(S_k^{\mathrm{froz}}(t)\bigr)^2.
\]
The rest of Step 1 proceeds in the same way as in Step 2 of Proposition~\ref{prop:flat_dichotomy} for the flat model.
Therefore, exactly as in the proof of Proposition~\ref{prop:flat_dichotomy},
\begin{equation}
\label{eq:app_frozen_intS_lower}
\int_0^t S_k^{\mathrm{froz}}(s)\,ds
\ge
\frac25\log\!\Bigl(1+\frac52\,t\,S_k^{\mathrm{froz}}(0)\Bigr).
\end{equation}
Since $\Gamma_j^{\mathrm{froz}}(0)=1$,
it holds $
b_j^{\mathrm{froz}}(0)
=
\Lambda_{\mathrm{froz}}(1)\,x_j^0
=
\Lambda_{\mathrm{froz}}(1)\frac{\varepsilon}{j^\alpha}, $
and hence
\[
S_k^{\mathrm{froz}}(0)
=
\Lambda_{\mathrm{froz}}(1)\varepsilon\sum_{j=1}^k \frac{1}{j^\alpha}
\geq 
C(\Lambda_{\mathrm{froz}}(1), \varepsilon,\alpha)
k^{1-\alpha}.
\]
Using $\frac{d}{dt}\log x_k^{\mathrm{froz}}(t)=S_{k-1}^{\mathrm{froz}}(t)$ and \eqref{eq:app_frozen_intS_lower}, we obtain
\begin{equation*}
x_k^{\mathrm{froz}}(t)
\geq
C(\Lambda_{\mathrm{froz}}(1),\varepsilon,t,\alpha)
\, 
k^{-\alpha}
\,
(k-1)^{\frac25(1-\alpha)}.
\end{equation*}
Thus, if $\alpha<\frac27$, then $x_k^{\mathrm{froz}}(t)\to\infty$ as $k\to\infty$ for every
fixed $t>0$.

\medskip
\noindent\textbf{Step 2: decay when $\alpha>\frac{2}{7-Q_*}$.}
Assume now that $Q_*<5$. Then
\eqref{eq:log_der_cor} gives
\begin{align}
\label{eq:eq_bj_froz7}
\bigl(b_j^{\mathrm{froz}}\bigr)'(t)
\le
-(5-Q_*)\,b_j^{\mathrm{froz}}(t)\,S_{j-1}^{\mathrm{froz}}(t)
\le 0.
\end{align}
Therefore,
\[
0\le b_j^{\mathrm{froz}}(t)\le b_j^{\mathrm{froz}}(0)
=
\Lambda_{\mathrm{froz}}(1)\frac{\varepsilon}{j^\alpha}
\le \Lambda_{\mathrm{froz}}(1)\varepsilon=: \Lambda^*
\]
In particular,
\[
\sum_{j=1}^k \bigl(b_j^{\mathrm{froz}}(t)\bigr)^2
\le
\Lambda^*\sum_{j=1}^k b_j^{\mathrm{froz}}(t)
=
\Lambda^*\, S_k^{\mathrm{froz}}(t).
\]
Using \eqref{eq:eq_bj_froz7} and \eqref{eq:app_frozen_sum_identity}, we get
\[
\bigl(S_k^{\mathrm{froz}}\bigr)'(t)
\le
-\frac{5-Q_*}{2}
\Bigl(
\bigl(S_k^{\mathrm{froz}}(t)\bigr)^2-\Lambda^*\, S_k^{\mathrm{froz}}(t)
\Bigr).
\]
Let $U_k$ solve
\[
U_k'(t)
=
-\frac{5-Q_*}{2}\bigl(U_k(t)^2-\Lambda^*\, U_k(t)\bigr),
\qquad
U_k(0)=S_k^{\mathrm{froz}}(0).
\]
By scalar comparison, $S_k^{\mathrm{froz}}(t)\le U_k(t)$ for all $t\ge0.$
The explicit formula for $U_k$ is
\[
U_k(t)
=
\frac{\Lambda^*}{1-\Bigl(1-\frac{\Lambda^*}{S_k^{\mathrm{froz}}(0)}\Bigr)e^{-(5-Q_*)\Lambda^*t/2}},
\]
and therefore
\[
\int_0^t S_k^{\mathrm{froz}}(s)\,ds
\le
\frac{2}{5-Q_*}
\log\!\left(1+
\frac{e^{(5-Q_*)\Lambda^*t/2}-1}{\Lambda^*}
S_k^{\mathrm{froz}}(0)\right).
\]
Using
\[
S_{k-1}^{\mathrm{froz}}(0)
=
\Lambda_{\mathrm{froz}}(1)\varepsilon\sum_{j=1}^{k-1} j^{-\alpha}
\leq C(\Lambda_{\mathrm{froz}}(1),\varepsilon,\alpha)
\, 
(k-1)^{1-\alpha},
\]
together with $\frac{d}{dt}\log x_k^{\mathrm{froz}}(t)=S_{k-1}^{\mathrm{froz}}(t)$, we obtain
\[
x_k^{\mathrm{froz}}(t)
=
x_k^0\exp\!\Bigl(\int_0^t S_{k-1}^{\mathrm{froz}}(s)\,ds\Bigr)
\le
\frac{\varepsilon}{k^\alpha}
\Bigl(1+ \frac{e^{(5-Q_*)\Lambda^*t/2}-1}{\Lambda^*}
S_{k-1}^{\mathrm{froz}}(0)\Bigr)^{\frac{2}{5-Q_*}},
\]
which yields \eqref{eq:xk_upper_frozen}. Finally,
if $
\frac{2}{5-Q_*}(1-\alpha)<\alpha
$, or equivalently $
\alpha>\frac{2}{7-Q_*},$
then the right-hand side of \eqref{eq:xk_upper_frozen} tends to zero as
$k\to\infty$.
\end{proof}

\section{List of notation}
\label{sec:notation}

For the convenience of the reader, we collect here the main notation that recurs throughout the paper. This appendix is intended as a global lookup table for symbols used repeatedly in the statements and main proofs. Proof-local symbols and locally introduced constants are usually not listed here.

\begin{itemize}
\item \textbf{Biot--Savart law and sign convention.}
We define vorticity by $\operatorname{curl}u=\omega$ in the general case and $\operatorname{curl}u= \omega e_\theta$ in the axisymmetric, no-swirl case. This determines the signs in the Biot--Savart law as indicated in \eqref{eq:gen_Biot-Savart} and \eqref{eq:axis_BS_law}, and we will use these definitions and the Biot--Savart laws with these signs. For the correct sign of the general Biot--Savart law, see \cite[the first page]{EncisoGarciaFerreroPeraltaSalas18} and \cite[Equation (1.44)]{Tsai18}. 

On the other hand, if one defines vorticity with the minus sign, that is, $\operatorname{curl}u=-\omega$ in the general case and $\operatorname{curl}u= -\omega e_\theta$ in the axisymmetric, no-swirl case. Then it will lead to an extra minus sign in the Biot--Savart law. This convention is used for example in \cite[Equations (1.3), (1.5), (1.6)]{KimJeong22}.

The displayed Biot--Savart formulas in
\cite[Equation (3)]{CordobaMartinezZheng25},
\cite[Equations (2.94), (2.95)]{MajdaBertozzi01},
\cite[Equation (1.53)]{BedrossianVicol22}, and
\cite[p.~477]{Danchin07}
use the cross-product order \((x-y)\times\omega(y)\). This is the negative of
the convention used in the present paper.
Consequently, those displayed Biot--Savart formulas correspond to the replacement
\(\omega_{\rm ref}=-\omega\) when compared with the present convention.

\item \textbf{Outer-region notation for  Proposition~\ref{prop:cone_free_key_lem}}
(Section~\ref{sec:cone_free_key_lem}).
\begin{itemize}
    \item $Q(\rho):=\{y\in\mathbb R^3:\ r_y\ge2\rho\}$:
the outer region with radial cutoff $\rho$ in Proposition~\ref{prop:cone_free_key_lem}.
\end{itemize}

\item \textbf{Physical variables and coordinates.}
\begin{itemize}
    \item $u=u^r e_r+u^z e_z$: the axisymmetric no-swirl velocity field.
   \item $\operatorname{curl}u=\omega(r,z,t)e_\theta$: the vorticity vector where $\omega$ denotes the scalar angular vorticity.
    \item $\xi:=\omega/r$: the relative vorticity.
    \item $(r,\theta,z)$: cylindrical coordinates in $\mathbb R^3$.
    \item $(r_x,\theta_x,z_x)$: the cylindrical coordinates of a point $x\in\mathbb R^3$.
\end{itemize}

\item \textbf{Parameter hierarchy and fixed background data.}
\begin{itemize}
    \item $\tilde\varepsilon>0$, $q>1$: the fundamental parameters.
    \item $L_q$, $\alpha_q$, $\phi_q$, $z_{0,q}$: quantities fixed once $q$ is fixed.
    \begin{itemize}
        \item $L_q=z_{0,q}/r_0$ is the cone-slope parameter.
        \item $\alpha_q$ is the logarithmic decay exponent.
        \item $\phi_q$ is the fixed reference profile.
        \item $z_{0,q}$ is the corresponding reference height.
    \end{itemize}
    \item $r_0,\eta,\mu,d$: globally fixed geometric parameters. In the paper they are taken to be
    \[
        r_0=1,\qquad \eta=\frac14,\qquad \mu=\frac1{20},\qquad d=10^{-2}.
    \]
    \item $\varepsilon=\varepsilon(\tilde\varepsilon,q)$: the amplitude parameter chosen after fixing $(\tilde\varepsilon,q)$ and thereafter treated as fixed.
    \item $k,m,A,t,T_N,T_B$: running parameters.
    \item $k$: the ring index; smaller $k$ corresponds to a larger, more outer ring.
    \item  $m$: the total number of rings in the finite superposition. 
\end{itemize}

\item \textbf{Initial data and multiring decomposition}
(mainly Section~\ref{sec:ini_data_ansatz} and Section~\ref{sec:inst_bu}).
\begin{itemize}
    \item $\omega_0^{(m)}=\sum_{k=1}^m \omega_{0,k}$: the finite-ring initial data; in the instantaneous blow-up argument one also uses the infinite sum  $\omega_0^{(\infty)}=\sum_{k=1}^\infty \omega_{0,k}$.
    \item $\omega^{(m)}$: the smooth finite-ring solution generated by $\omega_0^{(m)}$.
    \item $\omega_k$: the evolved $k$-th vortex ring.
    \item $u^{(m)}$: the velocity induced by $\omega^{(m)}$.
    \item $u_k$: the velocity induced by $\omega_k$.
    \item For a fixed $k$, $\omega_-:=\sum_{j=1}^{k-1}\omega_j$ and $u_-:=\sum_{j=1}^{k-1}u_j$
    denote the outer-ring contributions, while $\omega_+:=\sum_{j=k+1}^m\omega_j$ and $u_+:=\sum_{j=k+1}^mu_j$
    denote the inner-ring contributions.
    \item $\xi^{(m)}:=\omega^{(m)}/r$, $\xi_0^{(m)}:=\omega_0^{(m)}/r$: relative-vorticity variables.
\end{itemize}

\item \textbf{Rescaled ring variables and local flow maps}
(Section~\ref{sec:ini_data_ansatz}).
\begin{itemize}
    \item $x_k(t)$: the amplitude factor of the $k$-th ring.
    \item $\tilde x_k(t):=x_k(t)/x_k(0)$: the normalized amplitude.
    \item $R_k(t)=d^k\tilde x_k(t)$: the radial scale of the $k$-th ring.
    \item $H_k(t)=d^k\tilde x_k(t)^{-2}$: the vertical scale of the $k$-th ring.
    \item $W_k(r,z,t)$: the rescaled profile of the $k$-th ring, defined through
    \[
        \omega_k(r,z,t)=x_k(t)\,
        W_k\!\left(\frac{r}{R_k(t)},\frac{z}{H_k(t)},t\right).
    \]
    \item $V_k=(V_k^r,V_k^z)$: the rescaled velocity field driving $W_k$.
    \item $X_k=(X_k^r,X_k^z)$: the flow map associated with $V_k$ on the support of $\phi_q$.
\end{itemize}

\item \textbf{Bootstrap quantities}
(Section~\ref{roadmap_boot_Euler} and \S~\ref{subsec:Euler_bootstrap_improvement}).
\begin{itemize}
    \item $A>0$: the target amplitude for norm inflation.
    \item $\bar A:=A/(1-\mu)$: the renormalized target amplitude.
    \item $T_N(A,m)$: the ODE norm-inflation time, i.e. the time at which  some $x_k$ reaches $\bar A$.
    \item $F_m(t)$: the bootstrap deviation functional measuring the relative distortion of $X_k^r/r$ and $X_k^z/z$ on $\operatorname{supp}\phi_q$.
    \item $T_B(A,m)$: the maximal time on which the bootstrap bounds $F_m(t)\le \mu$ hold.
\end{itemize}
\item \textbf{ODE/stretching cascade notation}
(Section~\ref{sec:ODEs_Euler}; see also \S~\ref{ODE_intro} and Appendix~\ref{app:ODE_models}).
\begin{itemize}
    \item \emph{Superscript/subscript convention.}
    \begin{itemize}
        \item Superscript $\mathrm{orig}$ denotes quantities belonging to the \emph{original weak ODEs}, i.e.\ the PDE-driven cascade with the true coefficients $\Lambda_j(t,\Gamma)$.
        \item Subscript $\mathrm{froz}$ means that the transported profile has been \emph{frozen} against the fixed profile $\phi_q$, so $\Lambda_j(t,\Gamma)$ is replaced by $\Lambda_{\mathrm{froz}}(\Gamma)$ while the amplitude and aspect-ratio variables remain those of the original cascade.
        \item Superscript $\mathrm{loc}$ denotes the \emph{localized  model}, where $\Lambda_{\mathrm{froz}}(\Gamma)$ is replaced by the explicit localized coefficient $\Lambda_{\mathrm{loc}}(\Gamma)$.
        \item In the heuristic discussion and Appendix~\ref{app:ODE_models}, superscripts such as $\mathrm{froz}$, $\mathrm{flat}$, and $\mathrm{str}$ denote the stand-alone frozen-profile, flattened-coefficient, and strong no-self-slowdown model ODEs, respectively.
    \end{itemize}

    \item In Section~\ref{sec:ODEs_Euler} we temporarily write $x_k^{\mathrm{orig}}(t):=x_k(t)$
    (and similarly for the derived ODE quantities) in order to distinguish the original cascade from its simplified models. After that section, the paper returns to the shorter notation $x_k(t)$.

    \item
    $
        \Gamma_k^{\mathrm{orig}}(t)
        :=\frac{H_k(t)}{R_k(t)}
        =\left(\frac{x_k^{\mathrm{orig}}(0)}{x_k^{\mathrm{orig}}(t)}\right)^3
    $
    : the aspect ratio of the $k$-th ring in the original weak ODEs.

    \item
    $
        K_\Gamma(r,z):=\frac{r^2 z}{(r^2+\Gamma^2 z^2)^{5/2}}
    $
    : the rescaled Biot--Savart kernel appearing in the ODE coefficients.

    \item
    $
        \Lambda_j(t,\Gamma):=\iint K_\Gamma(r,z)\
        \big(-W_j(r,z,t)\big)\,dr\,dz
    $
    : the \emph{unfrozen} Biot--Savart coefficient coming from the true transported profile $W_j$.

    \item
    $
        \Lambda_{\mathrm{froz}}(\Gamma):=\iint K_\Gamma(r,z)\,\big(-\phi_q(r,z)\big)\,dr\,dz
    $
    : the \emph{frozen} Biot--Savart coefficient obtained by replacing $W_j(\cdot,\cdot,t)$ with the fixed profile $\phi_q$.

    \item
    $
        \Lambda_{\mathrm{loc}}(\Gamma)
        :=\frac{r_0^2 z_{0,q}}{(r_0^2+\Gamma^2 z_{0,q}^2)^{5/2}}
    $
    : the explicit localized coefficient obtained from the cone localization of $\phi_q$.

    \item
    \[
        b_j^{\mathrm{orig}}(t)
        :=x_j^{\mathrm{orig}}(t)\,\Gamma_j^{\mathrm{orig}}(t)^2
        \Lambda_j\!\bigl(t,\Gamma_j^{\mathrm{orig}}(t)\bigr),
    \]
    \[
        S_k^{\mathrm{orig}}(t):=\sum_{j=1}^k b_j^{\mathrm{orig}}(t),
        \qquad
        B_j^{\mathrm{orig}}(t):=\int_0^t b_j^{\mathrm{orig}}(s)\,ds
    \]
    : the true outer-ring contribution, its cumulative stretching rate, and its time integral.

    \item
    \[
        b_{\mathrm{froz},j}^{\mathrm{orig}}(t)
        :=x_j^{\mathrm{orig}}(t)\,\Gamma_j^{\mathrm{orig}}(t)^2
        \Lambda_{\mathrm{froz}}(\Gamma_j^{\mathrm{orig}}(t))
        =x_j^{\mathrm{orig}}(0)\,\Gamma_j^{\mathrm{orig}}(t)^{5/3}
        \Lambda_{\mathrm{froz}}(\Gamma_j^{\mathrm{orig}}(t)),
    \]
    \[
        S_{\mathrm{froz},k}^{\mathrm{orig}}(t):=\sum_{j=1}^k b_{\mathrm{froz},j}^{\mathrm{orig}}(t)
    \]
    : the frozen-coefficient comparison quantities associated with the original cascade.

    \item
    \[
        b_j^{\mathrm{loc}}(t)
        :=x_j^{\mathrm{loc}}(t)\,\Gamma_j^{\mathrm{loc}}(t)^2
        \Lambda_{\mathrm{loc}}(\Gamma_j^{\mathrm{loc}}(t)),
    \]
    \[
        S_k^{\mathrm{loc}}(t):=\sum_{j=1}^k b_j^{\mathrm{loc}}(t),
        \qquad
        B_j^{\mathrm{loc}}(t):=\int_0^t b_j^{\mathrm{loc}}(s)\,ds
    \]
    : the localized-model outer contribution, cumulative stretching rate, and time integral.

    \item$
        \kappa_{\mathrm{froz}}(\Gamma):=5+3\Gamma\frac{\Lambda_{\mathrm{froz}}'(\Gamma)}{\Lambda_{\mathrm{froz}}(\Gamma)}
    $
    : the coefficient governing the differential equation for $b_{\mathrm{froz},j}^{\mathrm{orig}}$.

    \item
    $
        c_\mu:=\frac{(1-\mu)^3}{(1+\mu)^5},
        \, 
        C_\mu:=\frac{(1+\mu)^3}{(1-\mu)^5}
    $
    : the bootstrap comparison constants between the true and frozen outer contributions.

    \item
    $
        \Theta_\mu:=\frac{C_\mu}{c_\mu}
        =\left(\frac{1+\mu}{1-\mu}\right)^8
    $
    : the comparison ratio appearing in the frozen/unfrozen bootstrap inequalities.

    \item
    $
        \Psi(\zeta):=\zeta^{5/3}(1+\zeta^2)^{-5/2}
    $
    : the one-parameter Biot--Savart profile arising after localization.

    \item
    $
        L_{q,-}:=\frac{1-\eta}{1+\eta}L_q,$ and $
        L_{q,+}:=\frac{1+\eta}{1-\eta}L_q
    $
    : lower and upper cone-slope bounds coming from the support of $\phi_q$.

    \item
    $
        \zeta_k^{\mathrm{orig}}(t):=L_q\,\Gamma_k^{\mathrm{orig}}(t),$ and $
        \zeta_k^{\mathrm{loc}}(t):=L_q\,\Gamma_k^{\mathrm{loc}}(t)
    $
    : the cone variables in the original and localized cascades.

    \item
    $
        \zeta_*:=\frac{1}{\sqrt2}
    $
    : the threshold where $\Psi$ changes monotonicity.

    \item
    $
        \zeta_\eta:=\frac{1+\eta}{1-\eta}\,\zeta_*$
    : the fixed productive threshold used in the front-migration argument.

    \item
    $
        J_{\zeta_\eta}^{\mathrm{orig}}(t)
        :=\max\Big\{1\le j\le m-1:\ \zeta_j^{\mathrm{orig}}(t)\ge \zeta_\eta\Big\}$
    : the front index in the original cascade.

    \item
    $
        t_{\zeta_\eta}^{\mathrm{orig}}
        :=\inf\Big\{t>0:\ \zeta_m^{\mathrm{orig}}(t)=\zeta_\eta\Big\}$
    : the threshold-hitting time of the last ring.
    \item
\[
\gamma_q:=\frac{3\Theta_\mu}{\log(L_q/\zeta_\eta)},
\qquad
\beta_q:=\frac{1}{1+\gamma_q}-\alpha_q
=
\frac{\log(L_q/\zeta_\eta)}
{3\Theta_\mu+\log(L_q/\zeta_\eta)}
-\alpha_q.
\]
Here $\beta_q>0$ is the exponent giving the algebraic decay
$T_B(A,m)\lesssim m^{-\beta_q}$.
\end{itemize}

\item \textbf{Global flow and radial-extremum notation}
(Section~\ref{sec:prel_lem} and Appendix~\ref{app:two_lemmas}).
\begin{itemize}
    \item $Y(a,t)$: the global Lagrangian flow map of the bounded log-Lipschitz velocity field $u$ obtained under the contradiction hypothesis.
    \item $Y_t^{-1}=Y^{-1}(\cdot,t)$: the inverse map of $Y_t:=Y(\cdot,t)$.
    \item $R_K(t):=\max_{x\in K}Y^r(x,t)$ and $r_K(t):=\min_{x\in K}Y^r(x,t)$
    are the outer and inner radial envelopes of $K$ under the flow for a compact set $K\subset \mathbb R^3$.
    \item $\mathcal M_K(t):=\operatorname*{argmax}_{x\in K}Y^r(x,t)$
    and $\mathcal A_K(t):=\operatorname*{argmin}_{x\in K}Y^r(x,t)$
    : the maximizing and minimizing sets used in the Danskin-type lemma.
\end{itemize}

\item \textbf{Finite-head contradiction notation}
(Section~\ref{subsec:non_existence}).
\begin{itemize}
    \item $\omega_0^{(\infty)}=\sum_{k=1}^\infty \omega_{0,k}$: the infinite-ring initial data.
    \item
    $\omega^{(\infty)}, u^{(\infty)}$: the infinite-ring solution and the velocity field induced by $\omega^{(\infty)}.$
    \item
    An integer $M\geq 2$ is the finite head cutoff used in the contradiction/bootstrap argument.
    \item $
        \omega^{(\le M)}:=\sum_{j=1}^M \omega_j$ and $\omega^{(>M)}:=\sum_{j=M+1}^\infty \omega_j$
    : the head and tail parts of the vorticity. The same notation is used for the corresponding velocity decomposition $u^{(\infty)}=u^{(\le M)}+u^{(>M)}.$
    \item $V_{k,(\le M)}$: the rescaled velocity built only from the head part $u^{(\le M)}$.
    \item $K_M:=\operatorname{supp}\omega_{0,M}$
    : the initial support of the $M$-th ring.
    \item
    \[
        K_{\mathrm{tail}}:=\overline{\bigcup_{j=M+1}^\infty \operatorname{supp}\omega_{0,j}}
    \]
    : the initial tail set.
    \item $
        r_{M,-}(t):=\min_{x\in K_M}Y^r(x,t)$
    : the inner radial boundary of the $M$-th head ring.
    \item$
        R_{\mathrm{tail}}(t):=\max_{x\in K_{\mathrm{tail}}}Y^r(x,t)$
    : the outer radial boundary of the tail.
    \item $\mathcal A_M(t):=\operatorname*{argmin}_{x\in K_M}Y^r(x,t)$ and $
        \mathcal M_{\mathrm{tail}}(t):=\operatorname*{argmax}_{x\in K_{\mathrm{tail}}}Y^r(x,t)$
    : the extremizing sets used in the head-tail separation argument.
\end{itemize}
\end{itemize}

This appendix complements the notation conventions from \S~\ref{sec:out_paper_not}; generic constants and suppressed dependences are governed there.

\section*{Acknowledgements}
The authors are grateful to In-Jee Jeong for introducing us to the ill-posedness problem beyond Danchin's regime, and to both In-Jee Jeong and Xiaoyutao Luo for stimulating discussions.

\bibliographystyle{alpha}
	
 \bibliography{mybib}
\end{document}